\documentclass[11pt]{amsart}

\input{mathPreambuleReport}

\usepackage[T1]{fontenc}       
\usepackage[sc]{mathpazo}          
\usepackage[mathscr]{eucal}
\usepackage[scaled=0.86]{berasans} 
\usepackage[scaled=1]{inconsolata} 

\usepackage{fontspec}
\newfontfamily\cyrfont{cmunrm.otf}

\usepackage{tensor}
\usepackage[]{natbib}
\usepackage{tcolorbox}
\usepackage{mdframed}

\usepackage{tikz}
\usetikzlibrary{patterns,decorations.markings,calc,arrows.meta}

\definecolor{moved}{RGB}{0,100,180}
\definecolor{note}{RGB}{80,120,60}

\renewcommand{\phi}{\varphi}

\usepackage{titlesec}

\titleformat{\section}
  {\normalfont\Large\bfseries}{\S\thesection}{1em}{}
\titleformat{\subsection}
  {\normalfont\large\bfseries}{\S\thesubsection}{1em}{}
\titleformat{\subsubsection}
  {\normalfont\normalsize\bfseries}{\S\thesubsubsection}{1em}{}

\newcounter{partctr}
\renewcommand{\thepart}{\Roman{partctr}}
\newcommand{\Part}[1]{%
  \clearpage
  \refstepcounter{partctr}%
  \addcontentsline{toc}{part}{Part \thepart.\ #1}%
  \vspace*{3ex}%
  {\centering\LARGE\bfseries Part \thepart\par\medskip\Large #1\par}%
  \vspace{4ex}%
}

\newtheorem*{mainthm}{Theorem}

\title{
Singularities of matrix semicircles
}
\bigskip
\author{
Vladislav Kargin
}
\thanks{email:
vkargin@binghamton.edu; current address: 4400 Vestal Pkwy East, Department of Mathematics, Binghamton University, Binghamton, 13902-6000, USA}

\begin{document}

\maketitle

\begin{abstract}
Let $S=\sum_{i=1}^r A_i\otimes s_i$ be a matrix semicircular element, with
Hermitian coefficients $A_i\in M_n(\bC)$ and free standard semicircular
generators $s_i$. Its scalar spectral density $f$ is governed, through
Speicher's equation (a matrix Dyson equation), by the completely positive
covariance map $\eta_S(X)=\sum_i A_iXA_i$. We treat the singular regime, in
which the underlying Hermitian pencil $\sum_i A_i x_i$ is full but not
semisimple and $f$ is unbounded at the origin, in contrast to the bounded,
real-analytic density of the regular case.

We prove three results. (i) The leading singularity exponent at $0$ is
invariant under congruence $A_i\mapsto bA_ib^{*}$ of the pencil ($b$
invertible) -- or, more generally, under symmetric scaling of the covariance map.
(ii) For binary elements ($r=2$) we obtain a complete classification: in
Lancaster--Rodman canonical form every indecomposable cell is of one of three
types, and $f(x)\sim c\,|x|^{-(n^*-1)/(n^*+1)}$ as $x\to0$ with an explicit
constant $c$, where the exponent depends only on the size of the largest
Jordan block of the pencil (the effective chain length $n^*$) and not on the
coupling. Together with~(i) and the behaviour over direct sums, this
classifies the singularities of all full binary Hermitian pencils. (iii) The
spectral classification is strictly coarser than the algebraic one: a
Type~III cell of size $2m$ with non-real parameter $\beta$ and the direct sum
of two Type~II cells of size $m$ with parameter $|\beta|$ have identical
scalar densities, yet their covariance maps are not symmetrically scalable to
one another -- the scalar spectrum cannot detect the phase of $\beta$.

Each type calls for a different method: a reduction of Speicher's equation to
an autonomous discrete Painlev\'e~I (McMillan) map (Type~I), a
Lyapunov--Schmidt reduction at the branch point (Type~II), and a gauge
reduction by a diagonal unitary (Type~III).
\end{abstract}

\newpage

\contentsmargin{2.55em}
\dottedcontents{section}[3.8em]{}{2.3em}{1pc}
\dottedcontents{subsection}[6.1em]{}{3.2em}{1pc}
\tableofcontents  

\section{Introduction} 
\label{sec:introduction}

A \emph{matrix semicircular variable} is a random variable of the form
$S = \sum_{i=1}^{r} A_i \otimes s_i$, where $A_1,\ldots,A_r$ are 
Hermitian matrices in $M_n(\bC)$ and $s_1,\ldots,s_r$ are free 
standard semicircular variables in a non-commutative probability 
space $(\mathcal{A},\varphi)$. Such variables arise as the 
asymptotic limits of large random matrices --- band matrices, 
block matrices, and many structured ensembles --- and their 
spectral distributions encode the interaction between the 
algebraic structure of the matrix coefficients and the free 
probabilistic behavior of the semicircular generators.

The spectral properties of~$S$ are determined by its 
\emph{covariance map} $\eta_S(X) = \sum_{i} A_i X A_i$, a 
completely positive self-adjoint linear map on $M_n(\bC)$, via 
a matrix-valued quadratic equation (Speicher's equation).

The behavior of the spectral density $f_S$ at the origin is governed by a sharp
dichotomy, and the analytic difficulty is concentrated on its
singular side. A companion paper~\cite{kargin2026_algebraic}
located the dividing line: $f_S$ is bounded and real-analytic
at~$0$ exactly when the underlying pencil is LR-semisimple,
equivalently when the covariance map is symmetrically
DS-scalable. (Here \emph{DS-scalability} refers to scalability
of a single map to a doubly-stochastic normal form, a notion
distinct from the symmetric scalability \emph{relation} of
Definition~\ref{defi_sym_scalability}.) That theorem identifies
\emph{which} pencils are singular but is silent on \emph{what
kind} of singularity they produce, and it is the latter question
that carries the analytic content. We take it up here. Once
LR-semisimplicity fails, the regular-point arguments that force
analyticity in the smooth case break down: the operator-valued
Cauchy transform acquires a branch point at the origin, and
locating its leading behavior becomes a problem in the local
analysis of Speicher's equation that the implicit function
theorem no longer settles. We resolve this problem completely
for binary pencils, and we find that the singularity one
recovers retains strictly less information than the pencil that
produced it.

We assume throughout that the pencil is full, so 
that the spectral distribution has no atom at zero by
\cite{hoffmann_mai_speicher2024}. Under this assumption, the 
singularities of the density can only be algebraic branch 
points: $f_S(x) \sim c\,|x|^{\alpha}$ as $x \to 0$, where 
$\alpha > -1$ is a rational number (this follows from the 
algebraicity of the Cauchy transform, 
Proposition~\ref{prop:entrywise-algebraicity}).

Our aim is to answer two questions:
\begin{enumerate}[label=(\alph*)]
\item Does the singularity type change under natural 
transformations of the covariance map?
\item Can we classify the singularities explicitly?
\end{enumerate}
The answers lead to three main results.

\subsection*{Result A: Singularity exponents are invariant 
under scaling of the covariance map}

\begin{defi} \label{defi_sym_scalability}
Two covariance maps $\eta_A$ and $\eta_B$ on $M_n(\bC)$ are 
\emph{symmetrically scalable} to each other if there exists an 
invertible matrix $b \in M_n(\bC)$ such that 
$\eta_B(X) = b\,\eta_A(b^\ast X b)\, b^\ast$ for all $X \in M_n(\bC)$.
\end{defi}
This is the covariance-map analogue of the congruence 
transformation $A_i \mapsto b A_i b^\ast$ at the pencil level; it 
preserves the class of completely positive self-adjoint maps 
and arises naturally in the theory of operator scaling 
\cite{gurvits2004, ggow2020}.

\medskip
\begin{mdframed}
\begin{mainthm}[A. Invariance of the leading singularity 
exponent under covariance-map scaling]
\label{thm:intro_invariance}
Let $S_A$ and $S_B$ be two full self-adjoint matrix semicircles, 
with covariance maps $\eta_A$ and $\eta_B$.
Assume that $\eta_A$ and $\eta_B$ are symmetrically scalable. 
Let $f_A$ and $f_B$ denote the scalar densities of $S_A$ and 
$S_B$. Suppose that $f_A$ has an algebraic singularity at~$0$ 
with leading exponent $-\alpha$, where $\alpha \in [0,1)$.
Then $f_B$ has an algebraic singularity at~$0$ with the same 
leading exponent $-\alpha$. In other words, the leading 
singularity exponent at~$0$ is invariant under symmetric 
scalability of covariance maps.
\end{mainthm}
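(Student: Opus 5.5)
The plan is to turn symmetric scalability of the covariance maps into an exact identity between operator-valued resolvents. I would then compare the two scalar Cauchy transforms on the imaginary axis by a two-sided operator inequality, and finally read off the leading exponent from the growth of $\operatorname{Im} G(iy)$ as $y\downarrow 0$.

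\emph{Step 1 (exact conjugation).} Write $G_A(\Lambda)$ for the solution of Speicher's equation with matrix spectral argument, $G_A(\Lambda)=(\Lambda-\eta_A(G_A(\Lambda)))^{-1}$. Set $G:=b^{-*}G_A\big(z(b^*b)^{-1}\big)b^{-1}$. Then $\eta_B(G)=b\,\eta_A(G_A)\,b^*$, and a direct computation gives $(z-\eta_B(G))^{-1}=G$. By uniqueness of the solution with the correct imaginary-part sign, $G=G_B(z)$. This is the covariance-level form of the identity $(z-S_B)^{-1}=(b^{*}\otimes 1)^{-1}\big(z(b^*b)^{-1}\otimes 1-S_A\big)^{-1}(b\otimes 1)^{-1}$. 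That identity also holds whenever $\eta_B$ is realized by the congruent pencil, and I would phrase the rest in terms of $E\big[(iyQ-S_A)^{-1}\big]$ with $Q=(b^*b)^{-1}>0$.

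\emph{Step 2 (two-sided comparison on the imaginary axis).} Take $z=iy$, $M=yQ$ and Hermitian $S=S_A$. Then
$$-\operatorname{Im}(iM-S)^{-1}=\big(M+SM^{-1}S\big)^{-1}.$$
Let $c_1\le Q\le c_2$. Then $M+SM^{-1}S\le c_2y+(c_1y)^{-1}S^2\le (c_2/c_1)\big(c_1y+(c_1y)^{-1}S^2\big)$. Hence $-\operatorname{Im}(iyQ-S)^{-1}\ge (c_1/c_2)\,\big(-\operatorname{Im}(ic_1y-S)^{-1}\big)$, and the reverse bound follows symmetrically. Apply the conditional expectation, which preserves operator order, and conjugate by the fixed invertible $b^{-*},b^{-1}$. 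Conjugating a positive matrix by an invertible changes its trace by at most a factor $\|b^{-1}\|^{2}$ or $\|b\|^{2}$. This gives constants $0<k_1\le k_2$ and $0<\gamma_1\le\gamma_2$ with
$$k_1\,\operatorname{Im} g_A(i\gamma_1 y)\le \operatorname{Im} g_B(iy)\le k_2\,\operatorname{Im} g_A(i\gamma_2 y),$$
where $g=\tfrac1n\operatorname{tr}G$. This step is elementary; the only care needed is that the comparison is between positive operators.

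\emph{Step 3 (exponent recovery).} This is the step I expect to be the real obstacle, because it is an Abelian/Tauberian passage between $f$ and $\operatorname{Im} g(iy)=\int y f(x)/(x^2+y^2)\,dx$. By fullness there is no atom at $0$ (Hoffmann--Mai--Speicher). By Proposition~\ref{prop:entrywise-algebraicity}, $f_B$ has an algebraic (Puiseux-type) expansion $f_B(x)\sim c_\pm|x|^{-\alpha'}$ on each side of $0$, with $\alpha'<1$ rational; the Abelian direction is then routine. For $\alpha'\in(0,1)$ one gets $\operatorname{Im} g(iy)\sim C y^{-\alpha'}$ with $C>0$. For $\alpha'=0$ one gets $\operatorname{Im} g(iy)\to\pi f(0)>0$. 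For $\alpha'<0$ one gets $\operatorname{Im} g(iy)\to0$ with power rate $\min(1,-\alpha')$, up to possible logarithms. The same holds for $f_A$ with $\alpha\in[0,1)$, so $\operatorname{Im} g_A(iy)\asymp y^{-\alpha}$ and, by Step 2, $\operatorname{Im} g_B(iy)\asymp y^{-\alpha}$. Since $\operatorname{Im} g_B(iy)\asymp y^{-\alpha}$ with $\alpha\ge 0$, $\operatorname{Im} g_B(iy)$ does not tend to $0$; this excludes $\alpha'<0$ and forces $\alpha'=\alpha$.

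The remaining subtlety, which I would check first, is that the leading coefficients $c_\pm$ do not cancel. This cannot happen because $f\ge0$ forces $c_+,c_-\ge0$, and at least one of them is positive by the definition of the leading term. Hence $f_B$ has the same leading exponent $-\alpha$.
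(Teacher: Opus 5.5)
Your proposal is correct and follows essentially the paper's route. The Poisson (imaginary-axis) transform of $S_B$ is sandwiched between rescaled copies of that of $S_A$ by operator inequalities: for $b>0$, your $b^{-*}\bigl(yQ+S(yQ)^{-1}S\bigr)^{-1}b^{-1}$ with $Q=(b^*b)^{-1}$ equals $y\,b^{-1}(Sb^2S+y^2b^{-2})^{-1}b^{-1}$, which is the operator in Proposition~\ref{prop:poisson-comparison}. The exponent is then transferred by the same two Tauberian lemmas.

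The differences are minor and, if anything, in your favour. You obtain $G_B(z)=b^{-*}G_A\bigl(z(b^*b)^{-1}\bigr)b^{-1}$ directly from uniqueness in Speicher's equation, whereas the paper realizes $\eta_B$ through $(b\otimes I)S_A(b^*\otimes I)$ and reduces to positive $b$ by polar decomposition. Your Step~3 also explicitly excludes a vanishing (positive-exponent) leading term for $f_B$, which Lemma~\ref{lem:poisson-to-puiseux} rules out only tacitly by declaring $\alpha_*\in[0,1)$.
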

\end{mdframed}
\medskip

\noindent
The proof 
(Theorem~\ref{thm:covariance-scaling-singularity-exponent}
\S\ref{sec:invariance}) proceeds by sandwiching the Poisson transform 
$\mathcal{P}_{S_B}(\varepsilon)$ between rescaled copies of 
$\mathcal{P}_{S_A}(\varepsilon)$, and using two Tauberian 
lemmas to pass between density asymptotics and Poisson 
asymptotics.

Result~A answers question~(a): the singularity exponent 
depends on the covariance map only through its symmetric 
scaling class. In particular, for Hermitian binary pencils, the 
exponent depends only on the Lancaster--Rodman canonical form 
\cite{lancaster_rodman2005}. This motivates computing the 
exponent for each canonical cell, which is the content of 
Result~B.

\subsection*{Result B: Classification of singularities for 
binary matrix semicircles}

A binary matrix semicircle has $r = 2$ generators:
$S = A_1 \otimes s_1 + A_2 \otimes s_2$, and the
associated Hermitian pencil $A = A_1 x_1 + A_2 x_2$.
By the Lancaster--Rodman classification
\cite{lancaster_rodman2005}, every such pencil is
congruent to a direct sum of indecomposable cells of
three canonical types --- I, II, and III --- 
distinguished by whether the associated eigenvalue is
infinite, real, or a conjugate non-real pair 
(see \S\ref{sec:LaRo_cells} for the precise
definitions). We prove the following for cells of
size $n \ge 2$ (Type~I, II) or $m \ge 2$ (Type~III).

\medskip
\begin{mdframed}
\begin{mainthm}[B. Singularities of indecomposable binary 
matrix semicircles]
\label{thm:intro-classification}
Let $A = A_1 x_1 + A_2 x_2$ be an indecomposable Hermitian 
binary pencil in Lancaster--Rodman canonical form, and let 
$S = A_1 \otimes s_1 + A_2 \otimes s_2$ be the associated 
matrix semicircular element. Define the \emph{effective chain 
length} $n^*$ and \emph{coupling modulus} $\gamma$ by
\[
\renewcommand{\arraystretch}{1.3}
\begin{array}{lccc}
\hline
\textup{Cell type} & \textup{Size} & n^* & \gamma \\
\hline
\textup{Type I:}\  F_n x_1+G_n x_2
        & n & n & 0 \\[2pt]
\textup{Type II:}\  F_n x_1+(\alpha F_n+G_n)x_2,\ 
        \alpha\in\bR
        & n & n & |\alpha| \\[2pt]
\textup{Type III:}\ 
        F_{2m}x_1+\bigl(\begin{smallmatrix}0&\beta F_m+G_m\\
        \bar\beta F_m+G_m&0\end{smallmatrix}\bigr)x_2,\ 
        \beta\in\bC\setminus\bR
        & 2m & m & |\beta| \\
\hline
\end{array}
\]
Set $c := 1 + \gamma^2$. Then the spectral density satisfies
\begin{equation}\label{eq:intro-density}
f(x)
\;=\;
\frac{\sin\!\bigl(\pi/(n^*{+}1)\bigr)}{n^*\,\pi}\;
c^{-\frac{2n^*-1}{n^*+1}}\;
|x|^{-\frac{n^*-1}{n^*+1}}
\;+\;
o\!\left(|x|^{-\frac{n^*-1}{n^*+1}}\right)
\end{equation}
as $|x| \to 0$.
\end{mainthm}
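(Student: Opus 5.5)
The plan is to work cell by cell directly with Speicher's equation
\[
G(z)=\bigl(z-\eta_S(G(z))\bigr)^{-1},\qquad g(z)=\tfrac1n\operatorname{tr}G(z),
\]
and to extract the leading term of $g$ as $z\to0$ in the upper half-plane. I will show in each case that
\[
g(z)\;\sim\;K\,(-z)^{-\frac{n^*-1}{n^*+1}}
\]
for an explicit constant $K$. The density then follows by Stieltjes inversion, $f(x)=-\pi^{-1}\operatorname{Im} g(x+i0)$. The factor $\sin(\pi/(n^*+1))$, together with the fact that the leading coefficient is the same on both sides of $0$, comes from evaluating the argument of $(-z)^{-\frac{n^*-1}{n^*+1}}$ on the two rays $\arg z=0,\pi$. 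The exponent is then automatically consistent with Result~A, and Proposition~\ref{prop:entrywise-algebraicity} guarantees that the leading term is a genuine Puiseux term, so no Tauberian argument is needed at this stage.

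\textbf{Type I.} With $F_n$ the sip (anti-identity) matrix and $G_n$ its shifted version, $\eta(X)=F_nXF_n+G_nXG_n$ preserves a class of matrices with a fixed anti-diagonal/persymmetric sparsity pattern. I will first check, using uniqueness of the solution of Speicher's equation in the upper half-plane, that $G(z)$ lies in this invariant class, so that it is described by $O(n)$ scalars $w_1,\dots,w_n$ along the chain. Writing out $G(z-\eta(G))=I$ entrywise should give a three-term recursion of the form $w_{k+1}+w_{k-1}=\Phi(z,w_k)$. This is an autonomous discrete Painlev\'e~I (McMillan) map, with boundary conditions at both ends of the chain. Inserting a power ansatz $w_k\sim a_k(-z)^{e_k}$ with dominant balance gives linear exponents $e_k=\frac{2k-n-1}{n+1}$ (up to orientation). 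The coefficients $a_k$ then solve a finite algebraic system which the McMillan first integral makes explicitly solvable. Averaging the trace yields $K$ with $c=1$.

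\textbf{Type II.} The pencil $F_nx_1+(\alpha F_n+G_n)x_2$ is not congruent to the Type~I cell when $\alpha\neq0$, but its covariance map $\eta(X)=(1+\alpha^2)F_nXF_n+\alpha(F_nXG_n+G_nXF_n)+G_nXG_n$ is a perturbation of the Type~I map in which the dominant $F_n$-term is multiplied by $c=1+\alpha^2$. At $z=0$ the rescaled equation has a degenerate solution branch. I will perform a Lyapunov--Schmidt reduction there: split off the kernel of the linearization, which I expect to be one-dimensional and spanned by the leading chain profile, and solve the complementary equations by the implicit function theorem. The resulting scalar bifurcation equation should have leading part $c^{2n-1}\,\lambda^{\,n+1}\propto(-z)^{\,n-1}$, with the cross terms $\alpha(FXG+GXF)$ entering only at higher order. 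This produces the factor $c^{-\frac{2n-1}{n+1}}$.

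\textbf{Type III.} Conjugating by the diagonal unitary $u=\operatorname{diag}(e^{i\theta}I_m,e^{-i\theta}I_m)$ with $\beta=|\beta|e^{2i\theta}$ leaves $F_{2m}$ invariant up to a phase that cancels in $\eta$. It removes the phase of $\beta$ from the off-diagonal block, and the trace of $G$ is unchanged. After this gauge, the equation decouples into two copies of the Type~II equation of size $m$ with parameter $|\beta|$, giving $n^*=m$ and $\gamma=|\beta|$.

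\textbf{Main obstacle.} I expect the hardest step to be the Type~II reduction, namely verifying non-degeneracy of the bifurcation equation, so that the leading coefficient is nonzero and selecting the root that continues to the physical branch with $\operatorname{Im}G>0$. For the Type~I chain, the delicate point is checking that the dominant-balance exponents are consistent at both boundary ends.
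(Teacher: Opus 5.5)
Your Type~I plan is essentially the paper's: diagonal reduction to the McMillan chain, with the biquadratic first integral doing the real work. Two inaccuracies there. First, the invariant class is the diagonal subalgebra, and the solution is not persymmetric. Second, in the interleaved order $(w_n,w_1,w_{n-1},w_2,\dots)$, where the three-term recursion holds, dominant balance fails at every other step, because $1/y_{2m}$ and $y_{2m-1}$ cancel to leading order. The first integral is therefore needed to fix the exponents as well as the coefficients. The genuine gaps are in Types~II and~III.

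\textbf{Type~II.} Your expectation of a one-dimensional kernel fails for $n\ge3$.
\begin{itemize}
\item After the rescaling $W=D(t)MD(t)$, $t=u^{1/(n+1)}$, the normalized limiting equation $N^{-1}=F_nNF_n$ has a whole manifold of positive solutions $\{e^{Y}: F_nYF_n=-Y\}$.
\item The kernel at $N=I$ is the space of $F_n$-odd symmetric matrices, of dimension $\lfloor n^2/4\rfloor$. For $n=3$ it already contains $E_{12}+E_{21}-E_{23}-E_{32}$, which is not a chain profile.
\item The reduced equation on this kernel vanishes identically through order $t$; this needs a Stein-equation identity for the first-order correction. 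So nothing is selected before order $t^2$.
\item At order $t^2$, for $\alpha\neq0$, the solution does not tend to $N=I$. It tends to a shifted point $e^{Y_*(\alpha)}$ on the manifold, with $Y_*(\alpha)=\tfrac32\log(1+\alpha^2)\,\mathrm{diag}\bigl(\tfrac{n+1-2i}{n+1}\bigr)_{i}$. You must then prove the second-order reduced Jacobian is invertible on the whole kernel at that point.
\end{itemize}
Your claim that the cross terms $\alpha(F_nXG_n+G_nXF_n)$ enter only at higher order is false. They act at order $t$, and their square lands at order $t^2$ together with $G_nXG_n$ and the boundary term. This produces the combination $1/c-\alpha^2/c^2=1/c^2$, which fixes the power of $c$. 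For $n=2$ you can check directly from the $2\times2$ equations: dropping the cross terms gives $W_{22}(u)\sim c^{-2/3}u^{-1/3}$, whereas the true behaviour is $c^{-1}u^{-1/3}$. Your bifurcation equation $c^{2n-1}\lambda^{n+1}\propto\cdots$ is read off from the target rather than derived, and the mechanism you describe would give the wrong constant.

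\textbf{Type~III.} The block-scalar unitary $\mathrm{diag}(e^{i\theta}I_m,e^{-i\theta}I_m)$ multiplies each off-diagonal block by one overall phase. It sends $\beta F_m+G_m$ to $|\beta|F_m+e^{-2i\theta}G_m$, so it cannot change the relative phase of the $F_m$ and $G_m$ terms. On block-diagonal arguments, where the solution lives, its effect on $\eta$ cancels entirely.

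What does remove the phase is a graded gauge $Q\oplus\overline Q$, with $Q=\mathrm{diag}(q_j)$, $q_j=e^{i(j-\frac{m+1}{2})\theta}$ and $\beta=|\beta|e^{i\theta}$. Because $F_m$ and $G_m$ lie on adjacent anti-diagonals, $Q^*F_m\overline Q=F_m$ while $Q^*G_m\overline Q=e^{i\theta}G_m$. Hence $Q^*(\beta F_m+G_m)\overline Q=e^{i\theta}(|\beta|F_m+G_m)$.

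Even then the two diagonal blocks do not decouple. With $K=|\beta|F_m+G_m$ they satisfy $X^{-1}=uI+FVF+KVK$ and $V^{-1}=uI+FXF+KXK$. You need the block-swap symmetry together with uniqueness of the accretive solution to conclude $X=V$. Only then does the system reduce to a single size-$m$ Type~II equation.
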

\end{mdframed}
\medskip

For instance, the simplest non-trivial Type~I cell ($n=2$)
produces a cusp singularity $f(x) \sim 
\frac{\sqrt{3}}{4\pi}|x|^{-1/3}$ (Example~\ref{exa:cusp_singularity}).

\noindent
Several features of this classification deserve comment.
The singularity exponent $(n^* - 1)/(n^* + 1)$ depends only 
on the effective chain length and is independent of the 
coupling parameter~$\gamma$; by Result~A, it is also invariant 
under symmetric scaling of the covariance map. Equivalently, with $d:=n^*-1$ the defect of semisimplicity (the size of the largest
Jordan block of the pencil, minus one), the exponent is $d/(d+2)$, which is the form of the Kr\"uger--Renfrew
singularity degree~(\cite{MR4901646}).\footnote{Our exponents $\beta_k$ are
those of \cite[Lem.~3.1]{MR4901646} up to a relabeling of indices and the sign
convention there ($v_i\sim\eta^{-f_i}$ versus $y_k\sim u^{\beta_k}$): as
multisets $\{\beta_k\}=\{-f_i\}$, the most singular value being
$\beta_1=-(n-1)/(n+1)=-\sigma$, so the leading exponent agrees with
\cite[Thm.~2.8]{MR4901646}.} (with the algebraic defect $d$ in the role of
their combinatorial chain length) --- the defect filtered through the
quadratic term of Speicher's equation; Remark~\ref{rem:Nstar-geometry}
gives the underlying indefinite-metric picture.

Type~I is the 
special case $\gamma = 0$ of Type~II (with $c = 1$). For 
Type~III cells of matrix size~$2m$, the singularity is governed 
by the half-size chain length~$m$, not by~$2m$; this reflects 
a gauge reduction 
(Theorem~\ref{thm:typeIII-scalar-law-caseII}) that identifies 
the Type~III scalar density with a Type~II density of half the 
matrix size.

By Lancaster–Rodman every binary Hermitian pencil is congruent to a direct sum of canonical cells; since the singularity at the origin is invariant under congruence (Theorem A) and, over a direct sum, is governed by the most singular summand (\S\ref{sec:direct_sums}), the cell-by-cell classification of Theorem B is in fact a complete classification of singularities for full binary Hermitian pencils — the exponent of a general pencil being the largest occurring among the cells of its canonical form.

The proofs of Result~B employ different techniques for each 
type:
\begin{itemize}[nosep]
\item \emph{Type~I} (\S\ref{sec:type_I}): the 
covariance map preserves the diagonal subalgebra, reducing Speicher's equation to a discrete dynamical system --- a chain of 
coupled equations equivalent to the McMillan map (autonomous 
discrete Painlev\'e~I). A biquadratic integral of motion 
constrains consecutive pairs, and a cancellation mechanism in 
the two-step recurrence pins down the Puiseux exponents.

\item \emph{Type~II} 
(\S\ref{sec:singularities_type_2}--\S\ref{sec:proof_thm_case_II}): 
the diagonal subalgebra is no longer preserved. A rescaling 
$W_\alpha(u) = D(t)M_\alpha(t)D(t)$ with $t = u^{1/(n+1)}$ 
transforms Speicher's equation into a system that is regular 
at $t = 0$. A Lyapunov--Schmidt reduction and the holomorphic 
implicit function theorem produce a convergent expansion 
of~$M_\alpha(t)$.

\item \emph{Type~III} (\S\ref{sec:type_III}): a gauge reduction by a 
diagonal unitary shows that the Type~III Speicher equation with 
parameter $\beta \in \bC \setminus \bR$ decomposes into two 
copies of the Type~II equation with real parameter $|\beta|$.
\end{itemize}

\subsection*{Result C: The spectral classification is strictly 
coarser than the algebraic one}

Result~A shows that symmetrically scalable covariance maps 
produce densities with the same singularity exponent. The 
gauge reduction in the Type~III analysis proves something 
stronger: the densities are not merely similar but 
\emph{identical}. One might then ask whether equal densities 
force the covariance maps to be symmetrically scalable.
The answer is no.

\medskip
\begin{mdframed}
\begin{mainthm}[C. Spectral coincidence without algebraic 
equivalence]
\label{thm:intro-gap}
Let $\beta \in \bC \setminus \bR$. 
A \TypeIII{} cell of size $2m$ and parameter $\beta$ has the same scalar spectral density as the direct sum of two identical \TypeII{} cells of size $m$ and parameter $|\beta|$, but  their covariance maps are not symmetrically scalable to 
each other.
\end{mainthm}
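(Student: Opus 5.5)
The proof has two independent halves: equality of densities, and non-scalability of the covariance maps.

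\emph{Equality of densities.} I would invoke the gauge reduction announced for Type~III (Theorem~\ref{thm:typeIII-scalar-law-caseII}). Write $\beta=|\beta|e^{i\theta}$ and conjugate by the diagonal unitary $U=\mathrm{diag}(I_m,e^{i\theta}I_m)$. The off-diagonal blocks of $A_2$ then become $|\beta|F_m+e^{\mp i\theta}G_m$. A further rearrangement of the phases into $G_m$ shows that the $\eta$-orbit of the Speicher solution stays in a subalgebra on which the equation splits into two decoupled copies of the Type~II equation with parameter $|\beta|$. Since the scalar Cauchy transform is $\tfrac1{2m}\operatorname{tr}$ of the solution, and uniqueness of the solution of Speicher's equation in the upper half-plane identifies it with the block-diagonal Type~II solution, the Cauchy transforms coincide. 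By Stieltjes inversion, the densities coincide. The direct-sum statement uses the fact, from \S\ref{sec:direct_sums}, that the covariance map of $A\oplus A$ acts blockwise on block-diagonal $X$, so its solution is $M_{II}\oplus M_{II}$.

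\emph{Non-scalability.} Suppose $\eta_B(X)=b\,\eta_A(b^*Xb)\,b^*$, where $\eta_A$ is the Type~III map and $\eta_B$ that of the Type~II direct sum. I would extract a congruence invariant of the covariance map. A symmetric scaling sends the Kraus family $\{A_1,A_2\}$ to $\{bA_1b^*,bA_2b^*\}$. By unitary freedom of Kraus representations, the Kraus span $\mathcal{K}_A=\mathrm{span}_{\mathbb R}\{A_1,A_2\}$ of Hermitian matrices is mapped to $\mathcal{K}_B$ by congruence up to a real orthogonal (indeed real, since both families are Hermitian and minimal) change of basis. So $\eta_A,\eta_B$ scalable implies the pencils $A_1x_1+A_2x_2$ and $B_1x_1+B_2x_2$ are congruent after a real invertible linear change of the variables $(x_1,x_2)$. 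Such a real projective change acts on the eigenvalues of the pencil (the $\lambda$ with $\det(A_1+\lambda A_2)=0$) by a real Möbius transformation. A real Möbius transformation maps non-real eigenvalues to non-real ones. The Type~III pencil has eigenvalues $\beta,\bar\beta$, which are non-real, whereas the Type~II direct sum has only the real eigenvalue $-1/|\beta|$-type (or $\alpha$-related) real eigenvalues. This contradicts the Lancaster--Rodman uniqueness of canonical form. This eigenvalue argument is what shows the scalar spectrum loses the phase of $\beta$.

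\emph{Main obstacle.} I expect the main obstacle to be the Kraus-rigidity step: showing that symmetric scalability of the maps forces congruence of the pencils up to a real $GL_2$ change of variables, rather than some exotic complex recombination of Kraus operators. I would first prove linear independence of $A_1,A_2$, which holds since the pencil is indecomposable of size $\ge 4$. I would then use that two minimal Kraus families of the same map differ by a unitary $u\in U(2)$: $bA_jb^*=\sum_k u_{jk}B_k$. Taking adjoints, and using Hermiticity and independence of the $B_k$, gives $\bar u=u$, so $u$ is real orthogonal. This reduces everything to the Möbius invariance of the reality of the eigenvalues.
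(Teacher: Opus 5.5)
Your proposal is correct and is essentially the paper's proof. The density coincidence comes from a diagonal-unitary gauge that moves the phase of $\beta$ onto $G_m$ (the paper's $\mathcal Q=Q\oplus\overline Q$ with $q_j=e^{-i(j-\frac{m+1}{2})\theta}$, which works because $F_m$ and $G_m$ occupy adjacent antidiagonals), and non-scalability comes from the real Hermitian Kraus plane, pinned down up to a real orthogonal change of basis by unitary freedom. Your M\"obius/eigenvalue invariant is the paper's ``every nonzero real combination $xA_1+yA_2$ is invertible'' in projective language, since both say $\det(xA_1+yA_2)$ has no nontrivial real zero.

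Two small corrections: the appeal to Lancaster--Rodman uniqueness is unnecessary, and after the gauge the diagonal blocks are coupled ($X^{-1}=uI+\eta_{\mathrm{II},\rho}(V)$, $V^{-1}=uI+\eta_{\mathrm{II},\rho}(X)$), not decoupled. So $X=V$ comes from the block-swap symmetry, as in the paper, or from your uniqueness check that $Y\oplus Y$ solves the system.
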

\end{mdframed}
\medskip

\noindent
The spectral coincidence is
Theorem~\ref{thm:typeIII-scalar-law-caseII}; the algebraic
non-equivalence is Theorem~\ref{thm:typeIII-not-scalable} (with the
plane invariant of Proposition~\ref{prop:typeIII-definite-kraus}).

The obstruction 
is visible at the level of Hermitian Kraus planes: the Type~III 
plane $\mathrm{span}_\bR\{A_1, A_2\}$ consists entirely of 
invertible matrices (apart from~$0$), while any direct sum of 
Type~II planes contains nonzero singular elements. Since the 
property ``the Hermitian Kraus plane intersects the singular 
locus only at the origin'' is preserved under congruence, the 
two covariance maps lie in different symmetric scaling classes.

Result~C shows that the Lancaster--Rodman classification of 
Hermitian matrix pencils is strictly finer than what the scalar 
spectral density can detect. The covariance map retains 
algebraic information --- specifically, the phase of the 
coupling parameter $\beta$ in Type~III cells --- that is 
invisible to the spectral measure. Thus, while symmetric 
scaling of the covariance map cannot change the singularity type 
(Result~A), non-equivalent covariance maps may nonetheless 
produce identical densities.
\subsection*{Independent contributions and context}

Although the question settled here is the natural counterpart of
the one in~\cite{kargin2026_algebraic}, its answer stands on its
own. We isolate four contributions, then place them against the
closest prior work.

\emph{A complete and explicit classification.}
For every full binary Hermitian pencil, Theorem~B returns the
leading singularity exponent together with its constant in closed
form~\eqref{eq:intro-density}, the exponent read off from the
Jordan structure of the pencil through the defect $d=n^*-1$. The
classification is thus an exact dictionary between the
Lancaster--Rodman canonical form of an indefinite pencil and the
local profile of the associated spectral measure---the analytic
singularity realized as an invariant of indefinite linear
algebra. At the level of the question the two papers are
complementary: \cite{kargin2026_algebraic} decides \emph{when} a
singularity occurs, the present paper says \emph{what} it is, and
together they settle the local picture at the origin.

\emph{Invariance of the exponent (Theorem~A).}
The leading exponent depends on the covariance map only through
its symmetric-scaling class; equivalently, it is constant on the
congruence orbit of the pencil. This makes the singularity degree
an invariant of the operator-scaling action~\cite{gurvits2004,
ggow2020}, computable from any convenient representative, and it
is the mechanism by which the cell-by-cell computation of
Theorem~B extends to arbitrary pencils.

\emph{A bridge to discrete integrable systems.}
The Type~I analysis identifies the restriction of Speicher's
equation to the diagonal subalgebra with the autonomous discrete
Painlev\'e~I, or McMillan, map; the Puiseux exponents are then
forced by a conserved biquadratic and a cancellation in the
associated two-step recurrence. This places the local spectral
analysis of matrix semicircles, in the most degenerate cases, in
contact with the theory of QRT maps and discrete Painlev\'e
equations.

\emph{A spectral phenomenon with no algebraic counterpart
(Theorem~C).}
The scalar spectral measure cannot detect the phase of the
Type~III coupling~$\beta$: two covariance maps in distinct
symmetric-scaling classes produce \emph{identical} densities. The
Lancaster--Rodman classification is therefore strictly finer than
any invariant of the scalar spectrum, and Theorem~C pins down
exactly the algebraic data the spectrum forgets. This is a
self-contained negative result, independent
of~\cite{kargin2026_algebraic}.

\medskip
\emph{Relation to prior work.}
Under a flatness, or uniform-primitivity, hypothesis on the
self-energy operator, the now-standard regularity theory of the
matrix Dyson equation forces a \emph{bounded} density, with
non-analyticities confined to square-root edges and cubic-root
cusps~\cite{AjaErdKru2017, ErdPCMI2019, AltErdKru2020}. Dropping
flatness while keeping the self-energy \emph{commutative}
---structured matrices with a variance profile---~\cite{MR4901646} introduced the \emph{singularity degree}
$\sigma=\ell/(\ell+2)$, governed by the zero pattern of the
profile $S$ through a trichotomy: total support gives a bounded
density, support without total support a blow-up $|x|^{-\sigma}$,
and absence of support an atom (here $\ell$ is the length of the
longest increasing chain in a relation read off $S$). This
trichotomy is the commutative shadow of the dichotomy organising
the present series: total support, support, and no support
correspond to LR-semisimple (the bounded analytic case
of~\cite{kargin2026_algebraic}), full but not semisimple (our
blow-up), and non-full (an atom, excluded here by fullness).

\emph{The non-commutative setting.}
The covariance maps studied here need not be commutative. Type~I
preserves the diagonal subalgebra, so its scalar law coincides
with the variance-profile law of $S=\eta_S|_{\Delta}$, with
$S_{ij}=|(A_1)_{ij}|^2+|(A_2)_{ij}|^2$; there the chain exponent
$(n{-}1)/(n{+}1)$ is exactly the Kr\"uger--Renfrew degree with
$\ell=n-1$. For Types~II and~III the covariance map preserves no maximal abelian
subalgebra (Lemma~\ref{lem:no-MASA}), so their scalar laws are
variance-profile laws in no basis, the diagonal reduction
underlying~\cite{MR4901646} no longer applies, and the analysis
proceeds at the matrix level; yet the
same law $\sigma=d/(d+2)$ persists, with the combinatorial $\ell$
replaced by the algebraic defect $d=n^*-1$ of the
Lancaster--Rodman form. The resulting blow-ups form an infinite
discrete family approaching, without reaching, the non-integrable
threshold $|x|^{-1}$ as the largest Jordan block grows---behaviour
the flatness hypothesis excludes. The sharp constant, the
scaling-invariance (Theorem~A), and the spectral/algebraic gap
(Theorem~C) have no counterpart in~\cite{MR4901646}.

\medskip
\noindent
The length of the paper reflects this breadth rather than
incremental elaboration. The three canonical types are not
variants of a single argument but require qualitatively different
machinery---a discrete-integrable reduction (Type~I), a
Lyapunov--Schmidt analysis of a degenerate fixed-point equation
(Type~II, the longest and most delicate of the three), and a
gauge reduction by a diagonal unitary (Type~III)---each carried
out in full.

\subsection*{Organization of the paper}

The paper is divided into two parts.

\emph{Part~I} (\S\S\ref{sec:background}--\ref{sec:direct_sums}) develops the general 
theory. Section~\ref{sec:background} collects background material: matrix 
semicircles, covariance maps, Speicher's equation, 
algebraicity of the Cauchy transform, and a motivating example 
(the $n = 2$ cusp singularity). 
Section~\ref{sec:invariance} proves the invariance of the singularity 
exponent under symmetric scaling of covariance maps (Result~A) 
via the Poisson comparison method. Section~\ref{sec:direct_sums} analyzes 
direct sums, showing that the density of a decomposable 
semicircle is a weighted average of the component densities, 
with the singularity governed by the most singular summand.

\emph{Part~II} (\S\S\ref{sec:type_I}--\ref{sec:classification-synthesis}) classifies the
singularities of binary matrix semicircles (Result~B).
Section~\ref{sec:type_I} treats Type~I cells via the chain system and its
Painlev\'e structure. Sections~\ref{sec:type_II}--\ref{sec:LS} treat Type~II cells
via rescaling, Lyapunov--Schmidt reduction, and the implicit function theorem.
Section~\ref{sec:type_III} treats Type~III cells via gauge reduction to Type~II, and
establishes Result~C. Section~\ref{sec:classification-synthesis} assembles the cell
computations into a complete classification for arbitrary regular Hermitian binary
pencils: via congruence-invariance (Result~A) and the direct-sum reduction, the leading
singularity is governed by the largest effective chain length $N^*$ of the
Lancaster--Rodman form --- the density being regular at the origin exactly when the
pencil is LR-semisimple --- and the geometric meaning of $N^*$ is recorded there.

The appendix collects the deferred proofs: the algebraicity of the scalar Cauchy
transform (Appendix~\ref{sec:algebraicity}); the two Tauberian lemmas relating Puiseux
density asymptotics to Poisson asymptotics (Appendix~\ref{sec:tauberian}); and the
computations supporting the Type~II analysis (Appendix~\ref{sec:type2-computations}) ---
real-analyticity of the solution of Speicher's equation in the coupling parameter, the
negative-definiteness of the Lyapunov--Schmidt linearization~$\BB_\alpha$, the
reflection identity, and the explicit shifted base point.

\Part{General principles}


\section{Matrix semicircles and algebraic singularities} 
\label{sec:background}
Let $(\mathcal{A}, \varphi)$ be a non-commutative probability 
space, where $\varphi$ is a faithful tracial state. Self-adjoint 
elements $s_1, \ldots, s_r \in \mathcal{A}$ are called 
\emph{free standard semicircular variables} if they are freely 
independent and each $s_i$ has the semicircular distribution 
$d\mu(x) = \frac{1}{2\pi}\sqrt{4 - x^2}\,dx$ on $[-2, 2]$.

\begin{defi}
A \emph{matrix semicircular variable} is an element 
$S \in M_n(\mathbb{C}) \otimes \mathcal{A}$ of the form
\begin{equation}
\label{equ_defi_semicircle}
S = \sum_{i=1}^{r} A_i \otimes s_i,
\end{equation}
where $A_1, \ldots, A_r \in M_n(\mathbb{C})$ are Hermitian 
matrices and $s_1, \ldots, s_r \in \mathcal{A}$ are free 
standard semicircular variables.
\end{defi}

We write 
$\mathbb{E} = \mathrm{id}_{M_n(\mathbb{C})} \otimes \varphi 
\colon M_n(\mathbb{C}) \otimes \mathcal{A} \to M_n(\mathbb{C})$ 
for the conditional expectation onto the matrix subalgebra.

\begin{defi}
The \emph{covariance map} of a matrix semicircular variable 
$S = \sum_{i=1}^r A_i \otimes s_i$ is the linear map 
$\eta_S \colon M_n(\mathbb{C}) \to M_n(\mathbb{C})$ defined by
\begin{equation}
\label{defi_covariance}
\eta_S(X) := \mathbb{E}[S X S] 
  = \sum_{i=1}^{r} A_i X A_i,
\end{equation}
where the second equality follows from 
$\varphi(s_i s_j) = \delta_{ij}$ (by freeness and the 
normalization of the semicircular law). Since each $A_i$ 
is Hermitian, $\eta_S$ is a completely positive map satisfying 
$\eta_S = \eta_S^*$ (self-adjoint with respect to the 
Hilbert--Schmidt inner product).
\end{defi}

The spectral distribution of $S$ is encoded by its Cauchy 
transform. For $b \in M_n(\mathbb{C})$ such that 
$b \otimes \mathrm{id}_{\mathcal{A}} - S$ is invertible, 
define
\begin{equation}
\label{defi_Cauchy}
G_S(b) = \mathbb{E}\big[
  (b \otimes \mathrm{id}_{\mathcal{A}} - S)^{-1}\big].
\end{equation}
In particular, for $z \in \mathbb{C}^+ := \{z : \Im z > 0\}$, 
the scalar evaluation $G_S(zI)$ is well-defined. The 
\emph{spectral density} of $S$ (when it exists) is recovered 
from the Stieltjes inversion formula:
\begin{equation}
\label{equ_Stieltjes}
f(x) = \frac{1}{\pi} 
  \lim_{\varepsilon \downarrow 0} 
  \Im\big[-\tr\, G_S\big((x + i\varepsilon)I\big)\big],
\end{equation}
where $\tr$ denotes the normalized trace on $M_n(\mathbb{C})$.

The Cauchy transform can be computed from the covariance map 
via the following result.

\begin{theo}[Speicher's equation]
\label{theo_Cauchy_matrix_semicircle}
Let $S$ be a matrix semicircular variable with covariance 
map~$\eta_S$. Then for all $b \in M_n(\mathbb{C})$ with 
$\Im b \succ 0$, the Cauchy transform $G = G_S(b)$ satisfies
\begin{equation}
\label{equ_Speicher}
b\, G(b) = I + \eta_S\!\big(G(b)\big)\, G(b).
\end{equation}
\end{theo}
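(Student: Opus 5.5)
Since $S$ lives in $M_n(\bC)\otimes\mathcal A$ with $\mathcal A$ generated by free semicircular variables, I would prove Speicher's equation from the operator-valued free Schwinger--Dyson (integration by parts) identity for semicircular families. Concretely, let $\partial_i$ be the free difference quotient with respect to $s_i$, $\partial_i s_j=\delta_{ij}\,1\otimes 1$. The defining property of a free standard semicircular family is $\varphi(s_i P)=(\varphi\otimes\varphi)(\partial_i P)$ for noncommutative polynomials $P$. Tensoring with $M_n(\bC)$ turns this into an $M_n(\bC)$-valued statement under $\mathbb E=\mathrm{id}\otimes\varphi$. Applied to $R=(b\otimes 1-S)^{-1}$ via the Leibniz rule $\partial_i R = R\,(A_i\otimes 1\otimes 1)\,R$ (suitably interpreted in $M_n\otimes\mathcal A\otimes\mathcal A$), it yields
\[
\mathbb E[(A_i\otimes s_i)R]=A_i\,\mathbb E[R]\,A_i\,\mathbb E[R].
\]
Summing over $i$ gives $\mathbb E[SR]=\eta_S(G)G$. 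Combining this with the trivial identity $(b\otimes 1)R - SR = I$ and taking $\mathbb E$ gives $bG = I+\eta_S(G)G$.

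Two technical points need care. First, $R$ is not a polynomial. I would first verify the identity for $\|b^{-1}\|$ small, say $\Im b\succ 0$ with $b$ large, by expanding $R=\sum_k b^{-1}(Sb^{-1})^k$ as a norm-convergent Neumann series. On each polynomial term the integration-by-parts formula applies directly, and the double sums reassemble into $A_iGA_iG$; this reassembly is a routine reindexing.

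Second, I would extend to all $b$ with $\Im b\succ 0$ by analytic continuation. The set $\{\Im b\succ0\}$ is open and connected, and on it $b\otimes 1-S$ is invertible because $\Im(b\otimes1-S)=\Im b\otimes 1\succ 0$. Hence $G$ is holomorphic there, both sides of the equation are holomorphic in the entries of $b$, and they agree on an open subset. The identity theorem then gives equality everywhere.

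I expect the main obstacle to be the careful bookkeeping of the operator-valued integration by parts, in particular showing that the "cross term" factorizes as $\mathbb E[R]\,A_i\,\mathbb E[R]$ rather than something more entangled. This is exactly where freeness of the $s_i$ and the tensor structure with scalar-matrix coefficients enter. I would handle it by doing the computation monomial by monomial: $\varphi(s_i\,s_{j_1}\cdots s_{j_k})$ splits, by the semicircular Wick/non-crossing pairing, according to the partner of $s_i$. The noncrossing constraint makes the remaining pairings factor into an inside and an outside block, which is precisely the product $G A_i G$ after resumming. Alternatively, one can cite the standard result (Speicher's operator-valued semicircle theory) that an $M_n$-valued semicircular element with covariance $\eta$ has Cauchy transform satisfying this equation, and verify only that $S$ is $M_n$-valued semicircular with covariance $\eta_S$ by the computation $\mathbb E[SXS]=\sum A_iXA_i$ already recorded in \eqref{defi_covariance}.
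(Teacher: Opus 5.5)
Your argument is correct. The paper gives no proof of this statement. It is quoted from operator-valued free probability, where one knows that $S=\sum_i A_i\otimes s_i$ is an $M_n(\mathbb{C})$-valued semicircular element with covariance $\eta_S$. Its $R$-transform is therefore $\eta_S$, so $G(b)=(b-\eta_S(G(b)))^{-1}$, which is the displayed equation.

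Your route is different: you replace the operator-valued cumulant machinery by the scalar Schwinger--Dyson relation $\varphi(s_iP)=(\varphi\otimes\varphi)(\partial_iP)$.
\begin{itemize}
\item On each Neumann term $X_k=b^{-1}(Sb^{-1})^k$, the relation gives $\mathbb{E}[(A_i\otimes s_i)X_k]=\sum_{\ell=1}^{k}A_iG_{\ell-1}A_iG_{k-\ell}$, with $G_\ell:=\mathbb{E}[b^{-1}(Sb^{-1})^\ell]$. This factorization works because the scalar moments pull out of the matrix products.
\item The resulting Cauchy product converges absolutely, since $\|G_\ell\|\le\|b^{-1}\|(\|S\|\,\|b^{-1}\|)^\ell$.
\item The continuation is legitimate. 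You start from a neighbourhood of $itI$ with $t>\|S\|$, and $\{\Im b\succ0\}$ is convex, hence connected. On that domain $b\otimes1-S$ is invertible, because its imaginary part $\Im b\otimes1$ is bounded below.
\end{itemize}
What your approach buys is a self-contained derivation that uses only scalar freeness and matrix bookkeeping. The cited route is shorter and situates the equation in the general theory the paper relies on elsewhere, including the uniqueness of the accretive solution.

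One caveat concerns your fallback of citing Speicher's theorem. Computing $\mathbb{E}[SXS]=\sum_iA_iXA_i$ identifies only the covariance. To know that $S$ is $M_n(\mathbb{C})$-valued semicircular, you must also check that its higher $M_n$-valued cumulants vanish. They do, because $M_n(\mathbb{C})\otimes 1$ commutes with $1\otimes\mathcal{A}$. Hence those cumulants equal $A_{i_1}b_1A_{i_2}\cdots b_{k-1}A_{i_k}$ times the scalar cumulants $\kappa_k(s_{i_1},\dots,s_{i_k})$, and the latter vanish for $k\neq2$.
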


Following \cite{hfs2007}, it is convenient to reformulate 
equation~\eqref{equ_Speicher} on the right half-plane. 
Recall that a matrix $W \in M_n(\mathbb{C})$ is 
\emph{accretive} if $\Re W := (W + W^*)/2 \succeq 0$, and 
\emph{strictly accretive} if $\Re W \succ 0$. The 
substitution $u = -iz$ and $W(u) := iG_S(iu\cdot I)$ 
transforms Speicher's equation at $b = iu \cdot I$ into
\begin{equation}
\label{eq:HMS}
\eta_S\!\big(W(u)\big)\, W(u) + u\, W(u) = I.
\end{equation}


\begin{propo}[Entrywise algebraicity of the matrix Cauchy transform]
\label{prop:entrywise-algebraicity}
Let
\[
S=\sum_{j=1}^r A_j\otimes s_j
\]
be a matrix semicircular element with covariance map
$\eta(X)=\sum_{j=1}^r A_j X A_j^*$, and let
\[
G(z):=G_S(zI_n)\in M_n(\mathbb{C}),\qquad z\in\mathbb{C}^+.
\]
Then for every $1\le p,q\le n$, the scalar function $G_{pq}(z)$ is
algebraic over $\mathbb{C}(z)$: there exists a nonzero polynomial
$P_{pq}(z,w)\in\mathbb{C}[z,w]$ such that
\[
P_{pq}\bigl(z,G_{pq}(z)\bigr)=0\qquad\text{for all } z\in\mathbb{C}^+.
\]
In particular, $H(z)=\tr G(z)=\tfrac{1}{n}\sum_p G_{pp}(z)$ is
algebraic over $\mathbb{C}(z)$.
\end{propo}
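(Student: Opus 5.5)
The plan is to view Speicher's equation~\eqref{equ_Speicher} at $b=zI$ as a system of $n^2$ polynomial equations in the $n^2$ unknown entries $G_{pq}$, with coefficients polynomial in $z$:
\[
F(z,G):=zG-I-\eta(G)\,G=0 .
\]
The idea is to show that, over the field $K=\overline{\mathbb{C}(z)}$, the solution $G(z)$ is an isolated point of the variety $\{F(z,\cdot)=0\}$. If this holds, elimination theory makes each coordinate algebraic over $\mathbb{C}(z)$. Concretely, let $V\subset \mathbb{C}\times\mathbb{C}^{n^2}$ be the affine variety cut out by $F$. Take the irreducible component $V_0$ containing the analytic curve $\{(z,G(z)):z\in\mathbb{C}^+\}$. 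I aim to show that $\dim V_0=1$ and that the projection $V_0\to\mathbb{C}_z$ is dominant. Then every coordinate function $G_{pq}$ restricted to $V_0$ is algebraic over $\mathbb{C}(z)$. The polynomial $P_{pq}$ is obtained as the generator of the kernel of $\mathbb{C}[z,w]\to\mathbb{C}[V_0]$, $w\mapsto G_{pq}$; this kernel is a nonzero prime ideal because the transcendence degree of $\mathbb{C}(V_0)$ is one. The curve $(z,G(z))$ lies in $V_0$, so $P_{pq}(z,G_{pq}(z))=0$. The statement for $H=\tr G$ follows because algebraic elements form a field.

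The key step is dimension one. I would prove it via the implicit function theorem at large $|z|$. Expanding $G(z)=z^{-1}I+O(|z|^{-3})$, set $w=1/z$ and $G=wY$. The equation becomes
\[
Y-I-w^{2}\eta(Y)Y=0 .
\]
At $w=0$ the Jacobian in $Y$ is the identity, so it is invertible. Hence near $(w,Y)=(0,I)$ the solution set of $F$ is the graph of a holomorphic function $Y(w)$. This is a smooth one-dimensional germ, so any irreducible component of $V$ passing through it has local dimension one there. Since $V$ is a complex algebraic set, its components are equidimensional, so $\dim V_0=1$.

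Matching with the analytic solution uses uniqueness. Speicher's $G_S(zI)$ is analytic on $\mathbb{C}^+$ and behaves like $1/z$ at infinity. Therefore for $z$ in $\mathbb{C}^+$ with $|z|$ large it coincides with the IFT branch, which places the curve in a single component $V_0$. By the identity theorem, together with irreducibility (the analytic continuation stays in $V_0$), the whole curve over $\mathbb{C}^+$ lies in $V_0$.

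The main obstacle is a technical point. Regularity at $w=0$ is established in the chart $Y=G/z$, which is not a polynomial chart in $(z,G)$ at $z=\infty$. To handle this I would work directly with the polynomial system $\tilde F(w,Y)=Y-I-w^2\eta(Y)Y$ in variables $(w,Y)$. That system has an honest smooth point at $(0,I)$, which gives an algebraic curve component $\tilde V_0$ on which $w$ is nonconstant. Algebraicity of $Y_{pq}$ over $\mathbb{C}(w)$ then transfers to $G_{pq}=wY_{pq}$ over $\mathbb{C}(z)=\mathbb{C}(w)$.
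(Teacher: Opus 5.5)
Your proposal is correct and follows the paper's proof in all essentials. Both arguments treat Speicher's equation as a polynomial system in $(z,G)$, take the irreducible component containing the graph $z\mapsto(z,G(z))$, use an implicit-function-theorem point to show that component is a curve dominating the $z$-line, then eliminate to get a nonzero $P_{pq}$ and use the field property for $\operatorname{tr} G$. The only difference is where the IFT is applied: you work at $z=\infty$ in the chart $w=1/z$, $Y=zG$, where the Jacobian is exactly the identity; since $F(z,G)=\tilde F(1/z,zG)$, this also gives invertibility of $D_GF$ at $(z,G(z))$ for large $|z|$. The paper instead works at the finite point $z_0=it$ with $t>\sqrt{2\|\eta\|}$, using the resolvent bound $\|G(it)\|\le 1/t$ to control the perturbation of $z_0\,\mathrm{Id}$.
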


For a proof see Appendix \ref{sec:algebraicity}.

\begin{propo}[$H$ has real-coefficient minimal polynomial]
The minimal polynomial of $H(z)=\tr G(z)$ over $\mathbb{C}(z)$ 
has coefficients in $\mathbb{R}(z)$. Equivalently, there is 
a nonzero polynomial $P(z,w)\in\mathbb{R}[z,w]$, irreducible 
in $\mathbb{C}[z,w]$, with $P(z,H(z))=0$ on $\mathbb{C}^+$.
\end{propo}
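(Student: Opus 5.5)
The plan is a Schwarz-reflection argument. We know $H$ is algebraic over $\mathbb{C}(z)$ by Proposition~\ref{prop:entrywise-algebraicity}. Let $Q(z,w)=\sum_{k}q_k(z)w^k$ be its minimal polynomial, normalized to be monic in $w$ with $q_k\in\mathbb{C}(z)$. It suffices to show that $q_k=\bar q_k$, where $\bar q(z):=\overline{q(\bar z)}$ is the coefficientwise conjugate.

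\emph{Step 1: a function $\tilde H$ on $\mathbb{C}^-$ satisfying the conjugate polynomial.} Define $\tilde H(z):=\overline{H(\bar z)}$ for $z\in\mathbb{C}^-$. Conjugating $Q(\bar z,H(\bar z))=0$ gives
\[
\bar Q\bigl(z,\tilde H(z)\bigr)=0 \qquad (z\in\mathbb{C}^-),\qquad \bar Q(z,w):=\sum_k\bar q_k(z)w^k .
\]

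\emph{Step 2: $H$ and $\tilde H$ are the same analytic function.} Here we use that $H$ is the Cauchy transform of a probability measure $\mu$ with compact support $K\subset\mathbb{R}$, namely $H(z)=\int(z-t)^{-1}\,d\mu(t)$. This formula defines a single holomorphic function on $\mathbb{C}\setminus K$, and it satisfies $H(\bar z)=\overline{H(z)}$. Hence $\tilde H$ is the restriction of this same function to $\mathbb{C}^-$. Now $Q(z,H(z))$ is meromorphic on the connected set $\mathbb{C}\setminus K$ and vanishes on $\mathbb{C}^+$, so it vanishes identically there. In particular $Q(z,\tilde H(z))=0$ on $\mathbb{C}^-$.

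\emph{Step 3: comparison of minimal polynomials.} On $\mathbb{C}^-$ the function $\tilde H$ is a root of both $Q$ and $\bar Q$. Both are monic of the same degree, and $\bar Q$ is irreducible over $\mathbb{C}(z)$ because conjugation is a field automorphism of $\mathbb{C}(z)$. So $\bar Q$ is the minimal polynomial of $\tilde H$ over $\mathbb{C}(z)$, viewed in the field of meromorphic functions on $\mathbb{C}^-$. Since $Q$ is also monic and annihilates $\tilde H$, uniqueness of the minimal polynomial gives $Q=\bar Q$. Thus every $q_k$ lies in $\mathbb{R}(z)$.

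\emph{Step 4: clearing denominators.} Multiply $Q$ by a common denominator in $\mathbb{R}[z]$ to obtain $P\in\mathbb{R}[z,w]$. Irreducibility of $Q$ over $\mathbb{C}(z)$ gives irreducibility of $P$ in $\mathbb{C}[z,w]$ once $P$ is made primitive in $w$ (Gauss's lemma). The content of $P$ can be taken real by the same conjugation-invariance argument, or simply divided out, since it lies in $\mathbb{R}[z]$.

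The main obstacle is Step~2. One must justify that $H$ is indeed the Cauchy transform of a compactly supported real measure, so that $H$ extends across $\mathbb{R}\setminus K$ and we get a single connected domain on which the identity propagates from $\mathbb{C}^+$ to $\mathbb{C}^-$. This follows because $S$ is a bounded self-adjoint element and $\tr\circ\mathbb{E}$ is a state, so $H(z)=(\tr\otimes\varphi)\bigl((z-S)^{-1}\bigr)$ is holomorphic off $\mathrm{spec}(S)$. If $\mathrm{spec}(S)$ were all of an interval disconnecting nothing, the argument still works, because $\mathbb{C}\setminus[-\|S\|,\|S\|]$ is connected. The remaining steps are routine field theory.
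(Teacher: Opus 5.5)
Your argument is correct and is essentially the paper's own Schwarz-reflection proof: conjugate the monic minimal polynomial, use $\overline{H(z)}=H(\bar z)$ together with analytic continuation through $\mathbb{R}\setminus\operatorname{supp}\mu$, and conclude $\bar Q=Q$ by minimality. You propagate $Q$ from $\mathbb{C}^+$ to $\mathbb{C}^-$ where the paper propagates $\bar P$ in the other direction, and you make explicit two points the paper leaves implicit: the connectedness of $\mathbb{C}\setminus K$, and the irreducibility of the cleared polynomial in $\mathbb{C}[z,w]$ via Gauss's lemma.
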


\begin{proof}
Let $P(w)=w^d+a_{d-1}(z)w^{d-1}+\cdots+a_0(z)\in\mathbb{C}(z)[w]$ be 
the minimal polynomial of $H$ over~$\mathbb{C}(z)$. Since $\mu_S$ is a 
real measure, Schwarz reflection gives $\overline{H(z)}=H(\bar z)$ for 
$z\in\mathbb{C}\setminus\mathbb{R}$. Conjugating the identity 
$P(z,H(z))=0$ yields $\bar P(\bar z,H(\bar z))=0$ on~$\mathbb{C}^+$, 
where $\bar P$ denotes the polynomial obtained by conjugating the 
coefficients of each~$a_i$. By analytic continuation, $\bar P$ also 
annihilates~$H$ on~$\mathbb{C}^+$. Since $\bar P$ is monic of 
degree~$d$, minimality forces $\bar P=P$, so 
$a_i\in\mathbb{R}(z)$ for every~$i$. Clearing denominators gives 
the claimed element of~$\mathbb{R}[z,w]$.
\end{proof}

\begin{coro}[Structure of the spectral measure]
\label{cor:spectral-structure}
The spectral measure decomposes as
\[
\mu_S \;=\; f(x)\,dx \;+\; \sum_{j=1}^{N} m_j\,\delta_{x_j},
\qquad N<\infty,
\]
where the density $f$ is real-analytic on $\mathbb{R}$ except 
at finitely many points, at which it has either a pole 
(corresponding to an atom $x_j$) or an algebraic singularity.
\end{coro}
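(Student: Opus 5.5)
The plan is to derive everything from the algebraicity of $H(z)=\tr G(z)$ (Proposition~\ref{prop:entrywise-algebraicity}) and the real-coefficient minimal polynomial $P(z,w)\in\mathbb{R}[z,w]$ established just above. We then combine this with general facts about Cauchy transforms of probability measures. The spectral measure $\mu_S$ is compactly supported, since $S$ is bounded with $\|S\|\le\sum_i\|A_i\|\cdot 2$. We decompose $\mu_S=\mu_{ac}+\mu_{sing}$ and recover the absolutely continuous part from $-\frac1\pi\Im H(x+i0)$ by Stieltjes inversion~\eqref{equ_Stieltjes}.

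\textbf{Finite exceptional set.} Write $P(z,w)=\sum_{k=0}^d a_k(z)w^k$ with $P$ irreducible and $a_d\not\equiv 0$. Let $\Delta(z)$ be the discriminant of $P$ in $w$; it is nonzero because $P$ is irreducible, hence separable over $\mathbb{C}(z)$. Set
\[
E:=\{x\in\mathbb{R}: a_d(x)\Delta(x)=0\}.
\]
This set is finite. On each open interval $I$ of $\mathbb{R}\setminus E$, the $d$ roots of $P(x,\cdot)$ are distinct and locally analytic in $z$ near $I$ by the implicit function theorem. The branch $H$ on $\mathbb{C}^+$ extends continuously to $I$ from above. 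This holds because $H$ is bounded near any compact subset of $I$: a branch approaching $\infty$ would force $a_d(x)=0$. By uniqueness it coincides with one of the analytic root branches. Hence $H$ continues analytically across $I$, and $f=-\frac1\pi\Im H(x+i0)$ is real-analytic on $I$. Moreover, $\mu_S$ restricted to $I$ has no singular part, since the boundary values of $H$ are finite there and the Poisson representation then shows $\mu_S|_I=f\,dx$.

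\textbf{Local structure at points of $E$.} At each $x_0\in E$ we use the Newton--Puiseux theorem. Near $x_0$, every root branch of $P(z,\cdot)$ has a convergent Puiseux expansion
\[
w=\sum_{k\ge k_0}c_k (z-x_0)^{k/q}
\]
for some integer $q\ge1$ and $k_0\in\mathbb{Z}$. Restricted to the upper half-disc, $H$ equals one such branch, since there it has no branch point. The analysis splits by the leading exponent.
\begin{itemize}
\item If $k_0/q\ge 0$, then $H$ is bounded near $x_0$ and $\mu_S$ has no atom at $x_0$. The density $f$ then has an algebraic singularity given by a Puiseux series in $|x-x_0|^{1/q}$ on each side.
\item If $k_0/q<0$, we use $|H(z)|\le 1/\Im z$ and the fact that $\mu_S(\{x_0\})=\lim_{\varepsilon\downarrow0}(-i\varepsilon)H(x_0+i\varepsilon)$. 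The only possible non-integrable behaviour is a simple pole $m/(z-x_0)$, which produces an atom of mass $m$. Any remaining negative fractional exponents $>-1$ give an algebraic (integrable) singularity of $f$. An exponent $<-1$ is impossible: it would violate $|H(z)|\le 1/\Im z$ along the vertical approach to $x_0$.
\end{itemize}
This gives the stated dichotomy of pole/atom versus algebraic singularity at each point of $E$. Since $E$ is finite, the number $N$ of atoms is finite.

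\textbf{Absence of singular continuous part.} Since $\mu_S$ has no singular part on $\mathbb{R}\setminus E$ and $E$ is finite, any singular part is carried by $E$ and therefore consists of atoms. The decomposition follows.

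The main obstacle is the first step's claim that the boundary values of $H$ on $I$ are finite and select a single analytic branch. Bounding $H$ near $I$ requires care, and I would argue as follows. The roots of $P(z,\cdot)$ stay bounded for $z$ near $I$ since $a_d\ne0$ there. So $H$, being one of these roots pointwise, is bounded. Continuity of the selected branch then follows because the roots are separated, as $\Delta\ne0$, so $H$ cannot jump between them.
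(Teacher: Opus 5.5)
Your proposal is correct and follows the same route as the paper's much terser proof. Both take a finite exceptional set cut out by the leading coefficient and the discriminant, continue $H$ analytically across the complementary intervals (which excludes singular continuous mass), and use Puiseux expansions at the exceptional points, where the $(z-x_0)^{-1}$ term gives the atom and exponents below $-1$ are excluded by $|H(z)|\le 1/\Im z$. One small slip: with $H(z)=\int (z-x)^{-1}\,d\mu_S(x)$ the atom mass is $\lim_{\varepsilon\downarrow 0} i\varepsilon\, H(x_0+i\varepsilon)$, not $-i\varepsilon\, H(x_0+i\varepsilon)$.
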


\begin{proof}
Standard: $H$ is algebraic with real-coefficient minimal 
polynomial, so its singularities in $\mathbb{C}$ form a finite 
set (zeros of the discriminant and leading coefficient). Real 
poles of $H$ correspond to atoms of $\mu_S$ via Stieltjes 
inversion, with masses given by residues; real branch points 
give algebraic singularities of $f$ on each side via Puiseux 
expansion. Off this finite set, $H$ extends analytically across 
$\mathbb{R}$, so the singular continuous part of $\mu_S$ is 
zero.
\end{proof}

\begin{lemma}[Adjoint reflection symmetry and positivity]
\label{lemma_W_symmetry}
Let $W(u)$ be the strictly accretive solution of~\eqref{eq:HMS}
for $u\in D=\{\Re u>0\}$. Then
\[
        W(\bar u)=W(u)^*.
\]
In particular, $W(t)\succ0$ for every $t>0$.
\end{lemma}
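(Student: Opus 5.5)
The plan is to prove the reflection identity by a uniqueness argument, and then read off positivity from the identity together with accretivity. Two facts are taken as standard from \cite{hfs2007}. First, for each $u$ with $\Re u>0$, equation~\eqref{eq:HMS} has a unique strictly accretive solution. Second, this solution is $W(u)=iG_S(iu\,I)$, which is holomorphic in $u$.

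\emph{Step 1 (reflection).} Fix $u\in D$ and set $V:=W(\bar u)^*$. Take the adjoint of \eqref{eq:HMS} at the point $\bar u$:
\[
W(\bar u)^*\,\eta_S\bigl(W(\bar u)\bigr)^* + u\,W(\bar u)^* = I .
\]
Since each $A_i$ is Hermitian, $\eta_S(X)^*=\eta_S(X^*)$. The identity therefore becomes
\[
V\,\eta_S(V)+uV=I .
\]
This is the ``left'' version of \eqref{eq:HMS}, with the product order reversed, so it does not immediately show that $V$ solves \eqref{eq:HMS}. I would close this gap as follows. From $W(\eta_S(W)+u)=I$ we get $(\eta_S(W)+u)\,W=I$, because a one-sided inverse of a square matrix is two-sided. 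Hence $W$ commutes with $\eta_S(W)$, and the left and right forms of the equation are equivalent. Applying the same reasoning to $V$ shows that $\eta_S(V)\,V+uV=I$. Since $\Re V=\Re W(\bar u)\succ0$, the matrix $V$ is strictly accretive. Uniqueness then gives $V=W(u)$, that is, $W(\bar u)=W(u)^*$.

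An alternative route avoids the uniqueness theorem entirely. Write $W(u)=\mathbb{E}[(u-iS)^{-1}]$. Taking adjoints gives $\mathbb{E}[(\bar u+iS)^{-1}]$, because $S$ is self-adjoint and $\mathbb{E}$ commutes with adjoints. The difficulty is that this equals $W(-\bar u)$ rather than $W(\bar u)$. So this route needs the symmetry $S\mapsto -S$, i.e.\ the fact that $S$ and $-S$ have the same $M_n$-valued distribution. That symmetry holds because $-s_i$ are again free semicirculars, which gives $\mathbb{E}[(\bar u+iS)^{-1}]=\mathbb{E}[(\bar u-iS)^{-1}]=W(\bar u)$. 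I would present this version as the main argument, since it is short, and keep the uniqueness argument as a cross-check.

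\emph{Step 2 (positivity).} For real $t>0$ the identity gives $W(t)=W(t)^*$, so $W(t)$ is Hermitian. Then $\Re W(t)=W(t)$, and strict accretivity gives $W(t)\succ0$.

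The main obstacle is the bookkeeping in Step~1: making sure the adjoint reverses the product correctly and does not send $u$ to $-\bar u$. The distributional symmetry $s_i\mapsto -s_i$, which is an automorphism preserving $\varphi$ and freeness, resolves this cleanly.
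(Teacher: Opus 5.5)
Your proof is correct. The paper gives no argument of its own here; it defers to Lemma~5.1 of \cite{kargin2026_algebraic}. Your Step~1 uniqueness argument is, however, exactly the mechanism this paper uses for every comparable symmetry statement (Proposition~\ref{prop:Z2-symmetry}, and Steps~1 and~3 of Theorem~\ref{thm:typeIII-scalar-law-caseII}): transport the equation, check that strict accretivity survives ($\Re(X^*)=\Re X$, $\Re\bar u=\Re u$), and invoke the uniqueness of \cite{hfs2007}. The one twist specific to this lemma is that the adjoint reverses the product and yields $V\eta_S(V)+uV=I$. You handle it correctly, since $V(\eta_S(V)+u)=I$ forces $(\eta_S(V)+u)V=I$ for square matrices. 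I would make this the main proof. It needs only $\eta_S(X)^*=\eta_S(X^*)$ and uniqueness, with no operator model and no holomorphy.

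Your preferred resolvent route is genuinely different and also works, but the bookkeeping you flagged as the main risk is in fact off. Since $iu-S=i(u+iS)$, the definition $W(u)=iG_S(iu\,I)$ gives $W(u)=\mathbb{E}[(u+iS)^{-1}]$. Your formula $\mathbb{E}[(u-iS)^{-1}]$ holds only because of the symmetry $S\stackrel{d}{=}-S$ that you invoke later. Within your own convention, the adjoint $\mathbb{E}[(\bar u+iS)^{-1}]$ equals $-W(-\bar u)$ (with $-\bar u\notin D$), not $W(-\bar u)$. The clean chain is
\[
W(u)^*=\mathbb{E}[(\bar u-iS)^{-1}]=\mathbb{E}[(\bar u+iS)^{-1}]=W(\bar u).
\]
The middle equality must hold at the $M_n$-valued level, and there are two ways to get it. One is the $\varphi$-preserving $*$-automorphism $s_i\mapsto-s_i$ of $C^*(s_1,\dots,s_r)$, available because $\varphi$ is faithful, tensored with $\mathrm{id}_{M_n}$. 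The other is equality of all $M_n$-valued moments in the Neumann series for large $|u|$, followed by analytic continuation on $D$.

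What this route buys is an explanation of why the lemma holds. Schwarz reflection of $G_S$ alone only gives $W(u)^*=-W(-\bar u)$. The lemma's reflection $u\mapsto\bar u$ (i.e.\ $z\mapsto-\bar z$) is instead the evenness of the operator-valued law of $S$. The uniqueness route encodes the same evenness silently, through the absence of a mean term in~\eqref{eq:HMS}. For $S=a\otimes 1+\sum_i A_i\otimes s_i$ with $a=a^*$, the equation becomes $(u+ia+\eta_S(W))W=I$, and your adjoint trick then sends $a$ to $-a$.
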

For a proof see Lemma 5.1 in \cite{kargin2026_algebraic}.

\begin{exa}[Cusp singularity and congruence invariance for $n=2$]
\label{exa:cusp_singularity}
Let $S = A_1 \otimes s_1 + A_2 \otimes s_2$ with
$A_1 = \bigl(\begin{smallmatrix} 0 & 1 \\ 1 & 0 \end{smallmatrix}\bigr)$ and
$A_2 = \bigl(\begin{smallmatrix} 0 & 0 \\ 0 & 1 \end{smallmatrix}\bigr)$.
This is the Type~I Lancaster--Rodman cell $S_2$ (up to reordering of generators);
it is the smallest cell whose density is singular at the origin, and we work it out
by hand to fix the normalizations used later.

For $u > 0$ the strictly positive solution $W(u)\succ0$ of Speicher's equation
$\eta(W)W + uW = I$ is diagonal, $W(u) = \diag(a(u),\, d(u))$
(Lemma~\ref{lemma_W_symmetry}). Since
$\eta\bigl(\diag(x_1,x_2)\bigr)=\diag(x_2,\,x_1+x_2)$, the entries satisfy
$a(d + u) = 1$ and $d(a + d + u) = 1$. Eliminating $a = 1/(d + u)$ gives
$d(d+u)^2=u$, i.e.\ the cubic
\begin{equation}\label{eq:cusp_cubic}
  d^3 + 2u\,d^2 + u^2\,d - u = 0 .
\end{equation}
For $u>0$ the left-hand side is negative at $d=0$ and strictly increasing in $d>0$
(its derivative $(3d+u)(d+u)$ is positive), so \eqref{eq:cusp_cubic} has a unique
positive root. Its Newton polygon has a single segment of slope $1/3$, so that root
satisfies $d(u)\sim u^{1/3}$ as $u\to0^+$. Writing $s=u^{1/3}$ and solving
\eqref{eq:cusp_cubic} order by order gives $d = s-\tfrac23 s^3+O(s^5)$, whence
\[
  W(u) = \begin{pmatrix}
    u^{-1/3} - \tfrac{1}{3}\,u^{1/3} + O(u) & 0 \\[3pt]
    0 & u^{1/3} - \tfrac{2}{3}\,u + O(u^{5/3})
  \end{pmatrix}.
\]
As $u\to0^+$ the $(1,1)$-entry diverges and the $(2,2)$-entry vanishes; in the
normalized trace ($n=2$),
\[
  \tr W(u)=\tfrac12\bigl(a(u)+d(u)\bigr)=\tfrac12\,u^{-1/3}+O(u^{1/3}).
\]

To pass to the density, recall $G_S(zI)=-\,iW(u)$ with $z=iu$, so that
\[
  f(x)=\tfrac1\pi\lim_{\varepsilon\downarrow0}
        \Im\bigl[-\tr G_S\bigl((x+i\varepsilon)I\bigr)\bigr]
      =\tfrac1\pi\,\Re\,\tr W(u),
\]
evaluated at $u=-ix \ (\Re u>0)$.
The positive branch of $u^{-1/3}$ is the principal one (real for $u>0$); at $u=-ix$
it equals $|x|^{-1/3}e^{\pm i\pi/6}$ for $x\gtrless0$, so
$\Re\,u^{-1/3}=\cos(\pi/6)\,|x|^{-1/3}$ on both sides. Therefore
\begin{equation}\label{eq:cusp_density}
  f(x)=\frac{\cos(\pi/6)}{2\pi}\,|x|^{-1/3}+o\bigl(|x|^{-1/3}\bigr)
      =\frac{\sqrt3}{4\pi}\,|x|^{-1/3}+o\bigl(|x|^{-1/3}\bigr),
  \qquad x\to0 .
\end{equation}
This is a cusp of exponent $\tfrac13=\frac{n-1}{n+1}\big|_{n=2}$, and the constant
$\frac{\sqrt3}{4\pi}=\frac{1}{n\pi}\sin\frac{\pi}{n+1}\big|_{n=2}$ is the $\alpha=0$
specialization of the sharp constant $C_{\alpha,n}$ of
Theorem~\ref{theo:typ_2_cells_sharp}.

The example also previews the behavior under congruence. The flip $c=A_1$ is a
Hermitian involution ($c=c^*=c^{-1}$), and the congruence $A_i\mapsto cA_ic$ sends
the pencil to $A_1'=A_1$,
$A_2'=\bigl(\begin{smallmatrix}1&0\\0&0\end{smallmatrix}\bigr)$. The transformed
Speicher equation is solved by $W'(u)=cW(u)c=\diag\bigl(d(u),a(u)\bigr)$: the
diagonal entries are swapped. Thus the entrywise Puiseux exponents ($-\tfrac13$ and
$+\tfrac13$) are permuted, while the trace exponent --- and hence the density
exponent $\tfrac13$ --- is unchanged. Here $c$ is unitary, so density invariance is
immediate; the substantive statement, that the exponent is preserved under
congruence by an \emph{arbitrary} positive $b$, is
Theorem~\ref{thm:covariance-scaling-singularity-exponent}.
\end{exa}

\section{Invariance under symmetric covariance-map scaling} 
\label{sec:invariance}

Define the \emph{Poisson indicator} of~$S$ at $x = 0$ by
\[
\mathcal P_S(\varepsilon)
\;:=\;
\frac{1}{\pi}\int_{\mathbb R}
  \frac{\varepsilon}{x^2+\varepsilon^2}\,d\mu_S(x)
\;=\;
-\frac{1}{\pi}\,\Im\, H(i\varepsilon),
\qquad \varepsilon > 0,
\]
where $H(z) = \tr\, G_S(zI)$ is the scalar Cauchy transform
and $\mu_S$ is the spectral measure of~$S$.


\begin{propo}[Poisson comparison under congruence]
\label{prop:poisson-comparison}
Let $S\in M_n(\mathbb C)\otimes\mathcal A$ be a bounded self-adjoint random variable,
let $b\in M_n(\mathbb C)$ be invertible, and set
\[
        T=(b\otimes I)\,S\,(b^*\otimes I),
\]
which is again self-adjoint. Then for every $\varepsilon>0$,
\[
\frac{\sigma_{\min}(b)}{\|b\|^3}\,
\mathcal P_S\!\Bigl(\frac{\varepsilon}{\|b\|\sigma_{\min}(b)}\Bigr)
\;\le\;
\mathcal P_T(\varepsilon)
\;\le\;
\frac{\|b\|}{\sigma_{\min}(b)^3}\,
\mathcal P_S\!\Bigl(\frac{\varepsilon}{\|b\|\sigma_{\min}(b)}\Bigr).
\]
\end{propo}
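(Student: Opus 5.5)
The plan is to write both Poisson indicators as traces of resolvents of squares, and then compare the operators $T^2+\varepsilon^2$ and $S^2+\delta^2$ in the Loewner order. Let $\tau=\tr\otimes\varphi$ be the faithful tracial state on $M_n(\mathbb C)\otimes\mathcal A$. Since
\[
\Im\,(i\varepsilon-S)^{-1}=-\varepsilon\,(S^2+\varepsilon^2)^{-1},
\]
the definition of the Poisson indicator gives $\mathcal P_S(\varepsilon)=\frac{\varepsilon}{\pi}\,\tau\bigl((S^2+\varepsilon^2)^{-1}\bigr)$, and likewise for $T$.

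Set $B=b\otimes I$, $s=\sigma_{\min}(b)$, $M=\|b\|$ and $\delta=\varepsilon/(sM)$. Then $B$ is invertible and
\[
s^2\le B^*B\le M^2,\qquad s^2\le BB^*\le M^2 .
\]

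\textbf{Upper bound.} Compute $T^2=BSB^*BSB^*$. Inserting $B^*B\ge s^2$ between the two copies of $S$ (conjugation by $BS$ preserves order) gives $T^2\ge s^2BS^2B^*$. Also $\varepsilon^2 I\ge (\varepsilon^2/M^2)BB^*$. Adding these,
\[
T^2+\varepsilon^2\;\ge\;B\bigl(s^2S^2+\varepsilon^2/M^2\bigr)B^*\;=\;s^2\,B(S^2+\delta^2)B^* .
\]
Operator inversion is order-reversing on positive invertible operators, so
\[
(T^2+\varepsilon^2)^{-1}\le s^{-2}\,B^{-*}(S^2+\delta^2)^{-1}B^{-1}.
\]
By traciality, $\tau(B^{-*}XB^{-1})=\tau\bigl(X^{1/2}(BB^*)^{-1}X^{1/2}\bigr)\le s^{-2}\tau(X)$ for $X\ge0$. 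Hence $\tau\bigl((T^2+\varepsilon^2)^{-1}\bigr)\le s^{-4}\,\tau\bigl((S^2+\delta^2)^{-1}\bigr)$. Multiplying by $\varepsilon/\pi$ and using $\varepsilon/\delta=sM$ gives
\[
\mathcal P_T(\varepsilon)\le s^{-4}(\varepsilon/\delta)\,\mathcal P_S(\delta)=\frac{M}{s^3}\,\mathcal P_S(\delta),
\]
which is the right-hand inequality.

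\textbf{Lower bound.} The argument is symmetric. Use $B^*B\le M^2$ and $\varepsilon^2\le(\varepsilon^2/s^2)BB^*$ to get
\[
T^2+\varepsilon^2\le M^2B(S^2+\delta^2)B^* ,
\]
with the same $\delta$. Invert this, and bound the trace from below using $(BB^*)^{-1}\ge M^{-2}$. This yields $\tau\bigl((T^2+\varepsilon^2)^{-1}\bigr)\ge M^{-4}\tau\bigl((S^2+\delta^2)^{-1}\bigr)$, hence $\mathcal P_T(\varepsilon)\ge sM\cdot M^{-4}\mathcal P_S(\delta)=\frac{s}{M^3}\mathcal P_S(\delta)$.

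\textbf{Main obstacle.} The only delicate points are bookkeeping. First, each order inequality must be applied by conjugation $X\mapsto CXC^*$, which is always order-preserving, never by one-sided multiplication. Second, the scalar terms $\varepsilon^2$ must be converted using $BB^*$, not $B^*B$, so that the factor $B(\cdot)B^*$ can be pulled out on both sides. Third, one needs operator monotonicity of $x\mapsto -x^{-1}$ together with traciality and faithfulness of $\tau$. All of these are standard in a $C^*$/tracial setting, since $S$ is bounded and $S^2+\delta^2\ge\delta^2>0$ is invertible.
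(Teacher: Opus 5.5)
Your proof is correct and is essentially the paper's argument: a Loewner-order sandwich of $T^2+\varepsilon^2$ after pulling out the congruence factor, then inversion, then trace bounds from the extreme singular values of $b$. The one difference is that you skip the paper's polar-decomposition reduction to positive $b$ by writing $\varepsilon^2 I$ in terms of $BB^*$.

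There is one harmless slip. Cyclicity gives $\tau(B^{-*}XB^{-1})=\tau\bigl(X^{1/2}(B^*B)^{-1}X^{1/2}\bigr)$, not $(BB^*)^{-1}$. Since $s^2\le B^*B\le M^2$ as well, your estimates are unaffected.
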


In particular, the blow-up exponent of the Poisson transform at $0$ is
preserved under the congruence $S\mapsto bSb^\ast$.


\begin{proof}
\emph{Reduction to a positive congruence.}
Write the polar decomposition $b=uP$, where $u$ is unitary and
$P:=(b^*b)^{1/2}\succ0$. Since $P^*=P$,
\[
        T=(b\otimes I)\,S\,(b^*\otimes I)
        =(u\otimes I)\,\bigl[(P\otimes I)\,S\,(P\otimes I)\bigr]\,(u^*\otimes I).
\]
The Poisson indicator is invariant under conjugation by the unitary
$u\otimes I$: writing $\tau=\tr\otimes\phi$ and $R:=(P\otimes I)S(P\otimes I)$,
\[
        \mathcal P_T(\varepsilon)
        =\frac{\varepsilon}{\pi}\,\tau\!\bigl((T^2+\varepsilon^2I)^{-1}\bigr)
        =\frac{\varepsilon}{\pi}\,\tau\!\bigl((u\otimes I)(R^2+\varepsilon^2I)^{-1}(u^*\otimes I)\bigr)
        =\mathcal P_R(\varepsilon),
\]
using $T^2=(u\otimes I)R^2(u^*\otimes I)$ and the trace identity
$\tau\bigl((u\otimes I)Y(u^*\otimes I)\bigr)=\tau(Y)$. Moreover
$\|P\|=\|b\|$ and $\sigma_{\min}(P)=\sigma_{\min}(b)$. It therefore suffices to
prove the estimate for the positive congruence $R=(P\otimes I)S(P\otimes I)$,
i.e.\ we may assume $b=P$ is positive. We do so from now on.

Let $M:=\|b\|$, $m:=\sigma_{\min}(b)$, where $b\in M_n(\mathbb C)$ is positive and invertible. Then
\[
mI\preceq b\preceq MI,
\qquad
m^2I\preceq b^2\preceq M^2I,
\qquad
M^{-2}I\preceq b^{-2}\preceq m^{-2}I.
\]

Write $\tau:=\tr\otimes\phi$ and 
$T=bSb$.
We compute
\[
T^2+\alpha^2 I
=
bSb^2Sb+\alpha^2 I
=
b\bigl(Sb^2S+\alpha^2 b^{-2}\bigr)b,
\]
and therefore
\begin{equation}
\label{eq:resolvent_comparison}
(T^2+\alpha^2 I)^{-1}
=
b^{-1}\bigl(Sb^2S+\alpha^2 b^{-2}\bigr)^{-1}b^{-1}.
\end{equation}

Since $S=S^\ast$, the map $X\mapsto SXS$ is CP. Hence from
$m^2I\preceq b^2\preceq M^2I$
we obtain
\[
m^2S^2\preceq Sb^2S\preceq M^2S^2.
\]
Also,
\[
\alpha^2 M^{-2}I\preceq \alpha^2 b^{-2}\preceq \alpha^2 m^{-2}I.
\]
Adding the two inequalities gives
\[
m^2S^2+\alpha^2 M^{-2}I
\;\preceq\;
Sb^2S+\alpha^2 b^{-2}
\;\preceq\;
M^2S^2+\alpha^2 m^{-2}I.
\]
Inverting (which reverses the inequalities) yields
\[
\bigl(M^2S^2+\alpha^2 m^{-2}I\bigr)^{-1}
\;\preceq\;
\bigl(Sb^2S+\alpha^2 b^{-2}\bigr)^{-1}
\;\preceq\;
\bigl(m^2S^2+\alpha^2 M^{-2}I\bigr)^{-1}.
\]

Now apply \(\tau\) to~\eqref{eq:resolvent_comparison}. If
\[
X:=\bigl(Sb^2S+\alpha^2 b^{-2}\bigr)^{-1}\succeq 0,
\]
then
\[
\tau\bigl((T^2+\alpha^2 I)^{-1}\bigr)
=
\tau(b^{-1}Xb^{-1})
=
\tau(b^{-2}X).
\]
Since
\[
M^{-2}I\preceq b^{-2}\preceq m^{-2}I
\]
and $X\succeq 0$, we have
\[
M^{-2}\tau(X)\le \tau(b^{-2}X)\le m^{-2}\tau(X).
\]
Combining this with the operator inequality above, we obtain
\[
\frac{\tau\!\left(\bigl(M^2S^2+\alpha^2 m^{-2}I\bigr)^{-1}\right)}{M^2}
\;\le\;
\tau\bigl((T^2+\alpha^2 I)^{-1}\bigr)
\;\le\;
\frac{\tau\!\left(\bigl(m^2S^2+\alpha^2 M^{-2}I\bigr)^{-1}\right)}{m^2}.
\]
Factoring out $M^2$ and $m^2$, this becomes
\begin{equation}
\label{eq:resolvent_final}
\frac{
\tau\!\left(\Bigl(S^2+\frac{\alpha^2}{m^2M^2}I\Bigr)^{-1}\right)}{M^4}
\;\le\;
\tau\bigl((T^2+\alpha^2 I)^{-1}\bigr)
\;\le\;
\frac{
\tau\!\left(\Bigl(S^2+\frac{\alpha^2}{m^2M^2}I\Bigr)^{-1}\right)}{m^4}.
\end{equation}
Since by definition:
\[
\mathcal P_T(\alpha)
:=
\frac{\alpha}{\pi}\,\tau\bigl((T^2+\alpha^2 I)^{-1}\bigr),
\qquad \alpha>0,
\]
then~\eqref{eq:resolvent_final} can be rewritten as
\begin{equation}
\label{eq:poisson_final}
\frac{m}{M^3}\,
\mathcal P_S\!\Bigl(\frac{\alpha}{mM}\Bigr)
\;\le\;
\mathcal P_T(\alpha)
\;\le\;
\frac{M}{m^3}\,
\mathcal P_S\!\Bigl(\frac{\alpha}{mM}\Bigr).
\end{equation}
\end{proof}

\begin{proposition}[Poisson comparison under covariance-map scaling]
\label{prop:poisson-comparison-covariance}
Let
\[
S_A=\sum_i A_i\otimes s_i,
\qquad
S_B=\sum_j B_j\otimes t_j
\]
be matrix semicircles, and let their covariance maps be
\[
\eta_A(X)=\sum_i A_iXA_i^*,
\qquad
\eta_B(X)=\sum_j B_jXB_j^* .
\]
Assume that there exists an invertible matrix $b\in M_n(\mathbb C)$ such that
\[
        \eta_B(X)=b\,\eta_A(b^*Xb)\,b^*,
        \qquad X\in M_n(\mathbb C).
\]
Then, for every $\varepsilon>0$,
\[
\frac{\sigma_{\min}(b)}{\|b\|^3}\,
\mathcal P_{S_A}\!\left(
\frac{\varepsilon}{\|b\|\sigma_{\min}(b)}
\right)
\le
\mathcal P_{S_B}(\varepsilon)
\le
\frac{\|b\|}{\sigma_{\min}(b)^3}\,
\mathcal P_{S_A}\!\left(
\frac{\varepsilon}{\|b\|\sigma_{\min}(b)}
\right).
\]
\end{proposition}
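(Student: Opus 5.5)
Rather than work with the covariance maps directly, I would reduce the statement to Proposition~\ref{prop:poisson-comparison}. The point is that the scalar spectral distribution of a matrix semicircle depends on $S$ only through its covariance map. This follows from Speicher's equation (Theorem~\ref{theo_Cauchy_matrix_semicircle}): $G_S(b)$ for $\Im b\succ0$ is the unique solution with negative imaginary part of $bG=I+\eta_S(G)G$, so two matrix semicircles with the same covariance map have the same operator-valued Cauchy transform. In particular they have the same $H(z)=\tr G_S(zI)$, hence the same $\mu_S$ and the same Poisson indicator $\mathcal P_S$.

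The construction is the following. Set $T:=(b\otimes I)S_A(b^*\otimes I)=\sum_i (bA_ib^*)\otimes s_i$. This is again a matrix semicircle, with Hermitian coefficients $bA_ib^*$. Its covariance map is
\[
\eta_T(X)=\sum_i bA_ib^*XbA_ib^*=b\,\eta_A(b^*Xb)\,b^*=\eta_B(X).
\]
The map $\eta_T$ equals $\eta_B$ in the operator-valued sense: it is the map $X\mapsto\mathbb E[TXT]$, which is exactly what enters Speicher's equation. By the uniqueness remark above, $G_T=G_{S_B}$ and therefore $\mathcal P_T=\mathcal P_{S_B}$.

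Proposition~\ref{prop:poisson-comparison} applied to $S=S_A$ and this $b$ then gives the stated two-sided inequality verbatim. It is worth noting that $S_B$ may have a different number of generators and non-Hermitian Kraus operators $B_j$. This causes no trouble, because only $\eta_B$ enters Speicher's equation, and $\eta_B$ is completely positive and self-adjoint whenever $\eta_A$ is.

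The main obstacle is the uniqueness step. One must make sure that Speicher's equation with a given completely positive $\eta$ determines $G$ uniquely on $\{\Im b\succ0\}$ among solutions with $\Im G\prec0$. This is the Helton--Rashidi Far--Speicher uniqueness result \cite{hfs2007}, and I would cite it. One must also note that $\mathcal P_S(\varepsilon)=-\frac1\pi\Im\tr G_S(i\varepsilon I)$ is a function of $G_S$ alone, so for this purpose it is enough to know uniqueness at scalar arguments $b=i\varepsilon I$, in the accretive form~\eqref{eq:HMS}. A second, minor check is that the Poisson comparison proposition only requires $T$ to be bounded self-adjoint, which holds here.
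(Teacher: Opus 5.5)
Your proposal is correct and follows the paper's proof essentially verbatim. You set $T=(b\otimes I)S_A(b^*\otimes I)$, check $\eta_T=\eta_B$, conclude $\mathcal P_T=\mathcal P_{S_B}$ because the scalar law of a matrix semicircle depends only on its covariance map, and then apply Proposition~\ref{prop:poisson-comparison}. The only difference is that you justify the covariance-map step explicitly, via uniqueness of the solution of Speicher's equation (Helton--Rashidi Far--Speicher), whereas the paper simply asserts it.
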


\begin{proof}
Let
\[
        \widetilde S=(b\otimes I)S_A(b^*\otimes I).
\]
Then $\widetilde S$ is again a matrix semicircle. Its covariance map is
\[
\eta_{\widetilde S}(X)
=
b\,\eta_A(b^*Xb)\,b^*
=
\eta_B(X).
\]
Since the scalar distribution of a centered matrix semicircle is determined by
its covariance map, we have
\[
        \mu_{\widetilde S}=\mu_{S_B},
        \qquad
        \mathcal P_{\widetilde S}(\varepsilon)
        =
        \mathcal P_{S_B}(\varepsilon).
\]
Applying Proposition~\ref{prop:poisson-comparison} to $S_A$ and
$\widetilde S$ gives
\[
\frac{\sigma_{\min}(b)}{\|b\|^3}\,
\mathcal P_{S_A}\!\left(
\frac{\varepsilon}{\|b\|\sigma_{\min}(b)}
\right)
\le
\mathcal P_{\widetilde S}(\varepsilon)
\le
\frac{\|b\|}{\sigma_{\min}(b)^3}\,
\mathcal P_{S_A}\!\left(
\frac{\varepsilon}{\|b\|\sigma_{\min}(b)}
\right).
\]
Replacing $\mathcal P_{\widetilde S}$ by $\mathcal P_{S_B}$ proves the claim.
\end{proof}

\begin{corollary}[Preservation of the Poisson blow-up exponent under covariance-map scaling]
\label{cor:poisson-exponent-covariance-scaling}
Assume the setting of Proposition \ref{prop:poisson-comparison-covariance}. If, for some $\beta\in[0,1)$, there exist constants
$c_1,c_2,\varepsilon_0>0$ such that
\[
        c_1\,\varepsilon^{-\beta}
        \le
        \mathcal P_{S_A}(\varepsilon)
        \le
        c_2\,\varepsilon^{-\beta},
        \qquad 0<\varepsilon<\varepsilon_0,
\]
then there exist constants $c_1',c_2',\varepsilon_0'>0$ such that
\[
        c_1'\,\varepsilon^{-\beta}
        \le
        \mathcal P_{S_B}(\varepsilon)
        \le
        c_2'\,\varepsilon^{-\beta},
        \qquad 0<\varepsilon<\varepsilon_0'.
\]
In particular, the Poisson blow-up exponent at $0$ is invariant under
symmetric scalability of covariance maps.
\end{corollary}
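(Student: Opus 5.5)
The plan is to substitute the two-sided bounds on $\mathcal P_{S_A}$ directly into the sandwich of Proposition~\ref{prop:poisson-comparison-covariance}. Set $M:=\|b\|$ and $m:=\sigma_{\min}(b)$, so that $0<m\le M$, and put $\kappa:=mM$. Define $\varepsilon_0':=\kappa\,\varepsilon_0$. For $0<\varepsilon<\varepsilon_0'$ the rescaled argument $\varepsilon/\kappa$ lies in $(0,\varepsilon_0)$, which is exactly the range where the hypothesis on $\mathcal P_{S_A}$ applies.

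Evaluating the hypothesis at $\varepsilon/\kappa$ gives
\[
c_1\,\kappa^{\beta}\varepsilon^{-\beta}\;\le\;\mathcal P_{S_A}(\varepsilon/\kappa)\;\le\;c_2\,\kappa^{\beta}\varepsilon^{-\beta}.
\]
Inserting this into the lower bound $\frac{m}{M^3}\mathcal P_{S_A}(\varepsilon/\kappa)$ and the upper bound $\frac{M}{m^3}\mathcal P_{S_A}(\varepsilon/\kappa)$ of Proposition~\ref{prop:poisson-comparison-covariance} yields the desired estimate with
\[
c_1'=\frac{m}{M^3}\,c_1\,(mM)^{\beta},
\qquad
c_2'=\frac{M}{m^3}\,c_2\,(mM)^{\beta}.
\]
Both constants are strictly positive and finite because $b$ is invertible, so $m>0$.

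The exponent $-\beta$ passes through unchanged because the comparison only rescales the argument by a fixed factor and multiplies by fixed constants. A power law absorbs such changes into its constant.

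There is essentially no obstacle. The only point needing care is the bookkeeping of the threshold: $\varepsilon_0$ must be shrunk or enlarged by the factor $\kappa$ so that the rescaled argument stays inside the hypothesis range. The closing statement, that the Poisson blow-up exponent at $0$ is invariant, follows by applying the same argument with the roles of $A$ and $B$ reversed. This reversal is legitimate because symmetric scalability is symmetric: $\eta_A(X)=b^{-1}\eta_B\bigl((b^{-1})^*Xb^{-1}\bigr)(b^{-1})^*$. Hence $\mathcal P_{S_A}$ and $\mathcal P_{S_B}$ have two-sided $\varepsilon^{-\beta}$ bounds for exactly the same values of $\beta$.
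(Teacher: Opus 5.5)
Your proposal is correct and follows essentially the same route as the paper: you plug the hypothesis on $\mathcal P_{S_A}$, evaluated at $\varepsilon/(mM)$, into the sandwich of Proposition~\ref{prop:poisson-comparison-covariance}. Your threshold $\varepsilon_0'=mM\,\varepsilon_0$ and your constants $c_1'$, $c_2'$ are identical to the paper's. Your remark that the relation is symmetric, via $\eta_A(X)=b^{-1}\eta_B\bigl((b^{-1})^*Xb^{-1}\bigr)(b^{-1})^*$, makes explicit the reversal that the paper leaves implicit in its ``in particular'' sentence.
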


\begin{proof}
Let
\[
        M:=\|b\|,
        \qquad
        m:=\sigma_{\min}(b).
\]
By Proposition~\ref{prop:poisson-comparison-covariance}, for every
$\varepsilon>0$,
\[
\frac{m}{M^3}\,
\mathcal P_{S_A}\!\left(\frac{\varepsilon}{mM}\right)
\le
\mathcal P_{S_B}(\varepsilon)
\le
\frac{M}{m^3}\,
\mathcal P_{S_A}\!\left(\frac{\varepsilon}{mM}\right).
\]
Choose
\[
        \varepsilon_0' := mM\,\varepsilon_0 .
\]
Then for $0<\varepsilon<\varepsilon_0'$ we have
\[
        0<\frac{\varepsilon}{mM}<\varepsilon_0 .
\]
Hence the assumed bounds for $\mathcal P_{S_A}$ give
\[
c_1\left(\frac{\varepsilon}{mM}\right)^{-\beta}
\le
\mathcal P_{S_A}\!\left(\frac{\varepsilon}{mM}\right)
\le
c_2\left(\frac{\varepsilon}{mM}\right)^{-\beta}.
\]
Substituting these inequalities into the comparison estimate yields
\[
\frac{m}{M^3}c_1(mM)^\beta\,\varepsilon^{-\beta}
\le
\mathcal P_{S_B}(\varepsilon)
\le
\frac{M}{m^3}c_2(mM)^\beta\,\varepsilon^{-\beta}.
\]
Thus the conclusion holds with
\[
        c_1'=\frac{m}{M^3}c_1(mM)^\beta,
        \qquad
        c_2'=\frac{M}{m^3}c_2(mM)^\beta .
\]
\end{proof}


\begin{lemma}[Puiseux asymptotics implies Poisson asymptotics]
\label{lem:puiseux-to-poisson}
Let $f\ge 0$ be locally integrable near $0$, and assume that for some
$\alpha\in[0,1)$ and $\delta \in (0, 1 + \alpha)$,
\[
f(x)=c_+\,x^{-\alpha}+O(x^{-\alpha+\delta})
\quad\text{as }x\downarrow 0,
\]
and
\[
f(-x)=c_-\,x^{-\alpha}+O(x^{-\alpha+\delta})
\quad\text{as }x\downarrow 0,
\]
with $c_+,c_-\ge 0$ and $c_++c_->0$.
Then
\[
\frac{1}{\pi}\int_{\mathbb R}\frac{\varepsilon\,f(x)}{x^2+\varepsilon^2}\,dx
=
\frac{c_++c_-}{2}\,
\sec\!\Bigl(\frac{\pi\alpha}{2}\Bigr)\,
\varepsilon^{-\alpha}
+O(\varepsilon^{-\alpha+\delta}).
\]
\end{lemma}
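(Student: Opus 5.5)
The plan is the standard splitting of the Poisson integral into a near part and a far part, with an exact scaling computation for the near part. Throughout, $f$ is the density of a finite measure, as in the application to $\mu_S$, so it is integrable on all of $\mathbb R$. Fix $x_0\in(0,1)$ small enough that both expansions hold with a uniform constant $K$ on $(0,x_0]$:
\[
|f(\pm x)-c_\pm x^{-\alpha}|\le K x^{-\alpha+\delta},\qquad 0<x\le x_0 .
\]
Write
\[
\frac1\pi\int_{\mathbb R}\frac{\varepsilon f(x)}{x^2+\varepsilon^2}\,dx=I_{\rm near}(\varepsilon)+I_{\rm far}(\varepsilon),
\]
where $I_{\rm near}$ is the integral over $|x|\le x_0$ and $I_{\rm far}$ is the integral over $|x|>x_0$.

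\emph{Far part.} On $|x|>x_0$ the kernel is at most $\varepsilon/x_0^2$, so $I_{\rm far}(\varepsilon)\le \varepsilon\,\|f\|_{L^1}/(\pi x_0^2)=O(\varepsilon)$. The hypothesis $\delta<1+\alpha$ means $-\alpha+\delta<1$, so $\varepsilon=O(\varepsilon^{-\alpha+\delta})$ as $\varepsilon\downarrow0$. Hence the far part is absorbed into the error term.

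\emph{Main term.} On $(0,x_0]$ replace $f(x)$ by $c_+x^{-\alpha}$, and on $[-x_0,0)$ replace $f$ by $c_-|x|^{-\alpha}$. First extend each integral to the whole half-line; the added piece over $(x_0,\infty)$ is at most $\varepsilon\int_{x_0}^\infty x^{-\alpha-2}\,dx=O(\varepsilon)$. Then substitute $x=\varepsilon t$:
\[
\int_0^\infty\frac{\varepsilon\,x^{-\alpha}}{x^2+\varepsilon^2}\,dx=\varepsilon^{-\alpha}\int_0^\infty\frac{t^{-\alpha}}{1+t^2}\,dt=\varepsilon^{-\alpha}\,\frac{\pi}{2}\sec\Bigl(\frac{\pi\alpha}{2}\Bigr).
\]
The last equality is the Beta-integral identity $\int_0^\infty t^{s-1}/(1+t^2)\,dt=\frac{\pi}{2\sin(\pi s/2)}$ with $s=1-\alpha\in(0,1]$. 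Adding the two sides and dividing by $\pi$ gives exactly $\frac{c_++c_-}{2}\sec(\pi\alpha/2)\,\varepsilon^{-\alpha}$.

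\emph{Remainder.} The error from replacing $f$ on $|x|\le x_0$ is bounded by
\[
\frac{2K}{\pi}\int_0^{\infty}\frac{\varepsilon\,x^{-\alpha+\delta}}{x^2+\varepsilon^2}\,dx=\frac{2K}{\pi}\,\varepsilon^{-\alpha+\delta}\int_0^\infty\frac{t^{\delta-\alpha}}{1+t^2}\,dt .
\]
This step is the only delicate point: the last integral must be finite. It converges at $0$ because $\delta-\alpha>-1$, which follows from $\delta>0$ and $\alpha<1$. It converges at $\infty$ because $\delta-\alpha<1$, which is exactly the hypothesis $\delta<1+\alpha$. So the remainder is $O(\varepsilon^{-\alpha+\delta})$.

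Collecting the three estimates proves the lemma. The boundary case $\delta=1+\alpha$ would produce a logarithm, which is why that inequality is required to be strict.
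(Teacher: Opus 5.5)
Your proof is correct and follows essentially the same route as the paper: a near/far split in which the tail contributes $O(\varepsilon)$, an exact Beta-integral evaluation $\int_0^\infty t^{-\alpha}/(1+t^2)\,dt=\frac{\pi}{2}\sec(\pi\alpha/2)$ for the main term, and a remainder controlled by the convergent integral $\int_0^\infty t^{\delta-\alpha}/(1+t^2)\,dt$, which is exactly where $\delta<1+\alpha$ enters. The differences are cosmetic: the paper substitutes $x=\varepsilon u$ first and splits at $u=\eta/\varepsilon$, while you split at $x_0$ and then rescale; and, like the paper, you assume global integrability of $f$ to bound the tail, which is harmless.
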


Proof is in Appendix \ref{appx:puiseux_poisson}.

\begin{lemma}[Poisson asymptotics determines the leading Puiseux exponent]
\label{lem:poisson-to-puiseux}
Let $f$ be the density of the absolutely continuous part of a matrix semicircle.
Assume that $f$ has an algebraic singularity at $0$ and no atom at $0$.
Then there exist exponents
\[
-1<\beta_1<\beta_2<\cdots<\beta_N
\]
and coefficients $a_k^\pm\in\mathbb R$ such that
\[
f(x)=\sum_{k=1}^N a_k^+ x^{\beta_k}+o(x^{\beta_N})
\quad\text{as }x\downarrow 0,
\]
\[
f(-x)=\sum_{k=1}^N a_k^- x^{\beta_k}+o(x^{\beta_N})
\quad\text{as }x\downarrow 0.
\]
Let $\beta_*=\min\{\beta_k:\ a_k^+\neq 0\text{ or }a_k^-\neq 0\}$ and set
$\alpha_*=-\beta_*\in[0,1)$.
Then
\[
\mathcal P_X(\varepsilon)\asymp \varepsilon^{-\alpha_*}
\qquad (\varepsilon\downarrow 0).
\]
Conversely, if
\[
\mathcal P_X(\varepsilon)\asymp \varepsilon^{-\alpha}
\qquad (\varepsilon\downarrow 0),
\]
then $\alpha=\alpha_*$.
\end{lemma}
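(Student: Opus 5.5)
The plan has three stages: obtain the one-sided Puiseux expansions from algebraicity, feed them into Lemma~\ref{lem:puiseux-to-poisson} for the forward direction, and read off the converse by comparing powers.

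\emph{Stage 1: Puiseux expansions of $f$.} By Proposition~\ref{prop:entrywise-algebraicity}, $H(z)=\tr G_S(zI)$ is algebraic with a real-coefficient minimal polynomial. Since $\mu_S$ has no atom at $0$, $0$ is not a pole of the branch of $H$ in question. Near $z=0$ in $\mathbb C^+$, this branch is therefore given by a convergent Puiseux series
\[
H(z)=\sum_{k\ge k_0} h_k\, z^{k/q}
\]
for some integer $q\ge1$. The relevant determination of $z^{1/q}$ on $\mathbb C^+$ extends continuously to both rays $x>0$ ($\arg z=0$) and $x<0$ ($\arg z=\pi$).

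Stieltjes inversion then gives
\[
f(\pm x)=\sum_k a_k^\pm x^{k/q},\qquad a_k^\pm=-\tfrac1\pi\Im\bigl(h_k e^{i\theta_\pm k/q}\bigr),\quad \theta_+=0,\ \theta_-=\pi .
\]
Integer powers with real coefficients contribute nothing to the density on $x>0$, but I will keep them in the bookkeeping anyway. Local integrability of $f$, together with the absence of an atom, forces every exponent carrying a nonzero coefficient to exceed $-1$. Truncating at any finite order yields the finite sums in the statement with remainder $o(x^{\beta_N})$, and indeed $O(x^{\beta_N+1/q})$.

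\emph{Stage 2: the forward direction.} Let $\beta_*$ be the first exponent with $a_*^+$ or $a_*^-$ nonzero, and set $\alpha_*=-\beta_*$. There are two cases.

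If $\beta_*<0$, then $f\ge0$ forces $a_*^\pm\ge0$, since the leading term dominates as $x\downarrow0$. Hence $c_++c_->0$, and Lemma~\ref{lem:puiseux-to-poisson} applies with $\delta=1/q$ (reduced if necessary to lie in $(0,1+\alpha_*)$). It gives
\[
\mathcal P(\varepsilon)\sim \tfrac{c_++c_-}{2}\sec(\pi\alpha_*/2)\,\varepsilon^{-\alpha_*}.
\]

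If $\beta_*\ge0$, the claim $\alpha_*\in[0,1)$ forces $\beta_*=0$, meaning $f$ tends to a nonzero limit from at least one side. The same lemma with $\alpha=0$ again gives $\mathcal P\asymp1$. The one point to check is positivity of $c_++c_-$: if the first nonzero coefficients were positive-exponent terms, $f$ would vanish at $0$.

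\emph{Stage 3: the converse, and the main obstacle.} The converse follows immediately, since $\varepsilon^{-\alpha}\asymp\varepsilon^{-\alpha_*}$ as $\varepsilon\downarrow0$ forces $\alpha=\alpha_*$ by comparing powers.

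The main obstacle is the vanishing case in Stage 2. If $f(x)\to0$, for instance $f\sim c|x|^{\beta}$ with $\beta>0$, the Poisson integral is not $\asymp\varepsilon^{\beta}$. Instead, the mass of $f$ away from $0$ contributes order $\varepsilon$, so $\mathcal P(\varepsilon)\asymp\varepsilon^{\min(\beta,1)}$. The lemma as stated covers only $\alpha_*\in[0,1)$, i.e.\ $\beta_*\le0$. I would therefore treat the statement as restricted to the case $\beta_*\le 0$, and add a remark that when $f(0)=0$ the Poisson indicator decays and the exponent is no longer read off as $\alpha_*$. Everything else is the routine Tauberian computation already packaged in Lemma~\ref{lem:puiseux-to-poisson}.
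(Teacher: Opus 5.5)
Your proposal follows the paper's proof essentially step for step: the Puiseux expansion of the physical branch of $H$ from algebraicity, Stieltjes inversion on both sides, Lemma~\ref{lem:puiseux-to-poisson} for the forward direction (with nonnegativity of the leading coefficients coming from $f\ge0$), and comparison of powers for the converse. Your observation that the claim $\alpha_*\in[0,1)$ tacitly requires $\beta_*\le 0$ is correct, and the paper's proof relies on it silently too, since Lemma~\ref{lem:puiseux-to-poisson} needs $\alpha\in[0,1)$.

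The one step you leave out, which the paper includes, concerns the fact that $\mathcal P_X$ is defined with the full measure $\mu_S$ rather than with $f$ alone. You therefore also need to bound the Poisson contribution of any singular part of $\mu_S$, which is supported at some distance $r>0$ from $0$, by $\varepsilon\,\mu_S^{\mathrm{sing}}(\mathbb R)/(\pi r^2)=O(\varepsilon)$. This term is absorbed because $\alpha_*\ge 0$.
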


Proof is in Appendix \ref{appx:poisson_puiseux}


\begin{theorem}[Invariance of the leading singularity exponent under covariance-map scaling]
\label{thm:covariance-scaling-singularity-exponent}
Let
\[
        S_A=\sum_{i=1}^r A_i\otimes s_i,
        \qquad
        S_B=\sum_{j=1}^q B_j\otimes t_j
\]
be matrix semicircles, with covariance maps
\[
        \eta_A(X)=\sum_{i=1}^r A_iXA_i^*,
        \qquad
        \eta_B(X)=\sum_{j=1}^q B_jXB_j^* .
\]
Assume that $\eta_A$ and $\eta_B$ are symmetrically scalable.

Assume moreover that $S_A$ is full, so that neither $S_A$ nor $S_B$ has an atom
at $0$. Let $f_A$ and $f_B$ denote the scalar densities of $S_A$ and $S_B$.

Suppose that $f_A$ has an algebraic singularity at $0$ with leading exponent
$-\alpha$, where $\alpha\in[0,1)$. In particular, near $0$,
\[
        f_A(x)=c_{A,+}\,x^{-\alpha}+o(x^{-\alpha})
        \qquad (x\downarrow 0),
\]
and
\[
        f_A(-x)=c_{A,-}\,x^{-\alpha}+o(x^{-\alpha})
        \qquad (x\downarrow 0),
\]
with $c_{A,+},c_{A,-}\ge 0$ and $c_{A,+}+c_{A,-}>0$.

Then $f_B$ has the same leading exponent $-\alpha$ at $0$; that is,
\[
        f_B(x)=c_{B,+}\,x^{-\alpha}+o(x^{-\alpha})
        \qquad (x\downarrow 0),
\]
and
\[
        f_B(-x)=c_{B,-}\,x^{-\alpha}+o(x^{-\alpha})
        \qquad (x\downarrow 0),
\]
for some constants $c_{B,+},c_{B,-}\ge 0$, not both zero.

In particular, the leading singularity exponent at $0$ is invariant under
symmetric scalability of covariance maps.
\end{theorem}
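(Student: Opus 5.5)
The plan is to chain the two Tauberian lemmas through the Poisson comparison of Corollary~\ref{cor:poisson-exponent-covariance-scaling}.

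\emph{Step 1: the hypothesis on $f_A$ gives Poisson bounds.} Since $S_A$ is full, $\mu_{S_A}$ has no atom at $0$. By Corollary~\ref{cor:spectral-structure}, $f_A$ has a genuine Puiseux expansion on each side of $0$, so the $o(x^{-\alpha})$ remainder is in fact $O(x^{-\alpha+\delta})$ for some $\delta>0$ (the gap to the next exponent). Lemma~\ref{lem:puiseux-to-poisson} then yields
\[
\mathcal P_{S_A}(\varepsilon)\sim \tfrac{c_{A,+}+c_{A,-}}{2}\sec(\pi\alpha/2)\,\varepsilon^{-\alpha},
\]
hence two-sided bounds $c_1\varepsilon^{-\alpha}\le\mathcal P_{S_A}(\varepsilon)\le c_2\varepsilon^{-\alpha}$ for small $\varepsilon$. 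The lemma is stated for the absolutely continuous density. Because there are only finitely many atoms and none sits at $0$, the atomic part of $\mu_{S_A}$ contributes only $O(\varepsilon)$ to $\mathcal P_{S_A}$, which is negligible against $\varepsilon^{-\alpha}$.

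\emph{Step 2: transfer to $S_B$.} Corollary~\ref{cor:poisson-exponent-covariance-scaling} with $\beta=\alpha$ gives $\mathcal P_{S_B}(\varepsilon)\asymp\varepsilon^{-\alpha}$.

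\emph{Step 3: recover the exponent of $f_B$.} We need $S_B$ to have no atom at $0$. The cleanest route is to note that $\widetilde S=(b\otimes I)S_A(b^*\otimes I)$ has the same law as $S_B$, and that the kernel projection of $\widetilde S$ is congruent to that of $S_A$, so fullness transfers. Alternatively, an atom of mass $w$ at $0$ would force $\mathcal P_{S_B}(\varepsilon)\ge w/(\pi\varepsilon)$, contradicting the $\varepsilon^{-\alpha}$ upper bound since $\alpha<1$.

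By Corollary~\ref{cor:spectral-structure}, $f_B$ has an algebraic (Puiseux) expansion at $0$. Lemma~\ref{lem:poisson-to-puiseux} then gives $\mathcal P_{S_B}\asymp\varepsilon^{-\alpha_*}$, where $-\alpha_*$ is the leading exponent of $f_B$, and its converse part forces $\alpha_*=\alpha$. Nonnegativity of $f_B$ makes the leading coefficients $c_{B,\pm}\ge0$, and by definition of $\alpha_*$ they are not both zero. One point needs checking here: Lemma~\ref{lem:poisson-to-puiseux} lists exponents greater than $-1$. If $f_B$ were real-analytic and nonzero at $0$, then $\alpha_*=0$ and the argument is consistent. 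If $f_B(0)=0$ with leading exponent positive, then $\mathcal P_{S_B}\to0$ as $\varepsilon\to0$, which contradicts $\mathcal P_{S_B}\ge c\,\varepsilon^{-\alpha}$ with $\alpha\ge0$.

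\emph{Main obstacle.} The delicate point is Step 1 when $\alpha=0$. There $f_A$ may merely tend to a constant, and we must make sure Lemma~\ref{lem:puiseux-to-poisson} applies, i.e.\ that an error term $O(x^{\delta})$ exists. This again follows from algebraicity: the Puiseux series in $x^{1/k}$ has a strictly positive next exponent. The other subtlety is the upgrade from ``$o$'' to ``$O(x^{-\alpha+\delta})$'', for which I would cite the finite Puiseux structure rather than the bare hypothesis.
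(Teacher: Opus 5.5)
Your proposal is correct and follows the paper's proof. Both apply Lemma~\ref{lem:puiseux-to-poisson} to $f_A$, transfer $\mathcal P\asymp\varepsilon^{-\alpha}$ to $S_B$ via Corollary~\ref{cor:poisson-exponent-covariance-scaling}, and conclude with the converse part of Lemma~\ref{lem:poisson-to-puiseux}. Your additions fill in points the paper asserts without detail: the upgrade from $o(x^{-\alpha})$ to $O(x^{-\alpha+\delta})$ via the Puiseux structure, the $O(\varepsilon)$ atomic contribution, and the absence of an atom for $S_B$ deduced from the $\varepsilon^{-\alpha}$ upper bound.
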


\begin{proof}
By Proposition~\ref{prop:entrywise-algebraicity}, the scalar Cauchy transform of
$S_B$ is algebraic. Hence the density $f_B$ has the local Puiseux-type
expansion required in Lemma~\ref{lem:poisson-to-puiseux}. Since $S_A$ is full
and $\eta_B$ is obtained from $\eta_A$ by an invertible symmetric scaling,
$S_B$ has no atom at $0$.

By Lemma~\ref{lem:puiseux-to-poisson}, the assumed asymptotics of $f_A$ imply
that there exists $\delta>0$ such that
\[
        \mathcal P_{S_A}(\varepsilon)
        =
        \frac{c_{A,+}+c_{A,-}}{2}\,
        \sec\!\left(\frac{\pi\alpha}{2}\right)\,
        \varepsilon^{-\alpha}
        +
        O(\varepsilon^{-\alpha+\delta})
        \qquad (\varepsilon\downarrow 0).
\]
In particular,
\[
        \mathcal P_{S_A}(\varepsilon)\asymp \varepsilon^{-\alpha}
        \qquad (\varepsilon\downarrow 0).
\]
Therefore, by Corollary~\ref{cor:poisson-exponent-covariance-scaling},
\[
        \mathcal P_{S_B}(\varepsilon)\asymp \varepsilon^{-\alpha}
        \qquad (\varepsilon\downarrow 0).
\]

Now Lemma~\ref{lem:poisson-to-puiseux} applies to $S_B$. Let
$\alpha_*\in[0,1)$ denote the leading singularity exponent of $f_B$ in the
sense of that lemma. Since
\[
        \mathcal P_{S_B}(\varepsilon)\asymp \varepsilon^{-\alpha},
\]
the converse part of Lemma~\ref{lem:poisson-to-puiseux} gives
\[
        \alpha_*=\alpha.
\]
Hence the leading term in the Puiseux expansion of $f_B$ has exponent
$-\alpha$. Equivalently, there exist constants $c_{B,+},c_{B,-}\ge 0$, not
both zero, such that
\[
        f_B(x)=c_{B,+}\,x^{-\alpha}+o(x^{-\alpha})
        \qquad (x\downarrow 0),
\]
and
\[
        f_B(-x)=c_{B,-}\,x^{-\alpha}+o(x^{-\alpha})
        \qquad (x\downarrow 0).
\]
This proves the claim.
\end{proof}

Remark: This is Result A of the introduction.



\section{Hermitian binary pencils and reduction to canonical cells} 
\label{sec:direct_sums}

\subsection{Lancaster--Rodman canonical cells}
\label{sec:LaRo_cells}


Let $A = A_1 x_1 + A_2 x_2$ be the associated Hermitian matrix pencil.
According to \cite{lancaster_rodman2005}, every Hermitian binary pencil 
can be brought by a congruence transformation $A \mapsto C A C^*$ to a direct sum of 
indecomposable cells of the following three types.

\medskip
\noindent\textbf{Type~I} (eigenvalue at $\infty$)\textbf{:}\quad
$\delta\,(F_n\, x_1 + G_n\, x_2)$, \quad $\delta = \pm 1$, \  $n \ge 1$.

\smallskip
\noindent\textbf{Type~II} (real finite eigenvalue $\alpha$)\textbf{:}\quad
$\eta\,\bigl(F_n\,(x_2 + \alpha\, x_1) + G_n\, x_1\bigr)$, \quad $\eta = \pm 1$, 
\ $\alpha \in \R$, \  $n \ge 1$.

\smallskip
\noindent\textbf{Type~III} (conjugate pair of non-real eigenvalues $\beta, \ovln\beta$)\textbf{:}\quad
\[
\bmatr{0 & F_m(x_2 + \beta\, x_1) + G_m\, x_1 \\ 
F_m(x_2 + \ovln\beta\, x_1) + G_m\, x_1 & 0}, 
\qquad \beta \in \bC \setminus \R, \quad m \ge 1.
\]

\noindent
Here $F_m$ is the $m \times m$ \emph{sip} (standard involutory permutation) matrix 
with ones on the anti-diagonal,
\[
F_m = 
\bmatr{
0      & \cdots & 0 & 1 \\
\vdots &        & \iddots & 0 \\
0      & \iddots &        & \vdots \\
1      & 0      & \cdots & 0
} = F_m^{-1},
\]
and\ $G_m = \bigl[\begin{smallmatrix} F_{m-1} & 0 \\ 0 & 0 \end{smallmatrix}\bigr]$.
A Type~I cell has size~$n$, a Type~II cell has size~$n$, and a Type~III cell has size~$2m$.
The signs $\delta, \eta = \pm 1$ constitute the \emph{sign characteristic} of the pencil.
The canonical form is unique up to permutation of blocks and the replacement 
$\beta \leftrightarrow \ovln\beta$ within Type~III cells.

\begin{remark}\label{rem:LR_splittability}
A $1 \times 1$ cell of Type~I is simply $\delta\, x_1$ and a $1 \times 1$ cell of 
Type~II is $\eta\,(x_2 + \alpha\, x_1)$; both are scalar pencils giving rise to 
(rescaled) scalar semicircular distributions with no singularity.
For $n \ge 2$, Type~I and~II cells are splittable but not LR-semisimple, and the 
associated matrix semicircular element develops a singularity at the origin.
A Type~III cell of size $2$ (i.e., $m = 1$) is splittable and LR-semisimple; for $m \ge 2$ it is again splittable but not LR-semisimple.
\end{remark}

\begin{defi}[Regular and full pencils]
\label{defi:regular-full}
Let $A=A_1x_1+\dots+A_rx_r$ be a self-adjoint matrix pencil, $A_i=A_i^*\in M_N(\bC)$.
\begin{enumerate}[label=\textup{(\roman*)}]
\item $A$ is \emph{regular} if $\det(A_1x_1+\dots+A_rx_r)\not\equiv0$ in $\bC[x_1,\dots,x_r]$;
equivalently, some real combination $\sum_i t_iA_i$ ($t\in\bR^r$) is invertible.
\item $A$ is \emph{full} if its noncommutative inner rank is maximal, $\rank A=N$, where
$\rank A$ is the least $\rho$ admitting a factorization $A=PQ$ with
$P\in M_{N\times \rho}(\bC\langle x_1,\dots,x_r\rangle)$ and
$Q\in M_{\rho\times N}(\bC\langle x_1,\dots,x_r\rangle)$.
\end{enumerate}
\end{defi}

By \cite[Thm.~4.3]{hoffmann_mai_speicher2024}, for free standard semicircular generators the
inner rank is read off the atom of the scalar law,
$\rank A=N\bigl(1-\mu_{S_A}(\{0\})\bigr)$; thus $A$ is full iff $\mu_{S_A}(\{0\})=0$. This is the
sense of ``full'' used throughout (no atom at the origin).

\begin{propo}[For binary pencils, regularity $=$ fullness]
\label{prop:regular-full}
Let $A=A_1x_1+A_2x_2$ be a self-adjoint binary pencil with $A_i\in M_N(\bC)$. The following
are equivalent:
\textup{(i)} $A$ is regular;
\textup{(ii)} $A$ is full;
\textup{(iii)} $\mu_{S_A}(\{0\})=0$.
\end{propo}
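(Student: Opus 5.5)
The equivalence (ii)$\Leftrightarrow$(iii) is exactly the cited theorem of Hoffmann--Mai--Speicher recorded just above, $\rank A=N(1-\mu_{S_A}(\{0\}))$. It therefore suffices to prove (i)$\Leftrightarrow$(ii).

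The direction (ii)$\Rightarrow$(i) is general and holds for any number of variables. Suppose $\det(A_1x_1+A_2x_2)\equiv0$. Then the commutative rank $\rho$ of the linear matrix over $\bC[x_1,x_2]$ is less than $N$. For binary (in fact for any) pencils, one then wants a noncommutative factorization through size $<N$. The clean route for two variables is the Kronecker canonical form. A singular pencil $A_1x_1+A_2x_2$ is strictly equivalent, $PAQ$ with $P,Q$ invertible scalar matrices, to a block form containing a minimal-index block $L_\varepsilon$ of size $\varepsilon\times(\varepsilon+1)$, or its transpose. Such a pencil contains an $a\times b$ zero block with $a+b>N$. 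Concretely, the $L_\varepsilon$ column structure gives a kernel vector polynomial, and after a constant change of basis this yields a zero block of size $k\times l$ with $k+l=N+1$. By the standard characterization of inner rank for linear matrices (hollow-block criterion, Cohn / Fortin--Reutenauer), a linear matrix that is equivalent under constant invertible matrices to one with a $k\times l$ zero block, $k+l>N$, is not full. Hence not regular implies not full. (For the binary case this can also be done by hand: the Kronecker blocks $L_\varepsilon$ and $L_\varepsilon^T$ each have rectangular shape, and appropriate row and column groupings exhibit the hollow block.)

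For (i)$\Rightarrow$(ii), regularity means some real combination $t_1A_1+t_2A_2$ is invertible; since regularity is polynomial, one can even take $t\in\bR^2$. Suppose $A=PQ$ over the free algebra with inner dimension $\rho<N$. The plan is to evaluate on commuting scalars: substituting $x_i\mapsto t_i\in\bC$ is a ring homomorphism $\bC\langle x_1,x_2\rangle\to\bC$. It gives $t_1A_1+t_2A_2=P(t)Q(t)$, which has rank at most $\rho<N$, contradicting invertibility. Thus full rank is forced, and inner rank $\ge$ commutative rank $=N$.

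Assembling the pieces gives (i)$\Leftrightarrow$(ii)$\Leftrightarrow$(iii). The main obstacle is the step (ii)$\Rightarrow$(i): in general, commutative rank can be strictly smaller than inner rank (for three or more variables). The argument must genuinely use $r=2$ through the Kronecker form, which turns commutative singularity into a hollow zero block. I expect the care to lie in checking that the $L_\varepsilon$ block produces a zero block of total size exceeding $N$ after constant equivalence. Here $L_\varepsilon$ is the $\varepsilon\times(\varepsilon+1)$ pencil, and it is sandwiched with the remaining blocks in a block-triangular arrangement. Because fullness is invariant under multiplication by invertible constant matrices, this suffices.
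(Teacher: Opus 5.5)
Your proof is correct and takes essentially the paper's route: Hoffmann--Mai--Speicher for (ii)$\Leftrightarrow$(iii), scalar evaluation of a factorization $A=PQ$ for (i)$\Rightarrow$(ii), and a Kronecker column block $L_\varepsilon$ for (ii)$\Rightarrow$(i). Your $(N-\varepsilon)\times(\varepsilon+1)$ hollow zero block is the coordinate form of the paper's shrunk subspace $W'$. It comes directly from block-diagonality of the Kronecker form, not from a polynomial kernel vector. It has the mild advantage that ``hollow $\Rightarrow$ not full'' is a one-line explicit factorization rather than an appeal to GGOW.

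One correction: your opening claim that (ii)$\Rightarrow$(i) ``holds for any number of variables'' is false. The paper's Hermitian $3\times3$ skew-form example is full but singular. It is (i)$\Rightarrow$(ii) that is general, as your own closing paragraph effectively concedes.
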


\begin{proof}
The equivalence \textup{(ii)}$\iff$\textup{(iii)} is \cite[Thm.~4.3]{hoffmann_mai_speicher2024}.

\textup{(i)}$\Rightarrow$\textup{(ii)}, by contraposition. If $A$ is not full then
$\rank A=:r<N$, so $A=PQ$ with $P,Q$ matrices over $\bC\langle x_1,x_2\rangle$ of inner size
$r$. Specializing the formal variables to any scalars $(x_1,x_2)=(t_1,t_2)\in\bC^2$ gives
$A_1t_1+A_2t_2=P(t)Q(t)$, a product factoring through an $r$-dimensional space, so
$\rank(A_1t_1+A_2t_2)\le r<N$ and $\det(A_1t_1+A_2t_2)=0$. As this holds for all
$(t_1,t_2)$, the form $\det(A_1x_1+A_2x_2)$ vanishes identically: $A$ is singular.

\textup{(ii)}$\Rightarrow$\textup{(i)}, again by contraposition. Assume $A$ is singular. By
the Kronecker canonical form of a pencil under strict equivalence
\cite[Ch.~XII]{gantmacher59}, there are invertible constant matrices $P,Q\in\GL_N(\bC)$ such
that $PAQ$ is a direct sum of canonical blocks, among which --- $A$ being square and
singular --- there is at least one \emph{column} minimal-index block
\[
        L_\epsilon \;=\; x_1\,[\,I_\epsilon\ \ 0\,]\;+\;x_2\,[\,0\ \ I_\epsilon\,]
        \qquad(\epsilon\times(\epsilon+1),\ \epsilon\ge0).
\]
Let $W\subseteq\bC^N$ be the $(\epsilon+1)$-dimensional coordinate subspace spanned by the
columns of this block, and write $\wh A_i:=PA_iQ$ for the coefficients of $PAQ$. By
block-diagonality $\wh A_1W+\wh A_2W$ lies in the $\epsilon$-dimensional row space of the
block, so $\dim(\wh A_1W+\wh A_2W)\le\epsilon<\epsilon+1=\dim W$. Put $W':=QW$; since $P$ is
invertible,
\[
        \dim(A_1W'+A_2W')=\dim(\wh A_1W+\wh A_2W)<\dim W=\dim W',
\]
so $W'$ is a \emph{shrunk subspace} for the Kraus family $\{A_1,A_2\}$ --- equivalently the
completely positive map $\eta(X)=A_1XA_1+A_2XA_2$ strictly decreases the rank of the
projection onto $W'$. By the operator-scaling characterization of fullness
\cite[\S1 and App.]{ggow2020}, $A$ is not full.
\end{proof}

\begin{remark}[The equivalence is special to $r=2$]
\label{rem:regular-binary-only}
For $r\ge3$ fullness is strictly weaker than regularity --- this gap is exactly what makes
the noncommutative Edmonds problem nontrivial \cite{hoffmann_mai_speicher2024,ggow2020}. The
standard witness is the Hermitian dressing of the generic $3\times3$ skew form,
$A_1x_1+A_2x_2+A_3x_3=i\!\left(\begin{smallmatrix}0&x_1&x_2\\-x_1&0&x_3\\-x_2&-x_3&0
\end{smallmatrix}\right)$ with Hermitian $A_k$ (the $\pm i$ off the diagonal): it has
$\det\equiv0$, hence is singular, yet $\rank=3$, hence full. For $r=2$ the gap closes because
Kronecker's form confines the singularity to a column block. Accordingly
Proposition~\ref{prop:regular-full} is stated for binary pencils, which is the setting of
\S\S\ref{sec:type_I}--\ref{sec:classification-synthesis}.
\end{remark}

\begin{lemma}[Types~II and~III preserve no maximal abelian subalgebra]
\label{lem:no-MASA}
Let $S=\sum_i A_i\otimes s_i$ be a matrix semicircular element with Hermitian
coefficients $A_i\in M_N(\bC)$ and covariance map $\eta(X)=\sum_i A_iXA_i$, and
write $\mu_k:=(\id\otimes\phi)(S^k)$ for its matrix-valued moments.
\begin{enumerate}
\item The odd moments vanish, $\mu_1=\mu_3=0$, while $\mu_2=\eta(I)$ and
$\mu_4=\eta^2(I)+\eta(I)^2$; consequently
\begin{equation}\label{eq:moment-commutator}
[\mu_2,\mu_4]=[\eta(I),\eta^2(I)].
\end{equation}
\item For a Type~II cell $\bigl(A_1=\alpha F_n+G_n,\ A_2=F_n,\ \alpha\in\bR\setminus\{0\},\ n\ge2\bigr)$
and for a Type~III cell $\bigl(N=2m,\ m\ge2,\ \beta\in\bC\setminus\bR\bigr)$ one has
$[\eta(I),\eta^2(I)]\neq0$. Hence $\eta$ leaves no maximal abelian subalgebra of
$M_N(\bC)$ invariant.
\end{enumerate}
\end{lemma}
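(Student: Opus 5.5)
The plan is to prove part~(1) directly from the operator-valued moment--cumulant structure of $S$, and part~(2) by an explicit matrix computation. The final assertion about maximal abelian subalgebras will then follow from a one-line argument: if $\eta$ leaves a MASA $\mathcal D\subseteq M_N(\bC)$ invariant, then since $I\in\mathcal D$ we get $\eta(I)\in\mathcal D$ and $\eta^2(I)\in\mathcal D$, so these two matrices commute. Thus $[\eta(I),\eta^2(I)]\neq0$ excludes any invariant MASA.

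\emph{Part (1).} Since $S$ is an $M_N(\bC)$-valued semicircular element with covariance $\eta$, its moments $\mu_k$ are sums over non-crossing pair partitions of nested applications of $\eta$. Alternatively, one can expand $S^k=\sum A_{i_1}\cdots A_{i_k}\otimes s_{i_1}\cdots s_{i_k}$ and use the free Wick formula $\phi(s_{i_1}\cdots s_{i_k})=\#\{\pi\in NC_2(k):\ \pi\le\ker i\}$.
\begin{itemize}
\item Odd $k$ admit no pairings, so $\mu_1=\mu_3=0$.
\item For $k=2$ there is a single pairing, giving $\mu_2=\sum_i A_i^2=\eta(I)$.
\item For $k=4$ the two non-crossing pairings $\{12\}\{34\}$ and $\{14\}\{23\}$ contribute $\eta(I)^2$ and $\sum_i A_i\eta(I)A_i=\eta^2(I)$ respectively.
\end{itemize}
Then $[\mu_2,\mu_4]=[\eta(I),\eta^2(I)]+[\eta(I),\eta(I)^2]$, and the last commutator vanishes.

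\emph{Part (2), Type~II.} With $A_2=F_n$ and $A_1=\alpha F_n+G_n$, we have $F_n^2=I$ and $G_n^2=E:=\diag(1,\dots,1,0)$. Writing $Z:=F_nG_n$, which is a nilpotent shift, we get $G_nF_n=Z^{T}$. Hence
\[
\eta(I)=(1+\alpha^2)I+E+\alpha\,(Z+Z^{T}),
\]
a real symmetric tridiagonal matrix. Next compute $\eta^2(I)=A_1\eta(I)A_1+F_n\eta(I)F_n$. Conjugation by $F_n$ reverses indices, and $A_1(\cdot)A_1$ can be expanded using the same shift relations. Rather than computing the whole commutator, I would isolate a single entry near a corner, e.g. the $(1,2)$ or $(n-1,n)$ entry of $[\eta(I),\eta^2(I)]$. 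Only a few band entries contribute there, and the entry should come out as a nonzero multiple of $\alpha$ (times a polynomial in $\alpha^2$ that is positive). The asymmetry between $E$, which has a $0$ only in the last slot, and its $F_n$-reflection is what breaks commutativity, so the boundary entries are the natural place to look. I would check the case $n=2$ by hand first to fix the sign and the $\alpha$-dependence, then do the general $n$ case.

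\emph{Part (2), Type~III.} Here the same strategy applies with $A_1$ the block off-diagonal matrix built from $\beta F_m+G_m$ and $\bar\beta F_m+G_m$, and $A_2=F_{2m}$. Now $\eta(I)$ is block-structured, with entries involving $|\beta|^2$ and $\Re\beta$. I would again evaluate one corner entry of $[\eta(I),\eta^2(I)]$, which should be proportional to $\Im\beta$ or to a combination that cannot vanish for $\beta\notin\bR$ and $m\ge2$.

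The main obstacle is this explicit bookkeeping in part~(2): choosing an entry whose value is a clean, manifestly nonzero expression, especially for Type~III, where cancellations between $\beta$ and $\bar\beta$ terms must be tracked. If a direct corner entry turns out to vanish, I would fall back on comparing $\tr(\eta(I)^2\eta^2(I)^2)$ with $\tr\bigl((\eta(I)\eta^2(I))^2\bigr)$, whose difference is $-\tfrac12\|[\eta(I),\eta^2(I)]\|_{HS}^2$, and compute it for the smallest case $m=2$ before generalizing.
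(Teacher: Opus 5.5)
Your plan is the paper's proof. Part~(1) comes from non-crossing pairings. The MASA claim follows because an invariant MASA contains $I$, hence both $\eta(I)$ and $\eta^2(I)$, which must then commute. Part~(2) is a boundary entry of $[\eta(I),\eta^2(I)]$.

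\emph{The gap.} You only predict that the chosen entry ``should'' be nonzero and never compute it. That entry is the whole content of part~(2), so as written part~(2) is asserted, not proved.

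\emph{Type~II.} The computation closes in a few lines in your (Lancaster--Rodman) convention. Set $C_1=\eta(I)=(1+\alpha^2)I+E+\alpha(L_n+U_n)$ and $C_2=\eta(C_1)$. For $n\ge3$, $C_1$ is tridiagonal with $(C_1)_{11}=(C_1)_{22}$, so
\[
[C_1,C_2]_{12}=\alpha\bigl[(C_2)_{22}-(C_2)_{11}-(C_2)_{13}\bigr].
\]
With $\bar i=n+1-i$, the index rules are $(F_nXF_n)_{ij}=X_{\bar i\bar j}$, $(F_nXG_n)_{ij}=X_{\bar i,\bar j-1}$, $(G_nXF_n)_{ij}=X_{\bar i-1,\bar j}$ and $(G_nXG_n)_{ij}=X_{\bar i-1,\bar j-1}$. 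They give $(C_2)_{22}-(C_2)_{11}=1+\alpha^2$ and $(C_2)_{13}=\alpha^2$. So the entry is exactly $\alpha$ for $n\ge3$, and $2\alpha$ when $n=2$. The $\alpha^2$-terms cancel outright; no positive polynomial factor survives. This is the $F_n$-mirror of the paper's identities at the $(n,n-1)$ corner; the paper places $G_n$ on $i+j=n+2$.

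\emph{Type~III.} Here your heuristic points the wrong way. In your convention $\eta(I)=\operatorname{diag}(B_+,\overline{B_+})$ with $B_+=(1+|\beta|^2)I+E+\beta L_m+\bar\beta U_m$. Its entries carry $\beta$ itself, not only $|\beta|^2$ and $\operatorname{Re}\beta$. The corner entry of the commutator is not proportional to $\operatorname{Im}\beta$. The $(1,2)$ entry of the top block is $\bar\beta$, or $2\bar\beta$ when $m=2$; the paper, in its convention, finds $-\beta$. So the lemma uses only $\beta\neq0$; the non-reality of $\beta$ plays no role here.

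The cleanest way to see this, and to avoid the Type~III bookkeeping you call the main obstacle, is the diagonal unitary gauge $\mathcal Q$ from the proof of Theorem~\ref{thm:typeIII-scalar-law-caseII}, with the phase sign adapted to your placement of $G_m$. For block-diagonal $X$ it gives $\eta_{\mathrm{III}}(\mathcal QX\mathcal Q^*)=\mathcal Q\,\widehat\eta(X)\,\mathcal Q^*$. Hence $\eta_{\mathrm{III}}^k(I)=\mathcal Q(C_k\oplus C_k)\mathcal Q^*$ for $k=1,2$, where $C_k$ are the Type~II matrices at $\alpha=|\beta|$. The Type~III commutator is then unitarily conjugate to $[C_1,C_2]\oplus[C_1,C_2]$, which is nonzero by the Type~II case.

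Your trace-identity fallback, computed only at $m=2$, would not cover general $m$, and it is unnecessary.
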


Proof is in Appendix \ref{sec:no-MASA}. 

\begin{remark}[Variance-profile reading of Lemma~\ref{lem:no-MASA}]
\label{rem:no-MASA-meaning}
The scalar law of $S$ is the law of a variance profile in some orthonormal basis
exactly when $\eta$ leaves a maximal abelian subalgebra $\mathcal A$ invariant: in
a basis diagonalizing $\mathcal A$ the completely positive map $\eta$ restricts to
a nonnegative profile $S=(s_{ij})$ on the diagonal, whose entries solve the
associated vector Dyson equation (cf.\ Remark~\ref{rem:typeI-KR} and
\cite{MR4901646}). Lemma~\ref{lem:no-MASA} shows this fails for Types~II and~III:
their scalar laws are variance-profile laws in \emph{no} basis. In particular the
diagonal reduction that places Type~I inside the Kr\"uger--Renfrew framework
(Remark~\ref{rem:typeI-KR}) has no Type~II/III analogue, which is why it is the
leading-order analysis, rather than a diagonal compression, that transfers to
these cells.
\end{remark}

\subsection{Direct sums and dominant singularities}
\begin{propo}
\label{prop:direct_sums}
Let $A \in M_{n_1}(\bC)\otimes \AA$ and $B \in M_{n_2}(\bC)\otimes \AA$ be self-adjoint, and let
\[
C=A\oplus B=
\begin{pmatrix}
A&0\\
0&B
\end{pmatrix}
\in M_{n_1+n_2}(\bC)\otimes \AA.
\]
Let $\mu_A,\mu_B,\mu_C$ be their scalar spectral measures (with respect to the normalized matrix traces). Then
\[
\mu_C
=
\frac{n_1}{n_1+n_2}\mu_A
+
\frac{n_2}{n_1+n_2}\mu_B.
\]
In particular, if $\mu_A$ and $\mu_B$ have densities $f_A$ and $f_B$, then $\mu_C$ has density
\[
f_C(x)=\frac{n_1}{n_1+n_2}f_A(x)+\frac{n_2}{n_1+n_2}f_B(x)
\]
for almost every $x$.
\end{propo}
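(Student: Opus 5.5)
The plan is to compute moments, or equivalently resolvents, directly in the block decomposition. Write $\tau_m=\tr_m\otimes\varphi$ for the normalized trace on $M_m(\bC)\otimes\AA$. The scalar spectral measure $\mu_X$ of a self-adjoint $X$ is characterized by $\int g\,d\mu_X=\tau(g(X))$ for bounded Borel (or just polynomial/continuous) $g$. I would work with the scalar Cauchy transforms $H_X(z)=\tau\bigl((z-X)^{-1}\bigr)$, $z\in\bC^+$, since a finite measure on $\bR$ is determined by its Cauchy transform.

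\textbf{Step 1.} Since $C$ is block diagonal, so is $zI-C$. Its inverse is therefore $(zI_{n_1}-A)^{-1}\oplus(zI_{n_2}-B)^{-1}$; the same holds for polynomials $p(C)=p(A)\oplus p(B)$.

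\textbf{Step 2.} The unnormalized matrix trace is additive over diagonal blocks: $\Tr_{n_1+n_2}(X\oplus Y)=\Tr_{n_1}X+\Tr_{n_2}Y$. Apply $\varphi$ and divide by $n_1+n_2$. This gives
\[
H_C(z)=\frac{n_1}{n_1+n_2}H_A(z)+\frac{n_2}{n_1+n_2}H_B(z).
\]
All three measures are compactly supported probability measures (the elements are bounded). The right-hand side is the Cauchy transform of the convex combination, so Stieltjes inversion, or uniqueness of the Cauchy transform, yields the identity of measures.

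\textbf{Step 3.} For the density statement, the absolutely continuous parts combine linearly. Since Lebesgue decomposition is unique, the density of $\mu_C$ is the stated convex combination almost everywhere. Alternatively one can read this off from \eqref{equ_Stieltjes} pointwise wherever the limits exist.

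There is no real obstacle. The only point needing care is that the normalized trace on the big block picks up the weights $n_i/(n_1+n_2)$ rather than $1/2$, and that $\varphi$ commutes with taking diagonal blocks, which is immediate because $\mathbb{E}=\id\otimes\varphi$ acts entrywise.
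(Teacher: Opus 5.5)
Your proposal is correct and follows essentially the same route as the paper: block-diagonal resolvent, additivity of the unnormalized trace giving the weighted convex combination of scalar Cauchy transforms, and uniqueness of the Cauchy transform. Your Step~3 appeal to uniqueness of the Lebesgue decomposition is slightly more careful than the paper's closing remark, since it also covers the case where $\mu_A,\mu_B$ only have absolutely continuous parts; otherwise the two arguments coincide.
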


\begin{proof}
For $z\in\C^+$,
\[
(zI-C)^{-1}
=
\begin{pmatrix}
(zI-A)^{-1}&0\\
0&(zI-B)^{-1}
\end{pmatrix}.
\]
Applying the conditional expectation $id\otimes \phi$ gives
\[
G_C(z)=G_A(z)\oplus G_B(z).
\]
Now take the normalized trace on $M_{n_1+n_2}(\bC)$:
\[
H_C(z)
=
\tr_{n_1+n_2}(G_C(z))
=
\frac{1}{n_1+n_2}\Big(\Tr(G_A(z))+\Tr(G_B(z))\Big).
\]
Since
\[
H_A(z)=\tr_{n_1}(G_A(z))=\frac{1}{n_1}\Tr(G_A(z)),
\qquad
H_B(z)=\tr_{n_2}(G_B(z))=\frac{1}{n_2}\Tr(G_B(z)),
\]
we obtain
\[
H_C(z)
=
\frac{n_1}{n_1+n_2}H_A(z)
+
\frac{n_2}{n_1+n_2}H_B(z).
\]
Therefore the scalar Cauchy transform of $\mu_C$ is the same convex combination of the scalar Cauchy transforms of $\mu_A$ and $\mu_B$, which implies
\[
\mu_C
=
\frac{n_1}{n_1+n_2}\mu_A
+
\frac{n_2}{n_1+n_2}\mu_B.
\]
If $\mu_A$ and $\mu_B$ are absolutely continuous with densities $f_A$ and $f_B$, then the same is true for $\mu_C$, with density
\[
f_C(x)=\frac{n_1}{n_1+n_2}f_A(x)+\frac{n_2}{n_1+n_2}f_B(x).
\]
\end{proof}

\subsection{Reduction of the classification problem}
\label{sec:classification_reduction}

\begin{propo}\label{prop:reduction-to-cells}
Let $A=A_1x_1+A_2x_2$ be a Hermitian binary pencil whose matrix semicircular
element $S_A=A_1\otimes s_1+A_2\otimes s_2\in M_n(\bC)\otimes\AA$ is full, and let
$C\in M_n(\bC)$ be invertible with
\[
  C A C^{*}=B=\bigoplus_{i=1}^{k}B^{(i)}
\]
the Lancaster--Rodman canonical form, $B^{(i)}$ an indecomposable cell of size $n_i$,
$\sum_i n_i=n$. For each cell let $\alpha_i\in[0,1)$ be the order of the singularity of
$\mu_{S_{B^{(i)}}}$ at $0$, i.e.\ its leading Puiseux exponent is $-\alpha_i$. Then
$\mu_{S_A}$ has no atom at $0$, and its leading Puiseux exponent there equals
\[
  -\alpha_{*},\qquad \alpha_{*}:=\max_{1\le i\le k}\alpha_i .
\]
Thus the singularity of $\mu_{S_A}$ at $0$ is governed by the most singular cell --- the
one of largest $\alpha_i$, equivalently smallest (most negative) leading exponent
$-\alpha_i$ --- and $\alpha_i$ depends only on the type and size of $B^{(i)}$, not on its
sign characteristic. Consequently, classifying the singularity at $0$ of an arbitrary
Hermitian binary pencil reduces to computing $\alpha_i$ for each indecomposable canonical cell.
\end{propo}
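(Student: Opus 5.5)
The plan is to chain three earlier results: congruence invariance (Theorem~\ref{thm:covariance-scaling-singularity-exponent}), the direct-sum formula (Proposition~\ref{prop:direct_sums}), and the regular/full equivalence (Proposition~\ref{prop:regular-full}).

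\emph{Step 1: reduce to the canonical form.} Put $B_i:=CA_iC^*$. Then $\eta_B(X)=C\,\eta_A(C^*XC)\,C^*$, so $\eta_A$ and $\eta_B$ are symmetrically scalable. By Proposition~\ref{prop:regular-full}, fullness of $S_A$ means $\det(A_1x_1+A_2x_2)\not\equiv0$. Since $\det(CAC^*)=|\det C|^2\det A$, the pencil $B$ is also regular, hence full, and neither $\mu_{S_A}$ nor $\mu_{S_B}$ has an atom at $0$.

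Regularity passes to each cell, because $\det B=\prod_i\det B^{(i)}$. So every $S_{B^{(i)}}$ is full and atomless at $0$. By Proposition~\ref{prop:entrywise-algebraicity}, each $f_{B^{(i)}}$ has a Puiseux expansion at $0$ on both sides, which gives a well-defined leading exponent $-\alpha_i$ with nonnegative leading coefficients $c^{(i)}_\pm$.

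\emph{Step 2: take the maximum over cells.} Since $S_B=\bigoplus_i S_{B^{(i)}}$, iterating Proposition~\ref{prop:direct_sums} gives
\[
f_B=\sum_i \frac{n_i}{n}\,f_{B^{(i)}} .
\]
All cells with $\alpha_i=\alpha_*$ contribute leading terms $\frac{n_i}{n}c^{(i)}_\pm|x|^{-\alpha_*}$. These coefficients are nonnegative, because $f\ge0$ forces the leading Puiseux coefficient on each side to be $\ge0$. Each such cell has $c^{(i)}_++c^{(i)}_->0$, so the sums cannot cancel and have positive total. The remaining cells are $o(|x|^{-\alpha_*})$. Hence $f_B$ has leading exponent $-\alpha_*$ with $c_{B,+}+c_{B,-}>0$.

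Applying Theorem~\ref{thm:covariance-scaling-singularity-exponent} with the roles of $A$ and $B$ exchanged (the scalability relation is symmetric via $C^{-1}$, and $S_B$ is full) transfers this exponent to $f_A$.

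\emph{Step 3: independence from the sign characteristic.} A cell with sign $-1$ is $-B^{(i)}$. The element $-S$ has the same covariance map as $S$ (indeed $s_j\mapsto -s_j$ preserves the free semicircular family). Therefore its scalar density equals that of $S$, since the density is determined by $\eta$. So $\alpha_i$ depends only on type, size and parameters, and the classification reduces to individual cells.

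\emph{Main obstacle.} The delicate point is the nonnegativity and non-cancellation of leading coefficients in Step 2, together with the hypothesis format of Theorem~\ref{thm:covariance-scaling-singularity-exponent}, which needs $c_{+}+c_{-}>0$ for the source pencil. I would handle this by noting that a Puiseux expansion of a nonnegative function has a nonnegative leading coefficient on each side. Taking for each cell the minimal exponent with nonzero coefficient, as in Lemma~\ref{lem:poisson-to-puiseux}, guarantees positivity of $c^{(i)}_++c^{(i)}_-$.

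A secondary check is the case $\alpha_*=0$, where all cells are regular. This fits the same argument provided ``order $0$'' means a bounded density with nonzero value at $0$. If a density could vanish at $0$, a positive exponent would appear, so I would restrict the statement to leading exponents $\le0$, as Lemma~\ref{lem:poisson-to-puiseux} does.
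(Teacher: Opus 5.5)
Your proposal is correct and follows essentially the paper's proof. It uses the same three ingredients: congruence invariance via Theorem~\ref{thm:covariance-scaling-singularity-exponent}, the positive-weight direct-sum law of Proposition~\ref{prop:direct_sums} with nonnegative, non-cancelling leading coefficients, and the symmetry $S\mapsto -S$ for the sign characteristic.

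The differences are minor. You get atomlessness of each cell from $\det B=\prod_i\det B^{(i)}$ and Proposition~\ref{prop:regular-full}, where the paper evaluates the mixture on $\{0\}$. You also apply Theorem~\ref{thm:covariance-scaling-singularity-exponent} from $B$ to $A$ with $b=C^{-1}$, where the paper goes from $A$ to $B$ through the polar decomposition $C=Up$. Your direction has the small merit that the theorem's hypothesis is verified for $f_B$ rather than presupposed for $f_A$.
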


\begin{proof}
Write $S_B=\sum_j B_j\otimes s_j$ for the matrix semicircular element of $B$; since
$B=CAC^{*}$ we have $S_B=(C\otimes I)\,S_A\,(C^{*}\otimes I)$.

\emph{Step 1 (congruence invariance of the exponent at $0$).}
Polar-decompose $C=Up$, with $U$ unitary and $p=(C^{*}C)^{1/2}\succ0$, and set
$S_p=(p\otimes I)\,S_A\,(p\otimes I)$. Then $S_B=(U\otimes I)\,S_p\,(U^{*}\otimes I)$, so
$S_B$ and $S_p$ are unitarily conjugate; hence $\mu_{S_B}=\mu_{S_p}$, with identical local
behaviour at every point. The covariance map of $S_p$ is
\[
  \eta_{S_p}(X)=\sum_i (pA_ip)\,X\,(pA_ip)=p\,\eta_A(pXp)\,p,
\]
the symmetric scaling of $\eta_A$ by the positive invertible $p$. As $S_A$ is full and is a
matrix semicircle (so $f_A$ has algebraic singularity with a leading exponent in $[0,1)$ by
Lemma~\ref{lem:poisson-to-puiseux}), Theorem~\ref{thm:covariance-scaling-singularity-exponent}
applies and gives that $\mu_{S_p}$ has the same leading exponent at $0$ as $\mu_{S_A}$, and no
atom there. Therefore $\mu_{S_B}$ and $\mu_{S_A}$ share their leading Puiseux exponent at $0$,
and $\mu_{S_B}$ has no atom at $0$.

\emph{Step 2 (the direct sum is governed by its most singular cell).}
Iterating Proposition~\ref{prop:direct_sums} over the $k$ blocks,
\[
  \mu_{S_B}=\sum_{i=1}^{k}\frac{n_i}{n}\,\mu_{S_{B^{(i)}}},\qquad \frac{n_i}{n}>0 .
\]
Evaluating on $\{0\}$ gives $0=\mu_{S_B}(\{0\})=\sum_i\frac{n_i}{n}\,\mu_{S_{B^{(i)}}}(\{0\})$, a
sum of nonnegative terms; hence each cell is itself atom-free at $0$, so each $\alpha_i\in[0,1)$
is well defined by Lemma~\ref{lem:poisson-to-puiseux}. Writing $f_i$ for the density of
$\mu_{S_{B^{(i)}}}$,
\[
  f_i(x)=c_{i,+}\,x^{-\alpha_i}+o(x^{-\alpha_i}),\quad
  f_i(-x)=c_{i,-}\,x^{-\alpha_i}+o(x^{-\alpha_i})\qquad(x\downarrow0),
\]
with $c_{i,\pm}\ge0$ and $c_{i,+}+c_{i,-}>0$, the densities add: $f_B=\sum_i\frac{n_i}{n}f_i$.
Set $\alpha_*=\max_i\alpha_i$ and $I_*=\{i:\alpha_i=\alpha_*\}$. The cells with $\alpha_i<\alpha_*$
contribute $o(x^{-\alpha_*})$, whence
\[
  f_B(\pm x)=\Bigl(\sum_{i\in I_*}\frac{n_i}{n}\,c_{i,\pm}\Bigr)x^{-\alpha_*}+o(x^{-\alpha_*})
  \qquad(x\downarrow0).
\]
Each leading coefficient is a sum of nonnegative terms with strictly positive weights, so no
cancellation can occur; moreover $\sum_{i\in I_*}\frac{n_i}{n}(c_{i,+}+c_{i,-})>0$, so the two
coefficients are not both zero. Hence the leading Puiseux exponent of $\mu_{S_B}$ at $0$ is
exactly $-\alpha_*$. Combined with Step~1, $\mu_{S_A}$ has leading exponent $-\alpha_*$, as claimed.

Finally, the sign $\delta$ (Type~I) or $\eta$ (Type~II) multiplies the whole cell, replacing
$S_{B^{(i)}}$ by $-S_{B^{(i)}}$; since $(s_j)$ is a symmetric family, $-S_{B^{(i)}}$ has the same
distribution, so $\alpha_i$ is independent of the sign characteristic.
\end{proof}
 

\Part{Singularities of canonical binary cells}

\section{Type I cells: the chain model} 
\label{sec:type_I}

We start to analyze the binary matrix semicircles and we start with the cells that have Type I in Lancaster-Rodman classification: $A_n = F_n x_1 + G_n x_2$. 

\begin{theo}[Type~I cells — explicit density at the origin]
\label{theo:typ_1_cells_sharp}
For the matrix semicircle $S_n = F_n\otimes s_1 + G_n\otimes s_2$,
the scalar density $f$ satisfies
\[
f(x)\;=\;\frac{\sin\!\left(\pi/(n+1)\right)}{\pi n}\,
         |x|^{-(n-1)/(n+1)}
   + o\!\left(|x|^{-(n-1)/(n+1)}\right)
   \quad (|x|\downarrow 0).
\]
\end{theo}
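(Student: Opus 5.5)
The plan is to exploit the fact that for Type~I cells the covariance map $\eta(X)=F_nXF_n+G_nXG_n$ leaves the diagonal subalgebra invariant, as in Example~\ref{exa:cusp_singularity}. Conjugation by $F_n$ reverses a diagonal, and conjugation by $G_n$ reverses the first $n-1$ entries and kills the last one. For $u>0$ the solution $W(u)\succ0$ of \eqref{eq:HMS} is unique and strictly accretive, and the diagonal subalgebra is invariant. I would therefore solve \eqref{eq:HMS} inside diagonal matrices and invoke uniqueness to conclude $W(u)=\diag(w_1,\dots,w_n)$. With the convention $w_0:=0$, the entries then satisfy the chain system
\[
w_i\bigl(w_{n+1-i}+w_{n-i}+u\bigr)=1,\qquad i=1,\dots,n .
\]
This is a three-term recurrence coupling the index $i$ with its reflections $n+1-i$ and $n-i$. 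After relabelling along the zigzag $1,n,2,n-1,\dots$, it becomes a one-dimensional chain of McMillan/discrete Painlev\'e~I type with boundary conditions at both ends. This relabelling is what makes the exponents computable.

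The second step is a dominant-balance (Newton polygon) analysis. I would make the ansatz $w_i\sim\kappa_i\,u^{e_i}$ and set $t:=u^{1/(n+1)}$. Pairing the equation for $i$ with the one for $n+1-i$ shows that the products $w_iw_{n+1-i}$ are of order one. The boundary equation at $i=n$, namely $w_n(w_1+u)=1$, is the analogue of $a(d+u)=1$ in the $n=2$ example. Propagating the balances along the zigzag chain should force the exponents to be
\[
e_i\in\Bigl\{-\tfrac{n-1}{n+1},-\tfrac{n-3}{n+1},\dots,\tfrac{n-1}{n+1}\Bigr\},
\]
with exactly one entry carrying the most singular exponent $-(n-1)/(n+1)$ and with leading coefficient $\kappa=1$. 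Two checks support this. For $n=2$ it reproduces $a\sim u^{-1/3}$. Moreover, $\kappa=1$ is exactly what the constant in the statement requires, because
\[
\Re\,(-ix)^{-(n-1)/(n+1)}=\cos\!\Bigl(\tfrac{\pi}{2}\cdot\tfrac{n-1}{n+1}\Bigr)|x|^{-(n-1)/(n+1)}=\sin\!\Bigl(\tfrac{\pi}{n+1}\Bigr)|x|^{-(n-1)/(n+1)},
\]
and the normalized trace contributes the factor $1/n$. To make the ansatz rigorous, I would substitute $w_i=t^{(n+1)e_i}v_i$. This yields a polynomial system $\Phi(v,t)=0$ that is regular at $t=0$. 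I would then show that the limit system $\Phi(v,0)=0$ has a unique positive solution, with all $v_i^{(0)}=1$ or explicitly computable, and that its Jacobian is invertible there. The holomorphic implicit function theorem then gives a convergent expansion $v(t)=v^{(0)}+O(t)$. Positivity of $W(u)$ for $u>0$, together with uniqueness, identifies this branch with the true solution.

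The final step is to transfer the expansion to the imaginary axis. Since $v(t)$ is holomorphic near $t=0$ and $W$ is algebraic (Proposition~\ref{prop:entrywise-algebraicity}), the Puiseux expansion
\[
\tr W(u)=\tfrac1n\,u^{-(n-1)/(n+1)}\bigl(1+O(u^{1/(n+1)})\bigr)
\]
continues analytically to the closed right half-plane minus $0$, with the principal branch. Evaluating at $u=-ix$ and applying $f(x)=\frac1\pi\Re\tr W(-ix)$ gives the formula on both sides of the origin.

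I expect the main obstacle to be the second step: establishing the exponent pattern and the nondegeneracy of the limiting system for general $n$. The naive leading-order system is degenerate, since many products balance to $1$ simultaneously. Its solution is pinned down only after using the conserved biquadratic of the McMillan map, or a cancellation in the two-step recurrence, to eliminate the intermediate variables. My first attempt would be to iterate the recurrence explicitly from the $i=n$ end, expressing each $w_i$ as a rational function of $w_1$ and $u$. This should reduce the whole system to one scalar polynomial equation in $w_1$ whose Newton polygon has a single relevant edge of slope $(n-1)/(n+1)$, generalizing the cubic \eqref{eq:cusp_cubic}.
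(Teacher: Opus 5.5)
Your outer steps are sound and match the paper: the diagonal reduction by invariance plus uniqueness, and the final transfer from $\tr W(u)\sim\frac1n u^{-(n-1)/(n+1)}$ to the density via Puiseux continuation and $f(x)=\frac1\pi\Re\tr W(-ix)$. The gap is in the middle step, and it is structural rather than technical. With $w_i=t^{\,n+1-2i}v_i$, the $i$-th equation becomes
\[
v_iv_{n+1-i}+t^{2}v_iv_{n-i}+t^{2(n+1-i)}v_i=1 .
\]
So at $t=0$ the equations for $i$ and for $n+1-i$ reduce to the same relation $v_iv_{n+1-i}=1$. The positive solutions of $\Phi(v,0)=0$ therefore form a $\lfloor n/2\rfloor$-parameter family: $v_i>0$ is free for $i\le n/2$ and $v_{n+1-i}=1/v_i$. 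At every point of this family the Jacobian has rank only $\lceil n/2\rceil$. There is neither a unique positive solution nor an invertible Jacobian, so the implicit function theorem cannot be applied as you plan.

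Worse, the constant you need is invisible at this order: $\tr W(u)\sim\frac1n v_n^{(0)}u^{-(n-1)/(n+1)}$, with $v_n^{(0)}$ arbitrary on the family. Your $\kappa=1$ comes from matching the constant you are trying to prove, which is circular. Likewise, positivity in the paired equations gives only $w_iw_{n+1-i}\le 1$, not that this product is of order one.

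What is missing is a mechanism that selects a point of this family, that is, the next-order solvability conditions. The paper supplies this in the chain variables $y_k$, in four moves:
\begin{itemize}
\item It takes the existence of leading Puiseux terms $y_k\sim c_ku^{\beta_k}$, with $|\beta_k|<1$, from Kr\"uger--Renfrew.
\item The McMillan invariant $H(x,y)=(xy-1)(x+u)(y+u)+u^2$ equals $-uy_1$ along the orbit. This forces $\beta_{k-1}+\beta_k\in\{0,\,1+\beta_1\}$ for every consecutive pair.
\item In $y_{2m+1}=1/y_{2m}-u-y_{2m-1}$, the term $1/y_{2m}$ cancels exactly against $y_{2m-1}$. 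This rules out two consecutive balances of the first kind.
\item The terminal condition $y_{n+1}=y_n$, via the endpoint identity, gives $\beta_1=-(n-1)/(n+1)$ and $c_1^{n+1}=1$, hence $c_1=1$ by positivity.
\end{itemize}

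You mention the invariant and the cancellation only as a hope. Your fallback is to eliminate down to one polynomial in $w_1$ and read off its Newton polygon, but this is not carried out. The degrees of the McMillan iterates grow with $n$, so locating the relevant edge and the positive branch for general $n$ is the entire difficulty, not a routine step.

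If you want to keep the implicit-function route, you need a Lyapunov--Schmidt reduction onto the degenerate family. The reduced equation must be divided by $t^2$ before the implicit function theorem can be applied. This is the paper's Type~II machinery: at $\alpha=0$ it is exactly this cell, and it yields the base point $v^{(0)}=(1,\dots,1)$ (Remark~\ref{rem:alpha-zero-IFT}).
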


We will give a proof of Theorem \ref{theo:typ_1_cells_sharp} in Section \ref{sec:proof_thm_case_I}. We start by analyzing the matrix Cauchy transform of $S_n$ in more detail.  

\subsection{Diagonal reduction and the chain equations} 
\label{sec:diagonal_reduction}

Let $\eta$ denote the covariance map of $S_n$. Note that for a diagonal matrix $D = \diag(d_1, \ldots, d_n)$, we have
\begin{align*}
F_n D F_n &= \diag(d_n, \ldots, d_1), \\
G_n D G_n &=  \diag(d_{n-1}, d_{n - 2}, \ldots, d_1, 0).
\end{align*}
Hence $\eta$ preserves the sub-algebra of diagonal matrices, 
\[
\eta(D) = \diag(d_n + d_{n - 1}, d_{n-1} + d_{n - 2}, \ldots, d_2 + d_1, d_1).
\]
Since we can start the iterative solution from the diagonal initial $W_0(u)$, it follows that the accretive solution of Speicher's equation \eqref{eq:HMS} is diagonal. 

Let the solution $W(u) = \diag\big(w_1(u), w_2(u), \ldots, w_n(u)\big)$. Then we have the system of equations: 
\begin{align*}
\Big(w_n(u) + w_{n - 1}(u) + u\Big) w_1(u) = 1, \\
\Big(w_{n-1}(u) + w_{n - 2}(u) + u\Big) w_2(u) = 1, \\
\ldots  \\
\Big(w_{2}(u) + w_{1}(u) + u\Big) w_{n - 1}(u) = 1, \\
\Big(w_1(u) + u\Big) w_n(u) = 1.
\end{align*} 

Let $y_1 = w_n$, $y_2 = w_1$, $y_3 = w_{n - 1}$, $y_4 = w_2$, \ldots. Formally, 
set
\[
y_{2m-1}(u)=w_{n+1-m}(u),\qquad
y_{2m}(u)=w_m(u),
\]
whenever these indices are defined. Then
\[
(y_1,y_2,y_3,y_4,\dots)=(w_n,w_1,w_{n-1},w_2,\dots),
\]
and the system for the functions $y_k (u)$'s is
\begin{align}
\label{eq:chain}
y_1\big(u+y_2\big)&=1,\\
\notag
&\ldots \\
\notag
y_k\big(u+y_{k-1}+y_{k+1}\big)&=1,\qquad 2\le k\le n-1,\\
\notag
&\ldots \\
\notag
y_n\big(u+y_{n-1}+y_n\big)&=1.
\end{align}
Observe that the right boundary equation involves $y_n$ quadratically, reflecting the self-coupling at the midpoint of the chain.

We are interested in the behavior of
each $y_k(u)$ as $u \downarrow 0$. 


\subsubsection*{Reduction to a two-step recurrence}

Adopt the conventions $y_0 := 0$ and $y_{n+1} := y_n$. Dividing each equation
in~\eqref{eq:chain} by the corresponding $y_k$ and solving for the successor
gives the uniform rewrite
\begin{equation}
\label{eq:rec}
y_{k+1} \;=\; \frac{1}{y_k} - u - y_{k-1}, \qquad k=1,\dots,n,
\end{equation}
subject to the boundary conditions
\begin{equation}
\label{eq:bc}
y_0 = 0, \qquad y_{n+1} = y_n.
\end{equation}
Indeed, for $k=1$ \eqref{eq:rec} reduces to $y_2 = 1/y_1 - u$, which is the
first equation of~\eqref{eq:chain}; for $2 \le k \le n-1$ it is the generic
equation; and for $k=n$, the condition $y_{n+1} = y_n$ is exactly
$y_n = 1/y_n - u - y_{n-1}$, i.e.\ the last equation of~\eqref{eq:chain}.

Equation~\eqref{eq:rec} is the autonomous discrete Painlev\'e\,I map (also
known as the McMillan map),
\[
T:(x,y)\longmapsto\bigl(y,\,\tfrac{1}{y}-u-x\bigr),
\]
with $u$ playing the role of a parameter. The system~\eqref{eq:chain} is thus
the orbit of $T$ starting from $(y_0,y_1) = (0,y_1)$ and terminating on the
diagonal $\{y_{n+1}=y_n\}$.

\begin{remark}[Type~I as a variance profile]
\label{rem:typeI-KR}
The diagonal reduction places Type~I inside the variance-profile framework of~\cite{MR4901646}. The compression of $\eta$ to the diagonal
subalgebra is the symmetric nonnegative profile $S=(s_{ij})$ with
$s_{ij}=|(F_n)_{ij}|^2+|(G_n)_{ij}|^2=\mathbf 1[i+j=n+1]+\mathbf 1[i+j=n]$,
supported on two adjacent antidiagonals, and the diagonal entries of $G_S$ solve
the associated vector Dyson equation. This $S$ has support but not total support, so by~\cite[Prop.~2.1]{MR4901646}
the scalar density blows up at the origin, and by~\cite[Thm.~2.8]{MR4901646}
the \emph{singularity degree} equals $\sigma=\ell/(\ell+2)$, with $\ell$ the
longest increasing chain in the zero pattern of $S$; here $\ell=n-1$, giving
$\sigma=(n-1)/(n+1)$ as in Theorem~\ref{theo:typ_1_cells_sharp}. Thus the exponent
is not new, and the convergent fractional-power expansion of the $y_k$ may be
taken from~\cite[Prop.~4.1]{MR4901646}. 
The same exponent was obtained independently and contemporaneously by~\cite{kolupaiev2021}, whose Assumption~1.1 ($s_{ij}>0$ for
$i+j\in\{n,n+1\}$ and $s_{ij}=0$ for $i+j\ge n+2$) is exactly this critical
antidiagonal staircase: by a direct asymptotic argument he derives
$\rho(E)\sim|E|^{-(n-1)/(n+1)}$ together with the componentwise asymptotics
$m_k(z)\,z^{-(1-\frac{2k}{n+1})}\to c_k\,e^{\,i\pi k/(n+1)}$. As in~\cite{MR4901646},
the leading amplitudes $c_k>0$ there are pinned only implicitly, as the unique
solution of a log-linear system, so neither reference supplies the sharp constant
of Theorem~\ref{theo:typ_1_cells_sharp}.

What the chain~\eqref{eq:chain} adds is
its integrable structure---it is an orbit of the autonomous discrete
Painlev\'e~I (McMillan) map (\S\ref{sec:motion_integral})---which we use to
obtain the \emph{sharp leading constant} in closed form: the proof of
\cite[Thm.~2.8]{MR4901646} reduces this constant to a positive amplitude that it
leaves undetermined, and the McMillan structure evaluates that amplitude, giving
the value in Theorem~\ref{theo:typ_1_cells_sharp}.
The reduction itself is special to Type~I: Types~II and~III preserve no maximal
abelian subalgebra, and it is the leading-order method, not this remark, that
transfers to them.
\end{remark}


\subsection{Integral of motion and endpoint identity} 
\label{sec:motion_integral}

\begin{propo}[QRT/McMillan invariant]\label{prop:invariant}
The function
\[
H(x,y) \;=\; (xy-1)(x+u)(y+u) + u^2
\]
is invariant under $T$: $H(T(x,y)) = H(x,y)$.
\end{propo}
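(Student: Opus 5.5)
The plan is a direct substitution argument; no earlier result is needed beyond the definition of the map $T$ in \eqref{eq:rec}. Write $z:=\tfrac1y-u-x$ for the second coordinate of $T(x,y)$, so that $T(x,y)=(y,z)$, and work on the open set $y\neq0$ where $T$ is defined. We need
\[
H(y,z)=(yz-1)(y+u)(z+u)+u^2 \;=\; (xy-1)(x+u)(y+u)+u^2=H(x,y).
\]
The term $u^2$ and the factor $(y+u)$ appear on both sides. It therefore suffices to show
\[
(yz-1)(z+u)=(xy-1)(x+u).
\]

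The key step is to simplify the two factors of $H(y,z)$ that involve $z$, using only the defining relation $yz=1-uy-xy$.

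\emph{First factor.} From this relation,
\[
yz-1=-y(u+x).
\]

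\emph{Second factor.}
\[
z+u=\frac1y-x=\frac{1-xy}{y}.
\]

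Multiplying the two factors, the $y$ in the first cancels the $1/y$ in the second:
\[
(yz-1)(z+u)=-(u+x)(1-xy)=(xy-1)(x+u).
\]
This is exactly the required identity, so $H(T(x,y))=H(x,y)$.

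There is no real obstacle here: the computation is a two-line cancellation, and the only thing to check is the bookkeeping of signs. It is also worth recording, for later use with the orbit starting at $(y_0,y_1)=(0,y_1)$, that the identity holds for every $u$ and every $(x,y)$ with $y\ne0$. In particular $H$ is constant along each orbit of \eqref{eq:rec}, and at the initial point it equals
\[
H(0,y_1)=-u(y_1+u)+u^2=-uy_1 .
\]
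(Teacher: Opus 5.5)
Your proposal is correct and is essentially the paper's proof: both use the defining relation $y(u+x+z)=1$ to rewrite $yz-1=-y(x+u)$ and $y(z+u)=-(xy-1)$, after which the factor $y$ cancels and $H(y,z)=H(x,y)$ follows. Your explicit restriction to $y\neq0$ and the evaluation $H(0,y_1)=-uy_1$ also match what the paper uses immediately afterwards.
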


\begin{proof}
Write $T(x,y) = (y,z)$, $z = 1/y - u - x$. The defining relation
$y(u+x+z)=1$ gives $yz - 1 = -y(x+u)$ and $y(z+u) = 1 - xy = -(xy-1)$. Hence
\begin{align*}
(yz-1)(y+u)(z+u)
&= \bigl[-y(x+u)\bigr](y+u)(z+u) 
\\
&= -(x+u)(y+u)\cdot y(z+u)
= (x+u)(y+u)(xy-1),
\end{align*}
which is symmetric in the roles of the two pairs, so $H(y,z) = H(x,y)$.
\end{proof}

The value of $H$ on our orbit is pinned down by the Dirichlet boundary
condition:
\[
H(y_0,y_1) \;=\; H(0,y_1) \;=\; (-1)\cdot u \cdot (y_1+u) + u^2
\;=\; -u\,y_1.
\]
Consequently,
\begin{equation}
\label{eq:orbit-invariant}
(y_{k-1}y_k - 1)(y_{k-1}+u)(y_k+u) \;=\; -u(y_1+u),
\qquad k=1,\dots,n.
\end{equation}


\subsubsection*{The endpoint identity} 

\begin{propo}[Endpoint identity]\label{prop:endpoint}
Any solution $(y_1,\dots,y_n)$ of~\eqref{eq:chain} satisfies
\begin{equation}
\label{eq:endpoint}
(y_n^2-1)(y_n+u)^2 \;=\; -u(y_1+u).
\end{equation}
\end{propo}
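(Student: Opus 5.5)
The plan is to evaluate the conserved quantity of Proposition~\ref{prop:invariant} at the last step of the orbit and feed in the terminal boundary condition $y_{n+1}=y_n$. By Proposition~\ref{prop:invariant}, $H$ is constant along the orbit of $T$. We already computed $H(y_0,y_1)=H(0,y_1)=-u\,y_1$, using the Dirichlet condition $y_0=0$. This yields \eqref{eq:orbit-invariant} for every consecutive pair $(y_{k-1},y_k)$, $k=1,\dots,n$.

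The key observation is that the recurrence \eqref{eq:rec} is valid also for $k=n$, with $y_{n+1}:=y_n$, as noted right after \eqref{eq:bc}. Hence the pair $(y_n,y_{n+1})=T(y_{n-1},y_n)$ still lies on the same orbit, and invariance gives
\[
H(y_n,y_{n+1})=H(y_{n-1},y_n)=\dots=H(0,y_1)=-u\,y_1 .
\]
Substituting $y_{n+1}=y_n$ gives
\[
(y_n^2-1)(y_n+u)^2+u^2=-u\,y_1 .
\]
Equivalently,
\[
(y_n^2-1)(y_n+u)^2=-u\,y_1-u^2=-u(y_1+u),
\]
which is exactly \eqref{eq:endpoint}.

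The only delicate point is bookkeeping. First, we must confirm that the right-hand side of \eqref{eq:orbit-invariant} should read $H-u^2=-u y_1-u^2=-u(y_1+u)$, which agrees with the stated identity. Second, we must check that applying $T$ requires $y_k\neq 0$ along the orbit. This holds because, for $u>0$, the solution is positive by Lemma~\ref{lemma_W_symmetry}. For general solutions of \eqref{eq:chain}, each equation $y_k(\cdots)=1$ already forces $y_k\neq0$. No analytic input is needed; the argument is purely algebraic and holds identically in $u$.
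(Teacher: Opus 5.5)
Your proof is correct and follows the paper's own argument: invariance of $H$ along the orbit gives $H(y_n,y_{n+1})=H(0,y_1)=-u\,y_1$, and substituting the terminal condition $y_{n+1}=y_n$ and expanding yields \eqref{eq:endpoint}. Your additional checks are sound bookkeeping that the paper leaves implicit: $y_k\neq0$ follows from each chain equation, and the right-hand side agrees with \eqref{eq:orbit-invariant}.
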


\begin{proof}
Apply Proposition~\ref{prop:invariant} to the pair $(y_n,y_{n+1}) = (y_n,y_n)$.
Since $H$ is constant along the orbit,
\[
H(y_n,y_n) \;=\; H(y_0,y_1) \;=\; -u\,y_1,
\]
which expands to $(y_n^2-1)(y_n+u)^2 + u^2 = -u y_1$, i.e.\
\eqref{eq:endpoint}.
\end{proof}

Equation \eqref{eq:endpoint} relates the two extreme entries of the chain by a single
polynomial identity, independent of~$n$. Together with
\eqref{eq:orbit-invariant} it provides a very efficient replacement for
iterating~\eqref{eq:rec}: every consecutive pair satisfies the same
algebraic equation, whose right-hand side tends to~$0$ as~$u\downarrow 0$.


\subsection{Puiseux exponents}
\label{sec:puiseux_exponents}

\begin{theo}[Puiseux exponents]
\label{thm:puiseux}
Assume each $y_k(u)$ admits a leading Puiseux behavior
\begin{equation}
\label{eq:ansatz}
y_k(u) \;=\; c_k\,u^{\beta_k}\bigl(1+o(1)\bigr),\qquad u\downarrow 0,
\end{equation}
with nonzero $c_k\in\bC$ and $\beta_k\in\bQ$ with $|\beta_k| < 1$.
Then the exponents are uniquely determined, and
\begin{equation}
\label{eq:exponents}
\boxed{\;\beta_k \;=\; (-1)^k\,\frac{n+1 - 2\lceil k/2\rceil}{n+1},
\qquad k=1,\dots,n.\;}
\end{equation}
Equivalently, writing $k=2m-1$ and $k=2m$,
\[
\beta_{2m-1} \;=\; -\frac{\,n-2m+1\,}{n+1},
\qquad
\beta_{2m} \;=\; \phantom{-}\frac{\,n-2m+1\,}{n+1}.
\]
Moreover, the leading coefficients satisfy
\begin{equation}
\label{eq:coeffs}
c_{2m-1} = c_1^{\,m},\qquad c_{2m} = c_1^{-m},\qquad c_1^{\,n+1}=1.
\end{equation}
\end{theo}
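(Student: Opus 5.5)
The plan is a dominant-balance analysis of the chain~\eqref{eq:chain}. I treat the recurrence~\eqref{eq:rec} as determining all exponents from $\beta_1$, and use the endpoint identity~\eqref{eq:endpoint} together with the boundary condition $y_{n+1}=y_n$ to fix $\beta_1$.

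\emph{Step 1: the first equation.} The first equation is $y_1(u+y_2)=1$. Since $|\beta_2|<1$, the term $u$ is negligible against $y_2$ unless $\beta_2\ge 1$. Hence $y_1y_2\to1$, which gives
\[
\beta_2=-\beta_1,\qquad c_2=c_1^{-1}.
\]

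\emph{Step 2: induction along the chain.} For the generic equation $y_k(u+y_{k-1}+y_{k+1})=1$, I assume inductively that $\beta_{k-1}=-\beta_k$. Then $y_ky_{k-1}\to c_kc_{k-1}$. If $c_kc_{k-1}=1$ (as for $k=2$), the leading terms cancel, and one must look at the full expansion rather than the leading monomial. This cancellation is the mechanism mentioned in the introduction.

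To control it I use the orbit invariant~\eqref{eq:orbit-invariant}. Its right-hand side is $-u(y_1+u)\sim -c_1u^{1+\beta_1}$, which tends to $0$ because $\beta_1>-1$.

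\emph{Step 2a: the pair $(y_1,y_2)$.} Substitute into $(y_1y_2-1)(y_1+u)(y_2+u)$. Suppose $\beta_1<0<\beta_2$; this must be checked, but the opposite sign is excluded by the endpoint step. Then $(y_1+u)(y_2+u)\sim y_1y_2\sim 1$. The invariant therefore gives
\[
y_1y_2-1\sim -c_1u^{1+\beta_1}.
\]

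\emph{Step 2b: the next exponent.} Equation~\eqref{eq:rec} gives
\[
y_3=\frac{1-y_2(u+y_1)}{y_2}=\frac{1-y_1y_2-uy_2}{y_2}.
\]
The numerator is $\sim c_1u^{1+\beta_1}$, since $uy_2=o(u^{1+\beta_1})$ when $\beta_2>\beta_1$. Hence
\[
y_3\sim c_1^2u^{1+2\beta_1},\qquad \beta_3=\beta_1+(1+\beta_1)=-(\beta_2-\beta_1)+\beta_1+\dots
\]
In effect the exponent shifts by $1+\beta_1$ every two steps. Iterating with the same invariant applied to each pair $(y_{2m-1},y_{2m})$ gives
\[
\beta_{2m-1}=\beta_1+(m-1)(1+\beta_1),\qquad \beta_{2m}=-\beta_{2m-1},
\]
together with $c_{2m-1}=c_1^m$ and $c_{2m}=c_1^{-m}$. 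Each step requires checking that the $u$-terms stay subdominant, which holds as long as $|\beta_k|<1$. That is exactly the standing hypothesis.

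\emph{Step 3: the endpoint.} The last equation, equivalently~\eqref{eq:endpoint}, closes the system. It reads
\[
(y_n^2-1)(y_n+u)^2=-u(y_1+u).
\]
The right-hand side is $\sim -c_1u^{1+\beta_1}$. Since $|\beta_n|<1$ and the left side must vanish, the only consistent leading balance is $y_n^2\to1$, forcing $\beta_n=0$. The parity of $n$ then determines $\beta_1$:
\begin{itemize}
\item For $n=2m-1$, the condition $\beta_1+(m-1)(1+\beta_1)=0$ gives $\beta_1=-(n-1)/(n+1)$.
\item For $n=2m$, the condition $-\beta_1-(m-1)(1+\beta_1)=0$ gives the same value.
\end{itemize}
Both cases match~\eqref{eq:exponents}, and the formula for $\beta_k$ follows by substitution. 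On the coefficient side, $y_n\to\pm1$ combined with $c_n=c_1^{\pm\lceil n/2\rceil}$ gives a root-of-unity condition. Refining it using the positivity of $y_k$ for $u>0$, or by matching the boundary equation $y_{n+1}=y_n$ at the next order, should yield $c_1^{n+1}=1$.

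\emph{Main obstacle.} The hardest step is Step 2b. The leading terms cancel there, so $\beta_{k+1}$ is determined by the first correction, and I must justify that the invariant's right-hand side, not some lower-order remainder of $y_{k-1}y_k$, controls it. I would handle this by using~\eqref{eq:orbit-invariant} for each consecutive pair rather than expanding. This turns each cancellation into an exact identity with a known right-hand side. The sign case $\beta_1>0$ must also be ruled out; I would exclude it because it would make the endpoint balance inconsistent with $|\beta_k|<1$.
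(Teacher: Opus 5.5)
\textbf{Main gap: the endpoint step.} Your Step~3 fails. The endpoint identity $(y_n^2-1)(y_n+u)^2=-u(y_1+u)$ does not force $y_n^2\to1$. It admits a second leading balance: $y_n\to0$ with $-c_n^2u^{2\beta_n}\sim -c_1u^{1+\beta_1}$, i.e.\ $2\beta_n=1+\beta_1$ and $c_n^2=c_1$.

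The identity alone cannot choose between the two balances. For $n=3$, your Step~2 relations plus the second balance give $\beta=(-\tfrac13,\tfrac13,\tfrac13)$, which is consistent with the identity at leading order but false. What decides is the last chain equation $y_n^2=1-uy_n-y_{n-1}y_n$, together with the type of the final pair:
\begin{itemize}
\item For $n=2M-1$ odd, the final pair $(y_{2M-2},y_{2M-1})$ has product tending to $0$. Hence $y_n^2\to1$, $\beta_n=0$ and $c_n^2=1$.
\item For $n=2M$ even, the final pair $(y_{2M-1},y_{2M})$ has product tending to $1$. Hence $y_n\to0$, and the \emph{other} balance holds.
\end{itemize}

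Your blanket conclusion $\beta_n=0$ therefore contradicts the formula you are proving, which gives $\beta_n=1/(n+1)$ for even $n$. Your even-case condition $-\beta_1-(m-1)(1+\beta_1)=0$ actually yields $\beta_1=-(m-1)/m=-(n-2)/n$, not $-(n-1)/(n+1)$. For $n=2$ it gives $\beta_1=0$, whereas the entry solving $y_1(u+y_2)=1$ behaves like $u^{-1/3}$ in Example~\ref{exa:cusp_singularity}. So ``gives the same value'' is an arithmetic slip hiding a wrong balance. The correct even computation is $2\beta_{2M}=1+\beta_1$ with $\beta_{2M}=-\beta_1-(M-1)(1+\beta_1)$, which gives $\beta_1=-(2M-1)/(2M+1)$.

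The coefficient step inherits the error. For even $n$, $c_n=c_1^{-n/2}$ and $c_n^2=c_1$ give $c_1^{n+1}=1$, whereas your $y_n\to\pm1$ would give $c_1^{n}=1$. For odd $n$, $c_n=c_1^{(n+1)/2}$ and $c_n^2=1$ give $c_1^{n+1}=1$ at once. You need neither positivity (unavailable anyway, since the $c_k$ may be complex) nor higher-order matching.

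\textbf{Step 2.} Apart from this, your Step~2 is a legitimate variant of the paper's argument. The paper uses a trichotomy of cases and shows by contradiction that a Case~A pair cannot follow a Case~A pair. You instead use the invariant as an exact formula for the deficit,
\[
1-y_{2m-1}y_{2m}=\frac{u(y_1+u)}{(y_{2m-1}+u)(y_{2m}+u)}\sim c_1u^{1+\beta_1},
\]
and read off $y_{2m}y_{2m+1}=(1-y_{2m-1}y_{2m})-uy_{2m}$ directly. This is arguably cleaner, but two points need repair.
\begin{itemize}
\item The subdominance of $uy_{2m}$, i.e.\ $\beta_{2m}>\beta_1$, cannot be deferred to the endpoint step, because that step uses the exponents you are deriving. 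Instead, note that $\beta_{2m}\le\beta_1$ would make $y_{2m+1}=O(u)$, contradicting $|\beta_{2m+1}|<1$. For $n=2$ the sign is settled by the last chain equation as above.
\item The relation $\beta_{2m+2}=-\beta_{2m+1}$ is not covered by your inductive assumption $\beta_{k-1}=-\beta_k$, which fails on the pairs $(y_{2m},y_{2m+1})$. It needs the dominance step $y_{2m+1}y_{2m+2}=1-uy_{2m+1}-y_{2m}y_{2m+1}\to1$, as in the paper's Step~4.
\end{itemize}
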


\begin{coro}
On the positive accretive branch, $c_1 = 1$ and \eqref{eq:coeffs} gives
$c_k = 1$ for all~$k$. Setting $m_k(t) := t^{-p_k}y_k(t^{n+1})$ with
$p_k = (n+1)\beta_k$, the rescaled chain entries therefore satisfy
$m_k(t) \to 1$ as $t \downarrow 0$.
\end{coro}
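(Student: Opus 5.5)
The plan is to combine the positivity of the solution on the real axis with the root-of-unity constraint \eqref{eq:coeffs} of Theorem~\ref{thm:puiseux}. By Lemma~\ref{lemma_W_symmetry}, $W(u)\succ0$ for every $u>0$. The solution is diagonal, so every entry $w_j(u)$, and hence every $y_k(u)$, is real and strictly positive on $(0,\infty)$.

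The first step is to confirm that the hypotheses of Theorem~\ref{thm:puiseux} hold on this branch. By Proposition~\ref{prop:entrywise-algebraicity}, each $w_j$ is algebraic over $\bC(z)$. Since $u=-iz$, it is also algebraic over $\bC(u)$, so it has a convergent Puiseux expansion at $u=0$ along $(0,\infty)$. Its leading term $c_k u^{\beta_k}$ has $c_k\neq0$ and $\beta_k\in\bQ$, because $y_k\not\equiv0$. I would then check the a priori bound $|\beta_k|<1$, in three parts.
\begin{itemize}
\item \emph{Lower bound $\beta_k\ge-1$.} Positivity in \eqref{eq:chain} gives $y_k\le 1/u$.
\item \emph{Upper bound $\beta_k\le 1$.} I would argue by contradiction. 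If some $y_k=O(u)$, then its neighbours in \eqref{eq:rec} would have to blow up like $u^{-1}$ or faster. This can be ruled out with the invariant \eqref{eq:orbit-invariant}, whose right-hand side is $-u(y_1+u)$.
\item \emph{Strictness.} The same invariant, together with the endpoint identity \eqref{eq:endpoint}, should exclude the borderline cases $\beta_k=\pm1$.
\end{itemize}
This verification is the step I expect to be the main obstacle. Alternatively, it can be cited from the convergent fractional-power expansion of \cite[Prop.~4.1]{MR4901646} quoted in Remark~\ref{rem:typeI-KR}.

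Given the ansatz, the leading coefficient is $c_k=\lim_{u\downarrow0}u^{-\beta_k}y_k(u)$. It is a limit of positive reals, so $c_k\ge0$, and since $c_k\neq0$ we get $c_k>0$. In particular $c_1$ is a positive real number with $c_1^{n+1}=1$, which forces $c_1=1$. Then \eqref{eq:coeffs} gives $c_{2m-1}=c_1^m=1$ and $c_{2m}=c_1^{-m}=1$, so $c_k=1$ for all $k$.

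Finally, with $u=t^{n+1}$ and $p_k=(n+1)\beta_k$, we have $t^{-p_k}=u^{-\beta_k}$. Hence $m_k(t)=u^{-\beta_k}y_k(u)\to c_k=1$ as $t\downarrow0$, because $u\downarrow0$ exactly when $t\downarrow0$.
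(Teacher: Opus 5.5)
Your proof is correct and is essentially the paper's argument. Positivity of the diagonal solution makes each $c_k=\lim_{u\downarrow0}u^{-\beta_k}y_k(u)$ a positive real, so $c_1^{n+1}=1$ forces $c_1=1$. Hence $c_k=1$ for all $k$ and $m_k(t)=u^{-\beta_k}y_k(u)\to1$, with the Puiseux hypothesis (including $|\beta_k|<1$) supplied by \cite[Prop.~4.1]{MR4901646}, exactly as in Step~1 of the proof of Theorem~\ref{theo:typ_1_cells_sharp}.

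One caveat: your self-contained sketch of $|\beta_k|<1$ via \eqref{eq:orbit-invariant} is not actually carried out. Borderline balances such as $\beta_{k-1}=-1$, $\beta_k=1$ are not excluded by a leading-order comparison alone, so it is the citation, not the sketch, that closes that step.
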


\begin{proof}[Proof of Theorem \ref{thm:puiseux}]
All asymptotic relations are as $u\downarrow 0$.
The proof uses two tools in alternation: the orbit
invariant~\eqref{eq:orbit-invariant}, which constrains
consecutive pairs, and the recurrence~\eqref{eq:rec}, which
propagates exponents forward. Together they produce a
two-step induction whose key mechanism is an exact
cancellation that forces the exponents to alternate in sign
and grow in a rigid arithmetic pattern.


\smallskip
\noindent\textbf{Step~1: Trichotomy from the invariant.}
Since $|\beta_k|<1$, we have $y_k + u \sim c_k\,u^{\beta_k}$ for
each~$k$.
Let $E(u) := -u(y_1+u) \sim -c_1\,u^{1+\beta_1}$. Applying
\eqref{eq:orbit-invariant} to the pair $(y_{k-1},y_k)$ gives
\begin{equation}
\label{eq:pair-leading}
(y_{k-1}y_k - 1)\cdot c_{k-1}c_k\,u^{\sigma_k}
\;\sim\; E(u),
\qquad \sigma_k := \beta_{k-1}+\beta_k.
\end{equation}
Since $E(u)\to 0$, exactly one of three cases holds for
each~$k$:
\begin{enumerate}[label=\textup{(\Alph*)}]
\item $\sigma_k = 0$: then $y_{k-1}y_k \to c_{k-1}c_k$,
and the left side of~\eqref{eq:pair-leading} tends to a
nonzero constant unless $c_{k-1}c_k = 1$. Matching with
$E(u)\to 0$ forces
\begin{equation}
\label{eq:caseA}
c_{k-1}c_k = 1.
\end{equation}

\item $\sigma_k < 0$: then $y_{k-1}y_k\to\infty$ and the
left side has order $u^{2\sigma_k}$. Matching exponents gives
$2\sigma_k = 1+\beta_1 > 0$, a contradiction. \emph{This
case never occurs.}

\item $\sigma_k > 0$: then $y_{k-1}y_k\to 0$ and
$y_{k-1}y_k - 1 \to -1$. Matching both sides,
\begin{equation}
\label{eq:caseC}
\beta_{k-1}+\beta_k = 1+\beta_1,
\qquad c_{k-1}c_k = c_1.
\end{equation}
\end{enumerate}


\smallskip
\noindent\textbf{Step~2: Base case and parity pattern.}
From $y_2 = 1/y_1 - u$ and $|\beta_1|<1$ we read off
$\beta_2 = -\beta_1$ and $c_2 = c_1^{-1}$, so $\sigma_2 = 0$
(Case~A). We claim the following pattern persists:
\begin{equation}
\label{eq:parity_0}
\sigma_{2m} = 0 \;\;(\text{Case A}),
\qquad
\sigma_{2m+1} = 1+\beta_1 \;\;(\text{Case C}),
\end{equation}
for all admissible~$m$. The inductive step has two halves: 
given Case~A at pair $(y_{2m-1},y_{2m})$, we must show Case~C 
at pair $(y_{2m},y_{2m+1})$; then given Case~C there, we must 
show Case~A at pair $(y_{2m+1},y_{2m+2})$.


\smallskip
\noindent\textbf{Step~3: Case~A $\Rightarrow$ Case~C
(cancellation).}
This is the heart of the argument. Assume Case~A holds at
$(y_{2m-1}, y_{2m})$, so $\beta_{2m} = -\beta_{2m-1}$ and
$c_{2m} = c_{2m-1}^{-1}$. Suppose for contradiction that
Case~A also holds at the next pair, giving
\begin{equation}
\label{eq:caseA-hypothetical_0}
\beta_{2m+1} = -\beta_{2m} = \beta_{2m-1},
\qquad
c_{2m+1} = c_{2m}^{-1} = c_{2m-1}.
\end{equation}
Consider the recurrence at $k = 2m$:
\begin{equation}
\label{eq:rec-2m}
y_{2m+1} \;=\; \underbrace{\frac{1}{y_{2m}}}%
_{\sim\, c_{2m-1}\,u^{\beta_{2m-1}}}
\;-\; u
\;-\; \underbrace{y_{2m-1}}_{\sim\, c_{2m-1}\,u^{\beta_{2m-1}}}.
\end{equation}
The two dominant terms share the same exponent
$\beta_{2m-1}$ and the same coefficient $c_{2m-1}$, so they
cancel:
\begin{equation}
\label{eq:cancellation}
\frac{1}{y_{2m}} - y_{2m-1}
\;=\; c_{2m-1}\,u^{\beta_{2m-1}}
  \bigl[(1+\varrho_2)^{-1} - (1+\varrho_1)\bigr]
\;=\; o\!\left(u^{\beta_{2m-1}}\right),
\end{equation}
where $\varrho_i(u) = o(1)$ are the sub-leading corrections. 
Since $\beta_{2m-1} < 1$, the surviving $-u$ term is even 
smaller, so $y_{2m+1}(u) = o(u^{\beta_{2m-1}})$. But 
\eqref{eq:caseA-hypothetical_0} predicts 
$y_{2m+1} \sim c_{2m-1}\,u^{\beta_{2m-1}}$ with 
$c_{2m-1}\neq 0$ --- contradiction. Hence Case~A 
fails and Case~C must hold at $(y_{2m}, y_{2m+1})$:
\begin{equation}
\label{eq:beta-2m+1-caseC}
\beta_{2m+1} = \beta_{2m-1} + (1+\beta_1),
\qquad c_{2m+1} = c_1\,c_{2m-1}.
\end{equation}


\smallskip
\noindent\textbf{Step~4: Case~C $\Rightarrow$ Case~A
(dominance).}
Now assume Case~C at $(y_{2m},y_{2m+1})$. The recurrence at
$k = 2m+1$ reads
\begin{equation}
\label{eq:rec-2m+1}
y_{2m+2} \;=\; \frac{1}{y_{2m+1}} - u - y_{2m}.
\end{equation}
The three terms on the right have leading exponents
$-\beta_{2m+1}$, $1$, and $\beta_{2m}$ respectively. Using
$\beta_{2m} + \beta_{2m+1} = 1+\beta_1$ and $|\beta_1|<1$,
both differences
\begin{align}
\label{eq:cmp-beta2m}
\beta_{2m} - (-\beta_{2m+1})
  &= \beta_{2m}+\beta_{2m+1} = 1+\beta_1 > 0,\\
\label{eq:cmp-one}
1 - (-\beta_{2m+1})
  &= 1+\beta_{2m+1}
  = 2 + \beta_1 + \beta_{2m-1} > 0,
\end{align}
are strictly positive (using $|\beta_1|, |\beta_{2m-1}| < 1$
in the second). So $1/y_{2m+1}$ is the unique dominant term,
giving
\begin{equation}
\label{eq:beta-2m+2}
\boxed{\;\beta_{2m+2} = -\beta_{2m+1},\qquad
c_{2m+2} = c_{2m+1}^{-1},\;}
\end{equation}
which is $\sigma_{2m+2} = 0$, i.e.\ Case~A. This closes the
induction and establishes~\eqref{eq:parity_0}.


\smallskip
\noindent\textbf{Step~5: Determining $\beta_1$ from the
endpoint.}
The parity pattern gives the recursion
$\beta_{2m} = -\beta_{2m-1}$ and
$\beta_{2m+1} = \beta_{2m-1} + (1+\beta_1)$, with solution
\begin{equation}
\label{eq:beta-linear_0}
\beta_{2m-1} = \beta_1 + (m-1)(1+\beta_1),
\qquad
\beta_{2m} = -\beta_{2m-1}.
\end{equation}

\emph{Case $n = 2M$ even.} Then $\beta_n = \beta_{2M} > 0$ 
and the endpoint identity~\eqref{eq:endpoint} gives
$(c_n^2 u^{2\beta_n} - 1)(c_n u^{\beta_n}+u)^2 \sim
-c_1\,u^{1+\beta_1}$. Since $\beta_n < 1$, the leading 
balance is $-c_n^2\,u^{2\beta_n} \sim -c_1\,u^{1+\beta_1}$, 
i.e.\ $2\beta_n = 1+\beta_1$ and $c_n^2 = c_1$. 
Substituting~\eqref{eq:beta-linear_0}:
\[
-2\beta_1 - 2(M-1)(1+\beta_1) = 1+\beta_1
\quad\Longleftrightarrow\quad
\beta_1 = -\frac{2M-1}{2M+1} = -\frac{n-1}{n+1}.
\]

\emph{Case $n = 2M-1$ odd.} The pair $(y_{n-1},y_n)$ is
Case~C, so $\beta_{n-1}+\beta_n = 1+\beta_1 > 0$, and the
last chain equation gives
\begin{equation}
\label{eq:last-odd}
y_n^2 = 1 - u\,y_n - y_{n-1}y_n = 1 + o(1),
\end{equation}
since $u\,y_n = O(u^{1+\beta_n}) = o(1)$ and
$y_{n-1}y_n = O(u^{1+\beta_1}) = o(1)$. Hence $\beta_n = 0$
and $c_n^2 = 1$. Substituting
$\beta_n = \beta_{2M-1} = \beta_1 + (M-1)(1+\beta_1)$:
\[
\beta_1 + (M-1)(1+\beta_1) = 0
\quad\Longleftrightarrow\quad
\boxed{\;\beta_1 = -\frac{M-1}{M} = -\frac{n-1}{n+1}.\;}
\]

In both cases $\beta_1 = -(n-1)/(n+1)$,
and~\eqref{eq:beta-linear_0} yields~\eqref{eq:exponents}.


\smallskip
\noindent\textbf{Step~6: Coefficients.}
The relations $c_{2m-1}c_{2m}=1$ (Case~A) and
$c_{2m}c_{2m+1}=c_1$ (Case~C) give inductively
$c_{2m-1} = c_1^m$ and $c_{2m} = c_1^{-m}$. The endpoint
constraint ($c_n^2 = c_1$ for $n$ even; $c_n^2 = 1$ for $n$
odd) reduces in both cases to $c_1^{n+1} = 1$.
\end{proof}

\subsection{Density asymptotics}
\label{sec:proof_thm_case_I}


\medskip

We now assemble the proof of the sharp density theorem
(Theorem~\ref{theo:typ_1_cells_sharp}).  Two ingredients are
needed beyond Theorem~\ref{thm:puiseux} itself:

\begin{enumerate}[label=(\roman*)]
\item A \emph{Tauberian transfer} 
(Corollary~\ref{cor:chain-density_0}) that converts the Puiseux
asymptotics $y_k(u)\sim c_k u^{\beta_k}$ of the Poisson
transform into matching power-law behavior
$\rho_k(x)\sim \tilde c_k|x|^{\beta_k}$ of the spectral
density.

\item A \emph{Stieltjes inversion} that reads off the sharp
leading constant of the scalar density $f(x)$ from the trace of
the matrix Cauchy transform.
\end{enumerate}

Recall that if $y_k(u) = -\Im G_{kk}(iu)$ is the imaginary part on the
imaginary axis of a diagonal entry of a matrix Cauchy transform, the
Poisson representation
\[
y_k(u) \;=\; \int_{\bR}\frac{u}{x^2+u^2}\,d\mu_k(x)
\]
implies, via a standard Tauberian correspondence
(cf.\ Lemma~\ref{lem:puiseux-to-poisson}), that $y_k(u)\sim c_k u^{\beta_k}$
as $u\downarrow 0$ translates into $\rho_k(x)\sim \tilde c_k\,|x|^{\beta_k}$
for the spectral density~$\rho_k$ near $x=0$. Combining with
Theorem~\ref{thm:puiseux}:

\begin{corollary}[Spectral regularity along the chain]
\label{cor:chain-density_0}
Under the Puiseux hypothesis of Theorem~\ref{thm:puiseux}, the diagonal
spectral densities at the origin behave as
\[
\rho_k(x) \;\sim\; \tilde c_k\,|x|^{\beta_k},
\qquad \beta_k = (-1)^k\,\frac{n+1-2\lceil k/2\rceil}{n+1}.
\]
In particular:
\begin{itemize}
\item the odd-indexed entries $\rho_{2m-1}$ have integrable power-law
blow-ups of order $(n+1-2m)/(n+1)$;
\item the even-indexed entries $\rho_{2m}$ have power-law cusps of the
same order $(n+1-2m)/(n+1)$;
\item for $n$ odd, the terminal entry $\rho_n$ is regular
at $0$ ($\beta_n=0$); for $n$ even, all diagonal entries are non-regular.
\end{itemize}
\end{corollary}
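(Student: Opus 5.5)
The plan is to treat each diagonal entry separately, as a scalar Cauchy transform of a positive measure, and transfer the Puiseux behaviour of $y_k(u)$ from Theorem~\ref{thm:puiseux} to the density by a Tauberian argument. Fix $k$ and let $j$ be the index with $y_k=w_j$. Then $w_j(u)=iG_{jj}(iu)$, and $G_{jj}(z)=\bigl((\mathrm{id}\otimes\varphi)(zI-S)^{-1}\bigr)_{jj}$ is the Cauchy transform of the positive measure $\mu_k:=\langle e_j,E_S(\cdot)e_j\rangle$, where $E_S$ is the spectral resolution of $S$ composed with $\mathrm{id}\otimes\varphi$. Hence
\[
y_k(u)=\int_{\bR}\frac{u}{x^2+u^2}\,d\mu_k(x),
\]
which is $\pi$ times the Poisson indicator of $\mu_k$ at $0$.

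First I would establish the local structure of $\mu_k$ near $0$. By Proposition~\ref{prop:entrywise-algebraicity}, $G_{jj}$ is algebraic, so the argument of Corollary~\ref{cor:spectral-structure} applies verbatim to $\mu_k$. Near $0$ the measure $\mu_k$ is a possible atom plus a density $\rho_k$ with one-sided Puiseux expansions $\rho_k(\pm x)=\sum_\ell a^\pm_\ell x^{\gamma_\ell}$. An atom at $0$ is excluded because $y_k(u)=O(u^{\beta_k})$ with $\beta_k>-1$, whereas an atom of mass $m>0$ would force $y_k(u)\ge m/u$. Let $\gamma_*$ be the smallest exponent with $(a^+_\ell,a^-_\ell)\neq(0,0)$. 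Since $\rho_k\ge0$, the leading coefficients satisfy $a^\pm_*\ge0$, so no cancellation between the two sides is possible.

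Second, I would extend Lemma~\ref{lem:puiseux-to-poisson} from $\alpha\in[0,1)$ to all leading exponents $\gamma\in(-1,1)$. The same splitting proof applies: take $|x|<\delta$ versus $|x|\ge\delta$ and substitute $x=\varepsilon t$. The near part gives $\tfrac{c_++c_-}{2}\sec(\pi\gamma/2)\,\varepsilon^{\gamma}$ with $\sec(\pi\gamma/2)>0$. The far part is $O(\varepsilon)$, which is negligible precisely because $\gamma<1$. If instead $\gamma_*\ge1$, or $\rho_k$ vanishes to all orders, the Poisson integral is $O(\varepsilon\log(1/\varepsilon))$. This is incompatible with $y_k\sim c_ku^{\beta_k}$ and $|\beta_k|<1$, so $\gamma_*<1$. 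Comparing the two asymptotics then forces $\gamma_*=\beta_k$, with $\tfrac{a^+_*+a^-_*}{2}\sec(\pi\beta_k/2)=c_k/\pi$. This is the converse direction of Lemma~\ref{lem:poisson-to-puiseux}, now in the enlarged exponent range.

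Third, to obtain a genuine two-sided statement $\rho_k(x)\sim\tilde c_k|x|^{\beta_k}$ with an explicit $\tilde c_k$, I would use Stieltjes inversion rather than the averaged Poisson information alone. Write $\rho_k(x)=\tfrac1\pi\Re\,y_k(-ix)$, as in Example~\ref{exa:cusp_singularity}. Continue the Puiseux branch of $y_k$ from the positive axis through the half-plane $\Re u>0$ to $u=-ix\pm0$; with $c_k=1$ on the accretive branch, this gives
\[
\tilde c_k=\tfrac1\pi\cos(\pi\beta_k/2)>0
\]
on both sides. The three bullet points are then read off from the sign of $\beta_k$ in \eqref{eq:exponents}, with $\beta_n=0$ exactly when $n$ is odd.

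The main obstacle is justifying this last step, namely that the Puiseux expansion valid for $u\downarrow0$ on the real axis persists uniformly up to the boundary $\Re u=0$. I would handle it through algebraicity. Near $u=0$ the function $y_k$ is a single branch of an algebraic function on the punctured half-disc, hence a convergent series in $u^{1/N}$ there. Its restriction to $(0,\delta)$ identifies it with the branch of Theorem~\ref{thm:puiseux}, and convergence in the full closed sector gives the boundary values. If this becomes delicate, the fallback is the Poisson argument of the second step, which already fixes the exponent $\beta_k$ on at least one side.
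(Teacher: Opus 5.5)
Your proposal is correct and follows the paper's route: the Poisson representation of $y_k$, a Tauberian transfer, and the exponents of Theorem~\ref{thm:puiseux}. It also supplies what the paper's one-line appeal to ``a standard Tauberian correspondence'' leaves implicit. First, algebraicity of $G_{jj}$ makes the transfer legitimate. Second, you extend Lemma~\ref{lem:puiseux-to-poisson} to positive (cusp) exponents, which the paper's lemmas, stated only for $\alpha\in[0,1)$, do not cover. Third, you obtain the explicit constant $\tilde c_k=\tfrac1\pi\cos(\pi\beta_k/2)$ through the same Puiseux continuation used in Step~3 of the proof of Theorem~\ref{theo:typ_1_cells_sharp}.

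Note also that $S\stackrel{d}{=}-S$ makes each $\mu_k$ symmetric, so $a_*^+=a_*^-$. Your second step alone therefore already yields the two-sided statement with this constant, and the boundary-continuation step becomes a consistency check rather than a necessity.
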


\begin{proof}[Proof of Theorem \ref{theo:typ_1_cells_sharp}]

\smallskip
\noindent\textbf{Step 1: Existence of the chain expansion.}
Let $W(u)=\diag(w_1(u),\dots,w_n(u))$ be the accretive diagonal solution of
Speicher's equation, whose diagonality was established in
\S\ref{sec:diagonal_reduction}, and let
$(y_k(u))_{k=1}^{n}=(w_n,w_1,w_{n-1},w_2,\dots)$ be the reshuffled chain
solving~\eqref{eq:chain}; each $y_k$ is real and positive for $u>0$. On the
imaginary axis $(y_k(u))$ is the solution of the vector Dyson equation for the
variance profile $S$ of Remark~\ref{rem:typeI-KR}, and $S$ has support, since
its antidiagonal entries $s_{i,\,n+1-i}=1$ form a positive diagonal. By
\cite[Prop.~4.1 and Lem.~3.1]{MR4901646} this solution admits a convergent
expansion in fractional powers of $u$ at $u=0^{+}$; in particular each $y_k$ has
a leading Puiseux term
\[
y_k(u)\;=\;c_k\,u^{\beta_k}\bigl(1+o(1)\bigr),
\qquad c_k>0,\quad \beta_k\in(-1,1),\qquad u\downarrow 0 .
\]
The hypotheses of Theorem~\ref{thm:puiseux} are therefore met. 
(Theorem~\ref{thm:puiseux} recovers exponents $\beta_k$---and, beyond
\cite{MR4901646}, the coefficients---from the integrable structure of the chain.)

\smallskip
\noindent\textbf{Step 2: Exponents and coefficients.}
By Theorem~\ref{thm:puiseux},
\[
\beta_{2m-1} \;=\; -\,\frac{n-2m+1}{n+1},\qquad
\beta_{2m}   \;=\; \phantom{-}\frac{n-2m+1}{n+1},
\]
\[
c_{2m-1}=c_1^m,\quad c_{2m}=c_1^{-m},\quad c_1^{n+1}=1.
\]
Combined with $c_1>0$ (Step 1), the last relation forces 
$c_1=1$, and hence $c_k=1$ for every $k$. In particular,
$\beta_1 = -\alpha_n$ is strictly the smallest exponent,
and
\[
\sum_{k=1}^n y_k(u) \;=\; u^{-\alpha_n}\bigl(1+o(1)\bigr),
\qquad u\downarrow 0.
\]

\smallskip
\noindent\textbf{Step 3: From $W$ to $f$.}
Since $W(u)=iG_S(iu I)$,
\[
H(iu) \;=\; \tr G_S(iu I) \;=\; -\frac{i}{n}\sum_{k=1}^n y_k(u)
\;=\; -\frac{i}{n}\,u^{-\alpha_n}\bigl(1+o(1)\bigr).
\]
By Proposition~\ref{prop:entrywise-algebraicity}, $H$ is algebraic 
and, by Step 2, does not have a pole at $0$; the classical 
Puiseux theorem then gives a convergent expansion of $H$ 
in $\mathbb{C}^+$ in powers of some $z^{1/m}$. Matching 
leading terms along $z=iu = u\,e^{i\pi/2}$ identifies the 
leading coefficient $C$:
\[
H(z) \;=\; C\,z^{-\alpha_n}\bigl(1+o(1)\bigr),
\qquad
C \;=\; -\frac{i}{n}\,e^{i\pi\alpha_n/2}
   \;=\; \frac{1}{n}\,e^{-i\pi(1-\alpha_n)/2},
\]
with the branch $z^{-\alpha_n}>0$ for $z>0$.
Stieltjes inversion $f(x)=-\tfrac{1}{\pi}\Im H(x+i0^+)$ 
gives, for $x\downarrow 0$,
\[
f(x) \;=\; \frac{\cos(\pi\alpha_n/2)}{\pi n}\,x^{-\alpha_n}
   + o\!\left(x^{-\alpha_n}\right),
\]
and the symmetry $S\stackrel{d}{=}-S$ gives the same 
expansion for $x\uparrow 0$ with $x$ replaced by $|x|$.
\end{proof}


\section{Type II cells: statement and regularization strategy} 
\label{sec:type_II}

The second case that we consider is $A_n =  F_n x_1 + (\alpha F_n + G_n) x_2$, where $\alpha \ne 0$ is a real parameter.

\begin{theo}[Type~II cells — explicit density at the origin]
\label{theo:typ_2_cells_sharp}
For the matrix semicircle  $S_n =  F_n \otimes s_1 + (\alpha F_n + G_n) \otimes s_2$, with a real $\alpha \ne 0$,
the scalar density $f_\alpha$ satisfies
\[
        f_\alpha(x)
        =
        C_{\alpha,n}\,|x|^{-\frac{n-1}{n+1}}
        +
        o\left(|x|^{-\frac{n-1}{n+1}}\right),
        \qquad x\to0,
\]
where
\[
        \boxed{
        C_{\alpha,n}
        =
        \frac{1}{n\pi}
        \sin\left(\frac{\pi}{n+1}\right)
        (1+\alpha^2)^{-\frac{2n-1}{n+1}}.
        }
\]
In particular, the singularity exponent is independent of $\alpha$:
\[
        \boxed{
        f_\alpha(x)
        \asymp
        |x|^{-\frac{n-1}{n+1}}
        \quad\text{near }x=0.
        }
\]
\end{theo}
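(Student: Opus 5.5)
The plan is to reduce Theorem~\ref{theo:typ_2_cells_sharp} to a statement about the positive solution $W_\alpha(u)\succ0$ of \eqref{eq:HMS} on the half-line $u>0$, and to read off the density at the end exactly as in Step~3 of the proof of Theorem~\ref{theo:typ_1_cells_sharp}. Once we know
\[
\tr W_\alpha(u)=\tfrac1n\,K_{\alpha,n}\,u^{-\frac{n-1}{n+1}}\bigl(1+o(1)\bigr),\qquad u\downarrow0,
\]
the argument goes as follows. Algebraicity (Proposition~\ref{prop:entrywise-algebraicity}) gives a Puiseux expansion of $H$ at $0$, with no pole because the pencil is full. Matching along $z=iu$ and applying Stieltjes inversion then gives the density with constant $\frac{1}{n\pi}\cos\bigl(\frac{\pi(n-1)}{2(n+1)}\bigr)K_{\alpha,n}=\frac{1}{n\pi}\sin\bigl(\frac{\pi}{n+1}\bigr)K_{\alpha,n}$. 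Symmetry $S\stackrel{d}{=}-S$ handles $x<0$. So everything hinges on showing $K_{\alpha,n}=(1+\alpha^2)^{-\frac{2n-1}{n+1}}$. The exponent alone could already be obtained from Theorem~\ref{thm:covariance-scaling-singularity-exponent} if Type~II were congruent to Type~I, but it is not, so we must work directly.

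\emph{Rescaling.} Guided by Theorem~\ref{thm:puiseux}, set $t=u^{1/(n+1)}$ and $D(t)=\diag(t^{p_1},\dots,t^{p_n})$, where the exponents $p_k=(n+1)\beta_k$ are placed on the basis vectors according to the chain ordering $(w_n,w_1,w_{n-1},w_2,\dots)$. Write $W_\alpha(u)=D(t)M_\alpha(t)D(t)$ and substitute into $\eta_\alpha(W)W+uW=I$, with $\eta_\alpha(X)=F_nXF_n+(\alpha F_n+G_n)X(\alpha F_n+G_n)$. Expand $\eta_\alpha$ into its $FXF$, $GXG$ and cross terms $\alpha(FXG+GXF)$. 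Conjugating by $D(t)$, one should check that every entry becomes a nonnegative integer power of $t$. The result is an equation $\Phi(M,t)=0$ that is polynomial in $t$; this is the regularity at $t=0$ announced in the outline.

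\emph{Leading-order system.} At $t=0$ the limit equation $\Phi(M,0)=0$ retains only the terms of exactly balanced degree. I expect that in this limit the $F$-contributions enter with total weight $1+\alpha^2$ (from $FXF$ and $\alpha^2FXF$), while the cross terms $\alpha(FXG+GXF)$ survive only off the diagonal. Consequently the diagonal part of $M(0)$ should satisfy a Type~I chain in which the coupling on the $F$-links is multiplied by $c=1+\alpha^2$. A homogeneity argument then solves this chain explicitly: rescale $y_k\mapsto\lambda_k y_k$ and $u\mapsto\mu u$ with powers of $c$ chosen to absorb the weights. Applying the Type~I result (coefficients $c_k=1$) and tracking the dominant entry $y_1$ should produce the factor $c^{-\frac{2n-1}{n+1}}$. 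If the off-diagonal entries of $M(0)$ are forced to be determined by the diagonal ones, the leading trace is $c^{-\frac{2n-1}{n+1}}u^{-\frac{n-1}{n+1}}$, as required.

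\emph{Main obstacle: justifying the expansion.} The linearization $\partial_M\Phi(M(0),0)$ is expected to be singular, which is why the plain implicit function theorem does not apply and a Lyapunov--Schmidt reduction is used. I would carry it out in three steps. First, split the space of Hermitian matrices into $\ker L\oplus\operatorname{ran}L$, where $L=\partial_M\Phi(M(0),0)$. Second, solve the range component by the holomorphic implicit function theorem. Third, show that the reduced bifurcation equation on the kernel has nondegenerate linear part $\BB_\alpha$; concretely, I would try to prove $\BB_\alpha$ is negative definite using the self-adjointness and complete positivity of $\eta_\alpha$. This yields a unique branch $M_\alpha(t)$, holomorphic near $t=0$. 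The last step is to identify this branch with the accretive solution. For that I would use uniqueness of the positive solution for $u>0$ (Lemma~\ref{lemma_W_symmetry}) together with positivity of $D(t)M_\alpha(t)D(t)$ for small $t>0$. I expect the negative-definiteness of $\BB_\alpha$ and the correct choice of the (possibly shifted) base point $M(0)$ to be the hardest part. In particular, fixing the non-diagonal entries of $M(0)$ requires a second-order compatibility condition.
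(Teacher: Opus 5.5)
There is a genuine gap, and it sits at the step that decides the constant. At $t=0$ the rescaled equation is $M^{-1}=cF_nMF_n$: the cross terms carry a factor $t$ and the $G_n$-terms a factor $t^2$. This equation does \emph{not} determine $M(0)$. Its positive solutions form a manifold of dimension $\lfloor n^2/4\rfloor$, which contains every $\operatorname{diag}(a_i)$ with $a_ia_{n+1-i}=1/c$. The free parameters are fixed only by a solvability condition at order $t^2$, and at that order the cross terms \emph{do} reach the diagonal:
\begin{itemize}
\item they create a first-order correction of size $a=\alpha/c$ on the bands $\pm1$, governed by the Stein equation $A\phi A+\phi=-a(AL_n+U_nA)$;
\item this correction feeds back into the diagonal through $A\phi A\phi A-a\mathcal P(\phi)$;
\item the net effect is to replace the $G_n$-coupling $b=1/c$ by $b-a^2=1/c^2$, which is what drives the boundary compatibility \eqref{eq:boundary-compat} and gives $M_{nn}(0)=c^{1-3n/(n+1)}$.
\end{itemize}
Your chain reweights only the $F_n$-links, i.e.\ it has covariance $cF_nXF_n+G_nXG_n$, so it drops this feedback. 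Your homogeneity argument applied to it gives $M_{nn}(0)=c^{-n/(n+1)}$, which differs from $c^{-(2n-1)/(n+1)}$ for every $n\ge2$.

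You can see this already for $n=2$. Write $W=\bigl(\begin{smallmatrix}x&y\\y&z\end{smallmatrix}\bigr)$. Speicher's equation gives exactly $y=-\alpha x^2/(2cx+u)$, $z=x+(x^2+2\alpha xy)/u$, and $\det W=x/(cx+u)$. The cross-term contribution $2\alpha xy\approx-(\alpha^2/c)x^2$ then yields $x^3\sim u$ and $M(0)=\operatorname{diag}(1,c^{-1})$. Your chain instead gives $x^3\sim u/c$ and $M(0)=\operatorname{diag}(c^{-1/3},c^{-2/3})$. So the chain does not ``produce'' $c^{-\frac{2n-1}{n+1}}$, and the sharp constant is not established. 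If the $G_n$-links are reweighted by $1-\alpha^2/c=1/c$, your homogeneity argument does return exactly $\operatorname{diag}(c^{1-3i/(n+1)})$. But proving that renormalization is precisely the second-order analysis you skipped.

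The same oversight affects your Lyapunov--Schmidt plan. If you read $M(0)$ off the chain, you would set up the implicit function theorem at a point of the limiting manifold where the reduced equation does not vanish. What is actually needed is:
\begin{itemize}
\item a Stein/Lyapunov identity for $\partial_t\Phi$, showing that the once-divided reduced equation vanishes identically on $E_-$ at $t=0$, so a nondegenerate $\mathcal B_\alpha$ appears only after dividing by $t^2$;
\item the value $\mathfrak R(0,\alpha,0)=\tfrac12(c^{-1/2}-c^{-2})(E_{11}-E_{nn})\neq0$;
\item the explicit diagonal zero $Y_*(\alpha)$;
\item invertibility after recentring at $Y_*(\alpha)$, where the coefficients satisfy $\hat b-\hat a^{2}=\hat d>0$.
\end{itemize}
Your expectation that only the \emph{off-diagonal} entries of $M(0)$ need a second-order condition is reversed: $M(0)$ is diagonal, and the second-order condition is what determines its diagonal entries.

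Two smaller slips. Because $W=DMD$ is two-sided, $D(t)$ must be $\operatorname{diag}(t^{p_k/2})$ (the paper's $t^{\gamma_i}$); with $t^{p_k}$ the term $uD^2$ picks up negative powers of $t$ for $n\ge4$. And uniqueness of the accretive solution is the theorem of \cite{hfs2007}, not Lemma~\ref{lemma_W_symmetry}. Your final extraction of the density from the trace (Puiseux expansion, matching along $z=iu$, Stieltjes inversion) is fine.
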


\subsection*{Strategy of the proof}

Theorem~\ref{theo:typ_2_cells_sharp} is proved by extracting the leading behaviour of
$\operatorname{tr}_n W_\alpha(u)$ as $u\downarrow0$, where $W_\alpha(u)=iG_\alpha(iu)$ solves
Speicher's equation; a Stieltjes inversion (\S\ref{sec:proof_thm_case_II}) then converts the
leading $u^{-p}$ term of the trace, with $p=\tfrac{n-1}{n+1}$, into the leading $|x|^{-p}$ term
of $f_\alpha$ and its sharp constant. Unlike the Type~I cell, here $\eta_\alpha$ does not
preserve the diagonal, so there is no scalar chain and Speicher's equation must be analysed
directly. The obstacle is that $W_\alpha(u)$ degenerates as $u\to0$, its entries scaling at the
distinct rates $\beta_i=\tfrac{n-2i+1}{n+1}$. The argument has two stages.

\emph{Stage 1 (this section): regularization and the obstruction.}
After recording the joint analyticity of $W_\alpha$ in $(\alpha,u)$
(\S\ref{sec:analyticity-in-alpha}) and a $\mathbb Z/2$-symmetry that makes every spectral
invariant even in $\alpha$, so that we may assume $\alpha\ge0$ (\S\ref{sec:Z2-symmetry}), we
remove the degeneration by the rescaling 
\[
M_\alpha(t)=D(t)^{-1}W_\alpha(u)D(t)^{-1}
\]
 with
$D(t)=\diag(t^{\gamma_i})$, $t=u^{1/(n+1)}$ (\S\ref{sec:rescaling}). The rescaled equation is
regular at $t=0$, with limiting equation $N^{-1}=\Theta(N)$, $\Theta=\operatorname{Ad}F_n$.
Crucially, its positive-solution set is not a point but a manifold $\mathcal M$ of dimension
$\lfloor n^2/4\rfloor$: the linearization at the base point has kernel the $\Theta$-odd subspace
$E_-$ (\S\ref{sec:linearization}). This kernel is the obstruction to a direct
implicit-function argument and shapes the rest of the proof.

\emph{Stage 2 (\S\ref{sec:LS}): Lyapunov--Schmidt and the shifted base point.}
Splitting $N=e^{Y}+Z$ along $E_-\oplus E_+$, the transverse component $Z$ is eliminated by the
implicit function theorem (\S\ref{sec:LS-reduction}), and the reduced equation on the kernel
variable $Y$ is shown to be divisible by $t^2$ (\S\ref{sec:lyapunov-divisibility}), yielding a
regular doubly-reduced equation $\mathfrak R(Y,\alpha,t)$. The subtle point is that the true
solution does \emph{not} limit to $Y=0$: the constant term $\mathfrak R(0,\alpha,0)$ is nonzero
(\S\ref{sec:constant-term}). We therefore compute the reduced linearization
(\S\ref{sec:linearization-B}), locate the correct base point $Y_*(\alpha)\in E_-$ in closed form
(\S\ref{sec:shifted-base-point}), and recenter there (\S\ref{sec:extension}); at $Y_*$ the
reduced linearization is invertible, so the implicit function theorem produces a holomorphic
solution $M_\alpha(t)=M_\alpha^{(0)}+O(t)$ with explicit diagonal limit $M_\alpha^{(0)}$
(\S\ref{sec:singularities_type_2}). Its most singular entry gives the leading term of
$\operatorname{tr}_n W_\alpha(u)$ and completes the proof (\S\ref{sec:proof_thm_case_II}).

%

\subsection{Joint analyticity of $W_\alpha(u)$ in $(\alpha, u)$}
\label{sec:analyticity-in-alpha}

The solution $W_\alpha(u)$ exists as a unique positive definite real symmetric matrix for every $\alpha\in\mathbb R$ and $u>0$, by the existence and uniqueness theorem of~\cite{hfs2007}. The following theorem records its smooth dependence on $(\alpha, u)$.

\begin{theorem}[Joint real-analyticity in $(\alpha, u)$]
\label{thm:analyticity-in-alpha}
The map
\[
(\alpha, u) \;\longmapsto\; W_\alpha(u) \;\in\; \mathrm{Sym}_n^{++}
\]
is real-analytic on $\mathbb R \times (0, \infty)$.
\end{theorem}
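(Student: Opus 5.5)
The plan is to apply the real-analytic implicit function theorem to the map
\[
\Phi(W,\alpha,u)\;:=\;\eta_\alpha(W)\,W+uW-I,\qquad \eta_\alpha(X)=F_nXF_n+(\alpha F_n+G_n)X(\alpha F_n+G_n),
\]
viewed as a polynomial, hence real-analytic, map $\mathrm{Sym}_n\times\mathbb R\times\mathbb R\to M_n(\mathbb R)$. The coefficients $F_n$, $G_n$ and $\alpha$ are real, so uniqueness gives $W_\alpha(u)$ real. By Lemma~\ref{lemma_W_symmetry} it is positive definite and symmetric. The equation, however, is not manifestly symmetric-valued. I would therefore rewrite it in the equivalent symmetric form
\[
\Psi(W,\alpha,u)\;:=\;W^{-1}-\eta_\alpha(W)-uI=0
\]
on the open cone $\mathrm{Sym}_n^{++}$. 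This map lands in $\mathrm{Sym}_n$ and is real-analytic there, since inversion is analytic. The two formulations are equivalent: $W(\eta(W)+u)=I$ is the same as $\eta(W)+u=W^{-1}$.

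The key step is to show that the partial derivative
\[
D_W\Psi(H)\;=\;-W^{-1}HW^{-1}-\eta_\alpha(H)
\]
is invertible on $\mathrm{Sym}_n$ at every solution point with $u>0$. I would argue by positivity. Suppose $W^{-1}HW^{-1}=-\eta_\alpha(H)$ for some symmetric $H$. Pair this with $H$ in the Hilbert--Schmidt inner product, after conjugating by $W^{1/2}$. Setting $K=W^{-1/2}HW^{-1/2}$, the equation becomes
\[
K\;=\;-\,W^{1/2}\eta_\alpha\bigl(W^{1/2}KW^{1/2}\bigr)W^{1/2}=:-\mathcal T(K),
\]
where $\mathcal T$ is completely positive and self-adjoint. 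The solution equation gives
\[
\mathcal T(I)\;=\;W^{1/2}\eta_\alpha(W)W^{1/2}\;=\;W^{1/2}(W^{-1}-u)W^{1/2}\;=\;I-uW\;\prec\; I .
\]
A positive map has operator norm on $(M_n,\|\cdot\|_\infty)$ equal to $\|\mathcal T(I)\|<1$. Hence $\mathcal T$ has spectral radius $<1$, so $-1$ is not an eigenvalue and $K=0$. This is exactly the stability estimate from \cite{hfs2007}, and I expect it to be the only real point of the proof. The point needing care is that $\|\mathcal T(I)\|<1$ uses $u>0$ and $W\succ0$ strictly; this is why the domain is $(0,\infty)$ and why the argument collapses at $u=0$.

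Given invertibility, the analytic implicit function theorem produces, near each $(\alpha_0,u_0)$, a real-analytic local solution branch $\widetilde W(\alpha,u)$ with $\widetilde W(\alpha_0,u_0)=W_{\alpha_0}(u_0)$. To identify this branch with the global solution, note that $\widetilde W$ is continuous and positive definite near the base point, so it lies in $\mathrm{Sym}_n^{++}$. The uniqueness of positive definite solutions from \cite{hfs2007} (accretive solutions, restricted to real $u>0$) then forces $\widetilde W(\alpha,u)=W_\alpha(u)$ on a neighbourhood. Real-analyticity is a local property, so this gives joint real-analyticity on all of $\mathbb R\times(0,\infty)$.
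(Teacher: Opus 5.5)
Your proposal is correct and follows the paper's proof essentially step for step. It uses the same symmetric residual $W^{-1}-uI-\eta_\alpha(W)$, the same congruence $V=W^{1/2}UW^{1/2}$ reducing the linearization to $-(I+T)$ with $T$ completely positive and $T(I)=I-uW\prec I$, and the same gluing of local analytic branches via uniqueness of the positive solution. The only difference is cosmetic: you get $\rho(T)<1$ from the Russo--Dye identity $\|T\|=\|T(I)\|$, whereas the paper iterates $T^k(I)\preceq(1-u\lambda_{\min}(W))^kI$ and then applies Gelfand's formula.
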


Proof is in Appendix \ref{sec:analyticity}.

\subsection{A $\mathbb{Z}/2$-symmetry of the family $\eta_\alpha$}
\label{sec:Z2-symmetry}

In this section we identify a discrete symmetry of the family of covariance maps $\{\eta_\alpha\}_{\alpha\in\mathbb R}$ that links the values at $\alpha$ and $-\alpha$ via a fixed congruence. The symmetry transfers to the solutions of Speicher's equation and yields several immediate consequences. In particular, all spectral invariants of $W_\alpha(u)$ are even functions of $\alpha$.

\subsubsection*{Setup and statement}

 Define
\begin{equation}
\label{eq:Jn-def}
J_n \;:=\; \operatorname{diag}\bigl((-1)^1, (-1)^2, \ldots, (-1)^n\bigr)
\;=\;\operatorname{diag}(-1, 1, -1, \ldots).
\end{equation}
Note that $J_n^* = J_n$, $J_n^2 = I$, so $J_n$ is a real orthogonal involution.

\begin{proposition}[$\mathbb{Z}/2$-symmetry]
\label{prop:Z2-symmetry}
For every $n\ge 2$ and every $\alpha\in\mathbb R$,
\begin{equation}
\label{eq:eta-symmetry}
J_n\,\eta_\alpha(B)\,J_n \;=\; \eta_{-\alpha}(J_n B J_n)
\qquad\text{for all }B\in M_n(\mathbb C).
\end{equation}
Consequently, the unique accretive solution $W_\alpha(u)\in\mathrm{Sym}_n^{++}$ of Speicher's equation satisfies
\begin{equation}
\label{eq:W-symmetry}
W_{-\alpha}(u) \;=\; J_n\, W_\alpha(u)\, J_n
\qquad\text{for all }\alpha\in\mathbb R,\ u>0.
\end{equation}
\end{proposition}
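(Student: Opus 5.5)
The plan is to prove the intertwining identity \eqref{eq:eta-symmetry} at the level of Kraus operators, and then transfer it to solutions through uniqueness of the accretive solution.

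\emph{Step 1 (Kraus level).} The Type~II covariance map is $\eta_\alpha(B)=F_nBF_n+(\alpha F_n+G_n)B(\alpha F_n+G_n)$. Conjugation by the diagonal sign matrix $J_n$ multiplies the $(i,j)$ entry of a matrix by $(-1)^{i+j}$.
\begin{itemize}
\item For $F_n$, the nonzero entries sit at $i+j=n+1$, so $J_nF_nJ_n=(-1)^{n+1}F_n$.
\item For $G_n$, the nonzero entries sit at $i+j=n$, so $J_nG_nJ_n=(-1)^nG_n$.
\end{itemize}
Hence $J_n(\alpha F_n+G_n)J_n=(-1)^n(-\alpha F_n+G_n)$.

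\emph{Step 2 (the identity).} Insert $J_n^2=I$ around $B$:
\[
J_n\eta_\alpha(B)J_n=(J_nF_nJ_n)(J_nBJ_n)(J_nF_nJ_n)+(J_nA_2J_n)(J_nBJ_n)(J_nA_2J_n),
\]
where $A_2=\alpha F_n+G_n$. Each Kraus operator appears twice, so the signs $(-1)^{n+1}$ and $(-1)^n$ square away. The right-hand side is therefore $\eta_{-\alpha}(J_nBJ_n)$, which is \eqref{eq:eta-symmetry}.

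\emph{Step 3 (transfer to solutions).} Set $\widetilde W:=J_nW_\alpha(u)J_n$. Conjugating Speicher's equation \eqref{eq:HMS} for $W_\alpha$ by $J_n$ and using Step~2 gives
\[
J_n\eta_\alpha(W_\alpha)J_n\cdot J_nW_\alpha J_n+u\,\widetilde W=I,
\quad\text{that is,}\quad
\eta_{-\alpha}(\widetilde W)\,\widetilde W+u\,\widetilde W=I .
\]
Because $J_n$ is a real orthogonal involution, $\widetilde W$ is again real symmetric positive definite, and in particular strictly accretive. The uniqueness of the strictly accretive solution \cite{hfs2007}, as used in \S\ref{sec:analyticity-in-alpha}, then forces $\widetilde W=W_{-\alpha}(u)$, which is \eqref{eq:W-symmetry}.

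\emph{Expected obstacle.} The only delicate point is the sign bookkeeping for $J_nG_nJ_n$. It rests on the fact that the support of $G_n$ lies exactly on the antidiagonal $i+j=n$, with the zero row and column not interfering. Once that is checked, the remaining steps are formal.
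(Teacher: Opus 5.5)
Your proposal is correct and follows essentially the same route as the paper: the entrywise sign rules $J_nF_nJ_n=(-1)^{n+1}F_n$ and $J_nG_nJ_n=(-1)^nG_n$, the invariance of $X\mapsto MXM$ under $M\mapsto -M$, then conjugating Speicher's equation by $J_n$ and invoking uniqueness of the accretive solution. Your one-line identity $J_n(\alpha F_n+G_n)J_n=(-1)^n(-\alpha F_n+G_n)$ simply packages the paper's case split on the parity of $n$.
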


\begin{proof}
\emph{Step 1: action of conjugation by $J_n$ on $F_n$ and $G_n$.}
For any matrix $M\in M_n(\mathbb C)$, conjugation by $J_n$ acts entry-wise as $(J_n M J_n)_{ij} = (-1)^{i+j} M_{ij}$. Since $F_n$ is supported on the anti-diagonal $\{(i,j) : i+j = n+1\}$ and $G_n$ on $\{(i,j) : i+j = n\}$, we obtain
\begin{equation}
\label{eq:JFnGnJ-rules}
J_n F_n J_n = (-1)^{n+1} F_n,\qquad
J_n G_n J_n = (-1)^n\, G_n.
\end{equation}

\emph{Step 2: the symmetry of $\eta_\alpha$.}
Using $J_n^2 = I$ and the identity $J_n (X Y X) J_n = (J_n X J_n)(J_n Y J_n)(J_n X J_n)$,
\begin{align*}
J_n\,\eta_\alpha(B)\,J_n
&= (J_n F_n J_n)(J_n B J_n)(J_n F_n J_n)
\\&\quad + (J_n(\alpha F_n + G_n)J_n)(J_n B J_n)(J_n(\alpha F_n + G_n)J_n)
\\
&= F_n (J_n B J_n) F_n
\\
& + \bigl((-1)^{n+1}\alpha F_n + (-1)^n G_n\bigr)(J_n B J_n)\bigl((-1)^{n+1}\alpha F_n + (-1)^n G_n\bigr),
\end{align*}
using \eqref{eq:JFnGnJ-rules} and $\bigl((-1)^{n+1}\bigr)^2 = 1$ in the first summand.

We now check that the second summand equals the corresponding term of $\eta_{-\alpha}(J_n B J_n)$, namely $(-\alpha F_n + G_n)(J_n B J_n)(-\alpha F_n + G_n)$. The conjugation $X\mapsto MXM$ is invariant under $M \mapsto -M$, so it suffices to verify
\[
(-1)^{n+1}\alpha F_n + (-1)^n G_n \;=\; \pm(-\alpha F_n + G_n).
\]
Both signs of $(-1)^{n+1}$ and $(-1)^n$ are opposite, so the two cases reduce to:
\begin{itemize}
\item \emph{$n$ even:} $-\alpha F_n + G_n = -\alpha F_n + G_n$ (sign $+$).
\item \emph{$n$ odd:} $\alpha F_n - G_n = -(-\alpha F_n + G_n)$ (sign $-$).
\end{itemize}
In either case, the conjugation of $J_n B J_n$ by this matrix coincides with conjugation by $-\alpha F_n + G_n$. Hence
\begin{align*}
J_n\,\eta_\alpha(B)\,J_n &=\; F_n (J_n B J_n) F_n + (-\alpha F_n + G_n)(J_n B J_n)(-\alpha F_n + G_n)
\\
&=\; \eta_{-\alpha}(J_n B J_n),
\end{align*}
proving \eqref{eq:eta-symmetry}.

\emph{Step 3: symmetry of the solution.}
Conjugating Speicher's equation $W_\alpha(u)^{-1} = uI + \eta_\alpha(W_\alpha(u))$ by $J_n$ and using $J_n^2 = I$:
\begin{align*}
(J_n W_\alpha(u) J_n)^{-1}
&=\; J_n W_\alpha(u)^{-1} J_n
\;=\; uI + J_n\,\eta_\alpha(W_\alpha(u))\,J_n
\\
&\stackrel{\eqref{eq:eta-symmetry}}{=}\; uI + \eta_{-\alpha}(J_n W_\alpha(u) J_n).
\end{align*}
Hence $J_n W_\alpha(u) J_n$ is an accretive solution of Speicher's equation associated with $\eta_{-\alpha}$. By uniqueness of the accretive solution (\cite{hfs2007}), $J_n W_\alpha(u) J_n = W_{-\alpha}(u)$, which is \eqref{eq:W-symmetry}.
\end{proof}

\begin{remark}[Even dependence on the coupling parameter]
\label{rem:even-alpha}
Since $W_{-\alpha}(u) = J_n W_\alpha(u) J_n$ with $J_n$ 
orthogonal (Proposition~\ref{prop:Z2-symmetry}), every spectral 
invariant of $W_\alpha(u)$ --- eigenvalues, trace, determinant, 
operator norm --- is an even function of~$\alpha$. Combined 
with the real-analyticity of $\alpha \mapsto W_\alpha(u)$ 
(Theorem~\ref{thm:analyticity-in-alpha}), these invariants are 
in fact real-analytic functions of~$\alpha^2$. In particular, we 
may assume $\alpha \ge 0$ without loss of generality throughout 
the remainder of this section.
\end{remark}

\subsection{Rescaling and the limiting equation}\label{sec:rescaling}

We rescale Speicher's equation
\begin{align*}
        W_\alpha(u)^{-1} &=\; uI + \eta_\alpha(W_\alpha(u)),
        \\
        \eta_\alpha(B) &= (1+\alpha^2)F_nBF_n + \alpha(F_nBG_n+G_nBF_n) + G_nBG_n,
\end{align*}
so that all positive powers of $u$ enter through a single small parameter
\[
        t \;:=\; u^{1/(n+1)},
        \qquad u = t^{n+1}.
\]

\textbf{The rescaled unknown.}
The Puiseux exponents of the unperturbed branch are
\[
        \beta_i \;=\; \frac{n-2i+1}{n+1},
        \qquad i=1,\ldots,n,
\]
with $\beta_1 > \cdots > \beta_n$, $\sum_i\beta_i = 0$, and
$\beta_1 = -\beta_n = (n-1)/(n+1)$.  Set
\[
        \gamma_i \;:=\; \tfrac{n+1}{2}\beta_i = \tfrac{n-2i+1}{2},
        \qquad
        D(t) \;:=\; \diag\bigl(t^{\gamma_1},\ldots,t^{\gamma_n}\bigr),
\]
and introduce the rescaled unknown
\begin{equation}\label{eq:M-rescaled-n}
        M_\alpha(t) \;:=\; D(t)^{-1}\, W_\alpha(u)\, D(t)^{-1},
        \qquad u = t^{n+1}.
\end{equation}
With this normalization, conjugation by $D$ acts entrywise as
\begin{equation}\label{eq:DXD-formula}
        [DXD]_{ij} \;=\; t^{\gamma_i+\gamma_j}\,X_{ij}
        \;=\; t^{\,n+1-(i+j)}\,X_{ij},
\end{equation}
so the scaling of the $(i,j)$-entry depends only on the antidiagonal
index~$i+j$.

Substituting $W_\alpha = DMD$ into Speicher's equation and conjugating by~$D$ on
both sides yields the \emph{exact rescaled equation}
\begin{equation}\label{eq:HMS-rescaled-n}
        \boxed{\;
        M^{-1} \;=\; uD^2 + D\,\eta_\alpha(DMD)\,D
        \;=:\; \mathcal R_\alpha(M;t)\;}.
\end{equation}
Both $W_\alpha(u)\succ0$ and $D(t)\succ0$, so $M_\alpha(t)\succ0$ as well.

\paragraph{Explicit form of $\mathcal R_\alpha$.}
The exponent identities
\[
        \gamma_i + \gamma_{n+1-i} = 0,\qquad
        \gamma_i + \gamma_{n-i} = 1\ \ (1\le i\le n-1)
\]
are exactly what is needed to evaluate the four conjugated products in
$D\,\eta_\alpha(DMD)\,D$.  Direct calculation using \eqref{eq:DXD-formula}
gives
\[
\begin{aligned}
        D F_n(DMD) F_n D &= F_nMF_n,
        & D F_n(DMD) G_n D &= t\,F_nMG_n, \\
        D G_n(DMD) F_n D &= t\,G_nMF_n,
        & D G_n(DMD) G_n D &= t^2\,G_nMG_n,
\end{aligned}
\]
together with $uD^2 = \diag\bigl(t^{2(n+1-i)}\bigr)_{i=1}^n$.  Combining
these via $\eta_\alpha(B) = (1+\alpha^2)F_nBF_n + \alpha(F_nBG_n+G_nBF_n) + G_nBG_n$
yields the explicit decomposition
\begin{equation}\label{eq:RHS-explicit-n}
        \mathcal R_\alpha(M;t)
        \;=\; c\,F_nMF_n
        \;+\; \alpha t\,\mathcal P(M)
        \;+\; t^2\,\mathcal Q(M)
        \;+\; \mathcal U(t),
\end{equation}
where $c := 1+\alpha^2$ and
\begin{align}
        \label{eq:P-def}
        \mathcal P(M) &\;:=\; F_nMG_n + G_nMF_n, \\
        \label{eq:Q-def}
        \mathcal Q(M) &\;:=\; G_nMG_n, \\
        \label{eq:U-def}
        \mathcal U(t)_{ij} &\;:=\; t^{2(n+1-i)}\,\delta_{ij}.
\end{align}
The leading piece $cF_nMF_n$ is independent of $\alpha$ in form: the
$\alpha$-dependence enters only through the scalar $c=1+\alpha^2$ and the
cross-term $\alpha t\,\mathcal P(M)$.

\begin{exa}
For $n=2$, 
\begin{align*}
\PP(M) &= F_2 M G_2 + G_2 M F_2 = \begin{pmatrix} 2 M_{12} &  M_{11} \\  M_{11} & 0 \end{pmatrix},
\\
\QQ(M) &= G_2 M G_2 = \begin{pmatrix} M_{11} & 0 \\ 0 & 0 \end{pmatrix}, \text{ and }
\UU(t) = \begin{pmatrix} t^4 & 0 \\ 0 & t^2 \end{pmatrix}.
\end{align*}
\end{exa}

\textbf{The limiting equation.}
Setting $t=0$ in \eqref{eq:RHS-explicit-n} yields the \emph{limiting equation}
\begin{equation}\label{eq:limit-eq-n}
        M_*^{-1} \;=\; c\,F_n M_* F_n,
        \qquad c = 1+\alpha^2.
\end{equation}
Remarkably, this equation has the same form for all $n$ and depends on
$\alpha$ only through the scalar $c$.  In particular it admits the scalar
positive solution $M_* = c^{-1/2}I$.  The Puiseux exponents are determined by
the rescaling \eqref{eq:M-rescaled-n}, which is independent of $\alpha$;
hence any $\alpha$-dependence in the leading asymptotics of $W_\alpha(u)$
must enter through the prefactor $M_*(\alpha)$.  The full set of positive
solutions of \eqref{eq:limit-eq-n} forms a manifold $\mathcal M_c \subset
\mathrm{Sym}_n^{++}$ whose dimension is computed in~\S\ref{sec:linearization}.

\textbf{Normalization at the identity.}
For the implicit-function analysis it is convenient to centre the limiting
equation at $M_* = c^{-1/2}I$.  We therefore set
\[
        N \;:=\; c^{1/2}\,M.
\]
Substituting $M = c^{-1/2}N$ into \eqref{eq:RHS-explicit-n} and dividing by
$c^{1/2}$ gives the \emph{normalized rescaled equation}
\begin{equation}\label{eq:normalized}
        \boxed{\;
        N^{-1} \;=\; F_n N F_n + \frac{\alpha}{c}\,t\,\mathcal P(N)
        + \frac{1}{c}\,t^2\,\mathcal Q(N)
        + c^{-1/2}\,\mathcal U(t)\;}.
\end{equation}
Equivalently $\widetilde{\mathcal G}(N,\alpha,t)=0$, where
\[
        \widetilde{\mathcal G}(N,\alpha,t) \;:=\;
        N^{-1} - F_n N F_n - \tfrac{\alpha}{c}\,t\,\mathcal P(N)
        - \tfrac{1}{c}\,t^2\,\mathcal Q(N) - c^{-1/2}\,\mathcal U(t).
\]
At $t=0$, \eqref{eq:normalized} reduces to $N_*^{-1} = F_n N_* F_n$, with
scalar positive solution $N_* = I$.  This is the form used throughout~\S\ref{sec:LS}.

\subsection{Linearization and its kernel}
\label{sec:linearization}

We linearize the rescaled equation at the scalar base point and show that
the kernel of the linearization is precisely the antiinvariant subspace of
an involution~$\Theta$ on $\mathrm{Sym}_n$. The chain-order presentation
makes this kernel transparent and is the description used in~\S\ref{sec:LS}.

\paragraph{Setup.}
By \S\ref{sec:rescaling} the normalized rescaled equation
\eqref{eq:normalized} reads $\widetilde{\mathcal G}(N,\alpha,t)=0$, and at
$t=0$ reduces to $N_*^{-1}=F_nN_*F_n$, with scalar positive solution
$N_*=I$. We linearize $\widetilde{\mathcal G}(\,\cdot\,,\alpha,t)$ in $N$
at $(N,t)=(I,0)$:
\[
        L_0(H)
        \;:=\; D_N\widetilde{\mathcal G}(I,\alpha,0)[H].
\]

\textbf{Chain order and the involution $\Theta$.}
Let $\pi$ be the chain-order permutation
\[
        (y_1,y_2,y_3,y_4,\ldots) = (w_n,w_1,w_{n-1},w_2,\ldots),
\]
with permutation matrix~$P$. In the chain basis, the natural-order
reversal $F_n$ becomes the adjacent-pair swap
\begin{equation}\label{eq:Theta-chain}
        J \;:=\; P F_n P^\mathsf{T},
        \qquad
        Je_{2r-1}=e_{2r},\ \ Je_{2r}=e_{2r-1},
        \quad\text{(and $Je_n = e_n$ if $n$ is odd).}
\end{equation}
Define the involution
\[
        \Theta(H) \;:=\; J H J \;=\; \mathrm{Ad}\,J(H).
\]
Equivalently, in natural-order coordinates $\Theta(H)=F_nHF_n$, since $J$
and $F_n$ are conjugate via~$P$. Because $J^2=I$ and $J=J^\mathsf{T}$,
$\Theta^2=\mathrm{id}$ and $\Theta$ is self-adjoint with respect to the
Hilbert--Schmidt inner product $\langle X,Y\rangle = \operatorname{tr}(XY)$.
Hence $\mathrm{Sym}_n$ splits orthogonally as
\[
        \mathrm{Sym}_n
        \;=\; \mathrm{Sym}_n^+ \oplus \mathrm{Sym}_n^-,
        \qquad
        \mathrm{Sym}_n^\pm := \{H : \Theta(H)=\pm H\},
\]
with projections $H_\pm = \tfrac12(H\pm\Theta(H))$.

\textbf{The chain-order rescaling identity.}
By~\eqref{eq:RHS-explicit-n} at $\alpha=0$, the natural-order rescaled
equation reads
\[
        \mathcal R_0(M;t)
        \;=\; F_n M F_n + t^2 G_n M G_n + \mathcal U(t).
\]
Conjugating by~$P$ converts this into chain-order form
\begin{equation}\label{eq:chain-rescaling}
        \mathcal R_0^{\mathrm{chain}}(M;t)
        \;=\; \Theta(M) + t^2\, K M K + \mathcal U^{\mathrm{chain}}(t),
        \qquad K := P G_n P^\mathsf{T}.
\end{equation}
The key feature is that the leading piece is exactly $\Theta(M)$, with no
$t$-dependent dressing; this is the reason the kernel of the linearization
collapses onto a single involution.

\textbf{The linearization at the scalar base point.}
Differentiating $\widetilde{\mathcal G}$ in $N$ at $(N,t)=(I,0)$ gives
\begin{equation}\label{eq:L0-formula}
        L_0(H) \;=\; -H - \Theta(H)
        \;=\; -(I+\Theta)\,H,
\end{equation}
independently of~$\alpha$, since the $\alpha$-dependent terms in
\eqref{eq:normalized} carry positive powers of~$t$.

\begin{lemma}[Linearization at the scalar base point]
\label{lem:L0}
The operator $L_0\colon\mathrm{Sym}_n\to\mathrm{Sym}_n$ defined by
\eqref{eq:L0-formula} is self-adjoint with respect to the Hilbert--Schmidt
inner product, and
\[
        L_0(H_+) = -2H_+,\qquad L_0(H_-) = 0.
\]
In particular,
\[
        \ker L_0 = \mathrm{Sym}_n^-,
        \qquad
        \operatorname{im} L_0 = \mathrm{Sym}_n^+,
\]
with the orthogonal decomposition $\mathrm{Sym}_n = \ker L_0\oplus\operatorname{im}L_0$.
\end{lemma}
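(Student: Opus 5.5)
The lemma is a direct verification from the formula \eqref{eq:L0-formula}. The plan is to first justify that formula, then read off the spectral decomposition of $L_0$ from the fact that $\Theta$ is a self-adjoint involution.

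\emph{Step 1 (the derivative).} Write $\widetilde{\mathcal G}(N,\alpha,t)=N^{-1}-F_nNF_n-\tfrac{\alpha}{c}t\,\mathcal P(N)-\tfrac1c t^2\mathcal Q(N)-c^{-1/2}\mathcal U(t)$. At $t=0$ the last three terms vanish identically in $N$, so they contribute nothing to $D_N\widetilde{\mathcal G}(I,\alpha,0)$. The derivative of $N\mapsto N^{-1}$ at $N=I$ is $H\mapsto -N^{-1}HN^{-1}=-H$. The map $N\mapsto F_nNF_n=\Theta(N)$ is linear, so its derivative is $\Theta$. Hence $L_0=-(I+\Theta)$, independent of $\alpha$, and $L_0$ maps $\mathrm{Sym}_n$ to itself because $F_n$ is real symmetric.

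\emph{Step 2 (self-adjointness and the spectrum).} With $\langle X,Y\rangle=\operatorname{tr}(XY)$ and $F_n=F_n^{\mathsf T}=F_n^{-1}$, cyclicity of the trace gives
\[
\langle \Theta X,Y\rangle=\operatorname{tr}(F_nXF_nY)=\operatorname{tr}(XF_nYF_n)=\langle X,\Theta Y\rangle,
\]
so $\Theta$, and therefore $L_0$, is self-adjoint. The same holds in chain coordinates with $J=PF_nP^{\mathsf T}$, since $P$ is orthogonal. Since $\Theta^2=\mathrm{id}$, the projections $H_\pm=\tfrac12(H\pm\Theta H)$ split $\mathrm{Sym}_n=\mathrm{Sym}_n^+\oplus\mathrm{Sym}_n^-$. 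This splitting is orthogonal: for $X_+\in\mathrm{Sym}_n^+$ and $X_-\in\mathrm{Sym}_n^-$,
\[
\langle X_+,X_-\rangle=\langle \Theta X_+,X_-\rangle=\langle X_+,\Theta X_-\rangle=-\langle X_+,X_-\rangle,
\]
so $\langle X_+,X_-\rangle=0$. Then $L_0H_+=-(H_++H_+)=-2H_+$ and $L_0H_-=-(H_--H_-)=0$.

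\emph{Step 3 (kernel and image).} For general $H$ we get $L_0H=-2H_+$. This vanishes iff $H_+=0$, i.e.\ $H\in\mathrm{Sym}_n^-$, so $\ker L_0=\mathrm{Sym}_n^-$. Every $K\in\mathrm{Sym}_n^+$ equals $L_0(-K/2)$, so $\operatorname{im}L_0=\mathrm{Sym}_n^+$. The orthogonal decomposition $\mathrm{Sym}_n=\ker L_0\oplus\operatorname{im}L_0$ is then exactly the $\Theta$-eigenspace splitting from Step 2; it is also the standard $\ker\oplus\operatorname{im}$ decomposition of a self-adjoint operator.

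No step is a genuine obstacle. The only point needing care is Step 1: confirming that the $\alpha$-dependent terms carry strictly positive powers of $t$, which is visible from \eqref{eq:normalized}, and that the derivative of inversion at the identity is $-H$.
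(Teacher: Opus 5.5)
Your proposal is correct and follows the paper's argument. The paper likewise writes $L_0=-(I+\Theta)$, obtains self-adjointness from that of the involution $\Theta$, and reads off $L_0(H_\pm)$ from $\Theta(H_\pm)=\pm H_\pm$. The paper establishes the derivative formula and the orthogonal $\Theta$-splitting of $\mathrm{Sym}_n$ in the text just before the lemma, whereas you derive both inside the proof.
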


\begin{proof}
Self-adjointness: $L_0 = -(I+\Theta)$ is a real linear combination of two
self-adjoint operators. The eigenvalue identities follow from
$\Theta(H_\pm)=\pm H_\pm$:
\[
        L_0(H_+) = -(H_++H_+)=-2H_+,
        \qquad
        L_0(H_-) = -(H_- - H_-) = 0.
        \qedhere
\]
\end{proof}

\textbf{Dimension of the kernel.}

\begin{proposition}\label{prop:kernel-dim}
$\dim_{\mathbb R}\mathrm{Sym}_n^- = \lfloor n^2/4\rfloor$.
\end{proposition}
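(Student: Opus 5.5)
The plan is to compute $\dim\mathrm{Sym}_n^-$ directly. Since $\Theta$ is an involution on $\mathrm{Sym}_n$, we have $\dim\mathrm{Sym}_n^- = \tfrac12\bigl(\dim\mathrm{Sym}_n - \operatorname{Tr}\Theta\bigr)$. Here $\operatorname{Tr}\Theta$ is the trace of $\Theta$ as a real-linear operator on $\mathrm{Sym}_n$; its eigenvalues are $\pm1$, so this trace is $\dim\mathrm{Sym}_n^+-\dim\mathrm{Sym}_n^-$. Rather than compute the trace abstractly, I will use an explicit basis adapted to $\Theta$. It is convenient to work in natural order, where $\Theta(H)=F_nHF_n$ acts by $(\Theta H)_{ij}=H_{n+1-i,\,n+1-j}$; this is an honest permutation of matrix positions.

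\emph{Step 1 (orbit decomposition).} The map $\sigma:(i,j)\mapsto(n+1-i,n+1-j)$ is an involution on index pairs, and it commutes with the transpose $(i,j)\mapsto(j,i)$. A basis of $\mathrm{Sym}_n$ is given by $E_{\{i,j\}}:=E_{ij}+E_{ji}$ for $i<j$, together with $E_{ii}$. $\Theta$ permutes these basis elements, namely $E_{\{i,j\}}\mapsto E_{\sigma\{i,j\}}$. A permutation representation of an involution splits into fixed points and $2$-cycles. Each $2$-cycle contributes one dimension to $\mathrm{Sym}_n^+$ and one to $\mathrm{Sym}_n^-$, while each fixed point contributes only to $\mathrm{Sym}_n^+$. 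Hence
\[
\dim\mathrm{Sym}_n^- = \tfrac12\Bigl(\tfrac{n(n+1)}{2} - \#\{\text{fixed unordered pairs}\}\Bigr).
\]

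\emph{Step 2 (count fixed points).} An unordered pair $\{i,j\}$ with $i\le j$ is fixed when either $\sigma(i,j)=(i,j)$ or $\sigma(i,j)=(j,i)$. The first case requires $i=j=\tfrac{n+1}{2}$, which is possible only for odd $n$. The second case means $j=n+1-i$: these are the antidiagonal pairs $\{i,n+1-i\}$ with $i\le n+1-i$, and there are $\lceil n/2\rceil$ of them. The pair from the first case is already one of these. So the number of fixed pairs is $\lceil n/2\rceil$ in both parities.

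\emph{Step 3 (arithmetic).} For $n=2k$ we get $\tfrac12\bigl(k(2k+1)-k\bigr)=k^2=n^2/4$. For $n=2k+1$ we get $\tfrac12\bigl((2k+1)(k+1)-(k+1)\bigr)=k(k+1)=\lfloor n^2/4\rfloor$.

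As a sanity check, for $n=2$ the space $\mathrm{Sym}_2^-$ is spanned by $\diag(1,-1)$, which has dimension $1$, as the formula predicts. The only point needing care is Step 2: one must check that no pair is double counted, and that $\Theta$ really acts by permuting the chosen basis without introducing signs. Both hold because $F_n$ is a permutation matrix with entries $0,1$. Finally, the statement is the same in chain order, since $J=PF_nP^{\mathsf T}$ and conjugation by the permutation $P$ maps $\mathrm{Sym}_n^-$ isomorphically onto the corresponding subspace.
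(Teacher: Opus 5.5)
Your proof is correct and is essentially the paper's argument. Both compute $\dim\mathrm{Sym}_n^-=\tfrac12\bigl(n(n+1)/2-\#\{\text{fixed unordered pairs}\}\bigr)$ by noting that $\Theta$ permutes the symmetric matrix units, and both find $\lceil n/2\rceil$ fixed pairs. The only difference is cosmetic: you count in natural order (the antidiagonal pairs $\{i,n+1-i\}$), whereas the paper counts in chain order (the adjacent pairs $\{2r-1,2r\}$, plus $\{n,n\}$ when $n$ is odd).
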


\begin{proof}
Since $\Theta^2 = \mathrm{id}$ on $\mathrm{Sym}_n$,
\[
        \dim\mathrm{Sym}_n^+ - \dim\mathrm{Sym}_n^-
        = \operatorname{tr}_{\mathrm{Sym}_n}(\Theta).
\]
Combined with $\dim\mathrm{Sym}_n^+ + \dim\mathrm{Sym}_n^- = n(n+1)/2$, this
gives
\[
        \dim\mathrm{Sym}_n^-
        = \tfrac12\bigl(n(n+1)/2 - \operatorname{tr}_{\mathrm{Sym}_n}\Theta\bigr).
\]

Compute $\operatorname{tr}_{\mathrm{Sym}_n}\Theta$. In the orthonormal
basis $\{S_{ij}:i\le j\}$ of $\mathrm{Sym}_n$ with
$S_{ij} := (E_{ij}+E_{ji})/\sqrt{1+\delta_{ij}}$, $\Theta$ permutes basis
elements according to the index-pair involution
\[
        \sigma\colon\{i,j\}\mapsto\{\tau(i),\tau(j)\},
\]
where $\tau$ is the chain-pair swap $\tau(2r-1)=2r$, $\tau(2r)=2r-1$ (and
$\tau(n)=n$ if $n$ is odd). Hence
$\operatorname{tr}_{\mathrm{Sym}_n}\Theta = \#\{\sigma\text{-fixed unordered pairs}\}$,
and a $\sigma$-fixed pair $\{i,j\}$ is one of the following:
\begin{itemize}
\item $i = j$ with $\tau(i) = i$: occurs only if $n$ is odd, with $i = j = n$;
\item $i \ne j$ with $\{\tau(i),\tau(j)\} = \{i,j\}$, $\tau(i)=j$:
i.e.\ $\{i,j\}=\{2r-1,2r\}$ for some $r$. There are $\lfloor n/2\rfloor$ such pairs.
\end{itemize}
Hence $\operatorname{tr}_{\mathrm{Sym}_n}\Theta = \lceil n/2\rceil$, and
\[
        \dim\mathrm{Sym}_n^-
        = \tfrac12\bigl(n(n+1)/2-\lceil n/2\rceil\bigr)
        = \lfloor n^2/4\rfloor,
\]
verified by parity:
\[
\begin{array}{l@{\quad}l}
        n=2m:   & \tfrac12(m(2m+1)-m) = m^2 = n^2/4,\\[2pt]
        n=2m+1: & \tfrac12((2m+1)(m+1)-(m+1)) = m(m+1) = \lfloor n^2/4\rfloor.
\end{array}
\]
\end{proof}

\textbf{Explicit basis at a diagonal solution.}
For Lyapunov--Schmidt computations in natural-order coordinates the
following description of $\ker L_0$ is convenient. We work at any diagonal
positive solution $M_*$ of the original (un-normalized) limiting
equation~\eqref{eq:limit-eq-n}.

\begin{corollary}[Kernel basis at a diagonal solution, natural order]
\label{cor:kernel-basis}
Let $M_*=\operatorname{diag}(a_1,\ldots,a_n)\in\mathrm{Sym}_n^{++}$ satisfy
$M_*^{-1}=cF_nM_*F_n$, equivalently $a_ia_{n+1-i}=1/c$ for $i=1,\ldots,n$.
Let $\sigma$ be the natural-order index involution
\[
        \sigma\colon (i,j)\mapsto (n+1-j,n+1-i),
\]
acting on $\{(i,j):i\le j\}$. Then the kernel of the (un-normalized)
linearization
\[
        \mathcal L_*(V)
        := -M_*^{-1}VM_*^{-1} - cF_nVF_n
\]
on $\mathrm{Sym}_n$ has basis
\begin{equation}\label{eq:kernel-Tij}
        T^{(i,j)}
        := \frac{E_{ij}+E_{ji}}{1+\delta_{ij}}
        - \frac{1}{c\,a_ia_j}\cdot
                \frac{E_{i'j'}+E_{j'i'}}{1+\delta_{i'j'}},
\end{equation}
where $(i',j')=\sigma(i,j)\ne(i,j)$ ranges over representatives of the
$\lfloor n^2/4\rfloor$ size-$2$ orbits of~$\sigma$.
\end{corollary}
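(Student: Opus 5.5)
The plan is to compute $\mathcal L_*$ entrywise on the elementary symmetric matrices and to see that it couples each index pair only with its $\sigma$-partner. Since $M_*$ is diagonal, $M_*^{-1}E_{ij}M_*^{-1}=(a_ia_j)^{-1}E_{ij}$. Conjugation by $F_n$ acts by $F_nE_{ij}F_n=E_{n+1-i,\,n+1-j}$. Applied to the symmetrized matrix $S_{ij}:=(E_{ij}+E_{ji})/(1+\delta_{ij})$, this gives $F_nS_{ij}F_n=S_{\sigma(i,j)}$, where $\sigma(i,j)=(n+1-j,n+1-i)$ is the stated involution (reordered so that $i'\le j'$). Hence
\[
\mathcal L_*(S_{ij})=-\frac{1}{a_ia_j}S_{ij}-c\,S_{i'j'},
\]
so $\mathcal L_*$ is block diagonal with respect to the orbits of $\sigma$. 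Each block is either $1\times1$ (a fixed pair) or $2\times2$ (an orbit of size two).

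On a fixed pair $(i,j)=\sigma(i,j)$, that is $j=n+1-i$, the block is scalar: it equals $-(a_ia_{n+1-i})^{-1}-c=-2c\neq0$, using $a_ia_{n+1-i}=1/c$. These directions therefore contribute nothing to the kernel.

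On a size-two orbit $\{(i,j),(i',j')\}$, the block in the basis $S_{ij},S_{i'j'}$ is
\[
-\begin{pmatrix} (a_ia_j)^{-1} & c\\ c & (a_{i'}a_{j'})^{-1}\end{pmatrix}.
\]
The relation $a_ia_{n+1-i}=1/c$ gives $a_{i'}a_{j'}=a_{n+1-j}a_{n+1-i}=1/(c^2a_ia_j)$, so the lower-right entry is $c^2a_ia_j$. The determinant is then $(a_ia_j)^{-1}c^2a_ia_j-c^2=0$, so each block has rank exactly one, because its entries are nonzero. Its kernel is spanned by $xS_{ij}+yS_{i'j'}$ with $x/(a_ia_j)+cy=0$. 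Taking $x=1$ gives $y=-1/(c\,a_ia_j)$, which is exactly $T^{(i,j)}$ in \eqref{eq:kernel-Tij}. Choosing the other orbit representative produces a scalar multiple of the same vector, so each size-two orbit contributes exactly one basis vector.

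Counting the size-two orbits finishes the argument. Conjugating by the permutation $P$ carries $\sigma$ to the index-pair involution of Proposition~\ref{prop:kernel-dim}. That proof shows $\mathrm{Sym}_n$ has $n(n+1)/2$ basis pairs, of which $\lceil n/2\rceil$ are fixed. The number of size-two orbits is therefore $\tfrac12\bigl(n(n+1)/2-\lceil n/2\rceil\bigr)=\lfloor n^2/4\rfloor$. The vectors $T^{(i,j)}$ have disjoint supports across orbits, so they are linearly independent, and by the block decomposition they span $\ker\mathcal L_*$.

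For consistency, at $M_*=c^{-1/2}I$ the map satisfies $\mathcal L_*=c\,L_0$, and the kernel should have the same dimension as $\mathrm{Sym}_n^-$. The count above confirms this.

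The only delicate step is the bookkeeping that $F_nS_{ij}F_n$ equals $S_{\sigma(i,j)}$ with the correct normalization, including when $(i',j')$ must be reordered and when the diagonal factor $1+\delta_{ij}$ is involved. Once that is in place, the rank-one computation is immediate.
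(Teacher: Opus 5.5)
Your proof is correct and follows essentially the same route as the paper: decouple $\mathcal L_*(V)=0$ along $\sigma$-orbits, use $a_ia_{n+1-i}=1/c$ to get the nonzero coefficient $-2c$ on fixed pairs and a singular $2\times2$ block (determinant $c^2-c^2=0$) on each size-two orbit, and count the orbits via Proposition~\ref{prop:kernel-dim}. Working in the basis $S_{ij}$ instead of with the entries $V_{ij}$ only changes the bookkeeping; also, no reordering is needed, since $\sigma$ already sends pairs with $i\le j$ to pairs with $i'\le j'$.
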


\begin{proof}
For diagonal $M_*$, the equation $\mathcal L_*(V)=0$ reads entrywise
\[
        \frac{V_{ij}}{a_ia_j} \;=\; -c\,V_{n+1-i,\,n+1-j} \;=\; -c\,V_{\sigma(i,j)},
\]
which decouples on $\sigma$-orbits.
\begin{itemize}
\item For a $\sigma$-fixed pair $(i,j)$ (i.e.\ $i+j=n+1$), the equation
becomes $V_{ij}\bigl(1/(a_ia_j)+c\bigr)=0$. Since $a_ia_{n+1-i}=1/c$, the
coefficient equals $2c\ne0$, forcing $V_{ij}=0$. There are $\lceil n/2\rceil$
such pairs, contributing $0$ to $\ker\mathcal L_*$.
\item For a size-$2$ orbit $\{(i,j),\sigma(i,j)\}$, the linear system
$V_{ij}/(a_ia_j)+cV_{i'j'}=0$ and $V_{i'j'}/(a_{i'}a_{j'})+cV_{ij}=0$ has
determinant $1/(a_ia_ja_{i'}a_{j'})-c^2 = c^2-c^2=0$ (using
$a_ia_{n+1-i}=1/c$), so it has a one-dimensional solution space generated
by~\eqref{eq:kernel-Tij}.
\end{itemize}
The total dimension is the number of size-$2$ orbits, which equals
$\lfloor n^2/4\rfloor$ by Proposition~\ref{prop:kernel-dim} (or equivalently
by the count $\binom{n+1}{2}-\lceil n/2\rceil$ divided by $2$).
\end{proof}

\begin{remark}[Specialization to the scalar base point]
At $M_*=c^{-1/2}I$ (so $a_i = c^{-1/2}$ and $a_ia_j=1/c$), the basis
elements~\eqref{eq:kernel-Tij} simplify to
\[
        T^{(i,j)}
        \;=\; \frac{E_{ij}+E_{ji}}{1+\delta_{ij}}
        - \frac{E_{i'j'}+E_{j'i'}}{1+\delta_{i'j'}},
\]
i.e.\ antisymmetric combinations of $\sigma$-paired symmetric matrix
units. These span $\mathrm{Sym}_n^-$, recovering the chain-order kernel
description $\ker L_0 = \mathrm{Sym}_n^-$ of Lemma~\ref{lem:L0}.
\end{remark}

\begin{remark}[Manifold of solutions]\label{rmk:manifold-dim}
The kernel of $L_0$ is the tangent space at $N_* = I$ to the manifold
\[
        \mathcal M
        := \{N\in\mathrm{Sym}_n^{++}: N^{-1} = \Theta(N)\}.
\]
Since $L_0$ is self-adjoint and surjective onto $\mathrm{Sym}_n^+$
(Lemma~\ref{lem:L0}), the implicit function theorem yields that $\mathcal M$
is locally a smooth submanifold near $N_*=I$ with
\[
        \dim\mathcal M = \dim\ker L_0 = \lfloor n^2/4\rfloor.
\]
The same dimension is obtained at every diagonal positive solution
$M_*\in\mathcal M_c$ in natural order via Corollary~\ref{cor:kernel-basis};
in particular $\dim\mathcal M_c = \lfloor n^2/4\rfloor$ as well.
\end{remark}

\section{The Lyapunov--Schmidt analysis for Type II cells}
\label{sec:LS}
We resume the analysis of \S\ref{sec:type_II}; recall the two-stage strategy described there. 
Our goal in this section is to prove the following theorem. 


\begin{theo}[All $\alpha$ rescaled holomorphicity]
\label{theo:all-alpha-rescaled-holomorphicity}
For every real $\alpha$, the rescaled solution $M_\alpha(t)$ extends
holomorphically to a neighborhood of $t=0$, and
\[
        M_\alpha(t)
        =
        M_\alpha^{(0)}+O(t),
\]
where
\[
        M_\alpha^{(0)}
        =
        \operatorname{diag}
        \left(
        c^{1-\frac{3}{n+1}},
        c^{1-\frac{6}{n+1}},
        \ldots,
        c^{1-\frac{3n}{n+1}}
        \right),
        \qquad
        c=1+\alpha^2.
\]
\end{theo}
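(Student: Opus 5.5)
We follow the Lyapunov--Schmidt route announced in the strategy of \S\ref{sec:type_II}, working with the normalized equation \eqref{eq:normalized}, $\widetilde{\mathcal G}(N,\alpha,t)=0$. By Remark~\ref{rem:even-alpha} we may take $\alpha\ge0$. Since $\widetilde{\mathcal G}$ is a polynomial in $t$ and rational in $N$, it is holomorphic near $(N,t)=(I,0)$. By Lemma~\ref{lem:L0}, $D_N\widetilde{\mathcal G}(I,\alpha,0)=-(I+\Theta)$ has kernel $E_-=\mathrm{Sym}_n^-$ and image $E_+=\mathrm{Sym}_n^+$. We parametrize a neighbourhood of the solution manifold $\mathcal M$ by $N=e^{Y}+Z$ with $Y\in E_-$ and $Z\in E_+$. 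Note that $e^{Y}\in\mathcal M$ exactly, because $\Theta(e^Y)=e^{\Theta Y}=e^{-Y}$.

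\textbf{Reduction.} We project the equation onto $E_+$. At $t=0$ the $Z$-derivative restricted to $E_+$ is $-2\,\mathrm{id}$, so this derivative stays invertible for $Y$ in a compact set, uniformly. The holomorphic implicit function theorem then gives a unique $Z=Z(Y,\alpha,t)$ with $Z(Y,\alpha,0)=0$. Substituting this into the $E_-$-projection gives the reduced equation $\Phi(Y,\alpha,t)=0$, which vanishes identically at $t=0$.

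\textbf{Divisibility by $t^2$.} We show that $\Phi=t^2\mathfrak R(Y,\alpha,t)$ with $\mathfrak R$ holomorphic. The order-$t$ term $\frac{\alpha}{c}\mathcal P(e^Y)$, projected to $E_-$ along the tangent directions at $e^Y$, must be shown to vanish. We would check this using the antidiagonal grading from \eqref{eq:DXD-formula}: $\mathcal P$ shifts the index $i+j$ by one, and the first-order correction to $Z$ absorbs this term. So only $\mathcal Q$, $\mathcal U$, and the square of the $\mathcal P$-correction survive at order $t^2$. We then compute $\mathfrak R(Y,\alpha,0)$ explicitly.

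\textbf{Shifted base point.} We solve $\mathfrak R(Y_*,\alpha,0)=0$ in closed form. The target value is fixed by the claimed limit: $N^{(0)}=c^{1/2}M^{(0)}_\alpha$ is diagonal with entries $c^{\frac32-\frac{3i}{n+1}}$. These entries satisfy $a_ia_{n+1-i}=1$, so $N^{(0)}\in\mathcal M$ and $N^{(0)}=e^{Y_*}$ with $Y_*$ diagonal and $\Theta$-odd, in accordance with Corollary~\ref{cor:kernel-basis}. To check that this $Y_*$ solves $\mathfrak R=0$, we restrict to diagonal $Y$, where the diagonal part of $\mathfrak R(Y,\alpha,0)$ becomes an explicit system in the variables $\log a_i$. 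The constant terms come from $\mathcal Q$, $\mathcal U$, and the square of the $\mathcal P$-correction, each weighted by powers of $c$. We verify the system is solved by the geometric progression $a_{i+1}/a_i=c^{-3/(n+1)}$ (at $\alpha=0$ this is $a_i=1$, matching the Type~I result $c_k=1$). Uniqueness of this root would follow from invertibility below.

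\textbf{Invertibility (main obstacle).} We must show $\mathcal B_\alpha:=D_Y\mathfrak R(Y_*,\alpha,0)$ is invertible on $E_-$. The plan is to prove it is negative definite with respect to a weighted Hilbert--Schmidt form, which the paper defers to the appendix. We would decompose $\mathcal B_\alpha$ by antidiagonal level; there it looks like a discrete Laplacian along the chain, generated by $\mathcal Q$ and by $\mathcal P^2$, plus positive boundary terms from $\mathcal U$. Definiteness would then follow from a discrete Poincaré-type inequality. This is the step most likely to need care, especially the off-diagonal kernel directions, where $\mathcal P$ mixes levels.

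\textbf{Conclusion.} Given invertibility, the implicit function theorem at $(Y_*,\alpha,0)$ yields a holomorphic $Y(t)$ with $Y(0)=Y_*$. Hence $N(t)=e^{Y(t)}+Z(Y(t),\alpha,t)$ is holomorphic near $t=0$ and positive definite for small $t>0$. By uniqueness of the positive solution \cite{hfs2007}, this branch equals $c^{1/2}M_\alpha(t)$ for $t>0$. We then use joint analyticity in $\alpha$ (Theorem~\ref{thm:analyticity-in-alpha}) to extend from a neighbourhood of $\alpha=0$ to all $\alpha$. Finally, $M_\alpha(t)=c^{-1/2}e^{Y_*}+O(t)=M_\alpha^{(0)}+O(t)$.
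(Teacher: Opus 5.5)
There is a genuine gap. Your outline is the paper's: the Lyapunov--Schmidt splitting $N=e^Y+Z$, division of the reduced equation by $t^2$, a diagonal shifted base point with ratios $c^{-3/(n+1)}$, the implicit function theorem, and identification by uniqueness of the positive solution. But the step that carries the content is missing, and your fallback for it does not work.

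\textbf{Invertibility and the extension in $\alpha$.} You give no argument that $D_Y\mathfrak R(Y_*,\alpha,0)$ is invertible for every $\alpha$. The ``discrete Poincar\'e inequality by antidiagonal level'' is only a hope. The Laplacian that actually appears acts along the diagonals $i-j=\mathrm{const}$, through $H_{ij}\mapsto H_{i\mp1,j\mp1}$. At the non-scalar point $Y_*$ you would also have to differentiate $e^Y$ and the first-order correction there. Your last step, extending from a neighbourhood of $\alpha=0$ via Theorem~\ref{thm:analyticity-in-alpha}, is invalid. That theorem gives real-analyticity of $W_\alpha(u)$ only on $\mathbb R\times(0,\infty)$. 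Nothing in it prevents holomorphy at $t=0$ from failing at some $\alpha_0$ while $W_\alpha(u)$ stays analytic for $u>0$, so holomorphy at $t=0$ must be proved at each $\alpha$ directly. Relatedly, for large $\alpha$ the point $Y_*=\tfrac32\log c\,\mathrm{diag}(\beta_i)$ is far from $0$. Solving the $E_+$-equation near $Y_*$ therefore needs more than ``the derivative is $-2\,\mathrm{id}$ at $Y=0$''. It does follow from $\langle Z,P_+(AZA)+Z\rangle_{\mathrm{HS}}\ge\|Z\|_{\mathrm{HS}}^2$ for $A=e^{-Y}\succ0$.

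\textbf{The missing idea: recentring.} Write $N_*=e^{Y_*}=\mathrm{diag}(\nu_i)$, so that $\nu_i\nu_{n+1-i}=1$ and $\sqrt{\nu_i/\nu_{i+1}}=w=c^{3/(2(n+1))}$. Then $S=N_*^{1/2}$ satisfies $SF_nS=F_n$ and $SG_nS=wG_n$ (Lemma~\ref{lem:dressed}). Hence $S\widetilde{\mathcal G}(S\widehat NS,\alpha,t)S$ is again the normalized equation, with $(a,b,d)$ replaced by $(\hat a,\hat b,\hat d)=(aw,bw^2,d\nu_n)$. Every computation at the scalar base point then transfers verbatim:
\begin{itemize}
\item The constant term is $\tfrac12(\hat a^2-\hat b+\hat d)(E_{11}-E_{nn})$. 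It vanishes because $\hat b-\hat a^2=w^2/c^2=c^{-(2n-1)/(n+1)}=\hat d$.
\item The linearization is $\tfrac{\hat b-\hat a^2}{4}[2(L_nHU_n+U_nHL_n)-D_\partial H-HD_\partial]-\tfrac{\hat d}{4}(E_\partial H+HE_\partial)$.
\item It is negative definite for every $\alpha$, by $\langle H,2(L_nHU_n+U_nHL_n)-D_\partial H-HD_\partial\rangle_{\mathrm{HS}}=-\|[L_n,H]\|_{\mathrm{HS}}^2-\|[U_n,H]\|_{\mathrm{HS}}^2$ and the trivial commutant of $\{L_n,U_n\}$.
\end{itemize}
No continuation in $\alpha$ is needed.

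\textbf{Two smaller points.}
\begin{itemize}
\item The antidiagonal grading does not show that the $E_-$-part of the order-$t$ coefficient vanishes. What does is that the unprojected Stein equation $A\phi A+\phi=-a(AL_n+U_nA)$ has a unique solution, which is $\Theta$-even by equivariance (Lemma~\ref{lem:lyapunov}).
\item Checking only the diagonal part of $\mathfrak R(Y_*,\alpha,0)$ is incomplete. For diagonal $Y$ it also has band-$\pm2$ entries, and these vanish only by the reflection identity of Lemma~\ref{lem:refl-id}. Recentring bypasses this as well.
\end{itemize}
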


\subsection{The reduction}
\label{sec:LS-reduction}

We use the kernel description of \S\ref{sec:linearization} to set up a
Lyapunov--Schmidt reduction of the normalized rescaled equation
\eqref{eq:normalized}. The kernel of the linearization at the scalar base
point is \emph{not} an obstruction: it is the tangent space to the
manifold of limiting positive solutions, and the reduction simply
parametrizes that manifold in $\Theta$-odd coordinates.

\textbf{Decomposition.}
Let $\Theta = \mathrm{Ad}\,J$ be the involution on $\mathrm{Sym}_n$ from
\S\ref{sec:linearization}. Set
\begin{equation}\label{eq:E-pm}
        E_- := \{Y\in\mathrm{Sym}_n : \Theta(Y)=-Y\},
        \qquad
        E_+ := \{Z\in\mathrm{Sym}_n : \Theta(Z)= Z\},
\end{equation}
and let $P_\pm := \tfrac12(\mathrm{Id}\pm\Theta)$ be the orthogonal
projections onto $E_\pm$. By Lemma~\ref{lem:L0},
\[
        \ker L_0 = E_-,\qquad \operatorname{im} L_0 = E_+,
        \qquad
        L_0|_{E_+} = -2\,\mathrm{Id}_{E_+}.
\]

\textbf{Parametrization of the limiting manifold.}
At $t=0$ the normalized equation reads $N^{-1}=\Theta(N)$, with manifold of
positive solutions
\[
        \mathcal M := \{N\in\mathrm{Sym}_n^{++}: N^{-1}=\Theta(N)\}.
\]
Near $N_*=I$, the manifold $\mathcal M$ is parametrized by $E_-$ via the
exponential:
\begin{equation}\label{eq:limit-manifold-param}
        Y\in E_-\ \text{(small)}
        \ \longmapsto\
        e^Y\in\mathcal M.
\end{equation}
Indeed, for $Y\in E_-$ one has $\Theta(Y)=-Y$, hence
\[
        \Theta(e^Y) = e^{\Theta(Y)} = e^{-Y} = (e^Y)^{-1},
\]
so $e^Y$ satisfies the limiting equation. The image is locally smooth of
the correct dimension $\lfloor n^2/4\rfloor$
(Proposition~\ref{prop:kernel-dim}), so it locally parametrizes $\mathcal M$.

\textbf{Lyapunov--Schmidt parametrization.}
We parametrize $N$ near $I$ by an $E_-$-tangential variable $Y$ and an
$E_+$-transverse correction $Z$:
\begin{equation}\label{eq:N-param}
        N \;=\; e^Y + Z,
        \qquad Y\in E_-,\ \ Z\in E_+.
\end{equation}
Splitting $\widetilde{\mathcal G}$ by the projections $P_\pm$ gives the
\emph{even equation}
\[
        \mathcal E(Y,Z,\alpha,t)
        := P_+\widetilde{\mathcal G}(e^Y+Z,\alpha,t) = 0,
\]
and the \emph{odd equation} $P_-\widetilde{\mathcal G}(e^Y+Z,\alpha,t)=0$.
The even equation is transverse to $\mathcal M$ and is solved for $Z$ by
the implicit function theorem; the odd equation is the genuine reduced
problem on the kernel variable $Y$.

\begin{lemma}[Solving the even equation]\label{lem:LS-even}
There exist neighborhoods $U\subset E_-$ of $0$, $V\subset\mathbb R$ of any
fixed $\alpha_0$, and $W\subset\mathbb R$ of $0$, and a real-analytic map
\begin{equation}\label{eq:Phi-defn}
        \Phi\colon U\times V\times W \to E_+,
\end{equation}
such that for $(Y,\alpha,t)\in U\times V\times W$ the even equation
$\mathcal E(Y,\Phi(Y,\alpha,t),\alpha,t)=0$ holds, and $\Phi$ is the unique
such function. Moreover
\[
        \Phi(Y,\alpha,0) \equiv 0.
\]
\end{lemma}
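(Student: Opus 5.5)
The approach is the analytic implicit function theorem applied to the map $(Y,Z,\alpha,t)\mapsto \mathcal E(Y,Z,\alpha,t)=P_+\widetilde{\mathcal G}(e^Y+Z,\alpha,t)$ from $E_-\times E_+\times\mathbb R\times\mathbb R$ to $E_+$. The first step is to check that this map is real-analytic near $(0,0,\alpha_0,0)$. The exponential $Y\mapsto e^Y$ is entire, and inversion is analytic near $N=I$. The remaining terms of \eqref{eq:normalized} are polynomial in $N$ and $t$, with coefficients $\alpha/c$, $1/c$ and $c^{-1/2}$ that are analytic in $\alpha$ because $c=1+\alpha^2\ge1$. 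Hence $\mathcal E$ is real-analytic on a neighborhood of $\{(0,0,\alpha,0)\}$ that can be taken uniform for $\alpha$ near $\alpha_0$.

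Next I check the base point and the transversal derivative. At $(Y,Z,t)=(0,0,0)$ we have $N=I$ and $\widetilde{\mathcal G}(I,\alpha,0)=I-F_nIF_n=0$. The partial derivative in $Z$ at this point is $P_+\,D_N\widetilde{\mathcal G}(I,\alpha,0)|_{E_+}=P_+L_0|_{E_+}$. By Lemma~\ref{lem:L0} this equals $-2\,\mathrm{Id}_{E_+}$, which is invertible and independent of $\alpha$. The analytic implicit function theorem then gives neighborhoods $U\times V\times W$ of $(0,\alpha_0,0)$, a neighborhood of $0$ in $E_+$, and a unique real-analytic $\Phi$ with $\mathcal E(Y,\Phi(Y,\alpha,t),\alpha,t)=0$. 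Uniqueness is understood within that neighborhood of $0$ in $E_+$, and this is the sense of ``unique'' in the statement.

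For the identity $\Phi(Y,\alpha,0)\equiv0$ I would show that $Z=0$ already solves the even equation at $t=0$ and then invoke uniqueness. At $t=0$, $\widetilde{\mathcal G}(e^Y,\alpha,0)=e^{-Y}-\Theta(e^Y)$, which vanishes because $\Theta(e^Y)=e^{\Theta(Y)}=e^{-Y}$ for $Y\in E_-$, as shown after \eqref{eq:limit-manifold-param}. In fact the full $\widetilde{\mathcal G}$ vanishes, not merely its $P_+$ part. Therefore $Z=0$ is a solution in the admissible neighborhood, and uniqueness forces $\Phi(Y,\alpha,0)=0$ for all $Y\in U$ and $\alpha\in V$.

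The main obstacle is mostly bookkeeping: $\Theta$ must be written consistently, as $\mathrm{Ad}\,F_n$ in natural order and as $\mathrm{Ad}\,J$ in chain order, so that $P_\pm$ and $E_\pm$ refer to the same splitting in which $L_0=-(I+\Theta)$. It also has to be checked that $\Theta$ acts by an automorphism, so that $\Theta(e^Y)=e^{\Theta(Y)}$ holds; this is true because $F_n$ is an orthogonal involution. The other point to handle is that the neighborhood $U$ must be small enough for $e^Y+Z$ to stay in the region where inversion is analytic and positive definiteness is preserved. The theorem as stated is local in $\alpha$, so shrinking $V$ around $\alpha_0$ resolves this.
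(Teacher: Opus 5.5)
Your proposal is correct and is essentially the paper's proof: the analytic implicit function theorem with $D_Z\mathcal E(0,0,\alpha,0)=P_+L_0|_{E_+}=-2\,\mathrm{Id}_{E_+}$, followed by $\Phi(Y,\alpha,0)\equiv 0$ because $Z=0$ already solves the even equation at $t=0$ (since $\Theta(e^Y)=e^{-Y}$ for $Y\in E_-$) and the implicit-function solution is unique near $0\in E_+$. Your extra care about analyticity and local uniqueness is fine; the one slip is that keeping $e^Y+Z$ invertible is achieved by shrinking $U$ and the $Z$-neighborhood, which the theorem does automatically, not by shrinking $V$, since $\alpha$ enters only through coefficients analytic on all of $\mathbb R$.
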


\begin{proof}
At $(Y,Z,t)=(0,0,0)$: by~\eqref{eq:limit-manifold-param}, $e^0=I$ solves
the limiting equation, so $\widetilde{\mathcal G}(I,\alpha,0)=0$ and a
fortiori $\mathcal E(0,0,\alpha,0)=0$.

The derivative in $Z$ at this point: since the $\alpha t\mathcal P$,
$t^2\mathcal Q$, and $\mathcal U$ terms in $\widetilde{\mathcal G}$ vanish
at $t=0$, we have $D_Z\mathcal E(0,0,\alpha,0)[Z] = P_+\, L_0[Z]$. By
Lemma~\ref{lem:L0}, $L_0[Z] = -2Z$ for $Z\in E_+$, so
\[
        D_Z\mathcal E(0,0,\alpha,0) = -2\,\mathrm{Id}_{E_+},
\]
which is invertible. The implicit function theorem produces the unique
$\Phi$ with $\Phi(0,\alpha,0)=0$.

The identity $\Phi(Y,\alpha,0)\equiv 0$ follows from~\eqref{eq:limit-manifold-param}:
at $t=0$ the choice $Z=0$ already solves
$\mathcal E(Y,0,\alpha,0) = P_+\widetilde{\mathcal G}(e^Y,\alpha,0) = 0$, and
$\Phi$ is the unique solution.
\end{proof}

\begin{lemma}[The reduced equation is divisible by $t$]\label{lem:LS-reduced}
Define the reduced equation on $E_-$ by
\begin{equation}\label{eq:R-defn}
        \mathscr R(Y,\alpha,t)
        := P_-\widetilde{\mathcal G}\bigl(e^Y+\Phi(Y,\alpha,t),\,\alpha,\,t\bigr).
\end{equation}
Then $\mathscr R(Y,\alpha,0)\equiv 0$, and there exists a real-analytic
function $\widehat{\mathscr R}\colon U\times V\times W\to E_-$ with
\begin{equation}\label{eq:R-divisible}
        \mathscr R(Y,\alpha,t) = t\,\widehat{\mathscr R}(Y,\alpha,t).
\end{equation}
\end{lemma}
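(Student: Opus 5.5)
The plan is to verify the identity $\mathscr R(Y,\alpha,0)\equiv0$ directly and then to obtain the factor $t$ by a Hadamard-type division in the real-analytic category.

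\emph{Step 1 (vanishing at $t=0$).} By Lemma~\ref{lem:LS-even}, $\Phi(Y,\alpha,0)\equiv0$. Hence
\[
\mathscr R(Y,\alpha,0)=P_-\widetilde{\mathcal G}(e^Y,\alpha,0)=P_-\bigl(e^{-Y}-F_ne^YF_n\bigr).
\]
In natural-order coordinates $\Theta=\mathrm{Ad}F_n$, and $e^Y$ lies on the limiting manifold by~\eqref{eq:limit-manifold-param}. Therefore $\Theta(e^Y)=e^{\Theta(Y)}=e^{-Y}$, so the bracket vanishes identically. One should also check that $\widetilde{\mathcal G}(N,\alpha,0)=N^{-1}-\Theta(N)$ exactly. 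This holds because each of the remaining terms, $\tfrac{\alpha}{c}t\mathcal P(N)$, $\tfrac1c t^2\mathcal Q(N)$ and $c^{-1/2}\mathcal U(t)$, vanishes at $t=0$. Indeed $\mathcal U(t)_{ii}=t^{2(n+1-i)}$ has strictly positive exponents $2(n+1-i)\ge2$.

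\emph{Step 2 (analyticity of $\mathscr R$).} The map $\widetilde{\mathcal G}$ is real-analytic in $(N,\alpha,t)$ on the open set $N\succ0$: it is a polynomial in $t$, rational in $\alpha$ with $c=1+\alpha^2>0$, and analytic in $N$ via $N\mapsto N^{-1}$. The exponential is analytic, and $\Phi$ is real-analytic by Lemma~\ref{lem:LS-even}. Shrinking $U,W$ if necessary, $e^Y+\Phi(Y,\alpha,t)$ stays positive definite near $I$. So $\mathscr R$ is real-analytic on $U\times V\times W$, with values in $E_-$ because $P_-$ is a fixed linear projection.

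\emph{Step 3 (division).} I would set
\[
\widehat{\mathscr R}(Y,\alpha,t):=\int_0^1\partial_t\mathscr R(Y,\alpha,st)\,ds .
\]
By the fundamental theorem of calculus and Step 1, this gives $\mathscr R=t\widehat{\mathscr R}$. The integral is real-analytic, since it is the parameter integral of an analytic function. Concretely, one can complexify $\mathscr R$ to a holomorphic function on a polydisc neighbourhood, where the integral is holomorphic by differentiation under the integral sign; its restriction to real arguments is real and analytic. Alternatively, one can expand $\mathscr R$ in a convergent power series in $t$ whose constant term vanishes and factor out $t$. Since $\mathscr R$ takes values in the linear subspace $E_-$, so does $\widehat{\mathscr R}$.

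None of these steps is a real obstacle. The only point needing care is Step 1: the parametrization in Lemma~\ref{lem:LS-even} must be exactly $N=e^Y+\Phi$ with $\Phi|_{t=0}=0$, not merely $\Phi(0,\alpha,0)=0$, and the identity $\Theta(e^Y)=e^{-Y}$ must hold for all $Y\in U$, not just to first order. Both hold here, the latter because $\Theta$ is conjugation by an orthogonal involution and therefore commutes with the exponential. The deeper statement that $\widehat{\mathscr R}$ is itself divisible by $t$ is separate and is not needed for this lemma.
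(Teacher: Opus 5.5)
Your proposal is correct and follows the paper's proof essentially verbatim: $\mathscr R(Y,\alpha,0)=P_-\widetilde{\mathcal G}(e^Y,\alpha,0)=0$ follows from $\Phi(Y,\alpha,0)\equiv0$ together with $F_ne^YF_n=e^{-Y}$ on $E_-$, and the factor $t$ comes from the division $\widehat{\mathscr R}(Y,\alpha,t)=\int_0^1\partial_t\mathscr R(Y,\alpha,st)\,ds$. The only slip is cosmetic: because of the factor $c^{-1/2}$, $\widetilde{\mathcal G}$ is real-analytic but not rational in $\alpha$, and this does not affect the argument.
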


\begin{proof}
By Lemma~\ref{lem:LS-even}, $\Phi(Y,\alpha,0)=0$, so
$\mathscr R(Y,\alpha,0) = P_-\widetilde{\mathcal G}(e^Y,\alpha,0)$. By
\eqref{eq:limit-manifold-param} the right-hand side vanishes:
$\widetilde{\mathcal G}(e^Y,\alpha,0) = 0$. Since $\mathscr R$ is
real-analytic in $(Y,\alpha,t)$ and vanishes at $t=0$, the function
\[
        \widehat{\mathscr R}(Y,\alpha,t)
        := \int_0^1 \partial_t\mathscr R(Y,\alpha,st)\,ds
\]
is real-analytic and satisfies $\mathscr R = t\widehat{\mathscr R}$.
\end{proof}

The reduced equation $\mathscr R(Y,\alpha,t)=0$ is therefore equivalent
near $t=0$ to
\[
        \widehat{\mathscr R}(Y,\alpha,t) = 0,
\]
a finite-dimensional analytic equation on the $\lfloor n^2/4\rfloor$-dimensional
kernel space $E_-$.

%
\textbf{Outline of the analysis.}

The remainder of \S\ref{sec:LS} is devoted to solving the reduced equation
\[
        \widehat{\mathscr R}(Y, \alpha, t) = 0,
\]
on a neighborhood of $(Y, t) = (0, 0)$ in $E_- \times \mathbb R$. A
real-analytic solution branch establishes that $W_\alpha(u)$ has the same
Puiseux structure at $u = 0$ as the unperturbed branch $W_0(u)$ of
Theorem~\ref{thm:puiseux}.

We will
\begin{enumerate}[nosep,label=(\roman*)]
\item establish divisibility of $\mathscr R$ by $t^2$ via a Lyapunov
  identity (\S\ref{sec:lyapunov-divisibility}),
\item compute the constant term and linearization of the doubly-reduced
  equation $\mathfrak R$ (\S\ref{sec:constant-term}--\ref{sec:linearization-B}),
\item identify the explicit shifted base point (\S\ref{sec:shifted-base-point}), and
\item apply the IFT in the recentered equation for all~$\alpha$
  (\S\ref{sec:extension}--\ref{sec:singularities_type_2}).
\end{enumerate}

\subsection{The Lyapunov identity and divisibility by $t^2$}
\label{sec:lyapunov-divisibility}

Recall the equation is $\widetilde{\mathcal G}(N,\alpha,t)=0$, where
\[
        \widetilde{\mathcal G}(N,\alpha,t) \;:=\;
        N^{-1} - F_n N F_n - \tfrac{\alpha}{c}\,t\,\mathcal P(N)
        - \tfrac{1}{c}\,t^2\,\mathcal Q(N) - c^{-1/2}\,\mathcal U(t).
\]
 Throughout we
work in the natural order, where
\[
        F_n=\sum_{i=1}^n E_{i,n+1-i},\qquad
        G_n=\sum_{i=1}^{n-1}E_{i,n-i}
\]
are the reversal matrix and the shifted reversal matrix, and
\[
        \Theta(M)=F_nMF_n,\qquad
        \cP(M)=F_nMG_n+G_nMF_n.
\]
Using $E_{ab}E_{cd}=\delta_{bc}E_{ad}$ one finds at once
\[
        L_n:=F_nG_n=\sum_{k=2}^n E_{k,k-1},\qquad
        U_n:=G_nF_n=\sum_{i=1}^{n-1}E_{i,i+1},
\]
 so $L_n,U_n$ are the lower and upper shift matrices and
\[
\mathcal P(I) \;=\; L_n+U_n \;=:\; T_n
\;=\;
\begin{pmatrix}
0 & 1      &        &        & 0 \\
1 & 0      & 1      &        &   \\
  & 1      & \ddots & \ddots &   \\
  &        & \ddots & 0      & 1 \\
0 &        &        & 1      & 0
\end{pmatrix}
\]
is the adjacency matrix of the path on $n$ vertices.

\medskip
\noindent\textbf{Vanishing of $P_-\cP(I)$.}\;
Since $F_n^2=I$,
\[
        \Theta(L_n)=F_n(F_nG_n)F_n=G_nF_n=U_n,
\]
and symmetrically $\Theta(U_n)=L_n$. Hence $\Theta(T_n)=T_n$, i.e.\ $T_n$ is
$\Theta$-even, and
\[
        P_-\cP(I)=\tfrac12(T_n-\Theta(T_n))=0.
\]
Consequently
\[
        \scrR_1(0,\alpha)=-\alpha c^{-1/2}P_-\cP(I)=0,
\]
so the branch $Y=0$ is not ruled out at order $t$.


\begin{remark}
One can show that the ``naive'' first-order linearization
$A_1:=P_-\cP|_{E_-}$ equals $-\tfrac12\{T_n,\cdot\}$, with
eigenstructure inherited from the path-graph spectrum. However,
the identity below shows that the entire reduced equation
$\widehat{\scrR}(\,\cdot\,,\alpha,0)$ vanishes on~$E_-$, making
this eigenstructure analysis unnecessary.
\end{remark}

\subsubsection*{The Lyapunov identity and divisibility by $t^2$}

The vanishing of $P_-\cP(I)$ shows that the first divided reduced equation
$\widehat{\scrR}$ vanishes at the base point $Y=0$. In fact a much stronger
statement holds: $\widehat{\scrR}(Y,\alpha,0)\equiv 0$ on all of~$E_-$,
equivalently $\scrR$ is divisible by $t^2$. The mechanism is a single
Lyapunov-type identity for the transverse correction that collapses the
entire cascade of obstructions simultaneously.

Fix $Y\in E_-$, set $A := e^{-Y}$, and define
\[
        \phi(Y) \;:=\; \partial_t\Phi(Y,\alpha,0) \;\in\; E_+.
\]
Note $\Theta(A)=A^{-1}$ and $e^Y = A^{-1}$.

\begin{lemma}[Lyapunov identity for $\phi(Y)$]
\label{lem:lyapunov}
For every $Y\in E_-$ in a neighborhood of $0$,
\begin{equation}\label{eq:sylvester}
        A\,\phi(Y)\,A \;+\; \phi(Y)
        \;=\; -a\,\bigl(A L_n + U_n A\bigr).
\end{equation}
\end{lemma}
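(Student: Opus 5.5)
The plan is to differentiate the even equation of Lemma~\ref{lem:LS-even} once in $t$ at $t=0$, identify the resulting linear equation for $\phi(Y)$, and then show that the Lyapunov equation \eqref{eq:sylvester} has a unique solution, which automatically lies in $E_+$. Throughout I read the constant $a$ in \eqref{eq:sylvester} as $a=\alpha/c$, the coefficient of $t\,\mathcal P(N)$ in \eqref{eq:normalized}.

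\emph{Step 1 (the $t$-derivative of $\widetilde{\mathcal G}$ along the branch).}
Since $\Phi$ is real-analytic and $\Phi(Y,\alpha,0)=0$, we have $N(t):=e^Y+\Phi(Y,\alpha,t)=A^{-1}+t\,\phi(Y)+O(t^2)$. Differentiate $\widetilde{\mathcal G}(N(t),\alpha,t)$ at $t=0$ term by term.
\begin{itemize}
\item $\partial_t N^{-1}|_{t=0}=-A\phi A$.
\item $\partial_t F_nNF_n|_{t=0}=\Theta(\phi)=\phi$, because $\phi\in E_+$.
\item The $\mathcal P$ term contributes $-a\,\mathcal P(A^{-1})$.
\item The $t^2\mathcal Q$ term and $\mathcal U(t)$ contribute nothing, since every diagonal entry of $\mathcal U(t)$ is $O(t^2)$.
\end{itemize}
For the $\mathcal P$ term, $\Theta(A)=A^{-1}$ gives $F_nA^{-1}=AF_n$ and $A^{-1}F_n=F_nA$. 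Hence
\[
\mathcal P(A^{-1})=AF_nG_n+G_nF_nA=AL_n+U_nA .
\]
Therefore
\[
\partial_t\widetilde{\mathcal G}\big|_{t=0}=-\bigl(A\phi A+\phi+a(AL_n+U_nA)\bigr)=:-\mathcal S(\phi).
\]
Differentiating the identity $\mathcal E(Y,\Phi(Y,\alpha,t),\alpha,t)\equiv0$ yields only $P_+\mathcal S(\phi)=0$. This is weaker than \eqref{eq:sylvester}, and closing that gap is the main point of the proof.

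\emph{Step 2 (a candidate $\psi\in E_+$ solving the full equation).}
Since $A\succ0$, the operator $X\mapsto AXA+X$ on $\mathrm{Sym}_n$ is invertible: in an eigenbasis of $A$ it acts by multiplication by $\lambda_i\lambda_j+1>0$. Let $\psi$ be the unique solution of $A\psi A+\psi=-a(AL_n+U_nA)$. The right-hand side is symmetric, because $L_n^{\mathsf T}=U_n$, so $\psi\in\mathrm{Sym}_n$.

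To see that $\psi\in E_+$, apply $\Theta$ and use $\Theta(A)=A^{-1}$, $\Theta(L_n)=U_n$, $\Theta(U_n)=L_n$:
\[
A^{-1}\Theta(\psi)A^{-1}+\Theta(\psi)=-a(A^{-1}U_n+L_nA^{-1}).
\]
Multiplying on both sides by $A$ gives $\Theta(\psi)+A\Theta(\psi)A=-a(U_nA+AL_n)$. So $\Theta(\psi)$ solves the same equation as $\psi$, and uniqueness gives $\Theta(\psi)=\psi$.

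\emph{Step 3 (identification $\phi=\psi$).}
Both $\phi$ and $\psi$ lie in $E_+$ and satisfy the linear equation $P_+(AXA+X)=-aP_+(AL_n+U_nA)$. The operator $X\mapsto P_+(AXA+X)$ on $E_+$ equals $2\,\mathrm{Id}_{E_+}$ at $Y=0$ (this is the $-D_Z\mathcal E$ of Lemma~\ref{lem:LS-even}). It therefore stays invertible for $Y$ in a small neighborhood of $0$, shrinking $U$ if necessary. Hence $\phi(Y)=\psi$, and $\psi$ satisfies the full equation \eqref{eq:sylvester}, which proves the lemma.

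The only delicate point is the one isolated in Step~1: the even equation controls only the $E_+$-component of $\mathcal S(\phi)$. The $\Theta$-equivariance of the Lyapunov operator $X\mapsto AXA+X$ in Step~2 is what upgrades this to the full identity.
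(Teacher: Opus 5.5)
Your proposal is correct and follows essentially the same route as the paper. Both differentiate the even equation to obtain only $P_+\mathcal S(\phi)=0$, build the unique full solution of $AXA+X=-a(AL_n+U_nA)$, show it is $\Theta$-even by equivariance and uniqueness, and identify it with $\phi(Y)$ via injectivity of $X\mapsto P_+(AXA+X)$ on $E_+$. The only difference is how you get that injectivity: you perturb from $2\,\mathrm{Id}_{E_+}$ at $Y=0$, which works on a possibly smaller neighborhood (the statement allows this), whereas the paper proves it for every $Y\in E_-$ via $\langle W,\,AWA+W\rangle_{\mathrm{HS}}=\|A^{1/2}WA^{1/2}\|_{\mathrm{HS}}^2+\|W\|_{\mathrm{HS}}^2\ge\|W\|_{\mathrm{HS}}^2$.
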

In other words, $\phi(Y)$ is a solution to the discrete Lyapunov equation (also called the Stein equation) $AXA + X = C$  with coefficient $A = e^{-Y}$ and right-hand side $C = -a(AL_n + U_n A)$. This equation has a unique solution whenever $-1 \notin \operatorname{spec}(A) \cdot \operatorname{spec}(A)$, which holds in our setting since $A = e^{-Y}$ is positive-definite.

\begin{proof}
For $Y\in E_-$ we have, using $F_n^{2}=I$ and $\Theta(e^Y)=e^{-Y}=A$,
\begin{align}
        \mathcal P(e^Y)
        &= F_n e^Y G_n + G_n e^Y F_n
        = (F_n e^Y F_n)(F_n G_n) + (G_n F_n)(F_n e^Y F_n) \notag
        \\
        &= A L_n + U_n A. \label{equ:Pe^Y}
\end{align}

The even equation $P_+\widetilde{\mathcal G}(e^Y+\Phi(Y,\alpha,t),\alpha,t)=0$
holds identically in $t$, so its $t$-derivative at $t=0$ also vanishes. By
the chain rule,
\[
        \partial_t\widetilde{\mathcal G}\bigl(e^Y+\Phi(Y,\alpha,t),\alpha,t\bigr)\Big|_{t=0}
        \;=\; D_N\widetilde{\mathcal G}\bigl(e^Y,\alpha,0\bigr)\bigl[\phi(Y)\bigr]
        \;+\; \partial_t\widetilde{\mathcal G}\bigl(e^Y,\alpha,0\bigr),
\]
using $\Phi(Y,\alpha,0)=0$ (so the argument of $\widetilde{\mathcal G}$
reduces to $e^Y$ at $t=0$) and the definition $\phi(Y):=\partial_t\Phi(Y,\alpha,0)$.

\smallskip

\noindent\emph{First piece.} Differentiating $N \mapsto N^{-1}$ in the
direction $K$ gives $-N^{-1}KN^{-1}$, hence
\[
        D_N\widetilde{\mathcal G}(N,\alpha,t)[K]
        \;=\; -N^{-1}KN^{-1} - \Theta(K) - at\,\mathcal P(K) - bt^{2}\mathcal Q(K).
\]
At $(N,t)=(e^Y,0)$ the last two terms drop and $(e^Y)^{-1}=A$, so
\[
        D_N\widetilde{\mathcal G}(e^Y,\alpha,0)\bigl[\phi(Y)\bigr]
        \;=\; -A\,\phi(Y)\,A \;-\; \Theta\bigl(\phi(Y)\bigr).
\]

\smallskip

\noindent\emph{Second piece.}
\[
        \partial_t\widetilde{\mathcal G}(N,\alpha,t)
        \;=\; -a\,\mathcal P(N) - 2bt\,\mathcal Q(N) - d\,\mathcal U'(t).
\]
At $t=0$ the $\mathcal Q$-term vanishes because of the explicit factor of
$t$, and $\mathcal U'(0)=0$ because every diagonal entry of $\mathcal U(t)$
is $t^{2(n+1-i)}$ with $n+1-i\ge 1$, hence $O(t^{2})$. Therefore
\[
        \partial_t\widetilde{\mathcal G}(e^Y,\alpha,0)
        \;=\; -a\,\mathcal P(e^Y)
        \;=\; -a\bigl(AL_n + U_n A\bigr),
\]
using the identity $\mathcal P(e^Y)=AL_n+U_n A$ established above.

\smallskip

\noindent Combining the two pieces, using $\phi(Y)\in E_+$ (hence
$\Theta(\phi(Y))=\phi(Y)$), and applying $P_+$:
\[
        P_+\bigl[\,-A\,\phi(Y)\,A \;-\; \phi(Y) \;-\; a(AL_n + U_n A)\,\bigr] \;=\; 0,
\]
or equivalently,
\begin{equation}\label{eq:sylvester-Pplus}
        P_+\bigl[\,A\,\phi(Y)\,A + \phi(Y) + a(AL_n + U_n A)\,\bigr] \;=\; 0.
\end{equation}

To upgrade \eqref{eq:sylvester-Pplus} to the unprojected identity
\eqref{eq:sylvester} we argue by uniqueness. The operator
$\mathcal S_Y(X) := AXA + X$ is invertible for every $Y \in E_-$
 (since $A = e^{-Y}$ is positive-definite, $\operatorname{spec}(A \otimes A) \subset (0,\infty)$ and $-1$ cannot be an eigenvalue of $A \otimes A$). So the equation
$\mathcal S_Y(X)=-a(AL_n + U_n A)$ has a unique solution~$X$.
Applying $\Theta$ to this equation and using
$\Theta(A)=A^{-1}$, $\Theta(L_n)=U_n$, $\Theta(U_n)=L_n$:
\[
        A^{-1}\Theta(X)A^{-1} + \Theta(X)
        = -a\bigl(A^{-1} U_n + L_n A^{-1}\bigr).
\]
Conjugating by $A$ on both sides recovers
$A\,\Theta(X)\,A + \Theta(X) = -a(AL_n + U_n A)$, so $\Theta(X)$ also
solves the Lyapunov equation. 

By uniqueness $\Theta(X)=X$, i.e.\ $X\in E_+$.

It remains to identify $\phi(Y)$ with $X$. Both lie in $E_+$ and satisfy the
\emph{same} projected equation: applying $P_+$ to the unprojected identity
$\mathcal S_Y(X)=-a(AL_n+U_nA)$ and using $P_+X=X$ gives
$P_+\mathcal S_Y(X)=-a\,P_+(AL_n+U_nA)$, while \eqref{eq:sylvester-Pplus} together
with $P_+\phi(Y)=\phi(Y)$ reads $P_+\mathcal S_Y(\phi(Y))=-a\,P_+(AL_n+U_nA)$.
Hence $W:=\phi(Y)-X\in E_+$ obeys $P_+\mathcal S_Y(W)=0$. But the restriction
$P_+\mathcal S_Y|_{E_+}\colon E_+\to E_+$ is positive definite: since
$P_-\mathcal S_Y(W)\in E_-\perp E_+\ni W$,
\begin{align*}
        \langle W,\,P_+\mathcal S_Y(W)\rangle_{\mathrm{HS}}
         &=\;\langle W,\,\mathcal S_Y(W)\rangle_{\mathrm{HS}}
        \;=\;\operatorname{tr}\!\bigl(WAWA\bigr)+\|W\|_{\mathrm{HS}}^2
        \\
        &=\;\bigl\|A^{1/2}WA^{1/2}\bigr\|_{\mathrm{HS}}^2+\|W\|_{\mathrm{HS}}^2
        \;\ge\;\|W\|_{\mathrm{HS}}^2,
\end{align*}
using $A=A^*\succ0$. So $P_+\mathcal S_Y|_{E_+}$ is injective, $W=0$, i.e.\
$\phi(Y)=X$, and the full identity \eqref{eq:sylvester} holds.

\end{proof}

\begin{proposition}\label{prop:Rhat-vanishes}
For every $Y\in E_-$ and every $\alpha$,
$\widehat{\mathscr R}(Y,\alpha,0) = 0$. Equivalently, $\mathscr R$ is
divisible by $t^{2}$ as an analytic function of $(Y,\alpha,t)$, and we may
define the doubly-reduced equation
\[
        \mathfrak R(Y,\alpha,t) \;:=\; \frac{\mathscr R(Y,\alpha,t)}{t^{2}}.
\]
\end{proposition}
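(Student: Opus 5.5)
Since $\mathscr R=t\widehat{\mathscr R}$ with $\widehat{\mathscr R}(Y,\alpha,0)=\partial_t\mathscr R(Y,\alpha,0)$, it suffices to show that $\partial_t\mathscr R(Y,\alpha,0)=0$ for all $Y\in E_-$ near $0$. Divisibility by $t^2$ then follows by applying the integral formula of Lemma~\ref{lem:LS-reduced} a second time to $\widehat{\mathscr R}$. By the chain rule, exactly as in the proof of Lemma~\ref{lem:lyapunov}, but now projecting with $P_-$, we get
\[
\partial_t\mathscr R(Y,\alpha,0)=P_-\bigl[-A\phi(Y)A-\Theta(\phi(Y))-a(AL_n+U_nA)\bigr],
\]
where $A=e^{-Y}$ and $a=\alpha/c$. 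We have $\phi(Y)\in E_+$, so $\Theta(\phi(Y))=\phi(Y)$. The bracket then equals $-\bigl[A\phi A+\phi+a(AL_n+U_nA)\bigr]$. By the unprojected Lyapunov identity \eqref{eq:sylvester}, this expression vanishes identically, not merely after projection. Hence $P_-$ of it is zero.

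The only point needing care is that Lemma~\ref{lem:lyapunov} delivers the full identity \eqref{eq:sylvester} and not just its $P_+$ part \eqref{eq:sylvester-Pplus}. That is precisely what the uniqueness argument there establishes: the solution $X$ of the Stein equation is $\Theta$-even and coincides with $\phi(Y)$. So no new estimate is required, and the vanishing holds for every $Y$ in the neighborhood $U$ and every $\alpha\in V$.

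Finally, analyticity of $\mathfrak R=\mathscr R/t^2$ follows from the representation
\[
\mathfrak R(Y,\alpha,t)=\int_0^1(1-s)\,\partial_t^2\mathscr R(Y,\alpha,st)\,ds,
\]
which is valid because $\mathscr R$ and $\partial_t\mathscr R$ both vanish at $t=0$. The neighborhoods of Lemma~\ref{lem:LS-even} can be taken uniform in $\alpha$ near any $\alpha_0$, so $\mathfrak R$ is real-analytic on $U\times V\times W$.
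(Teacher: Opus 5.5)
Your proposal is correct and follows the paper's proof essentially verbatim. You identify $\widehat{\mathscr R}(Y,\alpha,0)$ with $\partial_t\mathscr R(Y,\alpha,0)$, compute it by the chain rule as $P_-[-A\phi A-\Theta(\phi)-a\,\mathcal P(e^Y)]$, use $\Theta(\phi)=\phi$ and $\mathcal P(e^Y)=AL_n+U_nA$, and conclude from the unprojected Lyapunov identity \eqref{eq:sylvester}. Your one addition, the Taylor-remainder formula $\mathfrak R=\int_0^1(1-s)\,\partial_t^2\mathscr R(Y,\alpha,st)\,ds$, is also correct; it makes explicit the real-analyticity of $\mathfrak R$ that the paper takes for granted.
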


\begin{proof}
By definition $\widehat{\mathscr R} = \mathscr R/t$, so $\mathscr R$ is
divisible by $t$ (this was established at the end of \S\ref{sec:LS-reduction})
and
\[
        \widehat{\mathscr R}(Y, \alpha, 0)
        \;=\; \lim_{t\to 0}\frac{\mathscr R(Y, \alpha, t)}{t}
        \;=\; \partial_t\mathscr R(Y, \alpha, 0).
\]

The reduced equation is
\[
        \mathscr R(Y, \alpha, t)
        \;=\; P_-\,\widetilde{\mathcal G}\bigl(e^Y + \Phi(Y, \alpha, t),\,\alpha,\,t\bigr).
\]
The projection $P_-$ is linear and $t$-independent, so it commutes with
$\partial_t$. The chain rule applied to the $t$-derivative inside the
brackets gives, exactly as in the proof of
Lemma~\ref{lem:lyapunov},
\[
        \partial_t\widetilde{\mathcal G}\bigl(e^Y + \Phi(Y,\alpha,t),\alpha,t\bigr)\Big|_{t=0}
        \;=\; D_N\widetilde{\mathcal G}(e^Y, \alpha, 0)\bigl[\phi(Y)\bigr]
        \;+\; \partial_t\widetilde{\mathcal G}(e^Y, \alpha, 0),
\]
using $\Phi(Y, \alpha, 0) = 0$ and $\phi(Y) := \partial_t\Phi(Y, \alpha, 0)$.
The two pieces, computed in the proof of Lemma~\ref{lem:lyapunov}, are
\[
        D_N\widetilde{\mathcal G}(e^Y, \alpha, 0)\bigl[\phi(Y)\bigr]
        \;=\; -A\,\phi(Y)\,A - \Theta\bigl(\phi(Y)\bigr),
        \qquad
        \partial_t\widetilde{\mathcal G}(e^Y, \alpha, 0)
        \;=\; -a\,\mathcal P(e^Y).
\]
Combining and applying $P_-$,
\[
        \widehat{\mathscr R}(Y, \alpha, 0)
        \;=\; P_-\bigl[\,-A\,\phi(Y)\,A \;-\; \Theta(\phi(Y)) \;-\; a\,\mathcal P(e^Y)\,\bigr].
\]
Using $\Theta(\phi(Y)) = \phi(Y)$ (since $\phi(Y) \in E_+$) and
$\mathcal P(e^Y) = AL_n + U_nA$ (established in the proof of
Lemma~\ref{lem:lyapunov}), this becomes
\[
        \widehat{\mathscr R}(Y, \alpha, 0)
        \;=\; P_-\bigl[\,-A\,\phi(Y)\,A \;-\; \phi(Y) \;-\; a(AL_n + U_nA)\,\bigr].
\]
By Lemma~\ref{lem:lyapunov} the bracketed expression is identically zero,
so $\widehat{\mathscr R}(Y, \alpha, 0) = 0$.
\end{proof}

In particular, the cancellation $D_Y\widehat{\mathscr R}(0,\alpha,0)=0$
is an immediate consequence: the function
$\widehat{\mathscr R}(\,\cdot\,,\alpha,0)$ is identically zero on~$E_-$,
so all its derivatives vanish as well.

\subsection{The constant term $\mathfrak R(0,\alpha,0)$} 
\label{sec:constant-term}

We compute the value of the doubly-reduced equation at $Y=0$. By
construction $\mathfrak R(0,\alpha,0)$ is the coefficient of $t^{2}$ in
$\mathscr R(0,\alpha,t) = P_-\widetilde{\mathcal G}(I+\Psi(t),\alpha,t)$,
where $\Psi(t) := \Phi(0,\alpha,t)\in E_+$.

The Sylvester identity \eqref{eq:sylvester} at $Y=0$ (so $A=I$) reduces
to $2\phi(0)=-a(L_n+U_n)=-a T_n$. Hence the leading coefficient of
$\Psi(t) = z_1 t + z_2 t^{2} + O(t^{3})$ is
\begin{equation}
\label{eq:z1-formula}
        z_1 \;=\; \phi(0) \;=\; -\frac{a}{2}\,T_n.
\end{equation}

Expanding $\widetilde{\mathcal G}(I+\Psi(t),\alpha,t)$ in $t$ and using
$z_1, z_2 \in E_+$, the $t^{2}$ coefficient is
\begin{equation}
\label{equ:C_2}
        C_2 \;=\; (z_1^{2} - 2z_2)
        \;-\; a\,\mathcal P(z_1)
        \;-\; b\,\mathcal Q(I)
        \;-\; d\,E_{nn}.
\end{equation}
Since $z_1^{2}, z_2 \in E_+$, the $E_+$-part contributes nothing to
$P_- C_2$, leaving
\[
        \mathfrak R(0,\alpha,0)
        \;=\; \frac{a^{2}}{2}\,P_-\mathcal P(T_n)
        \;-\; b\,P_-\mathcal Q(I)
        \;-\; d\,P_-E_{nn}.
\]
where we used $-a\mathcal P(z_1)=\frac{a^{2}}{2}\mathcal P(T_n)$
from~\eqref{eq:z1-formula}.

\textbf{The three odd projections.}
For $T_n\in E_+$ a direct calculation gives $\mathcal P(T_n) = L_n^{2}+2U_nL_n+U_n^{2}$,
hence $\Theta\mathcal P(T_n)=L_n^{2}+2L_nU_n+U_n^{2}$, and
\[
        P_-\mathcal P(T_n) \;=\; U_nL_n - L_nU_n
        \;=\; (I-E_{nn})-(I-E_{11})
        \;=\; E_{11}-E_{nn},
\]
using $L_nU_n = I - E_{11}$ and $U_nL_n = I-E_{nn}$. Similarly,
$\mathcal Q(I) = G_n^{2} = I-E_{nn}$, so $\Theta\mathcal Q(I)=I-E_{11}$ and
\[
        P_-\mathcal Q(I) \;=\; \tfrac12(E_{11}-E_{nn}),
        \qquad
        P_-E_{nn} \;=\; -\tfrac12(E_{11}-E_{nn}).
\]

\textbf{The closed form.}
Combining,
\[
        \mathfrak R(0,\alpha,0)
        \;=\; \tfrac12\bigl(a^{2} - b + d\bigr)\,(E_{11}-E_{nn}).
\]
Substituting $a=\alpha/c$, $b=1/c$, $d=1/\sqrt c$ and using $\alpha^{2}-c=-1$,
\[
        a^{2} - b \;=\; \frac{\alpha^{2}-c}{c^{2}} \;=\; -\frac{1}{c^{2}},
\]
hence
\begin{equation}\label{eq:R2-at-0}
        \boxed{\;\;
        \mathfrak R(0,\alpha,0)
        \;=\; \frac{1}{2}\bigl(c^{-1/2}-c^{-2}\bigr)\,(E_{11}-E_{nn}),
        \qquad c=1+\alpha^{2}.\;\;}
\end{equation}
For small~$\alpha$, $c^{-1/2}-c^{-2}=\tfrac32\alpha^{2}+O(\alpha^{4})$, so
\[
        \mathfrak R(0,\alpha,0) \;=\; \tfrac34\,\alpha^{2}\,(E_{11}-E_{nn})
        \;+\; O(\alpha^{4}).
\]
In particular $\mathfrak R(0,0,0)=0$, but $\mathfrak R(0,\alpha,0)\ne 0$
for $\alpha\ne 0$. The implication is geometric: for $\alpha\ne 0$ the
$\alpha$-perturbed solution does not converge to $N=I$ as $t\downarrow 0$,
but to a shifted base point inside the limiting manifold $\mathcal M$.


\subsection{The linearization $\mathcal B_\alpha$}
\label{sec:linearization-B}

We now compute the Jacobian of the second divided reduced equation.
Recall that
\[
        \mathscr R(Y,\alpha,t)=t^2\mathfrak R(Y,\alpha,t).
\]
Thus the linearization
\[
        \mathcal B_\alpha
        :=
        D_Y\mathfrak R(0,\alpha,0)
        \colon E_- \to E_-
\]
is the coefficient of $t^2$ in
\[
        D_Y\mathscr R(0,\alpha,t)[H],
        \qquad H\in E_-.
\]

Set
\[
        c:=1+\alpha^2,
        \qquad
        a:=\alpha c^{-1},
        \qquad
        b:=c^{-1},
        \qquad
        d:=c^{-1/2}.
\]
The normalized equation is
\[
        \widetilde{\mathcal G}(N,\alpha,t)
        =
        N^{-1}
        -
        \Theta(N)
        -
        at\,\mathcal P(N)
        -
        bt^2\,\mathcal Q(N)
        -
        d\,\mathcal U(t).
\]

\begin{proposition}[Linearization of the second reduced equation]
\label{prop:Balpha-linearization}
Let
\[
        L_n:=F_nG_n,
        \qquad
        U_n:=G_nF_n,
        \qquad
        T_n:=L_n+U_n.
\]
Also set
\[
        E_{\partial}:=E_{11}+E_{nn},
        \qquad
        D_{\partial}:=L_nU_n+U_nL_n=2I-E_{\partial}.
\]
Then, for $H\in E_-$,
\begin{equation}
\label{eq:Balpha-closed}
        \boxed{
        \mathcal B_\alpha(H)
        =
        \frac{1}{4c^2}
        \left[
        2(L_nHU_n+U_nHL_n)
        -
        D_{\partial}H
        -
        HD_{\partial}
        \right]
        -
        \frac{1}{4\sqrt c}
        \left(
        E_{\partial}H+HE_{\partial}
        \right).
        }
\end{equation}
Moreover, $\mathcal B_\alpha$ is negative definite with respect to the
Hilbert--Schmidt inner product on the Hermitian matrices. In particular,
$\mathcal B_\alpha$ is invertible on $E_-$.
\end{proposition}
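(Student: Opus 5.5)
The plan is to compute $\mathcal B_\alpha$ directly from the Taylor expansion of $\mathscr R(Y,\alpha,t)$ in $t$ to order $t^2$, keeping terms to first order in $Y$. Write $\Phi(Y,\alpha,t)=t\,\phi(Y)+t^2\psi(Y)+O(t^3)$, with $\phi(Y)$ given by the Lyapunov identity (Lemma~\ref{lem:lyapunov}). Expand $N^{-1}$ for $N=e^Y+t\phi+t^2\psi$, with $A=e^{-Y}$:
\[
N^{-1}=A-tA\phi A+t^2\bigl(A\phi A\phi A-A\psi A\bigr)+O(t^3).
\]
The $t^2$ coefficient of $\widetilde{\mathcal G}(e^Y+\Phi,\alpha,t)$ is then
\[
\mathfrak R(Y,\alpha,0)=P_-\Bigl[A\phi A\phi A-A\psi A-\psi-a\,\mathcal P(\phi)-b\,\mathcal Q(e^Y)-d\,E_{nn}\Bigr].
\]
This reproduces \eqref{equ:C_2} at $Y=0$.

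Next I differentiate this expression at $Y=0$ in a direction $H\in E_-$, using $A\approx I-H$ and $e^Y\approx I+H$. I will need two auxiliary inputs.

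First, the derivative $\phi'$ of $\phi$. Linearizing \eqref{eq:sylvester} around $A=I$, $\phi(0)=-\tfrac a2T_n$ gives
\[
2\phi'=(H\phi(0)+\phi(0)H)+a\,(HL_n+U_nH),
\]
and this is explicit.

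Second, $\psi(0)=z_2$. I will read it off from the $t^2$ coefficient of the even equation at $Y=0$, namely $-2z_2+P_+(z_1^2-a\mathcal P(z_1)-b\mathcal Q(I)-dE_{nn})=0$. The term $-P_-(A z_2A+z_2)$ linearizes to $P_-(Hz_2+z_2H)$. I also need $\partial_Y\psi$, but it enters only through $-P_-(\psi'+\psi')$ with $\psi'\in E_+$, so it drops.

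Substituting these inputs and using the shift identities $L_nU_n=I-E_{11}$, $U_nL_n=I-E_{nn}$, $\Theta(L_n)=U_n$ and $P_-\Theta=-P_-$, the terms should assemble into \eqref{eq:Balpha-closed}. The $\alpha$-dependence must collapse through $a^2-b=-c^{-2}$, exactly as in \S\ref{sec:constant-term}. I expect this bookkeeping to be the main obstacle: the many $a^2$, $b$ and $d$ contributions, each with an $E_{11}$/$E_{nn}$ boundary correction, have to combine into the single coefficient $1/(4c^2)$ on the bracket and $1/(4\sqrt c)$ on the $E_\partial$ term. I would organize the computation by separating the terms quadratic in the shifts, which produce $L_nHU_n+U_nHL_n$ and $D_\partial$, from the terms that produce boundary projectors. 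I would also check the result against the $n=2$ case by hand.

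Negative definiteness then follows from a sum-of-squares identity. For Hermitian $H$, using $L_n^*=U_n$,
\[
\|U_nH-HU_n\|_{\mathrm{HS}}^2=\operatorname{tr}(H^2L_nU_n)+\operatorname{tr}(H^2U_nL_n)-2\operatorname{tr}(HL_nHU_n),
\]
together with the analogous formula for $L_n$. Hence
\[
\bigl\langle H,\,2(L_nHU_n+U_nHL_n)-D_\partial H-HD_\partial\bigr\rangle=-\|U_nH-HU_n\|^2-\|L_nH-HL_n\|^2\le0,
\]
and also $\langle H,E_\partial H+HE_\partial\rangle=2(\|He_1\|^2+\|He_n\|^2)\ge0$. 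So $\langle H,\mathcal B_\alpha H\rangle\le0$, with equality only if $H$ commutes with the nilpotent shift $U_n$ and $He_1=0$. Commuting with $U_n$ forces $H$ to be a polynomial in $U_n$; Hermiticity then makes $H=\lambda I$, and $He_1=0$ gives $\lambda=0$. Thus $\mathcal B_\alpha$ is negative definite on Hermitian matrices. Since $\mathcal B_\alpha$ maps $E_-$ to $E_-$ and is strictly negative there, it is invertible on $E_-$.
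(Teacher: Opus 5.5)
Your proposal is correct and is essentially the paper's proof. Expanding in $t$ before, rather than after, differentiating in $Y$ produces term for term the paper's raw expression \eqref{eq:Balpha-raw}: your $\phi'$ is its $s_1(H)=-\tfrac a2P_+\mathcal P(H)=\tfrac a4[H,L_n-U_n]$, your $\psi(0)$ is $z_2$, and $\psi'$ drops exactly as $s_2(H)$ does. Your energy identity is the paper's, and your single-shift commutant argument is a slightly leaner variant of its ``$L_n,U_n$ generate $M_n(\mathbb C)$'' step.

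The bookkeeping you flag, which the paper also leaves implicit, does close:
\begin{itemize}
\item $P_-$ kills $z_1\phi'+\phi'z_1$, since this term lies in $E_+$.
\item With $V:=L_n-U_n$, one has $P_-\bigl(-a\mathcal P(\phi')\bigr)=-\tfrac{a^2}{8}[[H,V],V]$, and this cancels every $L_n^2$, $U_n^2$, $L_nHL_n$, $U_nHU_n$ term.
\item What remains is $\tfrac{b-a^2}{2}(L_nHU_n+U_nHL_n)+(a^2-b)H+\tfrac{b-a^2-d}{4}(E_\partial H+HE_\partial)$, which is \eqref{eq:Balpha-closed} since $b-a^2=c^{-2}$ and $d=c^{-1/2}$.
\end{itemize}
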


The proof is in Appendix \ref{sec:jacobian}.

\begin{remark}
The point of the proposition is that the first divided reduced equation has
zero $Y$-linearization, but after the second division the linearization is
the strictly negative operator $\mathcal B_\alpha$. Therefore the
implicit-function theorem should be applied to
\[
        \mathfrak R(Y,\alpha,t)=0,
\]
not to the first divided equation.
\end{remark}

\subsection{The shifted base point: diagonal invariance and explicit formula}
\label{sec:shifted-base-point}

The argument has three steps: an entrywise analysis of the $t^2$
coefficient for diagonal~$Y$, a reflection identity on the second
off-diagonal that forces diagonality of $\mathfrak R(Y,\alpha,0)$,
and the direct verification of a closed-form solution.

Throughout this subsection, indices outside $\{1,\ldots,n\}$ are interpreted
as making the corresponding entry zero. We write $\bar i := n+1-i$ and use
the involution $\Theta(M)=F_nMF_n$, so that
$\Theta(M)_{ij}=M_{\bar i,\bar j}$. The constants are
$c:=1+\alpha^2$, $a:=\alpha/c$, $b:=1/c$, $d:=c^{-1/2}$.

We work inside the diagonal $\Theta$-odd subspace
\[
        D_-:=
        \bigl\{
        Y=\diag(y_1,\ldots,y_n)\,:\,
        y_{\bar i}=-y_i
        \bigr\}
        \subset E_- .
\]

\subsubsection*{The first-order correction}

Fix $Y\in D_-$ and set $A:=e^{-Y}=\diag(r_1,\ldots,r_n)$, so
$r_{\bar i}=r_i^{-1}$. Expand the even correction in $t$,
\[
        \Phi(Y,\alpha,t)=t\phi+t^2\psi+O(t^3),
        \qquad \phi,\psi\in E_+.
\]
Substituting $N=A^{-1}+t\phi+t^2\psi+O(t^3)$ into
$\widetilde{\cG}(N,\alpha,t)=0$ and using
$\cP(e^Y)=AL_n+U_nA$ (established in  \eqref{equ:Pe^Y}), the $t^1$ coefficient
yields (see Lemma \ref{lem:lyapunov})
\begin{equation}\label{eq:phi-eq}
        A\phi A+\phi=-a\bigl(AL_n+U_nA\bigr).
\end{equation}
Since $A$ is diagonal, \eqref{eq:phi-eq} is entrywise: $\phi_{ij}=0$ unless
$|i-j|=1$, and
\begin{equation}\label{eq:phi-formula}
        \phi_{i,i+1}=\phi_{i+1,i}
        =-\,\frac{a\,r_{i+1}}{1+r_ir_{i+1}},
        \qquad i=1,\ldots,n-1.
\end{equation}
We write $p_i:=\phi_{i,i+1}$ for short.
A direct check using $r_{\bar i}=r_i^{-1}$ shows
$\phi_{\bar i,\overline{i+1}}=\phi_{i,i+1}$, so $\phi\in E_+$.

\subsubsection*{The $t^2$ coefficient and its support}

Expanding $N^{-1}=A-tA\phi A+t^2(A\phi A\phi A-A\psi A)+O(t^3)$ and
$\Theta(N)=A+t\phi+t^2\psi+O(t^3)$, and noting that
$\cU(t)=t^2E_{nn}+O(t^4)$ on the diagonal, the $t^2$ coefficient is
\begin{equation}\label{eq:C2}
        C_2 \;=\;
        A\phi A\phi A - A\psi A - \psi
        - a\,\cP(\phi) - b\,\cQ(e^Y) - d\,E_{nn} .
\end{equation}
We record the supports of the individual pieces. Since $A$ is diagonal and
$\phi$ has band-$\pm 1$ support, the product $A\phi A\phi A$ is supported
on bands $\{0,\pm 2\}$. The action of $\cP$ on a matrix is, by direct
computation from the definitions $(F_n)_{ij}=\delta_{i+j,n+1}$ and
$(G_n)_{ij}=\delta_{i+j,n}$,
\begin{equation}\label{eq:P-action}
        (F_nMG_n)_{ij} = M_{\bar i,\bar j-1},
        \qquad
        (G_nMF_n)_{ij} = M_{\bar i-1,\bar j},
\end{equation}
so $\cP(\phi)$ has support contained in bands $\{0,\pm 2\}$. The remaining
terms in \eqref{eq:C2} are diagonal. Hence $C_2$ is supported on the
diagonal and on the second off-diagonal bands.

\subsubsection*{The reflection identity}

Set
\[
        B \;:=\; A\phi A\phi A \;-\; a\,\cP(\phi).
\]
We claim that $B$ satisfies a precise reflection compatibility relation on
the second off-diagonal bands.

\begin{lemma}
\label{lem:refl-id}
For $Y\in D_-$ with $A=e^{-Y}=\diag(r_1,\ldots,r_n)$, the matrix
$B=A\phi A\phi A-a\cP(\phi)$ satisfies
\begin{equation}\label{eq:refl}
        B_{ij} \;=\; r_ir_j\,B_{\bar i,\bar j}
        \qquad
        \text{for every $(i,j)$ with $|i-j|=2$.}
\end{equation}
\end{lemma}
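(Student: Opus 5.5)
The plan is a direct entrywise verification. Both sides of \eqref{eq:refl} are explicit rational functions of the $r_k$ once $\phi$ is known from \eqref{eq:phi-formula}. First, $B$ is symmetric, since $A$ is diagonal, $\phi$ is symmetric and $\mathcal P$ preserves symmetry. The map $(i,j)\mapsto(\bar i,\bar j)$ sends the band $j=i+2$ to the band $j=i-2$. So it suffices to treat pairs $(i,i+2)$ with $1\le i\le n-2$; the case $j=i-2$ follows by transposition.

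\emph{Step 1: the entries of $B$ on the second superdiagonal.} Since $A=\diag(r_k)$ and $\phi$ is tridiagonal with $\phi_{k,k+1}=p_k$, only one path contributes:
\[
(A\phi A\phi A)_{i,i+2}=r_ir_{i+1}r_{i+2}\,p_ip_{i+1}.
\]
For $\mathcal P(\phi)$ we use \eqref{eq:P-action}. With $j=i+2$ we have $\bar j=\bar i-2$. The term $(F_n\phi G_n)_{i,i+2}=\phi_{\bar i,\bar i-3}$ vanishes, because $\phi$ has band-$1$ support. The other term is $(G_n\phi F_n)_{i,i+2}=\phi_{\bar i-1,\bar i-2}=p_{\bar i-2}$. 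Next we use $\phi\in E_+$, i.e.\ $\phi_{\bar k,\overline{k+1}}=\phi_{k,k+1}$. In index form this reads $p_{\bar k-1}=p_k$, and it gives $p_{\bar i-2}=p_{i+1}$. Hence
\[
B_{i,i+2}=r_ir_{i+1}r_{i+2}\,p_ip_{i+1}-a\,p_{i+1}.
\]

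\emph{Step 2: the reflected entry.} The reflected position $(\bar i,\bar i-2)$ is, by symmetry of $B$, the entry $(k,k+2)$ with $k=\bar i-2=\overline{i+2}$. Applying Step 1 at index $k$ requires three substitutions:
\begin{itemize}
\item $r_k=r_{i+2}^{-1}$, $r_{k+1}=r_{i+1}^{-1}$ and $r_{k+2}=r_i^{-1}$, using $r_{\bar m}=r_m^{-1}$;
\item $p_k=p_{i+1}$, using $p_{\bar m-1}=p_m$ with $m=i+1$;
\item $p_{k+1}=p_{\bar i-1}=p_i$.
\end{itemize}
This gives
\[
B_{\bar i,\bar j}=\frac{p_ip_{i+1}}{r_ir_{i+1}r_{i+2}}-a\,p_i .
\]
Multiplying by $r_ir_{i+2}$, the claim \eqref{eq:refl} becomes the scalar identity
\[
p_ip_{i+1}\Bigl(r_ir_{i+1}r_{i+2}-r_{i+1}^{-1}\Bigr)=a\bigl(p_{i+1}-r_ir_{i+2}\,p_i\bigr).
\]

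\emph{Step 3: verification.} Substitute $p_i=-a r_{i+1}/(1+x)$ and $p_{i+1}=-a r_{i+2}/(1+y)$, where $x:=r_ir_{i+1}$ and $y:=r_{i+1}r_{i+2}$.
\begin{itemize}
\item The left side equals $a^2r_{i+2}(xy-1)/\bigl((1+x)(1+y)\bigr)$.
\item The right side equals $a^2r_{i+2}\bigl(x/(1+x)-1/(1+y)\bigr)$, which simplifies to $a^2r_{i+2}(xy-1)/\bigl((1+x)(1+y)\bigr)$.
\end{itemize}
The two sides agree, which proves the identity.

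\emph{Main obstacle.} The only delicate point is index bookkeeping: getting the shifts in \eqref{eq:P-action} right, checking that exactly one of the two $\mathcal P$-terms survives on the second band, and translating the $E_+$-invariance of $\phi$ into $p_{\bar k-1}=p_k$. Boundary issues do not arise, because for $1\le i\le n-2$ all the indices used lie in $\{1,\dots,n-1\}$. I would sanity-check the index conventions on the case $n=3$, $i=1$, which is self-reflected, before writing out the general computation.
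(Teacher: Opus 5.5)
Your proposal is correct and matches the paper's proof in substance. You compute $B_{i,i+2}=p_{i+1}(\rho p_i-a)$ and the reflected entry $p_i(\rho^{-1}p_{i+1}-a)$, with $\rho=r_ir_{i+1}r_{i+2}$; the only cosmetic difference is that you get the reflected entry from the symmetry of $B$ rather than by evaluating $\mathcal P(\phi)$ directly on the lower band. You then reduce to the same scalar identity and verify it by substituting the explicit formula for $\phi_{i,i+1}$, exactly as the paper does.
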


\begin{proof}
The relation \eqref{eq:refl} is the $\Theta$-equivariance of the order-$t^2$
term, evaluated on the second off-diagonal. 
The entrywise verification is given in Appendix~\ref{sec:reflection_identity}.
\end{proof}

\subsubsection*{Diagonality of $\fR(Y,\alpha,0)$ and of the shifted base point}

\begin{lemma}\label{lem:P-minus-vanishes}
Let $\psi\in E_+$ be determined by $P_+C_2=0$ on the second off-diagonal
bands. Then the second off-diagonal component of $P_-C_2$ vanishes; in
particular $\fR(Y,\alpha,0)=P_-C_2$ is diagonal and lies in $D_-$.
\end{lemma}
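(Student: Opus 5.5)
The plan is to work entrywise on a single second off-diagonal pair and use Lemma~\ref{lem:refl-id} as the only nontrivial input. By \eqref{eq:C2}, on the bands $|i-j|=2$ the only contributions to $C_2$ come from $B=A\phi A\phi A-a\,\cP(\phi)$ and from $-A\psi A-\psi$. The terms $b\,\cQ(e^Y)$ and $d\,E_{nn}$ are diagonal. The map $\Theta$ sends the pair $(i,j)$ to $(\bar i,\bar j)$, and $|\bar i-\bar j|=|i-j|$, so $\Theta$ preserves the second off-diagonal bands. Hence the equations $P_+C_2=0$ and $P_-C_2$ restricted to these bands decouple into $2\times 2$ problems on the orbits $\{(i,j),(\bar i,\bar j)\}$. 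Since $|i-j|=2$ forces $i+j\neq n+1$ unless $n$ is odd and the pair straddles the center, I will treat the fixed-orbit case separately as well.

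On a two-element orbit, write $\psi_{ij}=\psi_{\bar i\bar j}=:s$, which is forced because $\psi\in E_+$. Then
\[
(C_2)_{ij}=B_{ij}-(r_ir_j+1)s,\qquad (C_2)_{\bar i\bar j}=B_{\bar i\bar j}-(r_{\bar i}r_{\bar j}+1)s=B_{\bar i\bar j}-\bigl((r_ir_j)^{-1}+1\bigr)s .
\]
Here I use $r_{\bar i}=r_i^{-1}$. The condition $P_+C_2=0$ means that the sum of the two entries vanishes, and this determines $s$ uniquely. The odd component is half the difference of the two entries, so I must show $(C_2)_{ij}=-(C_2)_{\bar i\bar j}$ implies $(C_2)_{ij}=(C_2)_{\bar i\bar j}$, i.e.\ that both entries vanish. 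Substituting $B_{ij}=r_ir_jB_{\bar i\bar j}$ from \eqref{eq:refl} and setting $\rho=r_ir_j>0$ gives
\[
(C_2)_{ij}=\rho B_{\bar i\bar j}-(\rho+1)s,\qquad (C_2)_{\bar i\bar j}=B_{\bar i\bar j}-\rho^{-1}(\rho+1)s=\rho^{-1}(C_2)_{ij}.
\]
So the two entries are positive multiples of each other. Their sum vanishes only if both vanish, and then the odd part vanishes too.

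For a $\Theta$-fixed pair, $P_-$ annihilates the entry automatically. Thus every second off-diagonal entry of $P_-C_2$ is zero, and $P_-C_2$ is diagonal. The remaining question is whether $P_-C_2$ is $\Theta$-odd; this holds because $P_-$ lands in $E_-$ by construction. Its diagonal entries therefore satisfy $x_{\bar i}=-x_i$, so $\fR(Y,\alpha,0)=P_-C_2\in D_-$. Here I am identifying $\fR(Y,\alpha,0)$ with $P_-C_2$, which follows from Proposition~\ref{prop:Rhat-vanishes} together with the expansion $\Phi=t\phi+t^2\psi+O(t^3)$.

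The main obstacle is justifying that the $\psi$ in the statement is the genuine $\partial_t^2\Phi/2$, and that band supports do not mix. I would argue this by the same uniqueness device as in Lemma~\ref{lem:lyapunov}. The full even equation at order $t^2$ reads $P_+[-A\psi A-\psi+(\text{terms without }\psi)]=0$, and $P_+\mathcal S_Y|_{E_+}$ is positive definite, so $\psi$ is uniquely determined. Because $A$ is diagonal, $\mathcal S_Y$ preserves each band, so the unique $\psi$ is supported on bands $\{0,\pm2\}$. Its second-band part is exactly the orbitwise $s$ computed above.
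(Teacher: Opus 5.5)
Your proof is correct and follows essentially the same route as the paper: both work entrywise on the $\Theta$-orbits of second off-diagonal positions, use $\psi_{ij}=\psi_{\bar i\bar j}$ and $r_{\bar i}=r_i^{-1}$, and invoke Lemma~\ref{lem:refl-id}. The paper solves the even condition for $\psi_{ij}$ and substitutes to get $P_-C_2|_{ij}=(B_{ij}-sB_{\bar i\bar j})/(1+s)$, whereas your observation $(C_2)_{\bar i\bar j}=\rho^{-1}(C_2)_{ij}$ reaches the same conclusion more directly, and your uniqueness argument that $\psi$ is supported on bands $\{0,\pm2\}$ fills in a point the paper only asserts when it calls the remaining terms of \eqref{eq:C2} diagonal.
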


\begin{proof}
On the second off-diagonal, the diagonal terms in \eqref{eq:C2} do not
contribute, so
\[
        C_2|_{ij}
        =B_{ij}-(1+r_ir_j)\psi_{ij},
        \qquad |i-j|=2,
\]
using $(A\psi A)_{ij}=r_ir_j\psi_{ij}$ for diagonal $A$. Since $\psi\in
E_+$ we have $\psi_{\bar i,\bar j}=\psi_{ij}$, and the condition
$P_+C_2|_{ij}=0$ becomes
\[
        \bigl[(1+r_ir_j)+(1+(r_ir_j)^{-1})\bigr]\psi_{ij}
        =B_{ij}+B_{\bar i,\bar j},
\]
which determines $\psi_{ij}$ uniquely. Substituting this $\psi_{ij}$ into
$P_-C_2|_{ij}=\tfrac12\bigl[(B_{ij}-B_{\bar i,\bar j})-(r_ir_j-(r_ir_j)^{-1})
\psi_{ij}\bigr]$ and writing $s:=r_ir_j$,
\[
        P_-C_2|_{ij}
        =\frac{B_{ij}-s\,B_{\bar i,\bar j}}{1+s},
\]
which vanishes by Lemma~\ref{lem:refl-id}. Hence $P_-C_2$ has no
second-off-diagonal entries. Since $P_-C_2\in E_-$ by construction and the
only remaining support is the diagonal, $P_-C_2\in D_-$.
\end{proof}

\begin{propo}\label{prop:diagonal-invariance}
The reduced map at $t=0$ preserves $D_-$:
\[
        \fR(D_-,\alpha,0)\subseteq D_- .
\]
Consequently the shifted base point $Y_\alpha^{(0)}\in E_-$ produced by the
implicit function theorem at $(Y,t)=(0,0)$ in fact lies in $D_-$, and the
limiting rescaled solution
\[
        \lim_{t\to 0} M_\alpha(t)
        \;=\; c^{-1/2}\,e^{Y_\alpha^{(0)}}
\]
is diagonal.
\end{propo}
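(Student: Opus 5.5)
The plan has three parts: show that $D_-$ is invariant under the reduced map, apply the implicit function theorem inside $D_-$, and then use uniqueness in $E_-$ to see that the diagonal solution is the one the implicit function theorem produces.

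\emph{Invariance.} Fix $Y\in D_-$. By construction $\fR(Y,\alpha,0)$ is the coefficient of $t^2$ in $\mathscr R(Y,\alpha,t)=P_-\widetilde{\cG}(e^Y+\Phi(Y,\alpha,t),\alpha,t)$. The first-order vanishing of Proposition~\ref{prop:Rhat-vanishes} removes every lower-order term, so this coefficient equals $P_-C_2$, with $C_2$ as in \eqref{eq:C2}. Here $\phi$ is given by \eqref{eq:phi-formula}, and $\psi\in E_+$ is the second-order coefficient of $\Phi$. That coefficient is fixed by the even equation $P_+C_2=0$, which Lemma~\ref{lem:LS-even} guarantees.

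We already know $C_2$ is supported on the diagonal and on the bands $\pm2$. One point needs checking: the components of $\psi$ off these bands must vanish. They do, because there the equation reads $(1+r_ir_j)\psi_{ij}+(1+(r_ir_j)^{-1})\psi_{ij}=0$ with a positive coefficient. Lemma~\ref{lem:P-minus-vanishes} then kills the $\pm2$ bands of $P_-C_2$. Hence $\fR(Y,\alpha,0)$ is diagonal and $\Theta$-odd, so it lies in $D_-$.

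\emph{Implicit function theorem on $D_-$.} Consider the restricted map $\fR(\cdot,\alpha,0)|_{D_-}\colon D_-\to D_-$. At $Y=0$ its value is \eqref{eq:R2-at-0}, which lies in $D_-$. Its Jacobian is $\mathcal B_\alpha|_{D_-}$. Because $\fR(\cdot,\alpha,0)$ preserves $D_-$, $\mathcal B_\alpha$ maps $D_-$ into itself. Since $\mathcal B_\alpha$ is negative definite on $E_-$ (Proposition~\ref{prop:Balpha-linearization}), its restriction to $D_-$ is invertible. For small $\alpha$ the constant term is $O(\alpha^2)$, so a contraction or implicit function argument inside $D_-$ produces a root $Y^{(0)}_\alpha\in D_-$ of $\fR(\cdot,\alpha,0)$.

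\emph{Identification and the limit.} The implicit function theorem in $E_-$ at $(Y,\alpha)=(0,0)$ gives a unique root near $0$. Any root in $D_-\subset E_-$ near $0$ must therefore coincide with it, so $Y^{(0)}_\alpha\in D_-$. Applying the implicit function theorem to $\fR(Y,\alpha,t)=0$ in $(Y,t)$ then yields an analytic branch $Y(t)\to Y^{(0)}_\alpha$. Along it, $N(t)=e^{Y(t)}+\Phi(Y(t),\alpha,t)\to e^{Y^{(0)}_\alpha}$, because $\Phi(\cdot,\alpha,0)=0$. Undoing the normalization $N=c^{1/2}M$ gives $M_\alpha(t)\to c^{-1/2}e^{Y^{(0)}_\alpha}$, which is diagonal since $Y^{(0)}_\alpha$ is diagonal.

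The step I expect to be the main obstacle is this identification. It requires the analytic branch to be the true positive solution $M_\alpha(t)$, which I would argue via uniqueness of the positive solution of Speicher's equation \cite{hfs2007}. Extending the argument beyond small $\alpha$ would use analyticity in $\alpha$ (Theorem~\ref{thm:analyticity-in-alpha}) together with the explicit closed form of the base point that follows in the paper.
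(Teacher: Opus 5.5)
Your proposal is correct and follows the paper's argument. The steps are the same: invariance of $D_-$ via Lemma~\ref{lem:P-minus-vanishes}, invertibility of the negative-definite $\mathcal B_\alpha$ restricted to the invariant subspace $D_-$, the implicit function theorem inside $D_-$, and uniqueness of the small zero in $E_-$ to identify it with $Y_\alpha^{(0)}$.

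Your additions are legitimate refinements of points the paper leaves implicit:
\begin{itemize}
\item you check that $\psi$ vanishes off the bands $0,\pm2$;
\item you take the implicit function theorem in $(Y,\alpha)$ near $(0,0)$, which is needed because $\mathfrak R(0,\alpha,0)\neq0$ for $\alpha\neq0$;
\item you identify the branch with $M_\alpha(t)$ through positivity and uniqueness of the accretive solution, as the paper itself does in the proof of Theorem~\ref{theo:all-alpha-rescaled-holomorphicity}.
\end{itemize}
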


\begin{proof}
The inclusion $\fR(D_-,\alpha,0)\subseteq D_-$ is
Lemma~\ref{lem:P-minus-vanishes}. To close, recall that the linearization
$\cB_0:=D_Y\fR(0,\alpha,0)$ is invertible on $E_-$
(\ref{prop:Balpha-linearization}). Differentiating Lemma~\ref{lem:P-minus-vanishes}
at $Y=0$ shows $\cB_0$ preserves $D_-$; in finite dimension a restriction
of an invertible operator to an invariant subspace is invertible, so
$\cB_0|_{D_-}$ is invertible. The implicit function theorem applied inside
$D_-$ produces a unique small zero $Y\in D_-$ of $\fR(\cdot,\alpha,0)$. By
uniqueness of the small zero of $\fR(\cdot,\alpha,0)$ in the full space
$E_-$, this restricted zero coincides with $Y_\alpha^{(0)}$. Hence
$Y_\alpha^{(0)}\in D_-$, and the formula for the limit follows from the
rescaling identity $M_\alpha(t)=c^{-1/2}\,N(Y(\alpha,t))$ at $t=0$.
\end{proof}

\begin{remark}[The unperturbed case]
\label{rem:alpha-zero-IFT}
At $\alpha=0$, $\fR(0,0,0)=0$ and $\cB_0$ is invertible, so the implicit
function theorem applies directly at $(Y,\alpha,t)=(0,0,0)$, recovering
the unperturbed holomorphic rescaled limit.  For $\alpha\ne 0$ the
nonvanishing of $\fR(0,\alpha,0)$ shows that the base point shifts inside
$\mathcal M$; the correct approach is to recenter at the shifted point,
which we now identify.
\end{remark}

\subsubsection*{The explicit shifted base point}

\begin{propo}[Explicit diagonal solution of the limiting reduced equation]
\label{prop:explicit-diagonal-solution}
Let $n\ge 2$, $\alpha\in\bR$, and set $c:=1+\alpha^2$.
Define
\begin{equation}\label{eq:Y-star-formula}
        Y_*(\alpha)
        :=
        \frac32\log c\,
        \diag\!\left(
                \frac{n-1}{n+1},
                \frac{n-3}{n+1},
                \ldots,
                -\frac{n-3}{n+1},
                -\frac{n-1}{n+1}
        \right).
\end{equation}
Then $Y_*(\alpha)\in D_-$ and
$\mathfrak R(Y_*(\alpha),\alpha,0)=0$.
\end{propo}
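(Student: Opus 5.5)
The plan is to reduce $\mathfrak R(Y,\alpha,0)=0$ on $D_-$ to an explicit system of scalar equations in the diagonal entries $r_i$ of $A=e^{-Y}$, and then verify that the geometric sequence coming from $Y_*(\alpha)$ solves it.

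Membership $Y_*(\alpha)\in D_-$ is immediate, since the $i$-th diagonal entry is proportional to $n-2i+1$, which changes sign under $i\mapsto\bar i$. As a consistency check, $c^{-1/2}e^{Y_*}$ has $i$-th entry
\[
c^{-1/2+\frac32\frac{n+1-2i}{n+1}}=c^{1-\frac{3i}{n+1}},
\]
matching $M_\alpha^{(0)}$ in Theorem~\ref{theo:all-alpha-rescaled-holomorphicity}. So the real content is the vanishing of the reduced map.

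\textbf{Reduction to scalar equations.} By Lemma~\ref{lem:P-minus-vanishes}, for $Y\in D_-$ the value $\mathfrak R(Y,\alpha,0)=P_-C_2$ is diagonal, so only the diagonal of $C_2$ in \eqref{eq:C2} matters. I would split $C_2$ into its $\psi$-free part, $\Delta:=\operatorname{diag}(B)-b\,\mathcal Q(e^Y)-d\,E_{nn}$, and the term $-(A\psi A+\psi)$. The diagonal part of the latter is $-(1+r_i^2)\psi_{ii}$, with $\psi_{ii}=\psi_{\bar i\bar i}$ since $\psi\in E_+$.

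Repeating the elimination from the proof of Lemma~\ref{lem:P-minus-vanishes} with $s=r_ir_{\bar i}^{\,\cdot}$ replaced by $s=r_i^2$ (so $s^{-1}=r_{\bar i}^2$), $\psi_{ii}$ is fixed by $P_+C_2=0$. The odd part then becomes
\[
(P_-C_2)_{ii}=\frac{\Delta_{ii}-r_i^2\,\Delta_{\bar i\bar i}}{1+r_i^2}.
\]
Hence $\mathfrak R(Y,\alpha,0)=0$ on $D_-$ is equivalent to the $\lfloor n/2\rfloor$ scalar identities
\[
\Delta_{ii}=r_i^2\,\Delta_{\bar i\bar i}.
\]

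\textbf{The entries of $\Delta$.} Each entry follows from \eqref{eq:phi-formula} and \eqref{eq:P-action}:
\begin{itemize}
\item $(A\phi A\phi A)_{ii}=r_i^2\bigl(r_{i+1}p_i^2+r_{i-1}p_{i-1}^2\bigr)$, with $p_i=-a\,r_{i+1}/(1+r_ir_{i+1})$;
\item $\mathcal P(\phi)_{ii}=2p_{\bar i-1}$;
\item $\mathcal Q(e^Y)_{ii}=r_{n-i}^{-1}$ for $i\le n-1$ and $0$ for $i=n$;
\item $E_{nn}$ contributes only at $i=n$.
\end{itemize}
All of these are explicit rational functions of the $r_i$, with $a^2=(c-1)/c^2$, $b=c^{-1}$ and $d=c^{-1/2}$.

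\textbf{Verification at $Y_*$.} Substitute
\[
r_i=q^{\,n+1-2i},\qquad q:=c^{-3/(2(n+1))},
\]
so that $r_ir_{i+1}=q^{2(n-2i)}$ and $r_{\bar i}=r_i^{-1}$. Each interior identity should then collapse to a single power of $q$ times a combination of $a^2$, $b$ and $c$. The mechanism is the exponent bookkeeping $\gamma_i+\gamma_{n-i}=1$ that already underlies \eqref{eq:RHS-explicit-n}.

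\textbf{Main obstacle.} I expect the hardest part to be the boundary pair $i=1$, $\bar i=n$. There the $d\,E_{nn}$ term and the missing $\mathcal Q$ entry break the interior pattern, and the identity holds only because of the specific power $c^{-3/(2(n+1))}$ relating $c^{-1/2}$ to $c^{-2}$. This is the same combination $c^{-1/2}-c^{-2}$ that appears in \eqref{eq:R2-at-0}.

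I would first check $n=2$ and $n=3$ by hand to fix the bookkeeping. Then I would treat a general interior index $i$, clearing the denominators $1+r_ir_{i\pm1}$ by pairing each with its reflected denominator via $r_{\bar i}=r_i^{-1}$. Finally I would do the two boundary rows separately, using $a^2=(c-1)/c^2$ to reduce everything to an identity in powers of $c$.
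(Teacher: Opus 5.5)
Your proposal is correct and follows the paper's own route. Both arguments reduce $\mathfrak R(Y_*(\alpha),\alpha,0)=0$ to the entrywise compatibility of the order-$t^2$ Stein equation, i.e.\ your identities $\Delta_{ii}=r_i^2\Delta_{\bar i\bar i}$ (the paper's $R_{ii}=r_i^2R_{\bar i\bar i}$). These are then checked through the geometric relation at interior indices and through $b-a^2=c^{-2}$ at the boundary pair $(1,n)$. The only difference is that you dispose of the second off-diagonal by citing Lemma~\ref{lem:P-minus-vanishes}, whereas the appendix re-checks it.

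One refinement for the interior step. Carrying it out gives $R_{ii}-r_i^2R_{\bar i\bar i}=(a^2-b)\,(r_{i-1}r_{i+1}-r_i^2)/r_{i-1}$. The cancellation comes from the $a^2$-terms of $A\phi A\phi A$ collapsing against those of $-a\mathcal P(\phi)$, not from the bookkeeping $\gamma_i+\gamma_{n-i}=1$. So the interior identities hold for every geometric ratio, and the specific ratio $r_{i+1}/r_i=c^{3/(n+1)}$ is forced solely by the boundary identity $(a^2-b)\,r_2+d\,r_1^2=0$, as you anticipated.
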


\begin{proof}
Write $A=e^{-Y_*(\alpha)}=\diag(r_1,\dots,r_n)$ with
$r_i=c^{\frac{3}{n+1}\left(i-\frac{n+1}{2}\right)}$; since $r_{\bar i}=r_i^{-1}$ we have
$Y_*(\alpha)\in D_-$. By the reduction of \S\ref{sec:lyapunov-divisibility}, the identity
$\fR(Y_*,\alpha,0)=0$ is equivalent to the $E_+$-solvability of the order-$t^2$ Stein
equation $A\psi A+\psi=R$ with $R$ as in \eqref{eq:R-definition}, i.e.\ to the entrywise
compatibility $R_{ij}=r_ir_jR_{\bar i\bar j}$ of \eqref{eq:compatibility-all-entries}.
As $R$ is supported on the diagonal and the second off-diagonal, this is a finite set of
scalar identities; with $q:=c^{3/(n+1)}$ (so $r_{i+1}=qr_i$) they are verified in
Appendix~\ref{sec:type2-shifted-basepoint}.
\end{proof}

\begin{remark}[Perturbative consistency]
\label{rem:perturbative-consistency}
Using $\log c=\log(1+\alpha^2)=\alpha^2+O(\alpha^4)$,
\eqref{eq:Y-star-formula} expands as
\[
        Y_*(\alpha)
        \;=\;
        \tfrac{3}{2}\,\alpha^2\,\diag(\beta_1,\dots,\beta_n)
        +O(\alpha^4),
        \qquad
        \beta_i\;=\;\tfrac{n+1-2i}{n+1},
\]
recovering the leading-order shift that one computes via
$\mathcal{B}_0^{-1}(E_{11}-E_{nn})$ from \S\ref{sec:constant-term}. The
$\alpha^2$-direction is proportional to the vector
of unperturbed Puiseux exponents.
\end{remark}

\begin{remark}[Special values]
At $\alpha=0$, $c=1$ gives $Y_*(0)=0$ and $\lim_{t\downarrow 0}M_0(t)=I$,
recovering the unperturbed limit. For general $\alpha\ne 0$, the ratios
$r_{i+1}/r_i=c^{3/(n+1)}$ are constant, so the spectral gaps of $A_\alpha$
form a geometric progression.
\end{remark}

\subsection{Recentered equation}
\label{sec:extension}

Recall that the normalized rescaled equation \eqref{eq:normalized} is  $\widetilde{\mathcal G}(N,\alpha,t)=0$, where
\[
        \widetilde{\mathcal G}(N,\alpha,t) \;:=\;
        N^{-1} - F_n N F_n - \tfrac{\alpha}{c}\,t\,\mathcal P(N)
        - \tfrac{1}{c}\,t^2\,\mathcal Q(N) - c^{-1/2}\,\mathcal U(t).
\]

Fix $\alpha\in\bR$ and set $c:=1+\alpha^2$.
Let
  $Y_*(\alpha)$ be as in \eqref{eq:Y-star-formula}.
Define $N_*:=e^{Y_*(\alpha)}$, $S:=N_*^{1/2}$, and the
recentered variable $\widehat N:=S^{-1}NS^{-1}$.
Set    
\[
        \nu_i :=(N_*)_{ii}=c^{3\beta_i/2}>0.
\]
Note that the adjacent ratios are constant:
\begin{equation}\label{eq:nu-ratio}
        \sqrt{\nu_i/\nu_{i+1}}=c^{3/(2(n+1))}=:w,
        \qquad i=1,\ldots,n-1.
\end{equation}
and that $N_*\in\mathcal M$:
\begin{equation}
\label{equ:first_identity}
\nu_i\,\nu_{n+1-i}=1 \text{ (equivalently }y_{n+1-i}=-y_i\text{)}
\end{equation}

From this identity we have $\Theta(N_*^{1/2})=F_nN_*^{1/2}F_n
=\diag(\nu_{n+1-i}^{1/2})=N_*^{-1/2}$, i.e.\
\begin{equation}\label{eq:theta-sqrt}
        F_n S \,F_n=S^{-1}.
\end{equation}

Define
\[
        \tilde F:=S\,F_n\,S,
        \qquad
        \tilde G:=S\,G_n\,S.
\]

\begin{lemma}[Dressed shifts]\label{lem:dressed}
$\tilde F=F_n$ and $\tilde G=w\,G_n$.
\end{lemma}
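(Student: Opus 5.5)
The plan is a direct entrywise computation, since everything involved is diagonal or a permutation pattern. Here $S=N_*^{1/2}=\diag(s_1,\dots,s_n)$ with $s_i=\nu_i^{1/2}$. For any matrix $X$, the conjugation $SXS$ acts entrywise by $(SXS)_{ij}=s_is_jX_{ij}$. It therefore suffices to evaluate $s_is_j$ on the support of $F_n$ and of $G_n$.

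For $\tilde F$, the support of $F_n$ is the antidiagonal $\{(i,\bar i)\}$, where $\bar i=n+1-i$. There $s_is_{\bar i}=(\nu_i\nu_{n+1-i})^{1/2}=1$ by \eqref{equ:first_identity}, so $\tilde F=F_n$. Equivalently, one can argue at the matrix level: \eqref{eq:theta-sqrt} gives $F_nS=S^{-1}F_n$, hence $SF_nS=SS^{-1}F_n=F_n$. I would present the matrix-level version for brevity.

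For $\tilde G$, the support of $G_n$ is $\{(i,n-i):1\le i\le n-1\}$, which is the second antidiagonal, with $j=\bar i-1$. There
\[
s_is_{n-i}=(\nu_i\nu_{\bar i})^{1/2}\,\Bigl(\frac{\nu_{n-i}}{\nu_{n+1-i}}\Bigr)^{1/2}=\sqrt{\nu_{n-i}/\nu_{n-i+1}}.
\]
By \eqref{eq:nu-ratio} this ratio equals $w$ for every $i$, because $n-i$ ranges over $1,\dots,n-1$. Hence $\tilde G=wG_n$.

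The only point needing care is the index bookkeeping: one must check that $n-i$ stays in the range $1,\dots,n-1$, where \eqref{eq:nu-ratio} applies, and that the orientation of the ratio ($\nu_{n-i}/\nu_{n-i+1}$ rather than its inverse) matches the definition of $w$. Since $\nu_i=c^{3\beta_i/2}$ with $\beta_i$ decreasing in $i$, one gets $\nu_{k}/\nu_{k+1}=c^{3/(n+1)}$, so its square root is $c^{3/(2(n+1))}=w$, consistent with \eqref{eq:nu-ratio}. No other obstacle is expected; the lemma is a bookkeeping step that turns the recentered equation into one with the same form as \eqref{eq:normalized}, with $t$ effectively rescaled by $w$ in the $\mathcal P,\mathcal Q$ terms.
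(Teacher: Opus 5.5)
Your proof is correct and is essentially the paper's argument. You get $\tilde F=F_n$ from \eqref{eq:theta-sqrt} at the matrix level, and $\tilde G=w\,G_n$ by the same entrywise computation on the support of $G_n$, using $\nu_i\nu_{\bar i}=1$ and \eqref{eq:nu-ratio}. The only difference is labelling: you eliminate $\nu_i$ in favour of $\nu_{\bar i}$, while the paper eliminates $\nu_{n-k}$ in favour of $\nu_{k+1}^{-1}$.
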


\begin{proof}
For $\tilde F$: using \eqref{eq:theta-sqrt},
\[
        \tilde F\,F_n = S\,F_n \, S \,F_n
        =S S^{-1}\;=\;I,
\]
so $\tilde F=F_n^{-1}=F_n$.

For $\tilde G$: since $N_*$ is diagonal,
$(N_*^{1/2}G_nN_*^{1/2})_{ij}=\nu_i^{1/2}(G_n)_{ij}\nu_j^{1/2}$.
The nonzero entries of $G_n=\sum_{k=1}^{n-1}E_{n-k,k}$ live at
positions $(n-k,k)$ for $k\in\{1,\ldots,n-1\}$, so at every nonzero
position the prefactor is
\[
        \nu_{n-k}^{1/2}\,\nu_k^{1/2}
        \;=\; \nu_{k+1}^{-1/2}\,\nu_k^{1/2}
        \;=\; \sqrt{\nu_k/\nu_{k+1}}
        \;\stackrel{\eqref{eq:nu-ratio}}{=}\; w,
\]
where we used $\nu_{n-k}=\nu_{(n+1)-(k+1)}=\nu_{k+1}^{-1}$ from \eqref{equ:first_identity}. Hence $\tilde G=wG_n$.
\end{proof}

\begin{corollary}\label{cor:dressed-PQ}
For every $X \in M_n(\mathbb C)$,
\[
        \tilde FX \tilde G+\tilde G X \tilde F
        \;=\; w\,\mathcal P(X),
        \qquad
        \tilde G X \tilde G\;=\;w^2\,\mathcal Q(X).
\]
\end{corollary}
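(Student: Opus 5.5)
The plan is to use Lemma~\ref{lem:dressed}, $\tilde F=F_n$ and $\tilde G=wG_n$, together with bilinearity of the products. No further structure of $S$ is needed.

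\textbf{The cross term.} Substituting into the definition gives
\[
\tilde F X\tilde G+\tilde G X\tilde F=F_nX(wG_n)+(wG_n)XF_n=w\bigl(F_nXG_n+G_nXF_n\bigr).
\]
Since $w$ is a scalar, it factors out of each product. By \eqref{eq:P-def}, the last expression is exactly $w\,\mathcal P(X)$.

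\textbf{The quadratic term.} In the same way,
\[
\tilde GX\tilde G=(wG_n)X(wG_n)=w^2\,G_nXG_n,
\]
which equals $w^2\mathcal Q(X)$ by \eqref{eq:Q-def}.

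\textbf{Validity of the identities.} Both identities hold for arbitrary $X\in M_n(\mathbb C)$. Lemma~\ref{lem:dressed} is an identity of fixed matrices and does not depend on $X$. The only points to check are that $w=c^{3/(2(n+1))}$ is a real scalar, so it commutes with every matrix, and that the lemma was applied with $S=N_*^{1/2}$ diagonal and positive. There is no real obstacle: the substantive content, namely the constant adjacent ratio \eqref{eq:nu-ratio} and the reflection identity \eqref{eq:theta-sqrt}, already lies in Lemma~\ref{lem:dressed}.

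\textbf{How the corollary will be used.} The role of the result is that conjugating the recentered equation by $S$ turns $\widetilde{\mathcal G}$ into an equation of the same form. The coupling $\alpha/c$ is rescaled to $w\alpha/c$, and $1/c$ is rescaled to $w^2/c$.
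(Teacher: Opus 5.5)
Your proof is correct and matches the paper's intended argument. The paper states the corollary without separate proof, as an immediate consequence of Lemma~\ref{lem:dressed}: substitute $\tilde F=F_n$ and $\tilde G=wG_n$, then factor the real scalar $w$ out of the bilinear expressions in \eqref{eq:P-def} and \eqref{eq:Q-def}.
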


\subsubsection*{The recentred equation.}
Substitute $N= S \hat N S$ in
$\widetilde{\mathcal G}(N,\alpha,t)=0$ and multiply by $S$ on
both sides. Since $S N^{-1} S=\hat N^{-1}$ and
$S\Theta(N)S =\Theta(\hat N)$ (using $\tilde F=F_n$),
the limiting piece becomes $\hat N^{-1}-\Theta(\hat N)$, with the same
involution. Using Corollary~\ref{cor:dressed-PQ}:
\begin{align}
\label{eq:recentred-G}
        \widehat \GG (\hat N,\alpha,t)
        &:=\; N_*^{1/2}\widetilde{\mathcal G}(N,\alpha,t)N_*^{1/2}
        \\
        &=\; \hat N^{-1}-\Theta(\hat N)
        - \hat a\,t\,\mathcal P(\hat N)
        - \hat b\,t^2\,\mathcal Q(\hat N)
        - d\,\tilde{\mathcal U}(t), \notag
\end{align}
where
\begin{align*}
        \hat a & :=aw,
        \qquad
        \hat b:=bw^2,
        \\
        \tilde{\mathcal U}(t)
        &=\; N_*^{1/2}\mathcal U(t)N_*^{1/2}
        \;=\; \sum_{i=1}^n \nu_i\,t^{2(n+1-i)}E_{ii}.
\end{align*}

The equation \eqref{eq:recentred-G} is \emph{algebraically identical}
to the original $\widetilde{\mathcal G} = 0$ apart from the explicit scalar
replacements $a\mapsto\hat a$, $b\mapsto\hat b$, and the boundary term
$\mathcal U(t)\mapsto\tilde{\mathcal U}(t)$. We may therefore perform the LS reduction,
parametrising $\hat N=e^{\hat Y}+\hat\Phi(\hat Y,\alpha,t)$ with
$\hat Y\in E_-$, $\hat\Phi\in E_+$, exactly as in \S\S\ref{sec:LS-reduction}.
This yields $\hat{\mathscr R}$, divisible by $t^2$ by the analog of
Proposition~\ref{prop:Rhat-vanishes}, 
 and we set
$\hat{\mathfrak R}:=\hat{\mathscr R}/t^2$.

\subsubsection*{Consistency: $\hat{\mathfrak R}(0,\alpha,0)=0$.}

The verbatim analog of \eqref{eq:R2-at-0} reads
\[
        \hat{\mathfrak R}(0,\alpha,0)
        \;=\; \tfrac12\bigl(\hat a^{\,2}-\hat b+\hat d_{\text{eff}}\bigr)
        (E_{11}-E_{nn}),
\]
with $\hat d_{\text{eff}}=d\cdot\nu_n$ because the order-$t^2$ piece
of $\tilde{\mathcal U}$ is $\nu_n E_{nn}$.
Using $a^2-b=-1/c^2$,
\[
        \hat a^{\,2}-\hat b \;=\; w^2(a^2-b) \;=\; -\frac{w^2}{c^2}
        \;=\; -c^{\,3/(n+1)-2}
        \;=\; -c^{\,-(2n-1)/(n+1)},
\]
\[
        \hat d_{\text{eff}} \;=\; c^{-1/2}\cdot c^{-3(n-1)/(2(n+1))}
        \;=\; c^{-(2n-1)/(n+1)}.
\]
These cancel: $\hat{\mathfrak R}(0,\alpha,0)=0$. This is the expected
consistency: $\hat Y=0$ (i.e.\ $N=N_*$) is the actual limit point of
the perturbed solution.

\begin{lemma}[Recentred invertibility]
\label{lem:recentred-invertibility}
After applying the Lyapunov--Schmidt reduction to the recentred equation in
the variable $\widehat N$, let
\[
        \widehat{\mathfrak R}(\widehat Y,\alpha,t)=0
\]
be the corresponding second divided reduced equation. Then
\[
        D_{\widehat Y}\widehat{\mathfrak R}(0,\alpha,0)\colon E_-\to E_-
\]
is invertible for every real $\alpha$.
\end{lemma}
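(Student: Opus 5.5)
The plan is to recompute the Jacobian of the doubly-reduced equation for the recentred system by running the derivation of Proposition~\ref{prop:Balpha-linearization} verbatim, with the substitutions $a\mapsto\hat a=aw$, $b\mapsto\hat b=bw^2$ and $\mathcal U\mapsto\tilde{\mathcal U}$. The point is that \eqref{eq:recentred-G} has exactly the same algebraic form as \eqref{eq:normalized}. Three ingredients carry over without change: the involution $\Theta$, the splitting $E_-\oplus E_+$, and the linear operators $\mathcal P,\mathcal Q$. Consequently the Lyapunov identity (Lemma~\ref{lem:lyapunov}), the divisibility by $t^2$ (Proposition~\ref{prop:Rhat-vanishes}) and the computation of $D_Y$ of the $t^2$ coefficient at $Y=0$ all go through with only the scalar constants changed.

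The only place where the structure differs is the boundary term. Its order-$t^2$ part is now $d\nu_nE_{nn}$ instead of $dE_{nn}$, and the higher diagonal pieces $\nu_it^{2(n+1-i)}$ with $i<n$ are $O(t^4)$, so they do not enter $\widehat{\mathfrak R}(\cdot,\alpha,0)$. I would therefore expect
\[
D_{\widehat Y}\widehat{\mathfrak R}(0,\alpha,0)(H)=\frac{\hat\kappa}{4}\bigl[2(L_nHU_n+U_nHL_n)-D_\partial H-HD_\partial\bigr]-\frac{\hat d_{\mathrm{eff}}}{4}\bigl(E_\partial H+HE_\partial\bigr),
\]
with $\hat d_{\mathrm{eff}}=d\nu_n$. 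The coefficient $\hat\kappa$ is the combination of $\hat a^2$ and $\hat b$ that equals $c^{-2}$ in \eqref{eq:Balpha-closed}. In the original formula this combination is $b-a^2=c^{-2}$, so here it becomes $\hat b-\hat a^2=w^2c^{-2}=c^{-(2n-1)/(n+1)}$. To be sure the $\mathcal P$ and $\mathcal Q$ contributions really assemble as $b-a^2$ times the bracket, I will check this against the proof in Appendix~\ref{sec:jacobian}. Note that $\hat\kappa$ and $\hat d_{\mathrm{eff}}$ are both equal to $c^{-(2n-1)/(n+1)}$. This is the same cancellation observed for $\widehat{\mathfrak R}(0,\alpha,0)$, and it makes the operator a positive multiple of the $\alpha=0$ operator $\mathcal B_0$.

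Invertibility then reduces to the negative definiteness of $\mathcal B_\alpha$ from Proposition~\ref{prop:Balpha-linearization}, proved in Appendix~\ref{sec:jacobian}. I would either cite that argument directly, since it presumably works for any positive coefficients $\kappa,\delta$ in front of the two brackets, or specialise to $\alpha=0$ and scale. The sign argument should go as follows:
\[
\langle H,2(LHU+UHL)-D_\partial H-HD_\partial\rangle=-\|LH-HL\|^2-\|UH-HU\|^2
\]
or a similar sum-of-squares identity, which gives a seminegative term. The boundary term contributes $-\tfrac12\operatorname{tr}(E_\partial H^2)$ times a positive constant. A $\Theta$-odd $H$ that annihilates both terms must vanish, by propagating from the corners along the path graph.

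The main obstacle is verifying that the coefficients in the recentred Jacobian really come out as $(\hat b-\hat a^2)$ and $d\nu_n$. In particular, I need to confirm that no extra term arises from the $\nu_i$-dressing of $\tilde{\mathcal U}$, and that the $\hat a^2$ contribution (through $z_1=-\tfrac{\hat a}{2}T_n$ and its $H$-derivative) enters with the same structure as before.
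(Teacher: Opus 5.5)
Your proposal is correct and follows essentially the same route as the paper. You substitute $(a,b,d)\mapsto(\hat a,\hat b,d\nu_n)$ into the closed form of Proposition~\ref{prop:Balpha-linearization}; only the $t^2$ coefficient $\nu_nE_{nn}$ of $\tilde{\mathcal U}$ enters, via $z_2$. You then note $\hat b-\hat a^2=d\nu_n=c^{-(2n-1)/(n+1)}>0$ and conclude by the same energy identity. The $(a,b)$-dependence through $b-a^2$ alone that you flagged does check out, so the recentred Jacobian is exactly $c^{-(2n-1)/(n+1)}\mathcal B_0$.
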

\begin{proof}


Let
\[
        \widehat{\mathcal B}_\alpha
        :=
        D_{\widehat Y}\widehat{\mathfrak R}(0,\alpha,0).
\]
We will compute the closed form of $\widehat{\mathcal B}_\alpha$.

The order-$t^2$ part of $d\,\widetilde{\mathcal U}(t)$ in the recentered equation \eqref{eq:recentred-G} is
\[
        \widehat d\,t^2E_{nn},
        \qquad
        \widehat d:=d\nu_n.
\]
Thus, for the computation of the second divided linearization, the relevant
truncated equation is
\[
        \widehat \GG(\widehat N,\alpha,t)
        =
        \widehat N^{-1}
        -
        \Theta(\widehat N)
        -
        \widehat a\,t\,\mathcal P(\widehat N)
        -
        \widehat b\,t^2\,\mathcal Q(\widehat N)
        -
        \widehat d\,t^2E_{nn}
        +
        O(t^3).
\]

The recentered equation has the same algebraic form as the original
normalized equation, so the computation of
Proposition~\ref{prop:Balpha-linearization} applies with
$(a,b,d)$ replaced by $(\hat a,\hat b,\hat d)$. The resulting
linearization is
\[
 \boxed{
  \widehat{\mathcal B}_\alpha(H)
  =
  \frac{\hat b-\hat a^{\,2}}{4}
  \bigl[
  2(L_nHU_n+U_nHL_n) - D_\partial H - HD_\partial
  \bigr]
  -
  \frac{\hat d}{4}
  (E_\partial H+HE_\partial).
  }
\]

Here
\[
        E_\partial:=E_{11}+E_{nn},
        \qquad
        D_\partial:=L_nU_n+U_nL_n.
\]

Now we compute the two scalar coefficients. Since
\[
        \widehat a=aw,
        \qquad
        \widehat b=bw^2,
\]
we have
\[
        \widehat b-\widehat a^2
        =
        w^2(b-a^2).
\]
But
\[
        b-a^2
        =
        \frac1c-\frac{\alpha^2}{c^2}
        =
        \frac{c-\alpha^2}{c^2}
        =
        \frac1{c^2}.
\]
Therefore
\[
        \widehat b-\widehat a^2
        =
        \frac{w^2}{c^2}.
\]
Using
\[
        w=c^{3/(2(n+1))},
\]
we get
\[
        \widehat b-\widehat a^2
        =
        c^{3/(n+1)-2}
        =
        c^{-(2n-1)/(n+1)}.
\]

On the other hand,
\[
        \widehat d=d\nu_n.
\]
Since
\[
        d=c^{-1/2},
        \qquad
        \nu_n=c^{-3(n-1)/(2(n+1))},
\]
we obtain
\[
        \widehat d
        =
        c^{-1/2}c^{-3(n-1)/(2(n+1))}
        =
        c^{-(2n-1)/(n+1)}.
\]
Thus
\[
        \boxed{
        \widehat b-\widehat a^2=\widehat d>0.
        }
\]

Since $\hat b-\hat a^{\,2}=\hat d>0$, the two scalar
coefficients in $\widehat{\mathcal B}_\alpha$ are both strictly
positive. The energy identity from
Step~3 of the proof of
Proposition~\ref{prop:Balpha-linearization} therefore gives
$\langle H,\widehat{\mathcal B}_\alpha(H)\rangle_{\mathrm{HS}}<0$
for every nonzero $H\in E_-$, by the same commutant
argument. Hence $\widehat{\mathcal B}_\alpha$ is strictly
negative definite and invertible.

\end{proof}

\subsection{Proof of Theorem \ref{theo:all-alpha-rescaled-holomorphicity}}
\label{sec:singularities_type_2}

\begin{proof}[Proof of Theorem~\ref{theo:all-alpha-rescaled-holomorphicity}]
Fix $\alpha\in\bR$ and set $c:=1+\alpha^2$.  Let
$Y_*:=Y_*(\alpha)$, $N_*:=e^{Y_*}$, $S:=N_*^{1/2}$,
where $Y_*(\alpha)$ is as in \eqref{eq:Y-star-formula}.
Introduce the recentred variable $\widehat N:=S^{-1}NS^{-1}$ and the
recentred equation
\[
        \widehat{\mathcal G}(\widehat N,\alpha,t)
        \;:=\;
        S\,\widetilde{\mathcal G}(S\widehat N S,\alpha,t)\,S
        \;=\; 0.
\]
By Lemma~\ref{lem:dressed}, $\widehat{\mathcal G}$ has the same
structural form as $\widetilde{\mathcal G}$: it decomposes as
$\widehat N^{-1} - \Theta(\widehat N)$ plus terms of order $t$ and
$t^2$ (see \eqref{eq:recentred-G}). In particular, at $t=0$ the
limiting equation is again $\widehat N^{-1}=\Theta(\widehat N)$, with
$\widehat N=I$ as the base-point solution.

The Lyapunov--Schmidt reduction of \S\ref{sec:LS-reduction} and the
Lyapunov identity of \S\ref{sec:lyapunov-divisibility} therefore apply verbatim to
$\widehat{\mathcal G}$, producing a doubly-reduced equation
$\widehat{\mathfrak R}(\widehat Y,\alpha,t)$ that is holomorphic near
$(\widehat Y,t)=(0,0)$.

By Proposition~\ref{prop:explicit-diagonal-solution},
$\mathfrak R(Y_*(\alpha),\alpha,0)=0$, which translates to
$\widehat{\mathfrak R}(0,\alpha,0)=0$.
By Lemma~\ref{lem:recentred-invertibility},
$D_{\widehat Y}\widehat{\mathfrak R}(0,\alpha,0)$ is invertible.
The implicit function theorem therefore gives a unique holomorphic
function $\widehat Y_\alpha(t)\in E_-$ with $\widehat Y_\alpha(0)=0$
and $\widehat{\mathfrak R}(\widehat Y_\alpha(t),\alpha,t)=0$. Setting
\[
        N_\alpha(t)
        \;:=\;
        S\bigl(e^{\widehat Y_\alpha(t)}
        +\widehat\Phi(\widehat Y_\alpha(t),\alpha,t)\bigr)S,
\]
we obtain a holomorphic solution of
$\widetilde{\mathcal G}(N_\alpha(t),\alpha,t)=0$ with
$N_\alpha(0)=N_*$.

The limiting value of $M_\alpha(t)=c^{-1/2}N_\alpha(t)$ is therefore
$c^{-1/2}N_*=c^{-1/2}e^{Y_*(\alpha)}$, whose $i$th diagonal entry is
\[
        c^{-1/2}\,
        c^{\frac32\cdot\frac{n+1-2i}{n+1}}
        \;=\;
        c^{1-\frac{3i}{n+1}}.
\]
This gives $M_\alpha(t)=M_\alpha^{(0)}+O(t)$ with the claimed
$M_\alpha^{(0)}$.

Finally, for small $t>0$ the constructed solution is positive, hence
accretive. By uniqueness of the accretive solution of Speicher's equation, it coincides
with $W_\alpha(u)=iG_\alpha(iu)$ via $W_\alpha(t^{n+1})=D(t)M_\alpha(t)D(t)$.
\end{proof}

\subsection{Proof of Theorem \ref{theo:typ_2_cells_sharp}}
\label{sec:proof_thm_case_II}

\begin{proof}
Let
\[
        p:=\frac{n-1}{n+1}.
\]
By Theorem~\ref{theo:all-alpha-rescaled-holomorphicity}, with
$t=u^{1/(n+1)}$,
\[
        W_\alpha(u)
        =
        D(t)M_\alpha(t)D(t),
        \qquad
        M_\alpha(t)=M_\alpha^{(0)}+O(t),
\]
where
\[
        M_\alpha^{(0)}
        =
        \operatorname{diag}
        \left(
        c^{1-\frac{3}{n+1}},
        c^{1-\frac{6}{n+1}},
        \ldots,
        c^{1-\frac{3n}{n+1}}
        \right),
        \qquad
        c:=1+\alpha^2.
\]
Since
\[
        D(t)_{ii}=t^{\gamma_i},
        \qquad
        \gamma_i=\frac{n-2i+1}{2},
\]
the $i$th diagonal contribution to $\operatorname{tr}_n W_\alpha(u)$ has
order
\[
        t^{2\gamma_i}
        =
        t^{n-2i+1}.
\]
The most singular term comes from $i=n$. Therefore
\[
        \operatorname{tr}_n W_\alpha(u)
        =
        \frac1n
        c^{1-\frac{3n}{n+1}}
        t^{-(n-1)}
        +
        O(t^{-(n-2)}).
\]
Since $t=u^{1/(n+1)}$, this becomes
\[
        \operatorname{tr}_n W_\alpha(u)
        =
        K_{\alpha,n}u^{-p}
        +
        O\left(u^{-p+\frac{1}{n+1}}\right),
\]
where
\[
        K_{\alpha,n}
        =
        \frac1n c^{1-\frac{3n}{n+1}}.
\]

On the other hand, because
\[
        W_\alpha(u)=iG_\alpha(iu),
\]
we have
\[
        \operatorname{tr}_n W_\alpha(u)
        =
        \int_{\mathbb R}
        \frac{u}{u^2+x^2}\,d\mu_\alpha(x).
\]
If
\[
        f_\alpha(x)
        \sim C_{\alpha,n}|x|^{-p}
        \qquad x\to0,
\]
then
\[
        \int_{\mathbb R}
        \frac{u}{u^2+x^2}f_\alpha(x)\,dx
        \sim
        C_{\alpha,n}u^{-p}
        \int_{\mathbb R}
        \frac{|y|^{-p}}{1+y^2}\,dy.
\]
For $0\le p<1$,
\[
        \int_{\mathbb R}
        \frac{|y|^{-p}}{1+y^2}\,dy
        =
        \frac{\pi}{\cos(\pi p/2)}.
\]
Hence
\[
        K_{\alpha,n}
        =
        C_{\alpha,n}\frac{\pi}{\cos(\pi p/2)}.
\]
Therefore
\[
        C_{\alpha,n}
        =
        \frac{\cos(\pi p/2)}{\pi}K_{\alpha,n}.
\]
Since
\[
        p=\frac{n-1}{n+1},
\]
we have
\[
        \cos\left(\frac{\pi p}{2}\right)
        =
        \cos\left(\frac{\pi(n-1)}{2(n+1)}\right)
        =
        \sin\left(\frac{\pi}{n+1}\right).
\]
Thus
\[
        C_{\alpha,n}
        =
        \frac{1}{n\pi}
        \sin\left(\frac{\pi}{n+1}\right)
        c^{1-\frac{3n}{n+1}},
\]
which is the claimed formula.
\end{proof}

\section{Type III cells: gauge reduction and spectral coincidence}
\label{sec:type_III}

Type~III cells are where the spectral classification is strictly coarser than the
algebraic one, and they supply two facts used elsewhere.
\emph{(i)} The complex argument of $\beta$ is spectrally invisible: a unitary gauge
removes the phase, so the scalar density depends only on $|\beta|$ and in fact
coincides with that of a Type~II cell of size~$m$ (half the size) with real
parameter $|\beta|$ (Theorem~\ref{thm:typeIII-scalar-law-caseII}). The gauge is a
congruence, so this is consistent with the congruence-invariance of the singular
exponent (Result~A).
\emph{(ii)} Nonetheless $\eta_{\mathrm{III}}$ is \emph{not} symmetrically equivalent
to that Type~II map, nor to any direct sum of Type~II maps: a congruence-invariant
of the Hermitian Kraus plane separates them
(Proposition~\ref{prop:typeIII-definite-kraus}). Together these give the sharp form
of Result~C.

Throughout, $F:=F_m=\sum_{i=1}^m E_{i,m+1-i}$ is the reversal and
$G:=G_m=\sum_{i=2}^m E_{i,m+2-i}$ the shifted reversal; both are real symmetric.

\begin{remark}[Convention for $G_m$]
\label{rem:Gm-convention}
The shifted reversal used here, $G_m=\sum_{i=2}^m E_{i,m+2-i}$, is supported on the band
$i+j=m+2$, whereas the Lancaster--Rodman cells of \S\ref{sec:LaRo_cells} (and the
Type~I/II analyses that follow) use
$G_m^{\mathrm{LR}}=\bigl[\begin{smallmatrix}F_{m-1}&0\\0&0\end{smallmatrix}\bigr]
=\sum_{i=1}^{m-1}E_{i,m-i}$, supported on $i+j=m$. A direct check gives
$G_m=F_mG_m^{\mathrm{LR}}F_m$, so conjugation by the real orthogonal involution $F_m$
(applied blockwise, i.e.\ by $F_m\oplus F_m$ on the Type~III cell and by $F_m$ on the
Type~II companion) is a congruence carrying the present cells to those of the earlier
sections. The two normalizations therefore define congruent pencils, with identical
scalar densities and lying in the same symmetric-scalability class; we adopt
$i+j=m+2$ only because it makes the gauge phases in \S\ref{sec:typeIII-coincidence}
telescope cleanly.
\end{remark}

\subsection{The cell and its covariance map} 
\label{sec:typeIII-setup}

Fix $m\ge2$ and $\beta\in\bC\setminus\R$, and write $\beta=\rho e^{i\theta}$ with
$\rho=|\beta|>0$. Put $H:=\beta F+G$, so $H^*=\bar\beta F+G$. The \emph{Type~III
cell} is the Hermitian binary pencil $A=A_1x_1+A_2x_2$ on $\bC^{2m}$ with
\[
        A_1=\begin{pmatrix}0&F\\F&0\end{pmatrix}=F_{2m},
        \qquad
        A_2=\begin{pmatrix}0&H\\H^*&0\end{pmatrix},
\]
and covariance map $\eta_{\mathrm{III}}(B)=A_1BA_1+A_2BA_2$, $B\in M_{2m}(\bC)$.
Its companion is the size-$m$ Type~II map with \emph{real} parameter $\rho$,
\[
        \eta_{\mathrm{II},\rho}(Y)=FYF+KYK,
        \qquad K:=\rho F+G .
\]

\begin{exa}[$m=2$, $\beta=i$]
\label{exa:typeIII-m2}
Here $\rho=1$, $\theta=\tfrac\pi2$, $c:=1+|\beta|^2=2$, and
$H=iF_2+G_2=\begin{psmallmatrix}0&i\\i&1\end{psmallmatrix}$, so
\[
        A_1=\begin{pmatrix}0&0&0&1\\0&0&1&0\\0&1&0&0\\1&0&0&0\end{pmatrix},
        \qquad
        A_2=\begin{pmatrix}0&0&0&i\\0&0&i&1\\0&-i&0&0\\-i&1&0&0\end{pmatrix}.
\]
The companion Type~II map has size~$2$, parameter $\rho=1$, and
$K=F_2+G_2=\begin{psmallmatrix}0&1\\1&1\end{psmallmatrix}$. We return to this in
\S\ref{sec:typeIII-coincidence} and \S\ref{sec:typeIII-obstruction}.
\end{exa}

\subsection{Spectral coincidence}
\label{sec:typeIII-coincidence}

\begin{theo}[Type~III reduces to Type~II with parameter $|\beta|$]
\label{thm:typeIII-scalar-law-caseII}
The matrix semicircular elements of $\eta_{\mathrm{III}}$ and of
$\eta_{\mathrm{II},\rho}$ ($\rho=|\beta|$) have the same scalar distribution:
their normalized scalar Cauchy transforms agree on $\bC^+$, and hence
\[
        f_{\mathrm{III},\beta}(x)=f_{\mathrm{II},|\beta|}(x)
        \qquad\text{for a.e.\ }x\in\R .
\]
\end{theo}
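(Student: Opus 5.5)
The plan is to prove the theorem at the level of Speicher's equation~\eqref{eq:HMS}. I will show that the strictly accretive solution $W_{\mathrm{III}}(u)$ for $\eta_{\mathrm{III}}$ is, after a conjugation by a diagonal unitary, block-diagonal with two equal blocks, each solving the Type~II equation for $\eta_{\mathrm{II},\rho}$. The steps are: block-diagonality, a diagonal unitary gauge removing the phase of $\beta$, a swap symmetry identifying the two blocks, and a trace identity.

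\emph{Step 1 (block-diagonal reduction).} For $B=\diag(B_1,B_2)$ with $B_i\in M_m(\bC)$ one computes
\[
A_1BA_1=\diag(FB_2F,\;FB_1F),\qquad A_2BA_2=\diag(HB_2H^*,\;H^*B_1H).
\]
Hence $\eta_{\mathrm{III}}$ preserves the block-diagonal subalgebra $M_m\oplus M_m$. Running the fixed-point iteration of~\cite{hfs2007} from a block-diagonal accretive starting point stays in this subalgebra, so uniqueness of the accretive solution gives $W_{\mathrm{III}}(u)=\diag(W_1,W_2)$. The blocks satisfy the coupled system
\[
W_1^{-1}=u+FW_2F+HW_2H^*,\qquad W_2^{-1}=u+FW_1F+H^*W_1H .
\]

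\emph{Step 2 (gauge).} I look for diagonal unitaries $U=\diag(u_i)$ and $V=\diag(v_i)$ with $UFV^*=F$ and $UHV^*=e^{i\theta}K$. Entrywise, the $F$-band (positions $i+j=m+1$) requires $u_i\bar v_{m+1-i}=1$. The $G$-band (positions $i+j=m+2$, which is why Remark~\ref{rem:Gm-convention} adopts this convention) requires $u_i\bar v_{m+2-i}=e^{i\theta}$. With $u_i=v_{m+1-i}$ these reduce to $v_j\bar v_{j+1}=e^{i\theta}$, which telescopes to $v_j=e^{-ij\theta}$. Then $VFU^*=F$ and $VH^*U^*=e^{-i\theta}K$, and the unimodular phases cancel in $HW_2H^*$ and $H^*W_1H$. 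Setting $X:=UW_1U^*$ and $Y:=VW_2V^*$ gives the symmetric system
\[
X^{-1}=u+FYF+KYK,\qquad Y^{-1}=u+FXF+KXK .
\]
Unitary conjugation preserves strict accretivity, so $(X,Y)$ is the unique accretive solution of this system. The system is itself the Speicher equation of the block covariance map $(X,Y)\mapsto\bigl(\eta_{\mathrm{II},\rho}(Y),\,\eta_{\mathrm{II},\rho}(X)\bigr)$, a completely positive self-adjoint map on $M_m\oplus M_m$, for which uniqueness also holds by~\cite{hfs2007}.

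\emph{Step 3 (swap and trace).} The system is invariant under $(X,Y)\mapsto(Y,X)$, so uniqueness forces $X=Y$. Then $X^{-1}=u+\eta_{\mathrm{II},\rho}(X)$, and by uniqueness for the Type~II equation $X=W_{\mathrm{II},\rho}(u)$. Therefore
\[
\tr_{2m}W_{\mathrm{III}}(u)=\tfrac{1}{2m}\bigl(\Tr X+\Tr Y\bigr)=\tr_m W_{\mathrm{II},\rho}(u)
\]
for all $\Re u>0$. Equivalently the scalar Cauchy transforms agree on $\bC^+$, and Stieltjes inversion~\eqref{equ_Stieltjes} gives equality of the measures and hence of the densities a.e.

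\emph{Main obstacle.} The only delicate point is the uniqueness bookkeeping: one must check that accretivity passes through the block restriction and the gauge, and that uniqueness holds for the two-block system. I would handle this by noting that the two-block map is a genuine covariance map (the Kraus operators $A_1,A_2$ conjugated by $U\oplus V$), so the \cite{hfs2007} theorem applies verbatim. If needed, one can argue first on $u>0$, where everything is positive definite by Lemma~\ref{lemma_W_symmetry}, and extend by analyticity to $\Re u>0$.
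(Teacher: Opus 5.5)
Your proposal is correct and follows the paper's proof essentially step for step: block-diagonal reduction, a diagonal unitary gauge (your $U=\overline{Q}$, $V=Q$ agree with the paper's $\mathcal Q=Q\oplus\overline{Q}$ up to a global phase on $V$), the block swap, and the trace identity. The differences are cosmetic and both routes are valid. You obtain block-diagonality from invariance of the fixed-point iteration rather than from the symmetry $J=I_m\oplus(-I_m)$. You justify uniqueness for the swapped system by restricting the gauged covariance map to block-diagonal matrices, rather than passing to the real-parameter map $\widehat\eta$ on all of $M_{2m}(\bC)$.
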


\begin{proof}
Let $W=W(u)$ be the unique accretive solution of
$W^{-1}=uI_{2m}+\eta_{\mathrm{III}}(W)$, $\Re u>0$; recall $W(u)=i\mathcal G(iu)$,
so $\tr_{2m}W(u)$ is the normalized scalar Cauchy transform of
$\mu_{\mathrm{III},\beta}$ at $iu$.

\emph{Step 1: $W$ is block diagonal.}
Let $J=I_m\oplus(-I_m)$. The $A_k$ are block off-diagonal, so $JA_kJ=-A_k$ and
$\eta_{\mathrm{III}}(JBJ)=J\eta_{\mathrm{III}}(B)J$; as $J(uI)J=uI$, the matrix
$JWJ$ is also accretive, so by uniqueness $W=JWJ=:X\oplus V$, with
\[
        X^{-1}=uI+FVF+HVH^*,\qquad V^{-1}=uI+FXF+H^*XH .
\]

\emph{Step 2: the gauge removes the phase of $\beta$.}
Let $Q=\diag(q_1,\dots,q_m)$, $q_j=\exp\!\big(\!-i(j-\tfrac{m+1}{2})\theta\big)$,
and $\mathcal Q:=Q\oplus\overline Q$ (unitary). Since $F$ is supported on
$i+j=m+1$ and $G$ on $i+j=m+2$, the phases telescope:
\[
        Q^*F\overline Q=F,\quad Q^*G\overline Q=e^{i\theta}G
        \ \Longrightarrow\
        Q^*H\overline Q=e^{i\theta}K,\quad Q^TH^*Q=e^{-i\theta}K .
\]
Conjugating the equation by $\mathcal Q^*(\cdot)\mathcal Q$ and writing
$\widehat W:=\mathcal Q^*W\mathcal Q$ (still block diagonal) gives
$\widehat W^{-1}=uI+\sum_k(\mathcal Q^*A_k\mathcal Q)\,\widehat W\,(\mathcal Q^*A_k\mathcal Q)$,
where $\mathcal Q^*A_1\mathcal Q=\widehat A_1:=\begin{psmallmatrix}0&F\\F&0\end{psmallmatrix}$ and
$\mathcal Q^*A_2\mathcal Q=\begin{psmallmatrix}0&e^{i\theta}K\\e^{-i\theta}K&0\end{psmallmatrix}$.
On a block-diagonal argument the two phases cancel, so this last term equals
$\widehat A_2\widehat W\widehat A_2$ with
$\widehat A_2:=\begin{psmallmatrix}0&K\\K&0\end{psmallmatrix}$. Hence $\widehat W$
solves the \emph{real}-parameter equation
$\widehat W^{-1}=uI+\widehat\eta(\widehat W)$,
$\widehat\eta(B):=\widehat A_1B\widehat A_1+\widehat A_2B\widehat A_2$.

\emph{Step 3: the swap collapses the two blocks.}
As $F,K$ are real symmetric, the block swap
$R=\begin{psmallmatrix}0&I_m\\I_m&0\end{psmallmatrix}$ satisfies
$R\widehat A_kR=\widehat A_k$, so $R$ commutes with $\widehat\eta$ and with $uI$.
By uniqueness, $R\widehat W R=\widehat W$, i.e.\ the two diagonal blocks coincide,
$\widehat W=Y\oplus Y$. The surviving block satisfies
$Y^{-1}=uI+FYF+KYK=uI+\eta_{\mathrm{II},\rho}(Y)$, the Type~II equation; by
uniqueness $Y=W_{\mathrm{II},\rho}(u)$.

\emph{Conclusion.}
Since $\mathcal Q$ is unitary, $\Tr W=\Tr\widehat W=2\Tr Y$, so
\[
        \tr_{2m}W(u)=\tfrac1{2m}\Tr W=\tfrac1m\Tr Y=\tr_m W_{\mathrm{II},\rho}(u).
\]
The normalized scalar Cauchy transforms agree on $\bC^+$; as each is the Cauchy
transform of a compactly supported probability measure, the scalar measures
coincide, and the density identity follows by Stieltjes inversion.
\end{proof}

Thus the size-$4$ cell of Example~\ref{exa:typeIII-m2} ($\beta=i$) has exactly the
scalar density of the size-$2$ Type~II cell with parameter~$1$.

\begin{coro}[Leading singularity of a Type~III cell]
\label{cor:typeIII-from-caseII}
For $\beta\in\bC\setminus\R$,
\begin{align*}
        f_{\mathrm{III},\beta}(x)
        &=C_{\beta,m}\,|x|^{-\frac{m-1}{m+1}}+o\!\big(|x|^{-\frac{m-1}{m+1}}\big),
        \quad x\to0,
        \\
        C_{\beta,m}
        &=\frac{1}{m\pi}\sin\!\Big(\tfrac{\pi}{m+1}\Big)
        \big(1+|\beta|^2\big)^{-\frac{2m-1}{m+1}} .
\end{align*}
\end{coro}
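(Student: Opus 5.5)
The plan is to combine the spectral coincidence of Theorem~\ref{thm:typeIII-scalar-law-caseII} with the sharp Type~II asymptotics of Theorem~\ref{theo:typ_2_cells_sharp}. No new analysis is needed; the only care required is in matching conventions. Theorem~\ref{thm:typeIII-scalar-law-caseII} gives $f_{\mathrm{III},\beta}=f_{\mathrm{II},\rho}$ almost everywhere, with $\rho=|\beta|>0$. Here the companion is the size-$m$ map $\eta_{\mathrm{II},\rho}(Y)=FYF+KYK$, where $K=\rho F_m+G_m$ and $G_m$ carries the band convention $i+j=m+2$ of \S\ref{sec:type_III}.

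\textbf{Step 1: match conventions.} The first step is to identify this companion with the Type~II cell of Theorem~\ref{theo:typ_2_cells_sharp}, namely $S_m=F_m\otimes s_1+(\alpha F_m+G_m^{\mathrm{LR}})\otimes s_2$ with $\alpha=\rho$. By Remark~\ref{rem:Gm-convention}, $G_m=F_mG_m^{\mathrm{LR}}F_m$. Since $F_mF_mF_m=F_m$, conjugation by the real orthogonal involution $F_m$ carries the pair $(F_m,\rho F_m+G_m)$ to $(F_m,\rho F_m+G_m^{\mathrm{LR}})$. This is a unitary congruence, so it conjugates the solution of Speicher's equation, $W\mapsto F_mWF_m$, by uniqueness of the accretive solution. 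It therefore leaves $\tr_m W(u)$, and hence the scalar law, exactly unchanged; we do not merely get equality of exponents.

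The order of the Kraus operators does not matter, because $\eta$ is a sum of the two conjugations. The roles of $x_1,x_2$ in the Lancaster--Rodman labeling likewise do not matter, since the semicircular family is exchangeable and only $\eta$ enters the computation.

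\textbf{Step 2: apply the Type~II asymptotics.} We then invoke Theorem~\ref{theo:typ_2_cells_sharp} with $n=m\ge2$ and $\alpha=\rho\neq0$, which is allowed because $\beta\notin\R$ forces $\rho>0$. It gives
\[
f_{\mathrm{II},\rho}(x)=\frac{1}{m\pi}\sin\!\Big(\frac{\pi}{m+1}\Big)(1+\rho^2)^{-\frac{2m-1}{m+1}}|x|^{-\frac{m-1}{m+1}}+o\big(|x|^{-\frac{m-1}{m+1}}\big).
\]
Substituting $\rho^2=|\beta|^2$ yields the stated $C_{\beta,m}$.

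\textbf{Step 3: pass from a.e.\ equality to pointwise asymptotics.} The two densities agree almost everywhere. By Corollary~\ref{cor:spectral-structure}, both are real-analytic on a punctured neighbourhood of $0$, and continuous versions that agree a.e.\ agree everywhere there. Hence the pointwise asymptotics transfer verbatim.

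\textbf{Main obstacle.} The only potential difficulty is the bookkeeping in Step~1: making sure the companion map in Theorem~\ref{thm:typeIII-scalar-law-caseII} is literally the cell treated in \S\S\ref{sec:type_II}--\ref{sec:LS}, with the same normalization of $c=1+\alpha^2$ and not a rescaled version. The paper treats Type~II as $F_nx_1+(\alpha F_n+G_n)x_2$, and its coefficient identities $\eta_\alpha(B)=(1+\alpha^2)F_nBF_n+\cdots$ match $FYF+KYK$ exactly once the $F_m$-conjugation above is applied. With that verified, the corollary follows immediately.
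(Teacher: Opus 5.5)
Your proposal is correct and follows the paper's own proof, which likewise combines the spectral coincidence of Theorem~\ref{thm:typeIII-scalar-law-caseII} with Theorem~\ref{theo:typ_2_cells_sharp} at $n=m$, $\alpha=|\beta|$. Your extra bookkeeping is sound and spells out what the paper's one-line argument leaves implicit: the $F_m$-conjugation of Remark~\ref{rem:Gm-convention} carries $\rho F_m+G_m$ to the Lancaster--Rodman normalization, and you pass from a.e.\ to pointwise equality of the densities.
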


\begin{proof}
By Theorem~\ref{thm:typeIII-scalar-law-caseII} the density equals that of the
size-$m$ Type~II cell with parameter $\alpha=|\beta|$; apply
Theorem~\ref{theo:typ_2_cells_sharp} with $n=m$, $\alpha=|\beta|$, and use
$1-\tfrac{3m}{m+1}=-\tfrac{2m-1}{m+1}$.
\end{proof}

In particular the size-$2m$ Type~III cell carries the singularity exponent of a
size-$m$ Type~II cell, $\tfrac{m-1}{m+1}$ --- not $\tfrac{2m-1}{2m+1}$. The full
solution is $W(u)=\mathcal Q\,(Y\oplus Y)\,\mathcal Q^*
=QYQ^*\oplus\overline Q\,Y\,Q^T$ with $Y=W_{\mathrm{II},\rho}(u)$.

\subsection{The covariance maps are not symmetrically scalable} 
\label{sec:typeIII-obstruction}

Recall (Definition~\ref{defi_sym_scalability}) that $\eta_A,\eta_B$ are symmetrically
scalable if $\eta_B(X)=b\,\eta_A(b^*Xb)\,b^*$ for some invertible $b$.
The
separating invariant is the real span of the Hermitian Kraus matrices; we first verify
this span is a property of the \emph{map}. Write a self-adjoint covariance map as
$\eta(X)=\sum_{i=1}^r A_iXA_i$ with $A_i=A_i^*$, and call the family \emph{minimal} if
the $A_i$ are linearly independent over $\bC$ (so $r$ is the Choi rank).

\begin{lemma}[Unitary freedom of minimal Kraus families]
\label{lem:kraus-unitary-freedom}
Let $\Phi(X)=\sum_{i=1}^{r}A_iXA_i^{*}=\sum_{j=1}^{s}M_jXM_j^{*}$ for all $X\in M_n(\bC)$,
and suppose $\{A_i\}$ and $\{M_j\}$ are each linearly independent over $\bC$. Then
$r=s$, and there is a unitary $w=(w_{ij})\in U(r)$ with $A_i=\sum_{j}w_{ij}M_j$.
\end{lemma}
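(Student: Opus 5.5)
The plan is to reduce the statement to the standard uniqueness of Kraus representations, via the Choi matrix. Let $\mathrm{vec}\colon M_n(\bC)\to\bC^n\otimes\bC^n$ denote column-stacking, and form the Choi matrix
\[
C_\Phi:=\sum_{k,l}E_{kl}\otimes\Phi(E_{kl}).
\]
A direct computation shows $C_\Phi=\sum_i \mathrm{vec}(A_i)\mathrm{vec}(A_i)^*$, up to the fixed flip that identifies $\bC^n\otimes\bC^n$ with itself. Likewise $C_\Phi=\sum_j \mathrm{vec}(M_j)\mathrm{vec}(M_j)^*$. We will therefore have a positive semidefinite matrix written in two ways as $VV^*=WW^*$. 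Here $V$ is the $n^2\times r$ matrix with columns $\mathrm{vec}(A_i)$, and $W$ is the $n^2\times s$ matrix with columns $\mathrm{vec}(M_j)$.

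\textbf{Step 1 (ranks).} Linear independence of the $A_i$ means $V$ has full column rank $r$. Since $\operatorname{ran}(VV^*)=\operatorname{ran}V$, we get $\operatorname{rank}C_\Phi=r$. The same argument applied to $W$ gives $\operatorname{rank}C_\Phi=s$, hence $r=s$. Moreover $\operatorname{ran}V=\operatorname{ran}C_\Phi=\operatorname{ran}W$, so the columns of $V$ and of $W$ span the same $r$-dimensional space.

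\textbf{Step 2 (the unitary).} By Step 2 there is an invertible $T\in GL_r(\bC)$ with $V=WT$. Substituting into $VV^*=WW^*$ gives $W(TT^*-I)W^*=0$. Now $W$ is injective, and $W^*$ is surjective onto $\bC^r$, so the middle factor vanishes: $TT^*=I$, and $T$ is unitary. Reading off columns, $\mathrm{vec}(A_i)=\sum_j T_{ji}\mathrm{vec}(M_j)$. Setting $w_{ij}:=T_{ji}$ gives a unitary matrix $w=T^{\mathsf T}$, and undoing $\mathrm{vec}$ yields $A_i=\sum_j w_{ij}M_j$.

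\textbf{Main obstacle.} The only real care needed is the bookkeeping identity in Step~1, namely $C_\Phi=\sum_i \mathrm{vec}(A_i)\mathrm{vec}(A_i)^*$ with consistent conventions. It follows from
\[
\sum_{k,l}E_{kl}\otimes AE_{kl}A^*=(I\otimes A)\Bigl(\sum_{k,l}e_ke_l^*\otimes e_ke_l^*\Bigr)(I\otimes A^*),
\]
together with the fact that $\sum_{k,l}e_ke_l^*\otimes e_ke_l^*=\Omega\Omega^*$, where $\Omega=\sum_k e_k\otimes e_k$. This gives $C_\Phi=\sum_i (I\otimes A_i)\Omega\Omega^*(I\otimes A_i)^*$. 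The map $X\mapsto(I\otimes X)\Omega$ is a linear injection, so linear independence transfers to the vectors $(I\otimes A_i)\Omega$, and the argument above runs with these vectors in place of $\mathrm{vec}(A_i)$.

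No Hermiticity is needed anywhere in this argument. When it is later applied to Hermitian Kraus families, an extra step will be needed to make $w$ real orthogonal, or to conclude that the real span is preserved; that is not part of this lemma.
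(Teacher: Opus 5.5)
Your proposal is correct and follows the paper's proof essentially step for step. Your vector $(I\otimes A)\Omega=\sum_k e_k\otimes Ae_k$ is exactly the paper's $\flat(A)$, and the rank/range comparison of $C_\Phi=VV^*=WW^*$ followed by cancelling $W$ in $W(TT^*-I)W^*=0$ is the same argument (only minor slip: ``By Step 2'' inside Step 2 should read ``By Step 1'').
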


\begin{proof}
Let $\flat\colon M_n(\bC)\to\bC^{n^2}$, $\flat(A):=\sum_{k}e_k\otimes Ae_k$; this is a
linear isomorphism. Writing $E_{kl}=e_ke_l^{*}$ and $A_iE_{kl}A_i^{*}=(A_ie_k)(A_ie_l)^{*}$,
the Choi matrix of $\Phi$ satisfies
\[
        C_\Phi:=\sum_{k,l}E_{kl}\otimes\Phi(E_{kl})
        =\sum_{i}\flat(A_i)\,\flat(A_i)^{*}
        =\sum_{j}\flat(M_j)\,\flat(M_j)^{*}.
\]
Set $\mathsf A:=[\,\flat(A_1)\;\cdots\;\flat(A_r)\,]$ and
$\mathsf B:=[\,\flat(M_1)\;\cdots\;\flat(M_s)\,]$, so $C_\Phi=\mathsf A\mathsf A^{*}=\mathsf B\mathsf B^{*}$.
Since $\flat$ is an isomorphism, linear independence of the families makes $\mathsf A,\mathsf B$
of full column rank; hence $r=\operatorname{rank}C_\Phi=s$, and
$\operatorname{range}\mathsf A=\operatorname{range}C_\Phi=\operatorname{range}\mathsf B$.
As $\mathsf B$ is injective, each column of $\mathsf A$ lies in $\operatorname{range}\mathsf B$
and so $\mathsf A=\mathsf B\,w^{T}$ for a unique $w^{T}\in M_r(\bC)$. Then
$\mathsf B(w^{T}(w^{T})^{*}-I)\mathsf B^{*}=0$, and injectivity of $\mathsf B$ forces
$w^{T}(w^{T})^{*}=I$, i.e.\ $w\in U(r)$. Reading off columns,
$\flat(A_i)=\sum_j w_{ij}\,\flat(M_j)=\flat\!\big(\sum_j w_{ij}M_j\big)$, and injectivity of
$\flat$ gives $A_i=\sum_j w_{ij}M_j$.
\end{proof}

\begin{lemma}[The Hermitian Kraus plane is well defined]
\label{lem:kraus-plane-welldef}
If $\{A_i\}_{i=1}^r$ and $\{M_j\}_{j=1}^r$ are two minimal Hermitian Kraus families for
the same map $\eta$, then $\operatorname{span}_\R\{A_i\}=\operatorname{span}_\R\{M_j\}$.
We write $\mathcal K_{\eta,\mathrm{sa}}\subseteq\operatorname{Herm}(n)$ for this subspace.
\end{lemma}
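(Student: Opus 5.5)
We derive the statement from Lemma~\ref{lem:kraus-unitary-freedom}. The only issue is that the unitary mixing matrix it supplies is complex, while we want equality of \emph{real} spans. The plan is to show that, when both families are Hermitian, the mixing matrix can be taken real orthogonal, or at least that the real spans agree anyway.

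\emph{Step 1.} Apply Lemma~\ref{lem:kraus-unitary-freedom} to the two minimal families. This gives $w\in U(r)$ with $A_i=\sum_j w_{ij}M_j$.

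\emph{Step 2.} Take adjoints. Since $A_i^*=A_i$ and $M_j^*=M_j$, we also have $A_i=\sum_j\overline{w_{ij}}\,M_j$. Subtracting the two expansions gives $\sum_j(w_{ij}-\overline{w_{ij}})M_j=0$. The $M_j$ are linearly independent over $\bC$, so $w_{ij}=\overline{w_{ij}}$ for all $i,j$. Hence $w$ is a real orthogonal matrix.

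\emph{Step 3.} Since $w$ is real, each $A_i$ is a real combination of the $M_j$, so $\operatorname{span}_\R\{A_i\}\subseteq\operatorname{span}_\R\{M_j\}$. Inverting $w$ (its inverse $w^T$ is also real) gives the reverse inclusion, and the two real spans are equal.

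The substantive work is already contained in Lemma~\ref{lem:kraus-unitary-freedom}; this argument is essentially the observation that Hermitian symmetry forces the mixing matrix to be real. The one point needing care is that Step~2 uses the hypothesis that the families are minimal, so that the $M_j$ are linearly independent over $\bC$. Without it the mixing matrix is not unique, and the reality argument breaks down.
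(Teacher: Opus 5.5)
Your proposal is correct and follows the same route as the paper. You invoke the unitary freedom of minimal Kraus families, take adjoints to get $\sum_j(w_{ij}-\overline{w_{ij}})M_j=0$, and use $\bC$-linear independence of the $M_j$ to conclude that $w$ is real orthogonal, so the real spans agree.
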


\begin{proof}[Proof of Lemma~\ref{lem:kraus-plane-welldef}]
Both families have the common length $r=\operatorname{rank}C_\eta$. By
Lemma~\ref{lem:kraus-unitary-freedom}, $A_i=\sum_j w_{ij}M_j$ with $w\in U(r)$. Taking
adjoints and using $A_i^{*}=A_i$, $M_j^{*}=M_j$ gives
$\sum_j w_{ij}M_j=\sum_j\overline{w_{ij}}\,M_j$; linear independence of $\{M_j\}$ forces
$w_{ij}\in\R$, so $w$ is real orthogonal and the real spans coincide.
\end{proof}

\begin{lemma}[Scalability acts by congruence on the plane]
\label{lem:plane-scaling}
If $\eta_B(X)=b\,\eta_A(b^*Xb)\,b^*$ with $b$ invertible, then
$\mathcal K_{\eta_B,\mathrm{sa}}=b\,\mathcal K_{\eta_A,\mathrm{sa}}\,b^*$. Consequently the
property ``every nonzero element of the plane is invertible''
($\mathcal K_{\eta,\mathrm{sa}}\cap\{\det=0\}=\{0\}$) is a symmetric-scalability invariant.
\end{lemma}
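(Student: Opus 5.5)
The plan is to exhibit an explicit minimal Hermitian Kraus family for $\eta_B$ and then apply Lemma~\ref{lem:kraus-plane-welldef}. Fix a minimal Hermitian Kraus family $\{A_i\}_{i=1}^r$ for $\eta_A$, so $\eta_A(X)=\sum_i A_iXA_i$ with the $A_i$ linearly independent over $\bC$ and $\mathcal K_{\eta_A,\mathrm{sa}}=\operatorname{span}_\R\{A_i\}$. Substituting into the scalability relation gives
\[
        \eta_B(X)=b\,\eta_A(b^*Xb)\,b^*=\sum_{i=1}^r (bA_ib^*)\,X\,(bA_ib^*)^*,
\]
and each $M_i:=bA_ib^*$ is Hermitian because $A_i$ is.

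Next I check that $\{M_i\}$ is minimal. The map $Z\mapsto bZb^*$ is a $\bC$-linear bijection of $M_n(\bC)$, since $b$ is invertible. Hence $\sum_i\lambda_iM_i=0$ implies $\sum_i\lambda_iA_i=0$, and so all $\lambda_i=0$. Thus $\{M_i\}$ is a minimal Hermitian Kraus family for $\eta_B$. Lemma~\ref{lem:kraus-plane-welldef} then says that $\mathcal K_{\eta_B,\mathrm{sa}}$ equals $\operatorname{span}_\R\{M_i\}$, independently of the family chosen. Since the congruence is $\R$-linear, this span is $b\,\operatorname{span}_\R\{A_i\}\,b^*=b\,\mathcal K_{\eta_A,\mathrm{sa}}\,b^*$, which is the first assertion.

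For the invariance statement, I use that congruence by an invertible $b$ preserves invertibility in both directions: $\det(bZb^*)=|\det b|^2\det Z$. So $Z\mapsto bZb^*$ maps $\mathcal K_{\eta_A,\mathrm{sa}}\cap\{\det=0\}$ bijectively onto $\mathcal K_{\eta_B,\mathrm{sa}}\cap\{\det=0\}$ and sends $0$ to $0$. Therefore one intersection is $\{0\}$ exactly when the other is. Symmetric scalability is a symmetric relation (use $b^{-1}$), so this is a genuine invariant of the scaling class.

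I expect no real obstacle. The only points needing care are that $\eta_B$ is written with Hermitian Kraus operators, so that Lemma~\ref{lem:kraus-plane-welldef} applies, and that minimality transfers under congruence. Both are immediate from the invertibility of $b$.
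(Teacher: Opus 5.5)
Your proof is correct and follows essentially the same route as the paper: push a minimal Hermitian Kraus family of $\eta_A$ forward by the congruence $Z\mapsto bZb^*$, note that minimality survives because this is a linear bijection, invoke Lemma~\ref{lem:kraus-plane-welldef}, and conclude invariance from $\det(bZb^*)=|\det b|^2\det Z$. Your remark that symmetric scalability is a symmetric relation (via $b^{-1}$) is a small extra that the paper leaves implicit.
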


\begin{proof}
For a minimal Hermitian family $\{A_i\}$ of $\eta_A$ we have
\[
\eta_B(X)=\sum_i(bA_ib^*)\,X\,(bA_ib^*),
\] each $bA_ib^*$ Hermitian (as $A_i=A_i^*$, so $(bA_ib^*)^*=bA_i^*b^*=bA_ib^*$) and
the family minimal since $X\mapsto bXb^*$ is invertible. By
Lemma~\ref{lem:kraus-plane-welldef},
$\mathcal K_{\eta_B,\mathrm{sa}}=\operatorname{span}_\R\{bA_ib^*\}=b\,\mathcal K_{\eta_A,\mathrm{sa}}\,b^*$.
As $\det(bXb^*)=|\det b|^2\det X$ with $\det b\neq0$, $bXb^*$ is singular iff $X$ is.
\end{proof}

%
%

We call a plane satisfying this condition \emph{nonsingular}. (This is regularity, not
sign-definiteness: every nonzero element of the Type~III plane is in fact an indefinite
Hermitian matrix, with symmetric spectrum.)

\begin{lemma}[Determinant of $\alpha F+yG$]
\label{lem:det-alphaF-yG}
For all $m\ge2$ and $\alpha,y\in\bC$, $\det(\alpha F_m+yG_m)=(-1)^{m(m-1)/2}\alpha^m$;
in particular $\alpha F+yG$ is singular iff $\alpha=0$.
\end{lemma}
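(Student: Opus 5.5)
We work directly with the conventions of \S\ref{sec:type_III}: $F=F_m$ is supported on the antidiagonal $i+j=m+1$ and $G=G_m=\sum_{i=2}^m E_{i,m+2-i}$ on the band $i+j=m+2$. Set $M:=\alpha F+yG$. The plan is to turn $M$ into a triangular matrix by multiplying with $F$ and then read off the determinant.

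First compute the product with $F$. Since $F^2=I$, we have $FM=\alpha I+y\,FG$. The product $FG$ is a shift matrix. Indeed, $E_{k,m+1-k}E_{i,m+2-i}$ is nonzero only when $i=m+1-k$, and then it equals $E_{k,k+1}$. Hence
\[
FG=\sum_{k=1}^{m-1}E_{k,k+1}=:U,
\]
the strictly upper shift, which is nilpotent.

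It follows that $FM=\alpha I+yU$ is upper triangular with every diagonal entry equal to $\alpha$. Therefore $\det(FM)=\alpha^m$, for every complex $\alpha$ and $y$.

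Next we need $\det F$. The reversal permutation $i\mapsto m+1-i$ has exactly $\lfloor m/2\rfloor$ transpositions. This gives $\det F=(-1)^{\lfloor m/2\rfloor}$, and one checks by parity of $m$ that this equals $(-1)^{m(m-1)/2}$. Since $F=F^{-1}$, we conclude
\[
\det M=\det F\,\det(FM)=(-1)^{m(m-1)/2}\alpha^m.
\]
In particular $\det M=0$ if and only if $\alpha=0$.

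The same identity holds for the Lancaster--Rodman convention $G^{\mathrm{LR}}_m$. By Remark~\ref{rem:Gm-convention} we have $G_m=F_mG_m^{\mathrm{LR}}F_m$, so $\alpha F+yG^{\mathrm{LR}}=F(\alpha F+yG)F$. Conjugation by $F$ does not change the determinant. (Alternatively, $FG^{\mathrm{LR}}$ is the lower shift, and the same triangular argument applies.)

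The only step needing care is the sign bookkeeping in $\det F$. The identity $\lfloor m/2\rfloor\equiv m(m-1)/2 \pmod 2$ is settled by checking $m \bmod 4$. Everything else is immediate from $F^2=I$ and the nilpotency of $FG$.
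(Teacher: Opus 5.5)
Your proof is correct and is essentially the paper's argument. The paper reverses columns (right multiplication by $F$) to obtain the lower bidiagonal matrix $\alpha I+yGF$. You reverse rows instead and obtain the upper bidiagonal matrix $\alpha I+yFG$. In both cases the determinant is $\alpha^m$ times $\det F_m=(-1)^{m(m-1)/2}$.
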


\begin{proof}
Reversing columns ($j\mapsto m+1-j$) sends the anti-diagonal of $F$ to the main diagonal
(entries $\alpha$) and the shifted anti-diagonal of $G$ to the sub-diagonal (entries
$y$), a lower bidiagonal matrix of determinant $\alpha^m$; the reversal has sign
$(-1)^{m(m-1)/2}$.
\end{proof}

\begin{propo}[The Type~III Kraus plane is nonsingular]
\label{prop:typeIII-definite-kraus}
For $\beta\in\bC\setminus\R$, every nonzero real combination $xA_1+yA_2$ ($x,y\in\R$) is
invertible; i.e.\ $\mathcal K_{\eta_{\mathrm{III}},\mathrm{sa}}$ is nonsingular.
\end{propo}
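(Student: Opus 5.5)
We must show that $xA_1+yA_2$ is invertible for every $(x,y)\in\bR^2\setminus\{0\}$. Since it is block off-diagonal, we compute it directly:
\[
xA_1+yA_2=\begin{pmatrix}0&xF+yH\\ xF+yH^*&0\end{pmatrix},
\qquad xF+yH=(x+y\beta)F+yG,\quad xF+yH^*=(x+y\bar\beta)F+yG .
\]
A block off-diagonal matrix $\begin{psmallmatrix}0&P\\Q&0\end{psmallmatrix}$ is invertible iff both $P$ and $Q$ are invertible. Indeed, its determinant is $\pm\det P\det Q$.

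So the proof reduces to the two $m\times m$ blocks. We apply Lemma~\ref{lem:det-alphaF-yG}, which holds for arbitrary complex coefficients, with scalar parameters $x+y\beta$ (resp.\ $x+y\bar\beta$) and $y$. This gives
\[
\det(xF+yH)=(-1)^{m(m-1)/2}(x+y\beta)^m,\qquad \det(xF+yH^*)=(-1)^{m(m-1)/2}(x+y\bar\beta)^m .
\]
The shifted reversal $G=G_m$ of this section is supported on $i+j=m+2$, not on $i+j=m$. The lemma is still valid here, because reversing columns sends this band to the superdiagonal, giving an upper bidiagonal matrix with the same determinant. Alternatively, one can invoke Remark~\ref{rem:Gm-convention}, since $F_m(\cdot)F_m$ preserves determinants. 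Checking that the lemma is applied with the correct band convention is the only point needing care.

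Hence $\det(xA_1+yA_2)=\pm|x+y\beta|^{2m}$, using $x+y\bar\beta=\overline{x+y\beta}$ for real $x,y$. The last step is to show this is nonzero. Write $\beta=\rho e^{i\theta}$ with $\Im\beta\neq0$. If $x+y\beta=0$, then taking imaginary parts gives $y\,\Im\beta=0$, so $y=0$, and then $x=0$. Thus for $(x,y)\neq0$ the determinant is nonzero, and the plane $\mathcal K_{\eta_{\mathrm{III}},\mathrm{sa}}=\operatorname{span}_\bR\{A_1,A_2\}$ meets the singular locus only at $0$.

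This uses that $\{A_1,A_2\}$ is a minimal Hermitian Kraus family, which holds because $A_2$ is not a multiple of $A_1$. There is no real obstacle beyond this bookkeeping.
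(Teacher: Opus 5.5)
Your proof is correct and follows the paper's argument. You write $xA_1+yA_2$ as a block off-diagonal matrix with corner $(x+y\beta)F+yG$ and its adjoint, apply Lemma~\ref{lem:det-alphaF-yG}, and use $\operatorname{Im}\beta\neq0$ to force $x=y=0$. One harmless aside: that lemma is already stated for this section's $G_m$ (band $i+j=m+2$), so no convention adjustment is needed, and column reversal sends that band to the subdiagonal rather than the superdiagonal; the determinant is $\alpha^m$ either way.
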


\begin{proof}
$\{A_1,A_2\}$ is minimal ($F$ and $H=\beta F+G$ are linearly independent, $G\neq0$), so
it represents $\mathcal K_{\eta_{\mathrm{III}},\mathrm{sa}}$. A real combination is
$xA_1+yA_2=\begin{psmallmatrix}0&B\\B^*&0\end{psmallmatrix}$ with $B=(x+y\beta)F+yG$, and
is singular iff $B$ is. By Lemma~\ref{lem:det-alphaF-yG} ($\alpha=x+y\beta$), $B$ is
singular iff $x+y\beta=0$; for real $x,y$ this forces $y\operatorname{Im}\beta=0$ and
$x+y\operatorname{Re}\beta=0$, hence $y=0$, $x=0$ since $\operatorname{Im}\beta\neq0$.
\end{proof}

\begin{theo}[Type~III is not symmetrically scalable to real Type~II sums]
\label{thm:typeIII-not-scalable}
For $\beta\in\bC\setminus\R$ and any $m\ge2$, $\eta_{\mathrm{III}}$ is not symmetrically
scalable to any direct sum of size-$m$ Type~II covariance maps with real parameters; in
particular not to $\eta_{\mathrm{II},|\beta|}\oplus\eta_{\mathrm{II},|\beta|}$, which has
the same scalar density (Theorem~\ref{thm:typeIII-scalar-law-caseII} and the direct-sum
law).
\end{theo}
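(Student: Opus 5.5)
The plan is to use the invariant of Lemma~\ref{lem:plane-scaling}. By Proposition~\ref{prop:typeIII-definite-kraus}, the Hermitian Kraus plane $\mathcal K_{\eta_{\mathrm{III}},\mathrm{sa}}$ is nonsingular. It therefore suffices to show that for every direct sum $\eta_\oplus=\bigoplus_{k}\eta_{\mathrm{II},\alpha_k}$ of size-$m$ Type~II maps with real $\alpha_k$, the plane $\mathcal K_{\eta_\oplus,\mathrm{sa}}$ contains a nonzero singular element. Sizes must match, so there are exactly two summands, $\eta_\oplus=\eta_{\mathrm{II},\alpha_1}\oplus\eta_{\mathrm{II},\alpha_2}$ on $M_{2m}(\bC)$. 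Here ``direct sum of covariance maps'' means the covariance map of $S_1\oplus S_2$ with the \emph{same} free generators $s_1,s_2$. In the Kraus description this is $\eta_\oplus(X)=\sum_{i=1}^2 (A_i^{(1)}\oplus A_i^{(2)})X(A_i^{(1)}\oplus A_i^{(2)})$, and this is the reading under which Proposition~\ref{prop:direct_sums} gives the density law. I would fix this reading at the outset. If instead the summands were driven by independent generators, the Kraus family would have four members and the argument below would only get easier.

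\textbf{Step 1: identify the plane.} With the Kraus pair $B_1=F\oplus F$ and $B_2=(\alpha_1F+G)\oplus(\alpha_2F+G)$, I first check minimality, which holds because $G\neq0$ makes $B_1,B_2$ linearly independent over $\bC$. Lemma~\ref{lem:kraus-plane-welldef} then gives $\mathcal K_{\eta_\oplus,\mathrm{sa}}=\operatorname{span}_\R\{B_1,B_2\}$.

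\textbf{Step 2: exhibit a singular element.} Take $x=-\alpha_1$ and $y=1$. Then
\[
xB_1+yB_2=G\oplus\bigl((\alpha_2-\alpha_1)F+G\bigr).
\]
Its first block $G$ is singular by Lemma~\ref{lem:det-alphaF-yG} (the case $\alpha=0$), or directly because $G$ has a zero row. The element itself is nonzero since $G\neq0$. Hence the plane meets $\{\det=0\}$ outside the origin. The argument applies verbatim to any finite number of Type~II summands and to either convention for $G_m$; for the other convention, Remark~\ref{rem:Gm-convention} provides a congruence, which is harmless by Lemma~\ref{lem:plane-scaling}.

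\textbf{Step 3: conclude.} If $\eta_{\mathrm{III}}$ were symmetrically scalable to $\eta_\oplus$, Lemma~\ref{lem:plane-scaling} would give $\mathcal K_{\eta_\oplus,\mathrm{sa}}=b\,\mathcal K_{\eta_{\mathrm{III}},\mathrm{sa}}\,b^*$. The right-hand side is nonsingular, which contradicts Step 2. The particular case $\alpha_1=\alpha_2=|\beta|$ has the same density by Theorem~\ref{thm:typeIII-scalar-law-caseII} combined with Proposition~\ref{prop:direct_sums}. The only delicate point is the convention for the direct-sum Kraus family settled at the start; the algebra itself is immediate.
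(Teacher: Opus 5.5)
Your proof is correct and is essentially the paper's own argument. It uses the same minimal Hermitian Kraus pair $B_1=F\oplus F$, $B_2=(\alpha_1F+G)\oplus(\alpha_2F+G)$, the same nonzero singular element $-\alpha_1B_1+B_2$ with first block $G$, and the same scalability invariant from Proposition~\ref{prop:typeIII-definite-kraus} and Lemma~\ref{lem:plane-scaling}; your explicit fixing of the direct-sum convention (shared generators, hence two Kraus matrices) makes precise what the paper leaves implicit in its choice of $B_1=\bigoplus_kF$, $B_2=\bigoplus_k(\alpha_kF+G)$.
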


\begin{proof}
Such a sum $\eta_\oplus$ has minimal Hermitian family $B_1=\bigoplus_k F$,
$B_2=\bigoplus_k(\alpha_kF+G)$ ($\alpha_k\in\R$). The element $-\alpha_1B_1+B_2$ has
first block $G\neq0$, hence is a nonzero singular member of
$\mathcal K_{\eta_\oplus,\mathrm{sa}}$ (Lemma~\ref{lem:det-alphaF-yG}); so that plane is
not nonsingular. The Type~III plane is nonsingular
(Proposition~\ref{prop:typeIII-definite-kraus}), and nonsingularity is a
symmetric-scalability invariant (Lemma~\ref{lem:plane-scaling}); hence the two maps are
not symmetrically scalable.
\end{proof}

\begin{remark}[Sharp form of Result~C]
With Theorem~\ref{thm:typeIII-scalar-law-caseII}, this gives for every $m\ge2$ two
covariance maps on $M_{2m}(\bC)$ with identical scalar densities that are not
symmetrically scalable: the spectral classification is strictly coarser than the
symmetric-scalability classification.
\end{remark}

\begin{exa}[Example~\ref{exa:typeIII-m2}, continued]
For $\beta=i$, $m=2$: a real combination of $A_1,A_2$ is block-off-diagonal with corner
$B=(x+iy)F_2+yG_2$, $\det B=-(x+iy)^2$, vanishing only at $(x,y)=(0,0)$ --- the Type~III
plane is nonsingular. The size-$4$ map $\eta_{\mathrm{II},1}\oplus\eta_{\mathrm{II},1}$,
of the same scalar density, has family $F\oplus F$, $K\oplus K$ ($K=F_2+G_2$) with
\[
        -(F\oplus F)+(K\oplus K)=G_2\oplus G_2=\diag(0,1,0,1)
\]
a nonzero singular element. So the two are not symmetrically scalable.
\end{exa}

\section{Classification of singularities for Hermitian binary pencils}
\label{sec:classification-synthesis}

We collect the cell computations of \S\S\ref{sec:type_I}--\ref{sec:type_III} into a single statement. For a cell,
define its \emph{effective chain length} $n^*$: $n^*=n$ for a \TypeI{} or \TypeII{}
cell of size $n$, and $n^*=m$ for a \TypeIII{} cell of size $2m$. (For \TypeIII{}
this is the chain length of the \TypeII{} cell to which its scalar density reduces,
Corollary~\ref{cor:typeIII-from-caseII}.) A cell is LR-semisimple iff $n^*=1$.

\begin{theo}[Leading singularity of a regular Hermitian binary pencil]
\label{thm:classification-synthesis}
Let $A=A_1x_1+A_2x_2$ be a regular Hermitian binary pencil, with Lancaster--Rodman
cells $B^{(1)},\dots,B^{(k)}$ of effective chain lengths $n_1^*,\dots,n_k^*$, and set
$N^*:=\max_j n_j^*$. Then $\mu_{S_A}$ has no atom at $0$ and leading Puiseux exponent
there equal to $-\alpha_*$, where \[
        \alpha_*=\frac{N^*-1}{N^*+1},
        \qquad\text{i.e.}\qquad
        f_A(x)=C\,|x|^{-\frac{N^*-1}{N^*+1}}+o\!\big(|x|^{-\frac{N^*-1}{N^*+1}}\big),
        \quad x\to0,\ C>0.
\]
In particular $\mu_{S_A}$ is regular at $0$ ($\alpha_*=0$) iff $N^*=1$, i.e.\ iff $A$
is LR-semisimple.
\end{theo}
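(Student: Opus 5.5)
The proof assembles results already established. First, the hypothesis: $A$ is regular, so by Proposition~\ref{prop:regular-full} it is full and $\mu_{S_A}(\{0\})=0$. This makes Proposition~\ref{prop:reduction-to-cells} available. Applied to a congruence $CAC^*=\bigoplus_j B^{(j)}$ into Lancaster--Rodman form, it gives two things:
\begin{itemize}
\item each cell is atom-free at $0$;
\item the leading exponent of $\mu_{S_A}$ is $-\max_j\alpha_j$, where $-\alpha_j$ is the leading exponent of the cell $B^{(j)}$.
\end{itemize}
The sign characteristic is irrelevant, by the last paragraph of that proof. The task therefore reduces to computing $\alpha_j$ cell by cell and then checking that the maximum is the one claimed.

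Next, the cell values, by type.
\begin{itemize}
\item \TypeI{} cells of size $n\ge2$: Theorem~\ref{theo:typ_1_cells_sharp} gives $\alpha=\frac{n-1}{n+1}$ with a positive constant.
\item \TypeII{} cells of size $n\ge2$: the cell is written with the roles of $x_1,x_2$ exchanged relative to $S_n$, which does not change the distribution since $(s_1,s_2)$ is exchangeable. For $\alpha\ne0$ we use Theorem~\ref{theo:typ_2_cells_sharp}. For $\alpha=0$ the cell is literally a \TypeI{} cell, so Theorem~\ref{theo:typ_1_cells_sharp} applies. Either way $\alpha=\frac{n-1}{n+1}$, with constant $C_{\alpha,n}>0$.
\item \TypeIII{} cells of size $2m$: Corollary~\ref{cor:typeIII-from-caseII} gives $\alpha=\frac{m-1}{m+1}$. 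Remark~\ref{rem:Gm-convention} reconciles the $G_m$ conventions via an orthogonal congruence, which preserves the scalar law.
\item Cells with $n^*=1$: \TypeI{}/\TypeII{} cells of size $1$ are scalar semicircles up to a nonzero scaling (Remark~\ref{rem:LR_splittability}). For \TypeIII{} with $m=1$, the gauge reduction of Theorem~\ref{thm:typeIII-scalar-law-caseII} still applies when $m=1$ (the phases telescope trivially), reducing to the scalar Type~II cell $(1+|\beta|^2)^{1/2}s$. In all these cases the density is bounded, positive and real-analytic at $0$, so $\alpha=0=\frac{n^*-1}{n^*+1}$.
\end{itemize}
So every cell satisfies $\alpha_j=\frac{n_j^*-1}{n_j^*+1}$ with a strictly positive leading constant.

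The map $k\mapsto\frac{k-1}{k+1}=1-\frac{2}{k+1}$ is strictly increasing. Hence $\max_j\alpha_j=\frac{N^*-1}{N^*+1}$, and this maximum is attained exactly by the cells with $n_j^*=N^*$. Step~2 of Proposition~\ref{prop:reduction-to-cells} shows that the leading coefficient for the block-diagonal form $B$ is a positive-weight sum of the positive cell constants, so it is $C_B>0$. Here the two-sided constants are equal because $S\stackrel{d}{=}-S$.

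Transferring the statement back to $A$ itself is the one point that needs care, and I expect it to be the main obstacle. Theorem~\ref{thm:covariance-scaling-singularity-exponent} guarantees only the same exponent, with constants $c_{A,\pm}\ge0$ not both zero. The claim asks for a single $C>0$. Symmetry of $S_A$ under $s_i\mapsto-s_i$ gives $f_A(x)=f_A(-x)$, so $c_{A,+}=c_{A,-}=:C$, and "not both zero" then forces $C>0$. Note that $C$ need not equal $C_B$, since congruence by a non-unitary $b$ changes the constant; only its positivity is asserted.

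Finally, regularity at $0$. The statement $\alpha_*=0\iff N^*=1$ is immediate from strict monotonicity. Moreover, $N^*=1$ means every cell is $1\times1$ or a $2\times2$ \TypeIII{} cell, which is exactly LR-semisimplicity by Remark~\ref{rem:LR_splittability}.
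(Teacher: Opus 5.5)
Your proof is correct and follows the same route as the paper's: regularity gives fullness (Proposition~\ref{prop:regular-full}), Proposition~\ref{prop:reduction-to-cells} reduces the question to the canonical cells, the cell theorems give $\alpha_j=(n_j^*-1)/(n_j^*+1)$, and strict monotonicity of $k\mapsto 1-\frac{2}{k+1}$ identifies the maximum. Your extra care fills in points the paper's proof leaves implicit, namely the Type~II cell with $\alpha=0$, the $n^*=1$ cells (including the Type~III gauge reduction at $m=1$), the generator swap and $G_m$ convention, and positivity of the single constant $C$ via $f_A(x)=f_A(-x)$, which follows from $S_A\stackrel{d}{=}-S_A$.
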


\begin{proof}
Since $A$ is regular, the matrix semicircle $S_A$ is full
(Proposition~\ref{prop:regular-full}), so by Proposition~\ref{prop:reduction-to-cells} $\alpha_*=\max_j\alpha_j$, where
$\alpha_j$ is the singularity exponent of the cell $B^{(j)}$ (congruence-invariance
and the positive, hence cancellation-free, direct-sum law). The cell exponents are $(n-1)/(n+1)$ for \TypeI{} and \TypeII{} cells of size $n$
(Theorems~\ref{theo:typ_1_cells_sharp} and~\ref{theo:typ_2_cells_sharp})
and $(m-1)/(m+1)$ for \TypeIII{} cells of size $2m$
(Corollary~\ref{cor:typeIII-from-caseII}); in every case $\alpha_j=(n_j^*-1)/(n_j^*+1)$.
Since $g(n)=\frac{n-1}{n+1}=1-\frac{2}{n+1}$ is strictly increasing on $n\ge1$,
$\max_j\alpha_j=g(\max_j n_j^*)=g(N^*)$. The semisimplicity equivalence is
$N^*=1\iff$ every $n_j^*=1\iff$ every cell is LR-semisimple.
\end{proof}

\begin{remark}[What $N^*$ measures: deviation from semisimplicity]
\label{rem:Nstar-geometry}
The invariant $N^*$ has a coordinate-free meaning in the indefinite geometry from
which the Lancaster--Rodman form arises \cite[Thm.~5.1.1]{GLR2005}. Since $A$ is
regular, $H:=A_1+sA_2$ is invertible for all but finitely many real $s$ (take $s=0$
when $A_1$ is invertible); put $T:=H^{-1}A_2$. Then $HT=A_2$ is Hermitian, so $T$ is
\emph{$H$-selfadjoint}, i.e.\ self-adjoint for the indefinite inner product
$[x,y]:=y^*Hx$, and the Lancaster--Rodman cells are precisely the Jordan blocks of $T$
in this metric \cite[Thm.~5.1.1, Prop.~4.2.3]{GLR2005} --- equivalently the blocks of
the pair canonical form \cite{lancaster_rodman2005} via $T=H^{-1}A_2$ --- the sign
characteristic recording the sign of $[\cdot,\cdot]$ on each block; the sizes $n_j^*$
are independent of the (generic) $s$.

For a self-adjoint operator in a \emph{definite} metric the spectral theorem forbids
Jordan blocks; a block of size $n>1$ occurs only because $[\cdot,\cdot]$ degenerates
along the corresponding root space. On a cell of size $n$ the metric is the reversal
$H=\pm F_n$ \cite[Ex.~4.2.1]{GLR2005}, so the unique $T$-invariant flag
$\langle v_1,\dots,v_k\rangle$ has $[v_i,v_j]=\pm\delta_{i+j,\,n+1}$ and is totally
$[\cdot,\cdot]$-neutral iff $2k\le n$; hence the largest neutral $T$-invariant subspace
inside the cell has dimension $\lfloor n/2\rfloor$ (cf.\ the neutral-subspace bound
\cite[Thm.~2.3.4]{GLR2005}). For a \TypeIII{} cell ($\dim 2m$, a non-real eigenvalue
pair, whose root subspaces are automatically $[\cdot,\cdot]$-neutral
\cite[Thm.~5.1.1]{GLR2005}) this gives $\lfloor 2m/2\rfloor=m$. Thus, up to the parity
correction, $N^*$ is twice the largest dimension of a $T$-invariant subspace on which
$H$ vanishes identically: \emph{$N^*$ measures how deeply the eigendirections of the
pencil sink into the neutral cone of the indefinite metric.} LR-semisimplicity
($N^*=1$) is exactly the absence of any nonzero $T$-invariant neutral subspace ---
nondegeneracy of the metric on every eigendirection.

Writing $d:=N^*-1$ for the nilpotent depth of the largest block --- the \emph{defect of
semisimplicity}, algebraic minus geometric multiplicity --- this reads
\[
        \alpha_*=\frac{N^*-1}{N^*+1}=\frac{d}{d+2}.
\]
The singularity exponent is the defect filtered through the quadratic term of Speicher's
equation $zG=I+\eta(G)G$: the ``$+2$'' is the degree that term adds to the characteristic
equation over the depth $d$, interpolating from $\alpha_*=0$ at $d=0$ (smooth) to
$\alpha_*\to1$ as $d\to\infty$.
\end{remark}

\begin{remark}[The exponent classification is strictly coarser]
The leading exponent depends on the cells only through $N^*=\max_j n_j^*$ --- not on
their types, parameters, or sign characteristic, and not on whether a given chain
length is realized by a \TypeII{} cell of size $m$ or a \TypeIII{} cell of size $2m$.
Thus distinct LR data can give the same scalar density. The sharpest instance is
\S8: $\eta_{\mathrm{II},|\beta|}$ and $\eta_{\mathrm{III},\beta}$ (and direct sums
of them) have identical scalar densities yet are not symmetrically scalable
(Theorem~\ref{thm:typeIII-not-scalable}). When several cells attain $N^*$, the
constant $C$ is the corresponding $n_j^*/n$-weighted sum of the cell constants (which
depend on the \TypeII{}/\TypeIII{} parameter through $1+\alpha^2$, $1+|\beta|^2$); the
sum is positive, with no cancellation.
\end{remark}

\appendix


\section{Algebraicity of the matrix Cauchy transform} 
\label{sec:algebraicity}
\begin{proof}
\textbf{Step~1 (Polynomial system).}
Evaluating Speicher's equation $zG(z)=I_n+\eta(G(z))G(z)$
entrywise gives, for each $1\le p,q\le n$,
\[
zG_{pq}(z)-\delta_{pq}
-\sum_{\ell,k,m}\Bigl(\sum_{j}(A_j)_{p\ell}\,\overline{(A_j)_{k m}}\Bigr)
  G_{\ell m}(z)\,G_{kq}(z)=0.
\]
Set $R:=\mathbb{C}[z,(w_{\alpha\beta})_{1\le \alpha,\beta\le n}]$
and, for $1\le p,q\le n$, define
\begin{align*}
\Phi_{pq}(z,W)&:=
zw_{pq}-\delta_{pq}
-\sum_{\ell,k,m}T_{(p,\ell),(k,m)}\,w_{\ell m}w_{kq}\;\in R,
\\
T_{(p,\ell),(k,m)}&:=\sum_{j}(A_j)_{p\ell}\,\overline{(A_j)_{k m}}.
\end{align*}
Each $\Phi_{pq}$ has degree~$1$ in $z$ and total degree~$2$ in the
$n^2$ variables $(w_{\alpha\beta})$. Let
$I:=\langle \Phi_{pq}:1\le p,q\le n\rangle\subset R$ and
$V:=V(I)\subset\mathbb{A}^{1+n^2}_{(z,W)}$. By construction,
$(z,G(z))\in V$ for every $z\in\mathbb{C}^+$.

\smallskip
\textbf{Step~2 (Generic fiber is finite, via the implicit function
theorem).}
Fix $t>\sqrt{2\|\eta\|}$ and set $z_0:=it$. Let $W_0:=G(it)$, the physical
accretive solution on the imaginary axis. The resolvent bound
$\|G(iu)\|\le 1/u$ gives $\|W_0\|\le 1/t$. We compute the Jacobian of
the map $W\mapsto(\Phi_{pq}(z_0,W))_{p,q}$ at $W_0$:
differentiating $\Phi_{pq}(z_0,W)=z_0 w_{pq}-\delta_{pq}
-(\eta(W)W)_{pq}$ with respect to $w_{\alpha\beta}$, and using the
product rule,
\[
J_{(p,q),(\alpha,\beta)}(W_0)
=z_0\,\delta_{p\alpha}\delta_{q\beta}
-\sum_{k}T_{(p,\alpha),(k,\beta)}\,(W_0)_{kq}
-\eta(W_0)_{p\alpha}\,\delta_{q\beta}.
\]
View $J(W_0)$ as a linear operator on $M_n(\mathbb{C})\simeq
\mathbb{C}^{n^2}$. The first term is $z_0$ times the identity, and
the last two terms have operator norm bounded by $2\|\eta\|\,\|W_0\|
\le 2\|\eta\|/t$. Since $t>\sqrt{2\|\eta\|}$ gives $2\|\eta\|/t<t=|z_0|$, the Jacobian $J(W_0)$ is
invertible.

By the holomorphic implicit function theorem, the zero set
$\{\Phi_{pq}=0\}$ is a $1$-dimensional complex submanifold of
$\mathbb{C}^{1+n^2}$ near $(z_0,W_0)$, and the projection
$\pi:V\to\mathbb{A}^1_z$ is locally an isomorphism there.
Consequently, the unique irreducible component $V^{(0)}$ of $V$
passing through $(z_0,W_0)$ has dimension exactly~$1$, and
$\pi\!\restriction_{V^{(0)}}$ is dominant with $0$-dimensional
generic fiber.

Moreover, the analytic curve
$\gamma:\mathbb{C}^+\to\mathbb{C}^{1+n^2}$, $z\mapsto(z,G(z))$
lies in $V$ and passes through $(z_0,W_0)$; since $\gamma$ is
non-constant (its first coordinate is $z$), its image lies in
$V^{(0)}$. In particular,
\begin{equation}
\label{eq:Gamma-in-V0}
(z,G(z))\in V^{(0)}\quad\text{for every }z\in\mathbb{C}^+.
\end{equation}

\smallskip
\textbf{Step~3 (Projection to the $(z,w_{pq})$-plane).}
Fix $(p,q)$ and consider the projection
\[
\pi_{pq}:\mathbb{A}^{1+n^2}\longrightarrow \mathbb{A}^2_{(z,w)},
\qquad \bigl(z,(w_{\alpha\beta})\bigr)\longmapsto (z,w_{pq}).
\]
Let $Y_{pq}:=\overline{\pi_{pq}(V^{(0)})}^{\,\mathrm{Zar}}
\subset \mathbb{A}^2$ be the Zariski closure of the image. Then
$Y_{pq}$ is an irreducible algebraic subvariety of $\mathbb{A}^2$
with
\[
\dim Y_{pq}\le \dim V^{(0)}=1.
\]
By~\eqref{eq:Gamma-in-V0}, $Y_{pq}$ contains the image of the
analytic curve
\[
z\in\mathbb{C}^+\longmapsto (z,G_{pq}(z))\in\mathbb{C}^2,
\]
whose first coordinate is non-constant; in particular $Y_{pq}$ is
neither empty nor a single point, so
\[
\dim Y_{pq}=1.
\]
A $1$-dimensional irreducible algebraic subvariety of $\mathbb{A}^2$
is the zero set of a single irreducible polynomial
\[
P_{pq}(z,w)\in\mathbb{C}[z,w]\setminus\{0\}.
\]

\smallskip
\textbf{Step~4 (Conclusion).}
For every $z\in\mathbb{C}^+$, $(z,G_{pq}(z))\in Y_{pq}=V(P_{pq})$, so
\[
P_{pq}\bigl(z,G_{pq}(z)\bigr)=0\qquad\text{for all } z\in\mathbb{C}^+.
\]
This is the asserted algebraic relation, so $G_{pq}$ is algebraic over
$\mathbb{C}(z)$.

Finally, algebraic functions over $\mathbb{C}(z)$ form a field; hence
any $\mathbb{C}$-linear combination of algebraic functions is
algebraic, and $H(z)=\tfrac{1}{n}\sum_p G_{pp}(z)$ is algebraic over
$\mathbb{C}(z)$.
\end{proof}

\section{Tauberian lemmas for Poisson asymptotics}\label{sec:tauberian} 
\subsection{Proof of the Puiseux-to-Poisson lemma}
\label{appx:puiseux_poisson} 
\begin{proof}
We may assume without loss of generality that the Poisson integral
converges for all small $\varepsilon > 0$; in the application,
$f$~is a spectral density satisfying $\int_\bR f < \infty$.

\medskip\noindent\textbf{Step~1: Reduction to half-lines.}
Write
\[
\frac{1}{\pi}\int_\bR \frac{\varepsilon\,f(x)}{x^2+\varepsilon^2}\,dx
\;=\;
I_+(\varepsilon) + I_-(\varepsilon),
\qquad
I_\pm(\varepsilon)
\;=\;
\frac{1}{\pi}\int_0^\infty
  \frac{\varepsilon\,f(\pm x)}{x^2+\varepsilon^2}\,dx.
\]
We treat $I_+$; the analysis of~$I_-$ is identical with $c_-$
replacing~$c_+$.

\medskip\noindent\textbf{Step~2: Rescaling.}
The substitution $x = \varepsilon u$ gives
\[
I_+(\varepsilon)
\;=\;
\frac{1}{\pi}\int_0^\infty \frac{f(\varepsilon u)}{u^2 + 1}\,du.
\]
Fix $\eta > 0$ small enough that
$|f(x) - c_+ x^{-\alpha}| \le C x^{-\alpha+\delta}$
for all $x \in (0,\eta]$, and split
\[
I_+(\varepsilon) = J_1(\varepsilon) + J_2(\varepsilon),
\qquad
J_1 = \frac{1}{\pi}\int_0^{\eta/\varepsilon} \frac{f(\varepsilon u)}{u^2+1}\,du,
\quad
J_2 = \frac{1}{\pi}\int_{\eta/\varepsilon}^\infty \frac{f(\varepsilon u)}{u^2+1}\,du.
\]

\medskip\noindent\textbf{Step~3: Tail estimate.}
Returning to the original variable,
\[
J_2(\varepsilon)
= \frac{1}{\pi}\int_\eta^\infty
  \frac{\varepsilon\,f(x)}{x^2+\varepsilon^2}\,dx
\le \frac{\varepsilon}{\pi\eta^2}
  \int_\eta^\infty f(x)\,dx
= O(\varepsilon).
\]

\medskip\noindent\textbf{Step~4: Main term.}
On the interval $u \in (0,\,\eta/\varepsilon]$ we have
$\varepsilon u \in (0,\eta]$, so the asymptotic hypothesis applies:
\[
f(\varepsilon u) = c_+\,(\varepsilon u)^{-\alpha}
  + r(\varepsilon u),
\qquad |r(\varepsilon u)| \le C\,(\varepsilon u)^{-\alpha+\delta}.
\]
Thus $J_1 = M + R$ where
\[
M = \frac{c_+\,\varepsilon^{-\alpha}}{\pi}
  \int_0^{\eta/\varepsilon} \frac{u^{-\alpha}}{u^2+1}\,du,
\qquad
|R| \le \frac{C\,\varepsilon^{-\alpha+\delta}}{\pi}
  \int_0^{\eta/\varepsilon} \frac{u^{-\alpha+\delta}}{u^2+1}\,du.
\]

\emph{Evaluation of~$M$.}
The beta-integral identity (substitution $u^2 = t$, then the
reflection formula) gives
\begin{equation}\label{eq:beta_key}
\int_0^\infty \frac{u^{-\alpha}}{u^2+1}\,du
= \frac{1}{2}\,B\!\Bigl(\frac{1-\alpha}{2},\,\frac{1+\alpha}{2}\Bigr)
= \frac{\pi}{2}\,\sec\frac{\pi\alpha}{2},
\qquad \alpha \in [0,1).
\end{equation}
The complementary tail satisfies
\[
\int_{\eta/\varepsilon}^\infty \frac{u^{-\alpha}}{u^2+1}\,du
\le \int_{\eta/\varepsilon}^\infty u^{-\alpha-2}\,du
= \frac{(\varepsilon/\eta)^{1+\alpha}}{1+\alpha}
= O(\varepsilon^{1+\alpha}),
\]
so
\[
M = \frac{c_+}{2}\,\sec\frac{\pi\alpha}{2}\;\varepsilon^{-\alpha}
  + O(\varepsilon).
\]

\emph{Bound on~$R$.}
Since $-\alpha + \delta > -1$ (as $\alpha < 1$), the integral
$\int_0^\infty u^{-\alpha+\delta}/(u^2+1)\,du$ converges
whenever $\delta < 1 + \alpha$, giving $R = O(\varepsilon^{-\alpha+\delta})$.

\medskip\noindent\textbf{Step~5: Assembly.}
The three error contributions are:
the tail~$J_2 = O(\varepsilon)$ from Step~3, 
the beta-integral completion $O(\varepsilon)$ from Step~4,
and the remainder $R$.
For the remainder:
since $\delta < 1+\alpha$ by assumption, the integral
$\int_0^\infty u^{-\alpha+\delta}/(u^2+1)\,du$ converges, giving
$R = O(\varepsilon^{-\alpha+\delta})$.

Therefore
\[
I_+(\varepsilon)
= \frac{c_+}{2}\,\sec\frac{\pi\alpha}{2}\;\varepsilon^{-\alpha}
  + O\!\big(\varepsilon^{-\alpha+\delta} + \varepsilon\big).
\]
When $\delta \le 1+\alpha$ the first error term dominates
(since $-\alpha+\delta \le 1$), giving
$O(\varepsilon^{-\alpha+\delta})$.

The identical argument gives
$I_-(\varepsilon) = \frac{c_-}{2}\sec\frac{\pi\alpha}{2}
\,\varepsilon^{-\alpha} + O(\varepsilon^{-\alpha+\delta})$.
Adding the two contributions completes the proof.
\end{proof}

\subsection{Proof of the Poisson-to-Puiseux lemma}\label{appx:poisson_puiseux} 

\begin{proof}
\medskip\noindent\textbf{Part~1: Puiseux expansion of the density.}
By Proposition~\ref{prop:entrywise-algebraicity}, $H$ is algebraic:
it satisfies $P(z, H) = 0$ for a nontrivial polynomial~$P$
with real coefficients.
By assumption, $z = 0$ is a singularity of the density but
not an atom, so $0$~is a branch point (not a pole) of~$H$.
By the classical Puiseux theorem
\cite[Ch.\;IV, \S4]{walker1950},
the branch of~$H$ selected in~$\mathbb{C}^+$ admits an
expansion
\begin{equation}\label{eq:puiseux_H}
H(z) = \sum_{k \ge k_0} c_k\, z^{k/m},
\qquad z \in \mathbb{C}^+,\; |z| \text{ small},
\end{equation}
for some integer $m \ge 1$ and coefficients $c_k \in \mathbb{C}$,
where $z^{1/m}$ denotes the branch analytic in~$\mathbb{C}^+$
with $z^{1/m} > 0$ for $z > 0$.

The Stieltjes inversion formula
$f(x) = -\frac{1}{\pi}\,\Im\, H(x + i0^+)$
translates~\eqref{eq:puiseux_H} into Puiseux expansions
of~$f$ on each side of the origin.
For $x > 0$: $z^{k/m} = x^{k/m}$ is real, so
$f(x) = -\frac{1}{\pi}\sum_k (\Im\, c_k)\, x^{k/m}$.

For $x < 0$: $z = |x|\,e^{i\pi}$ gives
$z^{k/m} = |x|^{k/m}\, e^{i\pi k/m}$, so
\[
f(-x) = -\frac{1}{\pi}\sum_k
  |c_k|\,x^{k/m}\,\sin(\pi k/m + \arg c_k)
  \]
for $x > 0$ small.
In both cases the exponents are rational numbers of the
form $\beta = k/m$.

The constraint $\beta_1 > -1$ holds because $f \ge 0$ is locally
integrable (since $\mu_S$ has no atom at~$0$ by hypothesis,
$\int_{-\eta}^\eta f(x)\,dx < \infty$), which forces the leading
singular exponent to satisfy $\beta_1 > -1$.

\medskip\noindent\textbf{Part~2: Puiseux exponent determines
  Poisson scaling.}
Let $\alpha_* = -\beta_*$ where
$\beta_* = \min\{\beta_k : a_k^+ \neq 0 \text{ or } a_k^- \neq 0\}$
is the leading singular exponent.
Since $f \ge 0$ and
$f(x) \sim c_+\, x^{-\alpha_*}$ as $x \downarrow 0$ with $c_+ \ge 0$
(resp.\ $f(-x) \sim c_-\, x^{-\alpha_*}$ with $c_- \ge 0$) and
$c_+ + c_- > 0$,
Lemma~\ref{lem:puiseux-to-poisson} gives
\begin{equation}\label{eq:poisson_f_asymp}
\frac{1}{\pi}\int_{\mathbb R}
  \frac{\varepsilon\,f(x)}{x^2+\varepsilon^2}\,dx
\;=\;
\frac{c_+ + c_-}{2}\,\sec\frac{\pi\alpha_*}{2}\;
\varepsilon^{-\alpha_*}
+ O(\varepsilon^{-\alpha_*+\delta}).
\end{equation}
Since $\mu_S$ has no atom at~$0$, its singular part (if any)
is supported at distance at least $r > 0$ from the origin, and
its Poisson contribution is
\[
\frac{1}{\pi}\int_{\mathbb R}
  \frac{\varepsilon}{x^2+\varepsilon^2}\,d\mu_S^{\mathrm{sing}}(x)
\;\le\;
\frac{\varepsilon}{\pi r^2}\,\mu_S^{\mathrm{sing}}(\mathbb R)
\;=\; O(\varepsilon).
\]
As $\alpha_* \ge 0$, the $O(\varepsilon)$ term is absorbed by~%
\eqref{eq:poisson_f_asymp}, giving
$\mathcal P_S(\varepsilon) \asymp \varepsilon^{-\alpha_*}$.

\medskip\noindent\textbf{Part~3: Converse.}
Part~2 establishes $\mathcal P_S(\varepsilon) \asymp \varepsilon^{-\alpha_*}$
unconditionally (given the Puiseux expansion).
If simultaneously
$\mathcal P_S(\varepsilon) \asymp \varepsilon^{-\alpha}$,
then
$\varepsilon^{-\alpha} \asymp \varepsilon^{-\alpha_*}$
as $\varepsilon \downarrow 0$, which forces $\alpha = \alpha_*$.
\end{proof}

\section{Types~II and~III preserve no maximal abelian subalgebra}
\label{sec:no-MASA}

\begin{proof}[Proof of Lemma \ref{lem:no-MASA}]
\emph{Moments and the commutator identity.}
As an $M_N(\bC)$-valued semicircular element with covariance $\eta$, $S$ has
\[
\mu_{2p}=\sum_{\pi\in NC_2(2p)}\eta_\pi(I),
\]
 the sum over non-crossing pair
partitions of $\{1,\dots,2p\}$ of the associated nested applications of $\eta$ to
$I$; odd moments vanish since $NC_2$ of an odd set is empty. The unique pairing of
$\{1,2\}$ gives $\mu_2=\eta(I)$; the two non-crossing pairings of $\{1,2,3,4\}$
---nested $\{(1,4),(2,3)\}$ and side-by-side $\{(1,2),(3,4)\}$--- give
$\mu_4=\eta^2(I)+\eta(I)^2$. Since $[\eta(I),\eta(I)^2]=0$,
\eqref{eq:moment-commutator} follows.

\emph{Reduction to a commutator.}
Suppose $\eta(\mathcal A)\subseteq\mathcal A$ for a maximal abelian subalgebra
$\mathcal A\subseteq M_N(\bC)$. Then $I\in\mathcal A$ gives $\eta^j(I)\in\mathcal A$
for all $j\ge0$, so $\mu_2,\mu_4\in\mathcal A$ commute and, by
\eqref{eq:moment-commutator}, $[\eta(I),\eta^2(I)]=0$. It therefore suffices to
show $[\eta(I),\eta^2(I)]\neq0$ for the two cell types.

\emph{Type~II.}
Write $F=F_n$, $G=G_n$, and let $J$ be the upper shift $J_{i,i+1}=1$. With
$\bar i:=n{+}1{-}i$ and the convention that an entry with an index outside
$\{1,\dots,n\}$ is $0$,
\begin{align*}
(FXF)_{ij}&=X_{\bar i\,\bar j},\quad
(GXG)_{ij}=X_{n+2-i,\,n+2-j},\\
(FXG)_{ij}&=X_{\bar i,\,n+2-j},\quad
(GXF)_{ij}=X_{n+2-i,\,\bar j},
\end{align*}
and $\eta(X)=(\alpha^2{+}1)FXF+\alpha(FXG+GXF)+GXG$. Evaluating at $X=I$ (the $GXG$
term adds $1$ to every diagonal entry except the first) yields the Jacobi matrix
\[
C_1:=\eta(I)=(\alpha^2+2)\,I-E_{11}+\alpha\,(J+J^\top),
\]
with off-diagonal entries all equal to $\alpha$ and diagonal
$(\alpha^2+1,\alpha^2+2,\dots,\alpha^2+2)$. Put $C_2:=\eta(C_1)$. A direct
computation from the formulas above gives, for $n\ge3$,
\[
(C_2)_{n-1,n-1}-(C_2)_{nn}=\alpha^2+1,\qquad (C_2)_{n,n-2}=\alpha^2 .
\]
Because $C_1$ is tridiagonal with $(C_1)_{n-1,n-1}=(C_1)_{nn}=\alpha^2+2$ and
off-diagonal entries $\alpha$,
\[
[C_1,C_2]_{n,n-1}
=\alpha\big[(C_2)_{n-1,n-1}-(C_2)_{nn}-(C_2)_{n,n-2}\big]
=\alpha\big[(\alpha^2+1)-\alpha^2\big]=\alpha\neq0 ;
\]
in fact $[C_1,C_2]=\alpha(E_{21}-E_{12})+\alpha(E_{n,n-1}-E_{n-1,n})$. At $n=2$ the
two boundary terms coincide and $[C_1,C_2]=2\alpha(E_{21}-E_{12})\neq0$.

\emph{Type~III.}
Here $A_1=\bmatr{0&H\\ H^*&0}$ and $A_2=\bmatr{0&F\\ F&0}$ with $H=\beta F+G$,
$F=F_m$, $G=G_m$; both coefficients are block-anti-diagonal and Hermitian, so
$\eta$ preserves block-diagonal matrices, with
\[
\eta\bmatr{X_1&0\\0&X_2}=\bmatr{HX_2H^*+FX_2F&0\\0&H^*X_1H+FX_1F}.
\]
Thus $\eta(I)=\diag(B_+,B_-)$, where $B_+=HH^*+I=\overline{B_-}$ is the complex
Hermitian Jacobi matrix
\[
B_+=(|\beta|^2+2)\,I-E_{11}+\beta\,J+\bar\beta\,J^\top .
\]
With $\eta_+(Y):=HYH^*+FYF$, the top block of $\eta^2(I)$ is $\eta_+(B_-)$, so the
top block of $[\eta(I),\eta^2(I)]$ is $[B_+,\eta_+(B_-)]$. The corner computation
parallel to Type~II gives, for $m\ge3$,
\[
[B_+,\eta_+(B_-)]_{12}=-\beta\qquad(\text{and }-2\beta\text{ at }m=2),
\]
nonzero since $\beta\in\bC\setminus\bR$ forces $\beta\neq0$. Hence
$[\eta(I),\eta^2(I)]\neq0$. (For $m=1$ the cell is LR-semisimple,
$\eta(I)=(1+|\beta|^2)I_2$ is scalar, and the commutator vanishes.)
\end{proof}

\section{Computations for Type~II cell analysis}
\label{sec:type2-computations} 

\subsection{Real-analyticity of $W_\alpha(u)$}  
\label{sec:analyticity}

\begin{proof}[Proof of \ref{thm:analyticity-in-alpha}]
Define
\begin{equation}
\label{eq:Phi-residual}
\Phi(W; \alpha, u) \;:=\; W^{-1} - uI - \eta_\alpha(W),\qquad W\in\mathrm{Sym}_n^{++},\ \alpha\in\mathbb R,\ u>0.
\end{equation}
The map $\Phi$ is real-analytic in all of its arguments: $W^{-1}$ is real-analytic on $\mathrm{Sym}_n^{++}$, and $\eta_\alpha(W) = F_n W F_n + (\alpha F_n + G_n) W (\alpha F_n + G_n)$ is polynomial in $\alpha$ and linear in $W$. Speicher's equation reads $\Phi(W_\alpha(u); \alpha, u) = 0$.

By the real-analytic implicit function theorem, it suffices to verify that the Fr\'echet derivative
\[
\mathcal L \;:=\; D_W \Phi(W_\alpha(u); \alpha, u)\colon \mathrm{Sym}_n \to \mathrm{Sym}_n
\]
is invertible at every $(\alpha, u)$ with $u>0$. Direct computation gives
\begin{equation}
\label{eq:DWPhi}
\mathcal L[V] \;=\; -W^{-1} V W^{-1} - \eta_\alpha(V),\qquad V\in\mathrm{Sym}_n,
\end{equation}
where we abbreviate $W := W_\alpha(u)$. We prove invertibility of $\mathcal L$ in two steps.

\smallskip
\emph{Step 1: Rewriting $\mathcal L$.} 

Substitute the change of variable $V = W^{1/2} U W^{1/2}$ for $U\in\mathrm{Sym}_n$. (The square root $W^{1/2}$ is well-defined and in $\mathrm{Sym}_n^{++}$ since $W \succ 0$.) Then
\[
W^{-1} V W^{-1} = W^{-1/2} U W^{-1/2},
\]
and applying $W^{1/2}$ on both sides of \eqref{eq:DWPhi},
\begin{align}
\label{eq:L-rewritten}
W^{1/2}\,\LL[W^{1/2} U W^{1/2}]\,W^{1/2}
&=\; -U \;-\; W^{1/2}\,\eta_\alpha(W^{1/2} U W^{1/2})\,W^{1/2}
\\
\notag
&=\; -(I + T)(U),
\end{align}
where we have introduced the linear operator
\begin{equation}
\label{eq:T-def}
T(U) \;:=\; W^{1/2}\,\eta_\alpha(W^{1/2} U W^{1/2})\,W^{1/2}\colon \mathrm{Sym}_n \to \mathrm{Sym}_n.
\end{equation}
The transformation $V \leftrightarrow U$ is a linear isomorphism of $\mathrm{Sym}_n$, so invertibility of $\mathcal L$ is equivalent to invertibility of $I + T$.

\smallskip
\emph{Step 2: $T$ is a CP map with $T(I) = I - uW$.}

Each summand $\eta_\alpha(W^{1/2} U W^{1/2}) = \sum_k K_{\alpha,k}\, W^{1/2} U W^{1/2}\, K_{\alpha,k}$ (with $K_{\alpha,1} = F_n$, $K_{\alpha,2} = \alpha F_n + G_n$) is the Kraus representation of a CP map applied to $W^{1/2} U W^{1/2}$. 

The conjugation $X \mapsto W^{1/2} X W^{1/2}$ is also CP. Therefore $T$ is a composition of CP maps and is itself CP. In particular, $T$ preserves the cone $\mathrm{Sym}_n^+$ of positive semidefinite matrices.

Evaluate $T$ at $U = I$:
\[
T(I) \;=\; W^{1/2}\,\eta_\alpha(W)\,W^{1/2}.
\]
At the solution $W = W_\alpha(u)$, Speicher's equation gives $\eta_\alpha(W) = W^{-1} - uI$, hence
\begin{equation}
\label{eq:T-of-I}
T(I) \;=\; W^{1/2}(W^{-1} - uI)\,W^{1/2} \;=\; I - u\,W.
\end{equation}
Since $W\succ 0$ and $u > 0$, we conclude
\begin{equation}
\label{eq:T-strictly-bounded}
T(I) \;\prec\; I.
\end{equation}

\smallskip
\emph{Step 3: $\rho(T) < 1$ via the Collatz--Wielandt principle.}

Let $\lambda := u\,\lambda_{\min}(W) > 0$. From \eqref{eq:T-of-I} we have $T(I) \preceq (1 - \lambda)\,I$. Since $T$ is CP and the Loewner order is preserved by CP maps applied to PSD elements, induction yields
\[
T^k(I) \;\preceq\; (1-\lambda)^k\, T^{k-1}(I) \;\preceq\; \cdots \;\preceq\; (1-\lambda)^k\, I.
\]
More precisely: by the monotonicity of CP maps on PSD elements, $T(X) \preceq T(Y)$ whenever $X \preceq Y$, $X, Y \succeq 0$. Applying this iteratively starting from $T(I) \preceq (1-\lambda) I$,
\[
T^2(I) = T(T(I)) \preceq T((1-\lambda) I) = (1-\lambda) T(I) \preceq (1-\lambda)^2 I,
\]
and inductively $T^k(I) \preceq (1-\lambda)^k\, I$ for every $k\ge 0$.

For any $U\in\mathrm{Sym}_n^+$, $U \preceq \|U\|_{\mathrm{op}} I$, so
\[
0 \preceq T^k(U) \preceq \|U\|_{\mathrm{op}}\,T^k(I) \preceq \|U\|_{\mathrm{op}}\,(1-\lambda)^k\,I,
\]
hence $\|T^k(U)\|_{\mathrm{op}} \le (1-\lambda)^k \|U\|_{\mathrm{op}}$. 

Decomposing a general $U\in\mathrm{Sym}_n$ as $U = U_+ - U_-$ with $U_\pm\in\mathrm{Sym}_n^+$ and $\|U_\pm\|_{\mathrm{op}} \le \|U\|_{\mathrm{op}}$, we obtain
\[
\|T^k(U)\|_{\mathrm{op}} \;\le\; 2(1-\lambda)^k\,\|U\|_{\mathrm{op}}.
\]
By Gelfand's formula, $\rho(T) = \lim_k \|T^k\|^{1/k} \le 1 - \lambda < 1$.

\smallskip
\emph{Step 4: Invertibility of $\mathcal L$.}
Since $\rho(T) < 1$, the operator $I + T$ is invertible, with $(I+T)^{-1} = \sum_{k\ge 0}(-T)^k$ (Neumann series). By Step 1, $\mathcal L$ is invertible.

The real-analytic implicit function theorem now yields a unique real-analytic function $\widetilde W$ on a neighbourhood of any $(\alpha_0, u_0)$ with $u_0 > 0$, satisfying $\widetilde W(\alpha_0, u_0) = W_{\alpha_0}(u_0)$ and $\Phi(\widetilde W(\alpha, u); \alpha, u) = 0$. By global uniqueness of the accretive solution (\cite{hfs2007}), $\widetilde W = W_\cdot(\cdot)$ on the intersection of their domains, so the local pieces glue into a real-analytic function on all of $\mathbb R \times (0, \infty)$.
\end{proof}

\subsection{The Jacobian $\BB_\alpha$}
\label{sec:jacobian}

\begin{proof}[Proof of Proposition \ref{prop:Balpha-linearization}]
We split the proof into three parts: first we compute the coefficients
needed from the even equation, then we compute the coefficient of $t^2$ in
the linearized odd equation, and finally we prove invertibility by an energy
identity.

\medskip\noindent
\textbf{Step 0: Chain Rule.}
After solving the even equation, we write
\[
        N(Y, t): =e^Y+\Phi(Y,\alpha,t),
        \qquad
        Y\in E_-,
        \qquad
        \Phi(Y,\alpha,t)\in E_+.
\]
At $Y=0$, set
\[
        \Psi(t):=\Phi(0,\alpha,t).
\]
Fix $H\in E_-$ and set
\[
        \Xi_H(t):=D_Y\Phi(0,\alpha,t)[H].
\]

By definition $\mathscr R(Y, \alpha, t) = P_- \widetilde{\mathcal G}(N(Y, t), \alpha, t)$. Differentiating in $Y$ in
the direction $H$:
\[
        D_Y\,\mathscr R(0, \alpha, t)[H]
        \;=\; P_-\, D_N\,\widetilde{\mathcal G}\bigl(N(0, t), \alpha, t\bigr)\bigl[\,D_Y N(0, t)[H]\,\bigr].
\]
At $Y = 0$,
\begin{align*}
        N(0, t) & =\; I + \Psi(t), \text{ and} 
        \\
        D_Y N(0, t)[H] &=\; D_Y(e^{Y})|_{Y=0}[H] + D_Y\Phi(0, \alpha, t)[H]
        \;=\; H + \Xi_H(t),
\end{align*}
where we used
$D_Y(e^Y)_{Y=0}[H] = H$ (the standard formula for the differential of
the exponential at the identity).

Combining,
\begin{equation}\label{eq:Balpha-chain-rule}
        D_Y\,\mathscr R(0, \alpha, t)[H]
        \;=\; P_-\,D_N\,\widetilde{\mathcal G}\bigl(I + \Psi(t), \alpha, t\bigr)
        \bigl[\,H + \Xi_H(t)\,\bigr].
\end{equation}

Similarly, if we differentiate 
\[
        P_+\widetilde{\mathcal G}(N(Y, t), \alpha, t) = 0
\]
\emph{in $Y$ at $Y = 0$} in the direction of $H \in E_-$, we get

\begin{equation}
\label{eq:Balpha-chain-rule_plus}
        P_+\,D_N\widetilde{\mathcal G}(I + \Psi(t), \alpha, t)\bigl[H + \Xi_H(t)\bigr] \;=\; 0.
\end{equation}

\textbf{Decomposition of the chain rule.}
The operator $D_N\widetilde{\mathcal G}$ is linear in its bracketed argument,
so we may split
\begin{equation}
\label{equ:decomposition}
        D_N\widetilde{\mathcal G}\bigl[\,H + \Xi_H(t)\,\bigr]
        \;=\;
        \underbrace{D_N\widetilde{\mathcal G}\bigl[H\bigr]}_{\text{piece (I)}}
        \;+\;
        \underbrace{D_N\widetilde{\mathcal G}\bigl[\Xi_H(t)\bigr]}_{\text{piece (II)}},
\end{equation}
where the operator $D_N\widetilde{\mathcal G}$ is evaluated at
$(I + \Psi(t), \alpha, t)$ in both. We will deal with each piece separately
and then sum.

\medskip\noindent
\textbf{Step 1: Coefficients from the even equation.}
We expand
\[
        \Psi(t)=z_1t+z_2t^2+O(t^3),
\]
and
\[
        \Xi_H(t)=s_1(H)t+s_2(H)t^2+O(t^3).
\]
Since $\Phi$ takes values in $E_+$, we have
\[
        z_1,z_2,s_1(H),s_2(H)\in E_+.
\]

By \eqref{eq:z1-formula}, 
\[
        z_1=-\frac a2T_n.
\]

Substitute the expansions for $\Psi(t)$ and $\Xi_H(t)$ into \eqref{eq:Balpha-chain-rule_plus}.

\textbf{The coefficient of $t^0$.} At $t = 0$ the operator is
$D_N\widetilde{\mathcal G}(I, \alpha, 0)[K] = -K - \Theta(K)$ and the
bracketed argument is $H$. So we get $P_+[-H - \Theta(H)] = 0$, which
holds automatically because $H \in E_-$ implies $\Theta(H) = -H$ and the
bracket vanishes.

\textbf{The coefficient of $t$.}
The $t^1$ part of $D_N\widetilde{\mathcal G}(I + \Psi(t), \alpha, t)$ comes
from two sources: (a) the inverse term, where $(I + \Psi(t))^{-1} =
I - z_1 t + O(t^2)$ contributes $z_1\,(\,\cdot\,) + (\,\cdot\,)\,z_1$ at
order $t^1$ (after multiplication by the leading $-1$); and (b) the
$-at\,\mathcal P(\,\cdot\,)$ piece, whose $t^1$ coefficient is just
$-a\,\mathcal P(\,\cdot\,)$. Acting on $H$:
\[
        \bigl[\text{$t^1$ operator}\bigr][H] \;=\; z_1 H + H z_1 - a\,\mathcal P(H).
\]

The $t^0$ operator acting on $s_1(H) \in E_+$ gives $-2\,s_1(H)$ (using
$\Theta(s_1(H)) = s_1(H)$).

Combining, the $t^1$ coefficient of the chain-rule identity is
\[
        P_+\bigl[\,z_1 H + H z_1 - a\,\mathcal P(H) - 2\,s_1(H)\,\bigr] \;=\; 0.
\]
Now $z_1$ is $\Theta$-even and $H$ is $\Theta$-odd, so
\[
        z_1H+Hz_1\in E_-.
\]
Hence its $P_+$-projection vanishes. Therefore
\[
        s_1(H)=-\frac a2P_+\mathcal P(H).
\]

\textbf{The coefficient of $t^2$}
By \eqref{equ:C_2}, the coefficient of $t^2$ in
\[
        P_+\widetilde{\mathcal G}(I+\Psi(t),\alpha,t)=0
\]
gives us
\[
        z_2
        =
        \frac12P_+
        \left(
        z_1^2
        -
        a\mathcal P(z_1)
        -
        b\mathcal Q(I)
        -
        dE_{nn}
        \right).
\]
It turns out that we will not need $s_2(H)$, so we don't compute it. 

\medskip\noindent
\textbf{Step 2: Coefficient of $t^2$ in the linearized odd equation.}
Now we extract the coefficient of $t^2$ in this expression.
\[
        D_Y\mathscr R(0,\alpha,t)[H]
        =
        P_-D_N\widetilde{\mathcal G}(I+\Psi(t),\alpha,t)
        [H+\Xi_H(t)].
\]
We consider two pieces indicated in \eqref{equ:decomposition} sequentially.

\textbf{Piece (I): $D_N\widetilde{\mathcal G}[H]$ at order $t^{2}$.}
We expand
\begin{align*}
        D_N\widetilde{\mathcal G}(I + \Psi(t), \alpha, t)[H]
        &=\; -(I + \Psi(t))^{-1}\,H\,(I + \Psi(t))^{-1}
        \\
        &-\; \Theta(H)
        \;-\; at\,\mathcal P(H)
        \;-\; bt^{2}\,\mathcal Q(H).
\end{align*}
Using $(I + \Psi(t))^{-1} = I - z_1\, t + (z_1^{2} - z_2)\, t^{2} + O(t^{3})$,
the four terms contribute as follows.

\smallskip
\emph{The inverse term.} The coefficient of $t^{2}$ in
$(I + \Psi(t))^{-1}\,H\,(I + \Psi(t))^{-1}$ collects three contributions:
$(I + \Psi)^{-1}$ at $t^2$ on the left with $H$ and $I$ on the right
(giving $(z_1^2 - z_2) H$); the same with the two factors swapped
(giving $H (z_1^2 - z_2)$); and the cross term where both
$(I + \Psi)^{-1}$ factors contribute at order $t^1$ (giving
$(-z_1) H (-z_1) = z_1 H z_1$). So
\[
        \bigl[(I + \Psi)^{-1} H (I + \Psi)^{-1}\bigr]_{t^2}
        \;=\; (z_1^{2} - z_2) H + H (z_1^{2} - z_2) + z_1 H z_1,
\]
and the contribution of $-(I + \Psi)^{-1} H (I + \Psi)^{-1}$ at order
$t^2$ is the negative of this.

\emph{The other terms.} $-\Theta(H)$ has no $t$-dependence, so contributes
nothing at order $t^2$. $-at\,\mathcal P(H)$ has no $t^2$ coefficient
either. $-bt^2\,\mathcal Q(H)$ contributes $-b\,\mathcal Q(H)$.

\smallskip
Summing:
\begin{equation}\label{eq:piece-I}
        \bigl[\text{piece (I)}\bigr]_{t^{2}}
        \;=\; -(z_1^{2} - z_2)\,H - H\,(z_1^{2} - z_2) - z_1\,H\,z_1 - b\,\mathcal Q(H).
\end{equation}

\textbf{Piece (II): $D_N\widetilde{\mathcal G}[\Xi_H(t)]$ at order $t^{2}$.}

The coefficient $s_2(H)$ of $\Xi_H$
does not contribute. Indeed, the
operator $D_N\widetilde{\mathcal G}(I, \alpha, 0)$, evaluated at the
unperturbed base point, acts on its argument $K$ as
\[
        D_N\widetilde{\mathcal G}(I, \alpha, 0)[K] \;=\; -K - \Theta(K).
\]
For $K \in E_+$ this is $-2K \in E_+$, which is killed by $P_-$. In piece
(II), the $s_2(H)$
contribution at order $t^2$ specifically pairs $s_2(H)\, t^2$ with the
$t^0$ part $D_N\widetilde{\mathcal G}(I, \alpha, 0)$, giving
$-2\,s_2(H) \in E_+$ — which $P_-$ annihilates.

It follows that only $s_1(H)$ contributes at total order
$t^{2}$ once we apply $P_-$. The contributions are:
\begin{itemize}
\item $s_1(H)$ paired with $-(I + \Psi(t))^{-1}\,(\,\cdot\,)\,(I + \Psi(t))^{-1}$
at order $t^1$ on the operator: the operator's $t^1$ coefficient is
$z_1\,(\,\cdot\,) + (\,\cdot\,)\,z_1$ (from differentiating either of the
two factors in the inverse), so this gives
$z_1\,s_1(H) + s_1(H)\,z_1$.

\item $s_1(H)$ paired with $-at\,\mathcal P(\,\cdot\,)$ at order $t^0$:
the operator at order $t^0$ is just $-a\,\mathcal P$ (from the $-at$
prefactor), but we also need to factor in the $t$ from $\Xi_H$. So this
gives $-a\,\mathcal P(s_1(H))$ at order $t^2$.
\end{itemize}

Hence, for the second piece we have: 
\begin{equation}\label{eq:piece-II}
        P_-\bigl[\text{piece (II)}\bigr]_{t^{2}}
        \;=\; P_-\Bigl[\,z_1\,s_1(H) + s_1(H)\,z_1 - a\,\mathcal P(s_1(H))\,\Bigr].
\end{equation}

\textbf{Combining.}
Adding \eqref{eq:piece-I} and \eqref{eq:piece-II}:
\begin{equation}\label{eq:Balpha-raw}
\begin{split}
        \mathcal B_\alpha(H)
        \;=\; P_-\Big[\,
        & z_1\, s_1(H) + s_1(H)\, z_1
        \;-\; (z_1^{2} - z_2)\,H \;-\; H\,(z_1^{2} - z_2) \\
        &\;-\; z_1\, H\, z_1
        \;-\; a\,\mathcal P(s_1(H))
        \;-\; b\,\mathcal Q(H)
        \,\Big].
\end{split}
\end{equation}
This is the raw formula for the coefficient of $t^2$.

\bigskip
Substituting
\[
        z_1=-\frac a2T_n,
        \qquad
        s_1(H)=-\frac a2P_+\mathcal P(H),
\]
and
\[
        z_2
        =
        \frac12P_+
        \left(
        z_1^2
        -
        a\mathcal P(z_1)
        -
        b\mathcal Q(I)
        -
        dE_{nn}
        \right),
\]
then using
\[
        L_nU_n=I-E_{11},
        \qquad
        U_nL_n=I-E_{nn},
\]
\[
        T_n=L_n+U_n,
        \qquad
        D_\partial=L_nU_n+U_nL_n,
\]
and
\[
        a^2-b
        =
        \frac{\alpha^2}{c^2}-\frac1c
        =
        -\frac1{c^2},
\]
the raw expression simplifies to
\[
        \mathcal B_\alpha(H)
        =
        \frac{1}{4c^2}
        \left[
        2(L_nHU_n+U_nHL_n)
        -
        D_{\partial}H
        -
        HD_{\partial}
        \right]
        -
        \frac{1}{4\sqrt c}
        \left(
        E_{\partial}H+HE_{\partial}
        \right).
\]
This proves the stated formula.

\medskip\noindent
\textbf{Step 3: Energy identity and invertibility.}
We use the Hilbert--Schmidt inner product
\[
        \langle X,Y\rangle_{\mathrm{HS}}
        :=
        \operatorname{tr}(X^*Y).
\]
For Hermitian $H$, we claim that
\[
\begin{aligned}
        &\left\langle
        H,\,
        2(L_nHU_n+U_nHL_n)
        -
        D_\partial H
        -
        HD_\partial
        \right\rangle_{\mathrm{HS}}
\\
        &\qquad
        =
        -\|L_nH-HL_n\|_{\mathrm{HS}}^2
        -
        \|U_nH-HU_n\|_{\mathrm{HS}}^2.
\end{aligned}
\]
Indeed, since $L_n^*=U_n$ and $U_n^*=L_n$, expanding the two commutator
norms and using cyclicity of the trace gives exactly the displayed
identity.

The boundary term satisfies
\[
        \left\langle H,E_\partial H+HE_\partial\right\rangle_{\mathrm{HS}}
        =
        \|E_\partial H\|_{\mathrm{HS}}^2
        +
        \|HE_\partial\|_{\mathrm{HS}}^2.
\]
Therefore
\[
\begin{aligned}
        \left\langle H,\mathcal B_\alpha(H)\right\rangle_{\mathrm{HS}}
        =
        &-\frac{1}{4c^2}
        \left(
        \|L_nH-HL_n\|_{\mathrm{HS}}^2
        +
        \|U_nH-HU_n\|_{\mathrm{HS}}^2
        \right)
\\
        &-\frac{1}{4\sqrt c}
        \left(
        \|E_\partial H\|_{\mathrm{HS}}^2
        +
        \|HE_\partial\|_{\mathrm{HS}}^2
        \right).
\end{aligned}
\]
Since $c=1+\alpha^2>0$, all coefficients on the right-hand side are
strictly negative. Hence $\mathcal B_\alpha$ is negative semidefinite.

If equality holds, then
\[
        L_nH=HL_n,
        \qquad
        U_nH=HU_n,
        \qquad
        E_\partial H=0,
        \qquad
        HE_\partial=0.
\]
The first two identities imply that $H$ commutes with the algebra generated
by $L_n$ and $U_n$. This algebra is all of $M_n(\mathbb C)$. Indeed,
\[
        L_nU_n=I-E_{11},
        \qquad
        U_nL_n=I-E_{nn},
\]
so $E_{11}$ and $E_{nn}$ lie in the algebra generated by $L_n$ and $U_n$.
Then
\[
        E_{11}L_n^k=E_{1,k+1},
        \qquad
        U_n^kE_{11}=E_{k+1,1},
        \qquad
        k=0,\ldots,n-1,
\]
and hence all matrix units $E_{ij}$ belong to the generated algebra.
Thus the commutant is only the scalar matrices, so $H=\lambda I$.

The boundary condition
\[
        E_\partial H=0
\]
then gives
\[
        \lambda E_\partial=0,
\]
hence $\lambda=0$. Therefore $H=0$.

Thus equality in the energy identity is possible only for $H=0$. Hence
$\mathcal B_\alpha$ is negative definite, and in particular invertible on
$E_-$. This completes the proof.
\end{proof}

\subsection{Reflection Identity} 
\label{sec:reflection_identity}

\begin{proof}[Proof of Lemma~\ref{lem:refl-id}]
By symmetry of $\phi$ and the index relation $(\bar i,\overline{i+2})=
(\bar i,\bar i-2)$, it suffices to check~\eqref{eq:refl} for $j=i+2$.

\emph{Computation of $\cP(\phi)$ on second off-diagonals.}
From \eqref{eq:P-action},
\[
        \cP(\phi)_{i,i+2}
        =\phi_{\bar i,\bar i-3}+\phi_{\bar i-1,\bar i-2}
        =0+\phi_{\bar i-1,\bar i-2},
\]
since $\phi$ is supported on bands $\pm 1$. Using
$\Theta(\phi)=\phi$ and $\overline{\bar i-1}=i+1$,
$\overline{\bar i-2}=i+2$,
\begin{equation}\label{eq:Pphi-upper}
        \cP(\phi)_{i,i+2}
        \;=\; \phi_{\bar i-1,\bar i-2}
        \;=\; \phi_{i+1,i+2}.
\end{equation}
Applying \eqref{eq:P-action} again with $(i,j)$ replaced by
$(\bar i,\bar i-2)$,
\begin{equation}\label{eq:Pphi-lower}
        \cP(\phi)_{\bar i,\bar i-2}
        \;=\; \phi_{i,i+1}+\phi_{i-1,i+2}
        \;=\; \phi_{i,i+1}.
\end{equation}

\emph{Computation of $A\phi A\phi A$ on second off-diagonals.}
Since $A$ is diagonal and $\phi$ is tridiagonal, only the intermediate
index $k=i+1$ contributes to $(A\phi A\phi A)_{i,i+2}$, giving
\begin{equation}\label{eq:cube-upper}
        (A\phi A\phi A)_{i,i+2}
        =r_ir_{i+1}r_{i+2}\,\phi_{i,i+1}\phi_{i+1,i+2}.
\end{equation}
On the reflected entry, using $r_{\bar k}=r_k^{-1}$ and
$\phi_{\bar k,\overline{k+1}}=\phi_{k,k+1}$,
\begin{equation}\label{eq:cube-lower}
        (A\phi A\phi A)_{\bar i,\bar i-2}
        =\frac{1}{r_ir_{i+1}r_{i+2}}\,\phi_{i,i+1}\phi_{i+1,i+2}.
\end{equation}

\emph{Verification of \eqref{eq:refl} for $j=i+2$.}
Write
$p:=\phi_{i,i+1}$, $q:=\phi_{i+1,i+2}$, $\rho:=r_ir_{i+1}r_{i+2}$, and
$\sigma:=r_ir_{i+2}$. Combining \eqref{eq:Pphi-upper}--\eqref{eq:cube-lower},
\[
        B_{i,i+2}=q\,(\rho p-a),
        \qquad
        B_{\bar i,\bar i-2}=p\,(\rho^{-1}q-a).
\]
Therefore
\begin{align*}
        B_{i,i+2}-\sigma\,B_{\bar i,\bar i-2}
        &=q(\rho p-a)-\sigma p(\rho^{-1}q-a) \\
        &=pq\,(\rho-\sigma\rho^{-1})+a(\sigma p-q).
\end{align*}
Substituting \eqref{eq:phi-formula} into the second summand,
\[
        \sigma p-q
        =-a\,r_{i+2}\,\frac{r_ir_{i+1}^{2}r_{i+2}-1}
        {(1+r_ir_{i+1})(1+r_{i+1}r_{i+2})}.
\]
For the first summand,
\[
        pq
        =\frac{a^{2}r_{i+1}r_{i+2}}{(1+r_ir_{i+1})(1+r_{i+1}r_{i+2})},
        \qquad
        \rho-\sigma\rho^{-1}
        =\frac{r_ir_{i+1}^{2}r_{i+2}-1}{r_{i+1}},
\]
so
\[
        pq\,(\rho-\sigma\rho^{-1})
        =\frac{a^{2}r_{i+2}(r_ir_{i+1}^{2}r_{i+2}-1)}
        {(1+r_ir_{i+1})(1+r_{i+1}r_{i+2})}.
\]
Hence
$B_{i,i+2}-\sigma B_{\bar i,\bar i-2}
= pq(\rho-\sigma\rho^{-1})+a(\sigma p-q)=0$,
which is \eqref{eq:refl}.
\end{proof}

\subsection{Type II cells: Shifted base-point} 
\label{sec:type2-shifted-basepoint}

\begin{proof}[Proof of \ref{prop:explicit-diagonal-solution}]
Set $Y:=Y_*(\alpha)$ and $A:=e^{-Y}=\diag(r_1,\ldots,r_n)$, where
\[
        r_i
        =
        c^{\frac{3}{n+1}\left(i-\frac{n+1}{2}\right)}.
\]
Since $r_{n+1-i}=r_i^{-1}$, we have $Y\in D_-$.

It remains to verify $\mathfrak R(Y,\alpha,0)=0$. By the discussion
above, the equation $\fR(Y,\alpha,0)=0$ is the compatibility condition
that the order-$t^2$ Stein equation $A\psi A+\psi=R$ admit a solution
$\psi\in E_+$, where
\begin{equation}\label{eq:R-definition}
        R
        :=
        A\phi A\phi A
        -
        a\mathcal P(\phi)
        -
        b\mathcal Q(e^Y)
        -
        dE_{nn}
\end{equation}
and $\phi$ is the first-order correction with entries $p_i$ given
by~\eqref{eq:phi-formula}. Since $A$ is diagonal, the Stein equation is
entrywise, and the $E_+$-compatibility condition reads
\begin{equation}\label{eq:compatibility-all-entries}
        R_{ij}=r_ir_jR_{\bar i\bar j},
        \qquad
        1\le i,j\le n.
\end{equation}
The support of $R$ is contained in the diagonal and the second
off-diagonal bands, so \eqref{eq:compatibility-all-entries} is automatic
outside these bands.

Let
\[
        q:=c^{3/(n+1)}.
\]
Then $r_{i+1}=qr_i$ for $i=1,\ldots,n-1$.
We verify compatibility on the diagonal and on the second off-diagonal.

\medskip\noindent
\textbf{Diagonal entries.}
Let $R_i:=R_{ii}$.
Using \eqref{eq:R-definition}, \eqref{eq:phi-formula}, and
\[
        \bigl(\mathcal Q(e^Y)\bigr)_{ii}
        =
        r_{i+1}
        \quad (i=1,\ldots,n-1),
        \qquad
        \bigl(\mathcal Q(e^Y)\bigr)_{nn}=0,
\]
we get, for $1\le i\le n-1$,
\begin{equation}\label{eq:R-i-formula}
        R_i
        =
        r_i^2
        \left(
        \mathbf 1_{i>1}\,r_{i-1}p_{i-1}^2
        +
        r_{i+1}p_i^2
        \right)
        -
        2ap_i
        -
        br_{i+1}.
\end{equation}
At the right endpoint,
\begin{equation}\label{eq:R-n-formula}
        R_n
        =
        r_n^2r_{n-1}p_{n-1}^2-d.
\end{equation}

For the interior pairs $2\le i\le n-1$, the compatibility condition
$R_i=r_i^2R_{\bar i}$
follows directly from the geometric relation $r_{i+1}=qr_i$ and the
reflection relation $r_{\bar i}=r_i^{-1}$. Indeed, if $s=r_i$, then
\[
        r_{i-1}=q^{-1}s,
        \qquad
        r_i=s,
        \qquad
        r_{i+1}=qs,
\]
whereas the reflected triple is
\[
        r_{\bar i-1}=(qs)^{-1},
        \qquad
        r_{\bar i}=s^{-1},
        \qquad
        r_{\bar i+1}=q s^{-1}.
\]
Substituting these values into \eqref{eq:R-i-formula} gives the identity
$R_i-r_i^2R_{\bar i}=0$.

It remains to check the boundary pair $i=1$ and $i=n$. Set $r:=r_1$.
Then
\[
        r_2=qr,
        \qquad
        r_n=r^{-1},
        \qquad
        r_{n-1}=(qr)^{-1}.
\]
Moreover,
\[
        p_1=p_{n-1}
        =
        -a\,\frac{qr}{1+qr^2}.
\]
Using \eqref{eq:R-i-formula} and \eqref{eq:R-n-formula}, we get
\[
        R_1-r^2R_n
        =
        r(a^2q-bq+dr).
\]
Thus the boundary compatibility is equivalent to
\begin{equation}\label{eq:boundary-compat}
        a^2q-bq+dr=0.
\end{equation}
Now
\[
        b-a^2
        =
        \frac1c-\frac{\alpha^2}{c^2}
        =
        \frac{c-\alpha^2}{c^2}
        =
        \frac1{c^2}.
\]
Therefore \eqref{eq:boundary-compat} is equivalent to
$dr=q/c^2$.
Since $d=c^{-1/2}$, this is $r=qc^{-3/2}$.
But $r=r_1=q^{-(n-1)/2}$, and because $q=c^{3/(n+1)}$,
\[
        q^{-(n-1)/2}=qc^{-3/2}.
\]
Thus the boundary compatibility holds.

\medskip\noindent
\textbf{Second off-diagonal entries.}
It remains to check the entries with $|i-j|=2$. For $1\le i\le n-2$, set
$S_i:=R_{i,i+2}$.
The only terms contributing to $S_i$ are
$A\phi A\phi A$ and $-a\mathcal P(\phi)$. Hence
\begin{equation}\label{eq:S-i-formula}
        S_i
        =
        r_ir_{i+1}r_{i+2}p_ip_{i+1}
        -
        ap_{i+1}.
\end{equation}
Using $r_{i+1}=qr_i$ and the formula \eqref{eq:phi-formula}, one checks
that
\begin{equation}\label{eq:second-offdiag-compat}
        S_i=r_ir_{i+2}S_{\bar i-2},
\end{equation}
where $S_{\bar i-2}=R_{\bar i,\bar i-2}$ is the reflected lower
second-diagonal entry. Substitution of \eqref{eq:phi-formula} gives
equality on both sides.
Therefore the compatibility condition
$R_{i,i+2}=r_ir_{i+2}R_{\bar i,\bar i-2}$
holds for all second off-diagonal entries. The lower second off-diagonal
entries follow by symmetry.

We have now verified \eqref{eq:compatibility-all-entries} for every pair
$(i,j)$. Hence there exists $\psi\in E_+$ solving $A\psi A+\psi=R$.
Equivalently, $\mathfrak R(Y_*(\alpha),\alpha,0)=0$.
\end{proof}

\section*{Data availability}
Data sharing is not applicable to this article, as no datasets were
generated or analysed during the current study.

\bibliographystyle{plainnat}
\bibliography{comtest} 


\end{document}